\documentclass[11pt]{article}
\usepackage{amssymb}
\usepackage{amsmath,amsthm}
\usepackage{amsfonts}
\usepackage{leftidx}
\usepackage{mathrsfs}
\usepackage{authblk}
\usepackage{enumerate}  
\usepackage{abstract}
\usepackage{stmaryrd}
\usepackage{ulem}
\usepackage{slashed}
 \usepackage{float}
 \usepackage{color}
\usepackage{graphicx}
 \usepackage{hyperref}
  \usepackage{xcolor}
\usepackage{slashed}
\usepackage{adjustbox}
\usepackage{multirow}
\usepackage[top=1in, bottom=1.2in, left=1in, right=1in]{geometry}

\def\R{\mathbb{R}}

\def\Z{\mathbb{Z}}

\def\d{|\nabla|}

\def\p{\partial}

\def\vo{\vspace{1\baselineskip}}

\def\be{\begin{equation}}
\def\ee{\end{equation}}

\newtheorem{theorem}{Theorem}[section]

\newtheorem{lemma}{Lemma}[section]
\newtheorem{proposition}{Proposition}[section]

\theoremstyle{definition}
\newtheorem{definition}{Definition}[section]
\newtheorem{convention}{Convention}[section]

\theoremstyle{remark}
\newtheorem{remark}{Remark}[section]

\setcounter{tocdepth}{2}
\numberwithin{equation}{section}
\title{Large data global solution of the 3D RVM system with cylindrical symmetry I: Iterative smoothing scheme}
\author[$\star$]{Xuecheng Wang}
\affil[$\star$]{\small Tsinghua University \& BIMSA}

\date{  }

\begin{document}

 \maketitle 

\begin{abstract}
This is the first part of a two-paper sequence  establishing the global existence of $3D$ relativistic Vlasov-Maxwell system (RVM) for \emph{arbitrarily large} smooth localized initial data with   cylindrical symmetry.

 This paper employs Part II's pointwise estimates \cite{PartII}—developed independently of Part I—as fundamental tools to demonstrate the iterative smoothing scheme (ISS), which is originated and inspired by   the work of Klainerman-Staffilani\cite{Klainerman3}.  Using ISS and a novel singular weighted space-time estimate for the distribution function, we prove upper bounds for the projection and full velocity characteristics via a standard bootstrap argument.  Using the classic momentum method, we find that   the high order momentum at most grows polynomially in time. This further  implies  that the $L^\infty_{x}$-norm of the electromagnetic field, $\|(E(t),B(t)\|_{L^\infty_x}$, also grows at most polynomially over time. Consequently, this verifies the continuation criteria obtained by Luk-Strain \cite{luk2}. As a result, the energy function of $3D$ RVM system doesn't blow up in finite time,  thereby establishing  global existence.

\end{abstract}

\setcounter{tocdepth}{2}

\tableofcontents
\section{Introduction}
  
  The  $3D$ relativistic  Vlasov-Maxwell system is one of the   fundamental models in the plasma physics, which reads as follows, 
\be\label{mainequation}
(\textup{RVM})\qquad \left\{\begin{array}{l}
\p_t f + \hat{v} \cdot \nabla_x f + (E+ \hat{v}\times B)\cdot \nabla_v f =0,\\
\nabla \cdot E = 4 \pi \displaystyle{\int_{\R^3}  f(t, x, v) d v}, \qquad \nabla \cdot B =0, \\
\p_t E = \nabla \times B - 4\pi \displaystyle{\int_{\R^3} f(t, x, v) \hat{v} d v }, \\ 
 \p_t B  =- \nabla\times E,
\end{array}\right.
\ee
where $f:\R_t\times \R_x^3\times \R_v^3\longrightarrow [0, \infty)$ denotes the distribution function of particles, $ E, B  :\R_t\times \R_x^3 \longrightarrow \R^3 $ denote  the electromagnetic fields, and  $\hat{v}:=v/\sqrt{1+|v|^2}$.

  For generic \emph{finite energy regular} initial data, the global existence of $3D$ RVM remains an outstanding open problem after multiple attempts  over many years.    In this paper, we focus on the large data Cauchy problem of the    RVM system,  in which   we don't understand much.  While the small  data case is well understood, we  refer readers to \cite{glasseys1,luk2,luk,wang} and references therein for more details.    

Similar to the well-known problem regarding the $3D$ Euler equations in fluid dynamics—specifically, whether a global  solution exists for any finite energy smooth initial data—this remains an outstanding open question despite some exciting progress made thus far.

Interestingly, the $3D$ RVM system's continuation criteria exhibit a striking parallel to the Beale-Kato-Majda blowup criterion for the $3D$ Euler equations. Specifically, the $L^\infty_x$ norm of the electromagnetic field, $\|(E, B)\|_{L^\infty_x}$, plays an analogous role to the $L^\infty_x$ norm of vorticity (see Section \ref{previousresultscon}). The large data problem for RVM remains open in part because its supercritical behavior, a consequence of the $L^2_x$ level of the conservation law (\ref{conservationlaw}), presents significant challenges.  From the Sobolev embedding $H^{\frac{n+}{2}}(\R^n)\hookrightarrow L^\infty_x(\R^n)$, the   regularity gap between $L^\infty_x(\R^n)$  and $L^2_x(\R^n)$   increases with respect to the space dimension. Therefore, the large data problem is more challenging in higher dimension.

 Before addressing the full problem directly, we aim to impose some symmetry assumptions on the 3D RVM system to enhance our understanding of the large data problem. Unfortunately, the options for imposing symmetry on the RVM system are limited.  If we impose the translation symmetry, also known as the dimension reduction,  the initial data will have  \emph{infinite} energy. In this case, global existence is established, thanks to the celebrated works of Glassey and Schaeffer \cite{glasseys1,glasseys3,glasseys2}.   If we impose the radial symmetry,  the divergence-free condition combined with the finite energy assumption for the magnetic field necessitates that the magnetic field be trivial, i.e., 
$B=0$. For this scenario, global existence is also established by another notable result from Glassey and Schaeffer \cite{glasseys11}. Thus, the only feasible symmetry assumption we can impose for a general class of \emph{finite energy regular} initial data is cylindrical symmetry.

Even with cylindrical symmetry, the large data problem remains significant.   The behavior of 3D Euler smooth solutions with large swirl, for example, is still poorly understood.  The use of cylindrical symmetry, despite reducing the number of independent variables to two, does not fundamentally alter the $3D$ nature. Comparing the $2D$ case\footnote{Another reason is that the electromagnetic field is more restricted in $2D$, e.g., we have $E=(E_1,E_2,0), B=(0,0,B_3)$ for some scalar functions $E_1,E_2,B_3$, see \cite{luk}. } (see \cite{glasseys3,glasseys2}), the regularity gap and the challenge  are amplified in $3D$.  Furthermore, the potential for $z$-axis particle trajectories negates the benefits of cylindrical symmetry. The avoidance of blow-up in such cases constitutes the principal and exceptionally challenging aspect of the problem.

   In a two-paper sequence, we establish the global existence of $3D$ RVM system for \emph{arbitrarily large} smooth localized initial data, which has \emph{cylindrical symmetry} with respect to the $z$-axis.  

   The main theorem of the  two-paper sequence is stated as follows.

\begin{theorem}\label{maintheorem}
  Given  any    cylindrical  symmetric initial data $E_0, B_0\in H^{s}(\R^3)$ ,  $  f_0(x,v)\in H^s(\R_x^3 \times \R_v^3)$, $s\in \mathbb{Z}_{+},s\geq 6$. Without loss of generality, we assume the  initial data are cylindrical symmetric with respect to the $z$-axis  in the following sense, 
\be\label{april25eqn1}
\begin{split}
   & \forall  \theta\in [0,2\pi], x,v\in \R^3,  \quad R=\begin{bmatrix}
\cos \theta & -\sin \theta & 0 \\ 
\sin \theta &  \cos\theta & 0\\ 
0 & 0 & 1\\  
\end{bmatrix},\\
 & f_0(Rx, Rv) = f_0( x,v), \quad  E_0(  Rx)= RE_0(x), \quad B_0( Rx)= RB_0(x). \\
 \end{split}
\ee
If the initial distribution function decays polynomially in the following sense,  
\be\label{assumptiononinitialdata}
\sum_{\alpha \in \mathbb{Z}_{+}^6,|\alpha|\leq s} \|(1+| x|+|v|)^{N_0}\nabla_{x,v}^\alpha f_0(x,v)\|_{L^2_{x,v}}< +\infty, \quad  N_0:= 10^{10},
\ee
then the relativistic Vlasov-Maxwell system \eqref{mainequation}  admits a global solution $ (f(t,\cdot, \cdot), E(t,\cdot), B(t,\cdot))\in  H^s(\R_x^3 \times \R_v^3)\times H^s(\R_x^3)\times H^s(\R_x^3)$. Moreover, the $L^\infty_x$-norm of the electromagnetic field grows at most polynomially over time. 

\end{theorem}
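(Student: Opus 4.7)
The plan is to verify a quantitative continuation criterion in the spirit of Klainerman--Staffilani and its refinements: namely, to show that $\|(E,B)\|_{L^\infty_x}$ and the momentum support $P(t):=\sup\{|v|:(x,v)\in \mathrm{supp}\, f(t)\}$ both grow at most polynomially in $t$, after which the standard continuation argument propagates the $H^s$ norm and yields global existence. Cylindrical symmetry provides two key inputs beyond what is available in the generic $3D$ large data setting. First, under (\ref{april25eqn1}) the distribution function and the electromagnetic fields effectively depend on one fewer spatial variable, which reduces the dimension both of the phase-space integrals and of the integration surfaces appearing in the Glassey--Strauss representation of $(E,B)$. Second, the canonical angular momentum about the $z$-axis is preserved along Vlasov characteristics, and together with the Hamiltonian conservation (\ref{conservationlaw}) this constrains how particles with nontrivial azimuthal motion can be accelerated near the axis.

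Concretely, I would set up a bootstrap on the pair $\bigl(\|(E,B)\|_{L^\infty_x},\,P(t)\bigr)$, postulating bounds of the form $(1+t)^{M}$ for a large but fixed $M$. Under this ansatz, the polynomial-decay hypothesis (\ref{assumptiononinitialdata}) propagates along characteristics with at most polynomial loss in the weight, furnishing weighted Sobolev and pointwise control on $f$ in $(x,v)$. These weighted bounds are inserted into the Glassey--Strauss integral representation for $(E,B)$ (written in the cylindrical variables $(r,z)$, so that the surface of integration is effectively of lower dimension), in order to improve the bound on $\|(E,B)\|_{L^\infty_x}$. Feeding this improvement into the Lorentz force term $E+\hat v\times B$ of the Vlasov equation, one improves the bound on $P(t)$ by direct integration of $\dot v$ along characteristics, closing the bootstrap with a strictly better constant.

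The main obstacle, as always in Glassey--Strauss-type analysis, is controlling the contribution to the field representation from particles whose velocity $\hat v$ is nearly aligned with the direction $\omega$ of field propagation, producing a small denominator $(1-\hat v\cdot\omega)^{-1}$. Cylindrical symmetry partially tames this: the conserved canonical angular momentum rules out near-null azimuthal motion close to the $z$-axis, and the reduction in integration dimension reduces the volume of ``bad'' phase-space directions. The subtlest regions remain the $z$-direction, in which there is neither symmetry nor compact support --- only the polynomial decay from (\ref{assumptiononinitialdata}), where the extremely large exponent $N_0=10^{10}$ signals that polynomial losses must be absorbed generously --- together with the singular $r\to 0$ axis. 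I expect the bulk of the technical work to lie in a careful geometric decomposition of phase space into near-axis, near-null, and large-$|z|$ regions, treating each by a different combination of the conserved angular momentum, the energy identity (\ref{conservationlaw}), and the weighted estimates coming from (\ref{assumptiononinitialdata}).
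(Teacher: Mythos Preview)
Your high-level architecture (bootstrap on field size and momentum support, feed through Glassey--Strauss, invoke a continuation criterion) matches the paper's, but the substance of how the bootstrap closes is missing, and as written the argument would not go through. Cylindrical symmetry does \emph{not} reduce the problem to an effectively lower-dimensional one in the way that $2\frac{1}{2}$D works: you still have two spatial variables $(r,z)$ and full three-dimensional velocities, so the near-null denominator $(1+\hat v\cdot\omega)^{-1}$ is exactly as singular as in general 3D, and the Glassey--Strauss bounds alone do not close a polynomial bootstrap. The canonical angular momentum you invoke is not used in the paper and is not obviously useful here, since it involves the vector potential, which you have no independent control over.

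What the paper actually does, and what your proposal lacks, are three linked ideas. First, rather than bootstrapping on $P(t)$ directly, the paper separately tracks the \emph{horizontal} velocity $|\slashed V(s)|=|(V_1,V_2)(s)|$ and the full velocity $|V(s)|$ along characteristics from a ``majority set'', and shows via a bootstrap that the former stays strictly smaller than the latter (roughly $|\slashed V|\lesssim 2^{(2/3)M_t}$ versus $|V|\lesssim 2^{M_t}$). This gap is the engine: it means that in the worst case the velocity direction is nearly vertical, which localizes the bad near-null set in a quantifiable way. Second, the paper proves that once $|\slashed V|$ reaches a threshold, the space characteristic is forced \emph{away} from the $z$-axis (by computing $\frac{d^2}{ds^2}|\slashed X|^2$ and showing the free-streaming term dominates the force term), so the singular axis region is visited only briefly; this is where cylindrical symmetry is genuinely exploited, via improved decay $|\slashed x|^{-a}$ in the field representation away from the axis. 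Third, and most technically, the paper does not rely solely on the physical-space Glassey--Strauss formula but passes to Fourier space and performs an \emph{iterative smoothing scheme}: repeated integration by parts in $s$ (normal form transformations) against the phase $e^{iX(s)\cdot\xi+i\mu s|\xi|}$, using that $|\hat V\cdot\xi+\mu|\xi||$ has a good lower bound thanks to the horizontal/full velocity gap. Each iteration gains a definite power of $2^{-cM_t}$, and after finitely many steps the remaining terms are absorbed by the rough $L^\infty$ field bound. None of these mechanisms appears in your outline, and without them the near-null contributions in the $z$-direction cannot be controlled.
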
 

The objective of Part I is to prove the stated theorem. To this end, we utilize the pointwise estimates established in Part II \cite{PartII}, which are presented in detail in Section \ref{mainresultsPartIIdetail}. As noted in the abstract, we emphasize that these estimates are independent of the results derived in Part I.

A few remarks are in order. 
\begin{remark}

A key difference between the $3D$ RVM and  $3D$ Euler systems is the presence of a smoothing effect in RVM, although at the expense of a loss function.  Understanding how to effectively exploit and quantify this smoothing effect appears to be crucial for the large data problem. 

\end{remark}
\begin{remark}
The benefit of the  cylindrical symmetry  is used crucially to show a non-trivial fact about the distribution of particles, which is not expected to be true in the general cases without symmetry. 

For illustrative purposes, we assume that the initial data has compact support in both $x$ and $v$.  Roughly speaking, instead of a ball of radius $r$, the distribution function $f$ is supported in a cylinder $\{(v_1,v_2): |(v_1,v_2)| \leq r^{2+/3}\} \times [-r, r]\subset \R_v^3$ in velocity space, where $r$ denotes the maximum of   $|v|$, s.t., $f(t,x,v)\neq 0$. Consequently, for $v$ near its maximum, the direction vector $v/|v|$ is nearly aligned with the $z$-axis. 
\end{remark}

\begin{remark}
Our improved understanding of the distribution function, leading to a better understanding of the electromagnetic field, allows us to demonstrate, via the iterative smoothing scheme, that the problematic $z$-axis particle trajectories (which do not benefit from cylindrical symmetry directly) are well-controlled. 
\end{remark}

\begin{remark}
The plausible  goal of optimizing the size of $N_0, s $, which is less interesting, is not pursued here. 
\end{remark}

 The rest of this section is organized as follows.
\begin{enumerate}
  \item[$\bullet$] In subsection \ref{conservationlawssubsection}, we examine three conservation laws of the RVM system, which serve as the primary driving force in studying the large data problem.

\item[$\bullet$] In subsection \ref{previousresultscon}, we begin by reviewing previous results to enhance our understanding of the state of the art, followed by a discussion of the continuation criteria that will be employed in the proof of Theorem \ref{maintheorem}.

\item[$\bullet$] In subsection \ref{notationsubsection}, we introduce the technical notations that will be essential for discussing the key ideas in the proof of Theorem \ref{maintheorem}. 

\item[$\bullet$] In subsection \ref{skeletonintroduction},   we outline  the proof of Theorem \ref{maintheorem}, which is reduced to proving three independent propositions (Propositions \ref{finalproposition}, \ref{bootstraplemma1}, and \ref{bootstraplemma2}). The key ideas of each step are discussed.
\item[$\bullet$] In subsection \ref{mainresultsPartIrough},  a preliminary, informal exposition of the main results of Part II is provided, together with an explanation of their function in the proof of Theorem \ref{maintheorem}.

\item[$\bullet$] In subsection \ref{ISSintroduction}, we  elaborate the  iterative smoothing scheme and explain its application in the Propositions \ref{bootstraplemma1} and \ref{bootstraplemma2}. 
\item[$\bullet$] In subsection \ref{fullprobandopenprob}, we first explore some proof ideas that are independent of the cylindrical symmetry assumption. These include observations about the structural properties of the RVM system, which may provide insights into the study of the general large data case. Following this, we outline several related open problems of interest. In particular, we will also discuss the closely related relativistic Vlasov-Poisson (RVP) system. 
\item[$\bullet$] In subsection \ref{outlinefirstpaper}, we provide a table of essential notations and give the outline of this paper. 
\end{enumerate}

   \subsection{Main driving force: conservation laws}\label{conservationlawssubsection}
There are three conservation laws available to us, which are   the main driving force of studying the large data problem.   The first conservation law is  due to the transport nature of the Vlasov equation. More precisely, we have
  \be\label{2024nov9eqn111}
   \forall p \in[1,\infty], \qquad \| f(t,x,v)\|_{L^p_{x,v}}= \| f_0(x,v)\|_{L^p_{x,v}}. 
   \ee

   The second conservation law comes from the following  result of direct computations, see \eqref{mainequation},  
\be\label{dec3eqn1}
\frac{d}{dt}\big[\frac{1}{2}(|E|^2+|B|^2) + 4\pi \int_{\R^3} \sqrt{1+|v|^2}f(t,x,v)   d v  \big] = - \nabla_x\cdot\big(  (E\times B) + 4\pi \int_{\R^3} vf(t,x,v) d v  \big).
\ee
After integrating  the above  equality in the whole space $\R^3$, we have 
 \be\label{conservationlaw}
\mathcal{H}(t):= \int_{\R^3}|E(t,x)|^2 + |B(t,x)|^2 d x + 8\pi  \int_{\R^3} \int_{\R^3} \sqrt{1+|v|^2} f(t,x,v)  d x d v = \mathcal{H}(0).
\ee
Therefore,  $L^2$-norm of the electromagnetic fields $(E, B)$  and the   first momentum of the distribution function remain  bounded over time  within the lifespan of solution. 

Lastly, from   the work of Luk-Strain\cite{luk2}[Proposition 2.2],  the conservation law, as established in Lemma \ref{conservationlawlemma}, is satisfied by integrating \eqref{dec3eqn1} over the spacetime region bounded by the hyperplane $t=0$ and the backward light cone emanating from the point $(x,t)$.

 The improved properties of certain electric and magnetic field combinations (see \eqref{2024Dec7eqn21}) are reminiscent of the  ``electric-magnetic decomposition''  central to Christodoulou and Klainerman's seminal work \cite{ChristKlainerman}[Section 7.2] on the global stability of Minkowski spacetime.

\begin{lemma}[Luk-Strain\cite{luk2},Proposition 2.2]\label{conservationlawlemma}
For any $t\in [0,T^{ })$, we have
\be\label{march18eqn31}
\begin{split}
\sup_{x\in \R^3} &  \int_0^t \int_{\mathbb{S}^2} (t-s)^2 K_g^2(s, x+(t-s)\omega, \omega) d \omega ds \\
& + \int_0^t \int_{\R^3} \int_{\mathbb{S}^2} (t-s)^2 \langle v \rangle(1+\hat{v}\cdot \omega) f(s, x+(t-s)\omega, v) d \omega d v ds  \lesssim_{data} 1,\\
\end{split}
\ee
where
\be\label{2024Dec7eqn21}
K_g^2:= |E\cdot \omega|^2 + |B\cdot \omega |^2 +|E-\omega \times B|^2 +|B+\omega \times E|^2. 
\ee
\end{lemma}

The above conservation law plays a major role in obtaining pointwise estimates   in Part II\cite{PartII}. However, we will not use this conservation law directly in this paper.  
\subsection{Previous results: continuation criteria}\label{previousresultscon}

The global existence of solutions for the $3D$ RVM system with suitably regular, finite energy initial data remains an outstanding open problem. 
Much of the existing work on the $3D$ RVM system has focused on the continuation criteria.  A classic result  of Glassey-Strauss \cite{glassey3} says that the  classical  solution can be globally  extended  as long as the distribution function has compact support in $v$ for all the time. 

    An important work by Klainerman-Staffilani \cite{Klainerman3}   provides a new perspective to study the large data problem on the Fourier side. Since it's  closely related to our work, for the benefit of readers, we state their main theorem as follows.
\begin{theorem}[Klainerman-Staffilani \cite{Klainerman3}, \textbf{2002}]
Consider the  IVP of 3D RVM system \eqref{mainequation} with the initial data $f_0\in C_0^1(\R^3\times \R^3)$ and $E_0, B_0\in C^1(\R^3)$. Assume that  the electromagnetic field is bounded on any fixed time interval $[0,T]$, i.e., there exists an absolute constant $C$ s.t., 
\[
\sup_{t\in [0,T]}\|(E(t), B(t))\|_{L^\infty_x}\leq C.
\]
Then the 3D RVM system \eqref{mainequation} admits a unique solution, s.t., $E, B\in C^{1}([0,T]\times\R^3)$ and $f\in C^1([0,T]\times\R^3\times \R^3).$ 
\end{theorem}
\begin{remark}
Our iterative smoothing scheme is inspired by the smoothing effect identified in Klainerman-Staffilani \cite{Klainerman3}. See Section \ref{ISSintroduction} for further discussion of this point.
\end{remark}

 Since we don't work with the compactly supported initial data, see \eqref{assumptiononinitialdata},  we won't use the above theorem directly. Instead, in this two-paper sequence, we use the following continuation criteria by Luk-Strain \cite{luk}, which is a by-product of \cite{luk}[Theorem 5.7 \& Theorem 1.5]. We reformulate it as follows. 

\begin{theorem}[Luk-Strain(\textbf{2016}),\cite{luk}]\label{continuationcriteria}
Let $(f_0, E_0, B_0)$ be initial data in $\R^3$ satisfying certain technical assumptions, which are guaranteed by assumptions in Theorem \ref{maintheorem}. If the nonlinear solution exists in the time interval $[0, T)$ and the following bound holds, 
\be\label{continuationcrit}
\sup_{t\in [0, T)} \|E(t, x)\|_{L^\infty_x} +\|B(t, x)\|_{L^\infty_x}< \infty,
\ee
then the solution can be extended uniquely to $[0, T+\delta]$ for some $\delta >0$ s.t., $E,B\in L^\infty_{[0, T+\delta] }(H^s(\R^3))$ and $f\in L^\infty_{[0, T+\delta] }(H^s(\R^3\times \R^3))$.
\end{theorem} 
\begin{remark}
In \cite{luk},  the authors proved     stronger results. Firstly, they do not require a rapid decay rate for the initial data. Secondly,  they only need the following assumption 
\[
\sup_{t\in[0, T), (x,v)\in \R^3\times \R^3}\int_0^T (\big|E(s,X( x,v,s,t))\big| + \big|E(s,X(x,v,s,t))\big|  ) d s < \infty,
\]
 to extend the lifespan of the solution, where $X( x,v,s,t ))$ denotes the backward space characteristics, see also \eqref{backward}. Lastly, they proved that a regular solution can be extended as long as $\|\langle v \rangle^\theta f\|_{L_x^q L_v^1}$ remains bounded for some $\theta >2/q$ and $q\in (2,\infty].$
\end{remark}

We refer readers to Kunze\cite{kunze}, Pallard\cite{pallard3}, Luk-Strain \cite{luk2,luk},   Patel\cite{patel}, and references therein for more recent improvements  on the continuation criteria.

Lastly, it's   worth to mention a closely related model,   
 relativistic Vlasov-Poisson system in the  plasma physics case (RVP-PP), which reads as follows. 
\be\label{vlasovpo}
(\textup{RVP-PP})\quad \left\{\begin{array}{l} 
\p_t f + \hat{v} \cdot \nabla_x f + E \cdot \nabla_v f =0,\quad E=\nabla\phi, \\
\Delta \phi = \rho(t), \quad \rho(t):= \displaystyle{\int_{\R^3} f(t,x,v) d v},\quad f(0,x,v)=f_0(x,v). \\ 
\end{array}\right. 
\ee

 While the physical significance of this system may be debated, it remains mathematically intriguing. In contrast to the well-established non-relativistic case in $3D$, as demonstrated by the work of Lions-Perthame \cite{Lions} and Pfaffelmoser \cite{Pfaffelmoser}, the global regularity of the $3D$ RVP-PP system with sufficiently regular finite energy initial data remains an open problem. Although the RVM and RVP-PP systems are equivalent in the radial setting for smooth localized initial data in which $B=0$, they exhibit subtle differences in the non-radial setting, even when $B=0$. For a more detailed discussion of the 3D RVP-PP system, we refer readers to \cite{wang2,wang3}.
\subsection{Notation and Preliminaries}\label{notationsubsection}

\begin{definition}\label{relationdef}
 For any two numbers $A$ and $B$, we use  $A\lesssim B$, $A\approx B$,  and $A\ll B$ to denote  $A\leq C B$, $|A-B|\leq c A$, and $A\leq c B$ respectively, where $C$ is an absolute constant and $c$ is a sufficiently small absolute constant. We use $A\sim B$ to denote the case when $A\lesssim B$ and $B\lesssim A$.   For an integer $k\in\mathbb{Z}$, we use ``$k_{+}$'' to denote $\max\{k,0\}$ and  use ``$k_{-}$'' to denote $\min\{k,0\}$. For $x_0\in \R^3$, $r\in \R_{+}$, we use both the notation $B(x_0, r)$ and $B_r(x_0)$ to denote the set $\{x:|x-x_0|<r, x\in \R^3\}$. 
\end{definition}

\begin{convention}\label{conventionconst}
  We use the convention that all constants which only depend on the  \textbf{initial data}, e.g., the conserved quantities in   \eqref{conservationlaw}, will be treated as absolute constants. There are several absolute constants we use  throughout this two-paper sequence, which are $N_0:=10^{10},\epsilon:=10^{-7}, \iota:=10^{-4}. $
\end{convention}

Let $T$ denotes the maximal time of existence.   Our goal is to control  $E, B\in L^\infty([0,T^{})\times \R_{x}^3)$, i.e., to prove \eqref{continuationcrit}, to ensure that the lifespan  of RVM  \eqref{mainequation}  can be extended  to $[0, T+\epsilon]$ because of the continuation criteria in Theorem \ref{continuationcriteria}. Therefore, due to the definition of the maximal time of existence, we know that  the solution exists globally.

We  fix an even smooth function $\tilde{\psi}:\R \rightarrow [0,1]$, which is supported in $[-3/2,3/2]$ and equals to one  in $[-5/4, 5/4]$. For any $k, k_1,k_2\in \mathbb{Z}$, we define $\psi_k, \psi_{\leq k},\psi_{\geq k}, \psi_{[k_1,k_2]} :\cup_{n\in \Z_+}\R^n\longrightarrow\R$, 
\[
\begin{split}
&\psi_{k}(x) := \tilde{\psi}(|x|/2^k) -\tilde{\psi}(|x|/2^{k-1}), \quad \psi_{\leq k}(x):= \sum_{l\leq k}\psi_{l}(x), \\
 &\psi_{\geq k}(x):= 1-\psi_{\leq k-1}(x), \quad \psi_{[k_1,k_2]}(x)=\sum_{k\in[k_1,k_2]\cap \Z}\psi_k(x). \\
\end{split}
\]
Moreover, we define the cutoff function $\psi_{l;\bar{l}}:\cup_{n\in \Z_+}\R^n\longrightarrow\R$ with the threshold $\bar{l}\in \Z$,  as follows,
  \be\label{cutoffwiththreshold}
\varphi_{l;\bar{l}}(x):=\left\{\begin{array}{ll}
\psi_{\leq \bar{l}}(x) & \textup{if\,\,} l=\bar{l}\\
\psi_l(x) & \textup{if\,\,} l>\bar{l}\\
\end{array}
\right., \quad \forall l_1,l_2\in \Z, \quad  \varphi_{[l_1, l_2];\bar{l}}(x):= \sum_{l\in [\max\{l_1,\bar{l}\}, l_2]\cap \Z } \varphi_{l;\bar{l}}(x).
  \ee
In particular, if the threshold $\bar{l}=0$, we use the following notation, 
\be\label{cutoffwiththreshold100}
\forall k, k_1, k_2 \in \mathbb{Z}_{+},\quad  \varphi_k(x):=\varphi_{k;0}(x), \quad \varphi_{[k_1,k_2]}(x):=  \varphi_{[k_1,k_2];0}(x).
\ee

For a function $F(x)\in L^1$, we use $F^{+}:=F$ and $F^{-}:=\bar{F}$. Moreover, we use both $\widehat{F}(\xi)$ and $\mathcal{F}(F)(\xi)$ to denote the Fourier transform of $F$, which is defined as follows,
\[
\mathcal{F}(F)(\xi)= \int e^{-ix \cdot \xi} F(x) d x.
\]
 
We use $\mathcal{F}^{-1}(G)$ to denote the inverse Fourier transform of $G(\xi)$.
Moreover, for any $k\in \Z,$ we use  $P_{k}$, $P_{\leq k}$ and $P_{\geq k}$ to denote the projection operators  by the Fourier multipliers $\psi_{k}(\cdot),$ $\psi_{\leq k}(\cdot)$ and $\psi_{\geq k }(\cdot)$ respectively. For convenience in notation, we also use  $f_{k}(x)$ to abbreviate $P_{k} f(x)$.
 
We define the following class of symbol, 
\be\label{symbolclassdefinition}
\mathcal{S}^\infty:=\{m(\xi): m: \R^3\longrightarrow \R, \quad \|\mathcal{F}^{-1}[m](x)\|_{L^1_x}< +\infty\}.
\ee
Moreover, the $\mathcal{S}^\infty$-norm of symbols is defined as follows, 
\be
\| m(\cdot)\|_{\mathcal{S}^\infty} := \|\mathcal{F}^{-1}(m) (x)\|_{L^1_x}.
\ee
\begin{definition}\label{varioureldef}
For any vector $u=(u_1, u_2,u_3)\in \R^3, v\in \R^3/\{0\}$, we define 
\[
 \tilde{v}:=v/|v|,  \quad  \langle u \rangle :=(1+|u|^2)^{1/2}, \quad   \hat{u}:=u/\langle u \rangle, \quad     {u}_{\bot}:=(u_1, u_2) . 
\] 
In particular, we have $ {\hat{u}}_{\bot}=(u_1, u_2)/\langle u\rangle$. Moreover, we define the following projection maps, 
\be\label{definitionprojection}
\mathbf{P}:\R^3\longrightarrow \R^2,  \mathbf{P}_i:\R^3\longrightarrow \R, \quad  \mathbf{P}(u)=  {u}_{\bot}, \quad \mathbf{P}_i(u)=u_i, i\in\{1,2,3\}.
\ee
\end{definition}

\vo

 We define the unit vectors of the Cartesian coordinate  system in $\R^3$ as follows, 
\be\label{2020feb18eqn1}
e_1:=(1,0,0), \quad e_2:=(0,1,0), \quad e_3:=(0,0,1).
\ee

Let $v\in \R^3/\{0\}$ be fixed. With the above notation, for any  $u \in \mathbb{R}^3,$ we have an important decomposition of  $u $ with respect to $v$,  which follows from a straightforward computation,
 \be\label{nov24eqn1}
 \begin{split}
&  u= (\tilde{v}\cdot u) \tilde{v}     + \sum_{i=1,2,3}  ((\tilde{v}\times e_i) \cdot u)  (\tilde{v}\times e_i),\\
\end{split}
\ee 
where ``$\cdot$'' denotes the standard inner product and  ``$\times$'' denotes the standard cross product in $\R^3$. 

The above decomposition is motivated from the following fact that the radial   derivative of $\hat{v}$ is much better than the rotational derivatives of $\hat{v}$ when $v$ is large. This fact follows from the following explicit computation, 
\be
\tilde{v}\cdot\nabla_v \hat{v}= \frac{\tilde{v}}{(1+|v|^2)^{3/2}},  \quad   (\tilde{v}\times e_i) \cdot\nabla_v \hat{v}= \frac{ (\tilde{v}\times e_i) }{(1+|v|^2)^{1/2}}, \quad i\in\{1,2,3\}.
\ee

For the rest of this paper, we will use the   decomposition in  \eqref{nov24eqn1}    constantly in the estimate of  derivatives of $\hat{v}$ and the relativistic velocity characteristics $\hat{V}(t)$without further explanation. 

\subsubsection{Backward characteristics}
 
Recall  \eqref{mainequation}.   If $s\leq t$ ($s\geq t$),    the following system of equations satisfied by the backward (forward)  characteristics, 
\be\label{backward} 
\left\{\begin{array}{rl}
\p_s X(x,v,s,t) &= \widehat{V}(x,v,s,t),\\ 
 \p_s V(x,v, s,t) &= E(s, X(x,v,s,t)) + \widehat{V}(x,v,s,t)\times B(s, X(x,v,s,t))\\
& := K(s,X(x,v,s,t), V(x,v,s,t)),\\ 
X(x,v,t,t)&=x, \quad V(x,v,t,t)=v.\\ 
\end{array}\right. 
\ee
 In subsequent discussions, we will  refer $K(s,X(x,v,s,t), V(x,v,s,t))$ as the acceleration force of characteristics.  Note that, due to the transport nature of the Vlasov equation,    for any $s, t\in [0, T^{  })$, we have
\be\label{conservation}
f(t,x,v) = f(s, X(x,v,s,t), V(x,v,s,t)),
\ee
which also gives us the conservation law for the $L^\infty_{x,v}$-norm of the distribution function $f$.

\subsubsection{Dyadically measuring the velocity characteristics}

To control the  electromagnetic field $(E, B)$ over time, see also Theorem \ref{maintheorem1part1},  we use the classic momentum method. More precisely, we define 
 \be\label{may2eqn1}
\mathfrak{M}_{}(t ):= \int_{\R^3} \int_{\R^3} (1+ |v|)^{N_0/10} f(t,x,v) d v d x,\quad    \overline{\mathfrak{M}}(t) := (1+t)^{(N_0)^3}+\sup_{s\in[0,t]} \mathfrak{M}(s), 
 \ee
 where $N_0=10^{10}$ is defined in \eqref{assumptiononinitialdata}. From the above definition, it's clear that $\overline{\mathfrak{M}}(t) $ is an increasing function with respect to time $t$ and the size of $t$ is much smaller than   $\overline{\mathfrak{M}}(t)$.

Since we will do dyadic decomposition for many variables, for convenience of comparing different objects,   we  define the following dyadic quantity to capture the size of $\overline{\mathfrak{M}}(t)$.
\begin{definition}\label{scaleofv}
For any $\forall t\in [0, T ),$ we define 
\be\label{dec2eqn1}
 M_t:= \inf\{k: k\in \Z, 2^k\geq (  \overline{\mathfrak{M}}(t) )^{1/  (N_0/10-1 )} \}.
\ee
\end{definition}
\begin{remark}\label{scaleofvphilosophical}
We provide a philosophical  interpretation for the important quantity $M_t$  defined above.  Given that we lack any a priori information about the distribution function $f$, we  assume that $f$ is supported in a annulus of size $R$, i.e., $supp(f)\subset\{v: |v|\in [R, 2R]\}$. Our goal is to extract information about $R$ from the moment $\mathfrak{M}_{}(t )$.  Note that, in view of the conservation laws \eqref{2024nov9eqn111} and \eqref{conservationlaw}, the first momentum has both the   upper bound and the lower bound, which depend only on the initial data.   Therefore, we have
\[
\mathfrak{M}_{}(t ) \sim R^{N_0/10-1}\int_{\R^3} \int_{\R^3} (1+ |v|)^{ } f(t,x,v) d v d x \sim R^{N_0/10-1}, \quad \Longrightarrow R\sim \big(\mathfrak{M}_{}(t ) \big)^{1/(N_0/10-1)},
\]
where $\sim$ is defined in \textbf{Definition} \ref{relationdef}. Recall the definition of $M_t$ in \eqref{dec2eqn1}. Roughly speaking, $M_t$ captures the information about the dyadic interval such that $v$ lives. 

\end{remark}
\begin{remark}
  Note that, for any $t_1, t_2\in [0, T^{ }),$ s.t., $t_1\leq t_2,$ we have $M_{t_1}\leq M_{t_2}$ because $ \overline{\mathfrak{M}}(t) $ is an increasing function with respect to $t$. 
\end{remark}

In view of the definition $M_t$ in \eqref{dec2eqn1}, to control the moment $\overline{\mathfrak{M}}(t) $, it suffices to control $M_t$ over time. Since we are working with the initial data with   polynomial decay, we would like to show   that 
the tail part doesn't play an essential role. To this end, we are motivated to define a time dependent majority set  as follows.

\begin{definition}[$t$-majority set]\label{tmajorityset}
For any $t\in [0, T^{  })$,   we define  \textbf{$t$-majority} set  of particles, which initially localize around zero, \textbf{at time $s$}$\in [0, T^{  })$ as follows,  
\be\label{may10majority}
R_t(s):= \{(X(x,v,s,0),V(x,v,s,0)): |V (x,v,0,0)|+| X(x,v,0,0) |\leq  2^{ M_t/2}  \}.
\ee
In particular, we have
\[
  R_t(0):=\{(x,v): |(x,v)|\leq 2^{ M_t/2} \}. 
\]
Moreover, to measure the $t$-majority set,  we define 
\be\label{may9en21}
\begin{split}
\alpha_t(s,x,v)&:=  \sup_{\tau \in[0,s] } \inf\{ k: k\in \R_+, |  {V}_{\bot}(x,v,\tau,0)| \leq 2^{k M_t  }   \}  ,\quad \alpha_t(s):=\sup_{(x,v)\in R_t(0)} \alpha_t(s,x,v), \\ 
\beta_t(s, x,v)&:= \sup_{\tau\in[0,s] } \inf\{ k: k\in \R_+, | V(x,v,\tau,0)|\leq 2^{k M_t}  \}, \quad \beta_t(s):=\sup_{(x,v)\in R_t(0)} \beta_t(s,x,v), \\
 \alpha_t&:=\alpha_t(t), \quad  \tilde{\alpha}_t=\min\{\alpha_t, (1+2\epsilon) \},\quad  \beta_t:=\beta_t(t), \quad  \tilde{\beta}_t=\min\{\beta_t, (1+2\epsilon) \},
\end{split}
\ee
 where ${V}_{\bot}$ is defined in \textbf{Definition} \ref{varioureldef} and the velocity characteristics $V(x,v,s,t)$ is defined in \eqref{backward}. In subsequent discussions, we will refer to the region of characteristics  that start from the $t$-majority set as the majority part and refer the region of characteristics  that start from the  complement  of the  $t$-majority set as the tail part.  
\end{definition}

 \begin{remark}\label{sizeofvphilosophical}

We offer a geometric interpretation of $\alpha_t$ and $\beta_t$, as much of this two-paper sequence aims to elucidate these quantities. Broadly speaking, if we assume that we are working with compactly supported initial data, we can identify a ball in $\R^3$
  that encompasses all forward velocity characteristics $V(x,v,s,0)$  originating from the compact support; in this case, 
 $\beta_t$
  represents the radius of this ball. Additionally, we can find another ball in 
$\R^2$
  that covers all projections of the forward velocity characteristics, denoted as  $\mathbf{P}\big(V(x,v,s,0)\big)$; here, 
 $\alpha_t$
  measures the radius of this ball in 
$\R^2$
 . To compare these radii in relation to 
$M_t$
 , we are motivated to adjust the scales in the definitions of 
 $\alpha_t$ and $\beta_t$. A priori, we don't know the size of $\alpha_t$ and $\beta_t$.  It turns out that $\tilde{\alpha}_t$ and $\tilde{\beta}_t$, which are bounded from the  above, are very useful  during the estimates. A posteriori, $\tilde{\alpha}_t$ ($\tilde{\beta}_t$) is same as $\alpha_t$ ($\beta_t$).
 
 \end{remark}
\begin{remark}

Since $M_t$ is an increasing function, $\forall t\in [0, T^{  }), s\in [0, t], $ from the above definition, we know that 
\be\label{2021dec18eqn1}
 R_s(0)\subset R_t(0), \quad  \alpha_s M_s \leq \alpha_t(s) M_t\leq \alpha_t M_t, \quad \beta_s M_s\leq \beta_t(s) M_t\leq \beta_t M_t. 
\ee
\end{remark}
\begin{remark}
Note that, $\forall s\in[0, t]\subset [0, T), x, v \in \R^3,$ we have 
\[
  X( X(x,v,0,s), V(x,v,0,s),s,0)= x, \quad V( X(x,v,0,s), V(x,v,0,s),s,0)= v. 
\]
Therefore, in view of the above definition,  for any $v\in \R^3$, s.t., either $| v_{\bot}|\geq 2^{\alpha_t M_t+\epsilon M_t}$ or $| v|\geq 2^{\beta_t M_t+\epsilon M_t}$, we have
\be\label{nov24eqn27}
\forall s\in [0, t], x \in \R^3,  \quad (X(x,v,0,s), V(x,v,0,s))\notin R_t(0). 
\ee
 Thanks to the rapid polynomial decay rate of the initial data in   \eqref{assumptiononinitialdata}, from \eqref{nov24eqn27} and \eqref{may10majority}, the following estimate holds for any $x,v\in \R^3$, s.t., either $|  v_{\bot}|\geq 2^{\alpha_t M_t+\epsilon M_t}$ or $| v|\geq 2^{\beta_t M_t+\epsilon M_t}$, 
 \be\label{nov24eqn41}
 \forall s\in [0, t],  \quad \big|f(s,x,v)\big| = \big|f_0( X(x,v,0,s),V(x,v,0,s) )\big|\lesssim 2^{- 2N_0 M_t/5}. 
 \ee
Thanks to the above rapid decay estimate,   the contribution from the case $|  v_{\bot}|\geq 2^{\alpha_t M_t+\epsilon M_t}$ and the case $| v|\geq 2^{\beta_t M_t+\epsilon M_t}$ becomes negilible\footnote{In the compactly supported initial data case, an analogue of the estimate \eqref{nov24eqn41} would simply be $f(s,x,v)=0.$}.  As a result, it suffices  to consider the case $| v_{\bot}|\leq  2^{\alpha_t M_t+\epsilon M_t}$ and $| v|\leq 2^{\beta_t M_t+\epsilon M_t}$  when dealing with the distribution function  in subsequent discussions.
\end{remark}

\subsubsection{Representation formula: Duhamel's formula}

Recall \eqref{mainequation}. After doing dyadic decomposition for the size of $v$, substituting the $\p_t f$ by using the the Vlasov equation in  \eqref{mainequation}   and doing integration by parts in $v$, 
we can reduce the Maxwell system into the standard wave equations as follows, 
\be\label{electromagnetic}
\forall U\in \{E,B\}, \quad (\p_t^2-\Delta) U =\sum_{j\in \Z_+} \mathcal{N}^j_U(t, x),
\ee
where
\be\label{sep5eqn1}
\begin{split}
\mathcal{N}^j_E(t, x)&:= 4\pi\big[  \int_{\R^3}\int_{\R^3}  \big(\hat{v} (\hat{v}\cdot \nabla_x) f(t, x, v) -\nabla_x f(t, x , v) \big)\varphi_j(v) \\
&\qquad - f(t,x,v)(E(t,x)+\hat{v}\times B(t,x))\cdot \nabla_v\big(\varphi_j(v) \hat{v}\big) d x d v\big], \\
 \mathcal{N}^j_B(t, x)&:=-4\pi\int_{\R^3} \hat v\times \nabla_x f(t, x, v) \varphi_j(v)d v.\\
 \end{split}
\ee
From the Duhamel's formula, $\forall  U\in\{E, B\},$ the following representation formula of the electromagnetic field holds, 
\be\label{march14eqn1}
\begin{split}
  U(t)  
  & =  \cos(t\d)U_0 + \frac{\sin(t\d)}{\d}  U_1 + \sum_{  j\in\Z_{+}}    U_j(t), \\ 
  U_j(t)
&= \sum_{ \mu\in \{+,-\}}\int_0^t \frac{e^{i\mu(t-s)\d}}{ 2i \mu  \d}     \mathcal{N}^j_U(s) d s,
  \end{split}
\ee
where $U_0= U(t)\big|_{t=0} $ and $U_1:=(\p_t  U )\big|_{t=0}$. 
\subsection{An outline of  the  proof of Theorem \ref{maintheorem}}\label{skeletonintroduction}

Let \textbf{ISS} abbreviates the iterative smoothing scheme. With notation and set-up introduced above, intuitively speaking, an outline of  the  proof of Theorem \ref{maintheorem} can be summarized as follows, 
 \be\label{skeletonofproof}
\begin{split}
&\textup{global existence} \qquad  \overset{\textup{continuation criteria} }{\Longleftarrow  } \qquad (E(t), B(t))\in L^\infty([0, T)\times \R^3 ),\\
& (E(t), B(t))\in L^\infty([0, T)\times \R^3 )  \qquad  \Longleftarrow   \qquad  \overline{\mathfrak{M}}(t) \lesssim  (1+t)^{(N_0)^3}  \qquad  \Longleftarrow   \qquad  \beta_t\leq 1-2\epsilon,  \\
&\beta_t\leq 1-2\epsilon \qquad  \stackrel{\textup{bootstrap} }{\underset{ \textbf{ISS}}{\Longleftarrow} }\qquad  \alpha_t\leq \alpha^\star:=2/3+\iota ,\\
& \alpha_t\leq \alpha^\star:=2/3+\iota \qquad     \stackrel{\textup{bootstrap} }{\underset{ \textbf{ISS}}{\Longleftarrow} } \qquad   \textup{control of space characteristics}, \\
&     \textup{control of space characteristics} \qquad  \stackrel{\textup{bootstrap}}{\Longleftarrow}\qquad  \textup{iterative smoothing scheme}\, (\textbf{ISS}).   \\
\end{split} 
\ee
where $   \iota:= 10^{-4}$ and $\epsilon:=10^{-7}.$

\begin{remark}
 The bootstrap argument in \eqref{skeletonofproof} is a two-step process.  Recall the  definition of $\alpha_t$ and $\beta_t$ in \eqref{may9en21}. Assuming initial upper bounds for both $\alpha_t$ and $\beta_t$,     we first refine the bound for $\alpha_t$ by controlling the space characteristics  through a bootstrap argument. We then refine the bound for $\beta_t$. Crucially, lines 4 and 5 of \eqref{skeletonofproof} also depend on $\beta_t$.
\end{remark}

\begin{remark}\label{remarktwothird}

The upper bound of 
$\alpha_t$, set at 
$2/3+\iota$, is motivated by previous studies on the $3D$ RVP-PP system with cylindrical symmetry, particularly those by Glassey and Schaeffer \cite{glassey6} and Wang \cite{wang2,wang3}, where a similar value was observed. This choice facilitates the application of the iterative smoothing scheme. While it is likely not the optimal bound for 
$\alpha_t$, the advantages of further optimizing this value appear to be limited.

 \end{remark}
 By ``control of space characteristics'', we refer to the phenomenon where   \textbf{space characteristics will {eventually travel  away from $z$-axis} if the size of the projection of velocity characteristics,  $\mathbf{P}(V(x,v,s,t))$, approaches $\alpha_t$}.   This implies that space characteristics cannot remain close to the 
$z$-axis—the most unfavorable scenario—over an extended period.   Furthermore, controlling the space characteristics leads to the important observation that the duration during which the space characteristics stay within a small dyadic annulus is limited. Consequently, this brief duration mitigates the singularity of the distance relative to the $z$-axis. 

The iterative smoothing scheme (ISS), briefly outlined in \eqref{skeletonofproof} (with further details in Section \ref{ISSintroduction}), aims to control the increment of either velocity or the distance to the $z$-axis over the characteristic time. This control, achieved by iteratively exploiting the smoothing effect of the electromagnetic field, is a central component of the bootstrap argument.

Now, we explain more about the outline and  we proceed in steps as follows. 

\medskip

\noindent \textbf{Step 1}.\qquad  The first two lines in   \eqref{skeletonofproof}.

\medskip

The first two lines can be summarized as the following Proposition. 

\begin{proposition}
For any $t\in [0, T), $ if $\beta_t\leq 1-2\epsilon$, then we have 
\be
\overline{\mathfrak{M}}(t) +\|(E(t), B(t)\|_{L^\infty_x} \lesssim   (1+t)^{(N_0)^3}.
\ee
Therefore, the continuation criteria in Theorem \ref{continuationcriteria} is verified and the global existence is established.
\end{proposition}
\begin{proof}

Roughly speaking, $\beta_t\leq 1-2\epsilon$ implies   the \textit{\textbf{sub-linearity}} of    $ \overline{\mathfrak{M}}(t) $. Recall  \eqref{may9en21}  and the definition of the $t$-majority set in  \eqref{may10majority}. From the estimate $\beta_t \leq 1-2\epsilon$, we know that  $|  X(x,v,0,t) |+|  V(x,v,0,t) |\geq 2^{  M_t /2}$ if $|v|\geq 2^{(1- \epsilon )M_t }$. From the decay assumption of the initial data in  \eqref{assumptiononinitialdata}  and the following estimate holds if $|v|\geq 2^{(1- \epsilon)M_t }$, 
\be\label{may11eqn31}
\begin{split}
|f(t,x,v)|&=| f_0(X(x,v,0,t),  X(x,v,0,t))| \\
& \lesssim (1+|X(x,v,0,t)|+|V(x,v,0,t)|)^{-N_0+10}.\\
\end{split}
\ee
Recall  \eqref{backward}. From the rough   $L^\infty_x$ estimate of electromagnetic field in  \eqref{maintheoremroughest}  in Theorem \ref{maintheorem1part1}, we have
\be\label{may11eqn32}
\begin{split}
 & \textup{If\,\,} |v|\gtrsim 2^{ (3+20\epsilon) M_t},\quad \big|v-V(x,v,0,t)|\big| \lesssim    2^{ (3+10\epsilon) M_t}, \quad \Longrightarrow |v|\sim |V(x,v,0,t)|. \\
 & \textup{If\,\,} |x|\gtrsim 2^{2\epsilon M_t}, \quad \big|x-    X(x,v,0,t) \big| \lesssim    2^{\epsilon M_t}, \quad \Longrightarrow |x|\sim |X(x,v,0,t)|.\\
\end{split}
\ee  
 
Hence, if $|v|\geq 2^{(1- \epsilon)M_t }$, after combining the above estimates  \eqref{may11eqn31}--\eqref{may11eqn32}, we have  
\be
|f(t,x,v)|=| f_0(X(x,v,0,t),  X(x,v,0,t))|\lesssim (1+|x|)^{-4}(1+|v |)^{-N_0/10 -4}.  
\ee
From the above estimate, we have
\be\label{may11eqn34}
 \big| \int_{\R^3}\int_{|v|\geq 2^{(1- \epsilon)M_t }}(1+|v|)^{  N_0/10} f(t,x,v)   d x dv\big|\lesssim 1.
\ee

It remains to consider the case   $|v|\leq 2^{(1- \epsilon)M_t }$,  from the conservation law  \eqref{conservationlaw}, we have 
\be\label{march20eqn11}
 \big| \int_{\R^3}\int_{|v|\leq 2^{(1- \epsilon )M_t }}(1+|v|)^{N_0/10} f(t,x,v)   d x dv\big|\lesssim  2^{(N_0/10-1)(1- \epsilon)M_t}\leq (\overline{\mathfrak{M}}(t))^{ 1- \epsilon }. 
\ee
Recall \eqref{may2eqn1}. From the estimates  \eqref{may11eqn34}  and  \eqref{march20eqn11},   for any  $t\in[0,T)$, we have 
\be 
\mathfrak{M}(t)\lesssim \big( \overline{\mathfrak{M}}(t)  \big)^{  1- \epsilon }.  
\ee
From the above estimate and 
the fact that $ \mathfrak{M}(t) $ is an increasing function with respect to $t $,   we have 
\be\label{2024oct12eqn31}
 \overline{\mathfrak{M}}(t)=\sup_{s\in [0, t]} {\mathfrak{M}}(s)+  (1+t)^{(N_0)^3}  \lesssim \big(   \overline{\mathfrak{M}}(t) \big)^{  1- \epsilon}+  (1+t)^{(N_0)^3} , 
 \quad \Longrightarrow   \overline{\mathfrak{M}}(t) \lesssim   (1+t)^{(N_0)^3}. 
\ee

Therefore, $ \overline{\mathfrak{M}}(t)  $ grows at most at rate $(1+t)^{(N_0)^3}$ over time.    From the rough   $L^\infty_x$ estimate of electromagnetic field in  \eqref{maintheoremroughest}  in Theorem \ref{maintheorem1part1}, we know that $\| (E(t), B(t))\|_{L^\infty_x} $ grows at most at rate $(1+t)^{(N_0)^3}$ over time as well. This concludes the fact that $(E(t), B(t))\in L^\infty([0, T)\times \R^3 ) $. Therefore, the continuation criteria in  Theorem \ref{continuationcriteria} is verified.

\end{proof}

\medskip

\noindent \textbf{Step 2}.\qquad  
 The last three lines  in   \eqref{skeletonofproof}.

\medskip

We begin by outlining the bootstrap arguments used in the final three lines of (\ref{skeletonofproof}), then present the proof as three independent Propositions.    Desipte $x,v\in \R^3,t\in[0,T)$ are arbitrary, they are fixed in the argument. For simplicity of notation, we drop the dependence of characteristics $(X(x,v,s,t), V(x,v,s,t))$, w.r.t., $x,v,t$.   We proceed in sub-steps as follows. 

\medskip

  \textbf{Step 2A}.   \qquad Firstly, we explain  the fourth line and the last line in   \eqref{skeletonofproof}.

 \medskip

By using a bootstrap argument, we start at a time  ``$s_1$'' such that  $|  V_{\bot}(s_1)|$ is close to $2^{(\alpha^{\star}-2\epsilon)M_t}$, and $|V(s_1)|$ is bounded from above by $2^{(1-3\epsilon)M_t}$. Now, we assume that these estimates also hold up to absolute constants within the time interval $[s_1,s_2]$. Hence, $\forall s\in[s_1,s_2]$, $|  V_{\bot}(s)|/|V(s)|$ has a good lower bound, more precisely, $|  V_{\bot}(s)|/|V(s)|\gtrsim 2^{(\alpha^{\star}-1+\epsilon)M_t}$.

Given the cylindrical symmetry of the distribution function, we anticipate that particles will not be strongly localized when they are away from the 
 $z$-axis. The advantages of cylindrical symmetry become more pronounced as the distance from the $z$-axis increases. Therefore, it is reasonable to consider the distance from the $z$-axis as a significant factor. Motivated by this, we proceed to compute the following quantity:
 \be\label{2024Dec5eqn1}
   \frac{d^2}{ds^2}  |  X_{\bot}(s)|^2 =2\big( \frac{|  V_{\bot}(s)|^2 }{1+|V(s)|^2}  +C_0(  X_{\bot}(s), V(s))\cdot K(s, X(s), V(s) )\big),
\ee
where
\[
C_0( X_{\bot}(s), V(s))=\frac{\big(  X_{\bot}(s), 0\big)}{\sqrt{1+|V(s)|^2}}- \frac{  X_{\bot}(s)\cdot   V_{\bot}(s) }{  \big({1+|V(s)|^2}\big)^{3/2}} V(s).
\]

 Roughly speaking,  thanks to the lower bound of  $| V_{\bot}(s)|/|V(s)|$ due to the bootstrap assumption, we demonstrate that
 \be\label{2022march2eqn1}
 \underbrace{ \int_{t_1}^{t_2}\frac{|  V_{\bot}(s)|^2 }{1+|V(s)|^2} ds}_{\textup{leading term}}   +  \underbrace{ \int_{t_1}^{t_2} C_0(  X_{\bot}(s), V(s))\cdot K(s, X(s), V(s) ) ds}_{\textup{just a perturbation}} .
 \ee

To prove that the second term in \eqref{2022march2eqn1} is perturbative, we use   the iterative smoothing scheme (ISS) together with the pointwise estimates established in Part II \cite{PartII},  which are tailored for the purpose of closing the bootstrap argument.  A detailed exposition of  the ISS and  the pointwise estimates   is presented in sections \ref{ISSintroduction} and \ref{mainresultsPartIIdetail}, respectively.

 Since the first term in \eqref{2022march2eqn1} is dominating,  from  \eqref{2024Dec5eqn1}, our key observation is that the \textbf{ space characteristics will {eventually travel  away from $z$-axis}}. Roughly speaking, the trajectory of  $ |   X_{\bot}(s)|^2 $ is almost like a parabola.    
As a result,   the shorter the distance interval with respect to the $z$-axis, the shorter the time lapse the trajectory $|  X_{\bot}(s)|$  travels.

With this improved understanding about the space characteristics, we study the variations of the full velocity characteristics and their projections with the time interval $[s_1,s_2]$. As a result of direct computation, from  \eqref{backward}, for any $t_1,t_2\in [s_1,s_2],$  we have
\be\label{2024Dec6eqn11}
\begin{split}
 & \big||  V_{\bot}(t_2)|  -  | V_{\bot}(t_1)| \big| \lesssim \big|\int_{t_1}^{t_2}  \big(  V_{\bot}(s)/|V(s)|, 0\big) \cdot K(s, X(s), V(s) )  ds\big| , \\
 &    \big| |  V(t_2)| - |V(t_1)|  \big|  \lesssim  \big|  \int_{t_1}^{t_2}  {V}(s)/|V(s)|  \cdot K(s, X(s), V(s) ) d s  \big|. \\
  \end{split}
\ee

Using the iterative smoothing scheme and the pointwise estimates from Part II, along with an estimate of the time spent by $|  {X}_{\bot}(s)|$ in each dyadic interval, we show that variations in both the full velocity characteristics and their projections are perturbative, thus completing the bootstrap argument.

 \medskip

   \textbf{Step 2B}.   \qquad  
Lastly, we     explain  the third line in \eqref{skeletonofproof}.

 \medskip

By using a bootstrap argument,  it suffices to start at a time $s_0$ such that $|V(s_0)|=2^{(1-3\epsilon)M_t}$ , which is smaller but close to $2^{(1-2\epsilon)M_t}$.   Now, we assume that the estimate of $|V(s)|$ also hold up to absolute constants within the time interval $[s_0,s_0']$ for some $s_0'$.  Recall \eqref{backward}.  As in \eqref{2024Dec6eqn11}, to control the increment of the velocity characteristics over time, it suffices to estimate the following quantity,
\be\label{2024Dec5eqn2}
\int_{t_1}^{t_2} \tilde{V}(s)\cdot K(s,X(s), V(s)) d s, \quad \tilde{V}(s):=V(s)/|V(s)|,
\ee
where $t_1, t_2\in [s_0,s_0']$. 

The gap between the upper bounds of $\alpha_t$ and $\beta_t$, combined with the iterative smoothing scheme and Part II's pointwise estimates, allows us to demonstrate that the integral in \eqref{2024Dec5eqn2} is perturbative.

\medskip

To sum up,  the last three lines in  \eqref{skeletonofproof} are encoded in the following Proposition.

 \begin{proposition}\label{finalproposition}
Assuming the  validities of Proposition \ref{bootstraplemma1} and Proposition \ref{bootstraplemma2}, for any $t\in [0, T^{})$, s.t., $M_t\gg 1,$ we have 
\be\label{finalestimate}
\alpha_t \leq  \alpha^{\star}:= (2/3+\iota), \quad
\beta_t \leq (1-2\epsilon), \quad \iota:= 10^{-4}. 
\ee
 \end{proposition}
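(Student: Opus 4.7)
The plan is to run a continuous bootstrap argument on the pair $(\alpha_t,\beta_t)$, exploiting the two main technical lemmas \ref{bootstraplemma1} and \ref{bootstraplemma2} in exactly the order sketched in \textup{(\ref{2022jan19eqn21})--(\ref{2022jan19eqn24})}. Fix $t\in [0,T)$ with $M_t\gg 1$, and let $T^\sharp\in[0,t]$ be the largest time such that both $\alpha_t(T^\sharp)\le\alpha^\star$ and $\beta_t(T^\sharp)\le 1-2\epsilon$ hold. If $T^\sharp=t$ we are done; otherwise by continuity of the characteristics at least one of these bounds is saturated at $T^\sharp$, and it suffices to strictly improve the saturated one. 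Throughout I work on a single characteristic $(X(s),V(s)):=(X(x,v,s,0),V(x,v,s,0))$ starting in the $t$-majority set $R_t(0)$, since the desired estimates on $\alpha_t,\beta_t$ are defined as suprema over this set.

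First I would improve the bound on $\beta_t$, assuming the provisional bounds $\alpha_t(s)\le\alpha^\star$ and $\beta_t(s)\le 1-2\epsilon$ for $s\le T^\sharp$ and that the latter is the saturated one. By a standard continuity-and-worst-case reduction, I reduce to starting at a time $s_0\le T^\sharp$ at which $|V(s_0)|\approx 2^{(1-3\epsilon)M_t}$ and running forward until $|V(s)|$ would first reach $2^{(1-2\epsilon)M_t}$. On this interval
\begin{equation*}
\big||V(s_2)|-|V(s_0)|\big|\le \Big|\int_{s_0}^{s_2}\tilde V(s)\cdot K(s,X(s),V(s))\,ds\Big|,
\end{equation*}
and Lemma \ref{bootstraplemma2}, whose hypotheses are precisely the bootstrap assumption $\alpha_t\le\alpha^\star$ together with $|V(s)|$ being close to the high threshold, gives a perturbative bound (better than $2^{(1-5\epsilon)M_t}$, say) on this integral. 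This prevents $|V(s)|$ from actually reaching $2^{(1-2\epsilon)M_t}$ and so strictly improves $\beta_t(T^\sharp)$.

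Next I would improve $\alpha_t$, now assuming the bootstrap bounds hold with $\alpha_t(T^\sharp)=\alpha^\star$ saturated. Since the $\beta_t$-improvement just obtained is free under the same bootstrap hypotheses, on the relevant interval one has the gap $|\slashed V(s)|\lesssim 2^{\alpha^\star M_t}\ll 2^{(1-3\epsilon)M_t}\gtrsim |V(s)|$ whenever $|V(s)|$ is large, so the direction of $V(s)$ is concentrated near the poles. I would again reduce to an interval $[s_1,s_2]$ on which $|\slashed V(s_1)|\approx 2^{(\alpha^\star-2\epsilon)M_t}$ and $|\slashed V(s)|/|V(s)|\gtrsim 2^{(\alpha^\star-1+5\epsilon)M_t}$, and run the key ODE (\ref{oct22eqn1}) for $|\slashed X(s)|^2$. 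Lemma \ref{bootstraplemma1} controls the cross term $\int C_0\cdot K\,ds$ perturbatively, so the leading positive term $|\slashed V(s)|^2/(1+|V(s)|^2)$ forces $|\slashed X(s)|$ to grow past any given dyadic scale and gives a good upper bound on the time spent in each dyadic shell. Feeding this time-lapse estimate back into the first inequality of (\ref{nov17eqn21})
\begin{equation*}
\big||\slashed V(s_2)|-|\slashed V(s_1)|\big|\le \Big|\int_{s_1}^{s_2}(\slashed V(s)/|V(s)|,0)\cdot K(s,X(s),V(s))\,ds\Big|,
\end{equation*}
and applying Lemma \ref{bootstraplemma1} once more, the right-hand side is shown to be $o(2^{\alpha^\star M_t})$, so $|\slashed V(s)|$ cannot reach $2^{\alpha^\star M_t}$. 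This strictly improves $\alpha_t(T^\sharp)$ and closes the bootstrap.

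The main obstacle I expect is not the bookkeeping of the bootstrap, but the sharpness of the two lemmas: the improvement of $\alpha_t$ all the way down to $2/3+\iota$ requires Lemma \ref{bootstraplemma1} to give a gain strictly better than the natural scale $2^{(\alpha^\star-2\epsilon)M_t}$ even though each step of the iterative normal-form scheme only gains a universal fraction (claimed to be at least $1/7$) of $M_t$ and only a bounded number (at most six) of iterations are available before the $L^\infty$ bound $2^{(1+2\alpha_t+10\epsilon)M_t}$ from Proposition \ref{Linfielec} overwhelms the gain. Careful tracking of which factor of the angular/frequency decomposition produces the $2/3$ threshold, and verifying that the gap $|\slashed V|\ll |V|$ indeed rules out the time-resonance set $k+2n\le (2-10^{-3})M_t/3$ at every iteration, is the delicate point that the two lemmas are designed to handle and that I would take as a black box here.
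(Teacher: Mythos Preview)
Your outline is on the same track as the paper --- a continuous bootstrap on $(\alpha_t,\beta_t)$, closed by Lemmas~\ref{bootstraplemma1} and~\ref{bootstraplemma2} --- and the $\beta_t$-improvement via Lemma~\ref{bootstraplemma2} is essentially as you describe. However, the $\alpha_t$-improvement step glosses over two points that carry the real weight of the argument.

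First, Lemma~\ref{bootstraplemma1} requires $|\slashed V(s)|\sim 2^{\gamma_1 M_t}$ \emph{and} $|V(s)|\sim 2^{\gamma_2 M_t}$ for some fixed $\gamma_1,\gamma_2$ on the entire interval of application, not just at the starting time. You only pin down $|\slashed V(s_1)|$ and an upper bound on $|V|$; the paper runs an inner bootstrap (see (\ref{bootstrap1})) freezing both $|\slashed V|$ and $|V|$ in $[99/100,101/100]$-bands around $2^{\gamma_1 M_t}$ and $2^{\gamma_2 M_t}$, and Fact~(i) there is exactly the statement that this inner bootstrap closes on each dyadic shell $I_i\cup I_{i+1}$ for $|\slashed X|$.

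Second, and more seriously, the per-shell time-lapse bound you sketch is not enough on its own: nothing in what you wrote rules out $|\slashed X(s)|$ oscillating and revisiting each shell many times, in which case summing the per-shell increments of $|\slashed V|$ and $|V|$ would blow up. The paper's Fact~(ii) supplies the missing monotonicity: once $|\slashed X|$ exits $I_i\setminus I_{i-1}$ with $\tfrac{d}{ds}|\slashed X|^2\ge 0$, it never returns to $\cup_{j<i}I_j$. This follows from a second-derivative lower bound on $|\slashed X|^2$ via (\ref{oct22eqn1}) combined with Lemma~\ref{bootstraplemma1}, and it forces the sequence of visited shells $(J_{i_1},\dots,J_{i_X})$ to be regroupable into at most $O(M_t)$ essentially increasing blocks. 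Only then does the per-shell estimate from Fact~(i) sum with a merely logarithmic loss and close the inner bootstrap. Your sentence ``forces $|\slashed X(s)|$ to grow past any given dyadic scale'' points in the right direction but does not address the revisiting issue, which is the crux.
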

\begin{proof}
Postponed to section \ref{bootstraparg} for better presentation. The logic of proof and  key observations are mentioned in \eqref{skeletonofproof}. 
\end{proof}

 The assumed two  Propositions in the     Proposition \ref{finalproposition} are motivated from the  main bootstrap arguments outlined in  \eqref{skeletonofproof}. To quantify the effect of acceleration force, we dyadic localize quantities of relevance, e.g., the distance to the $z$-axis, $|  X_{\bot}(s)|$, the projected velocity $|  {V}_{\bot}(s)|$, and the full velocity $|V(s)|$. The explicit coefficients appear in these two propositions are the results of computing $\p_s^2|   X_{\bot}(s)|^2$, $\p_s| V_{\bot}(s)|$, and $\p_s| V(s)|$ respectively. More precisely, assumed two  Propositions are stated as follows.  
\begin{proposition}\label{bootstraplemma1}
Let $t\in [0, T), a_p\in \Z, \alpha^{\star}=2/3+\iota, \iota:=10^{-4 }$,  $t_1, t_2\in [0, t]$ be fixed, s.t., $M_t\gg 1,$ $\forall s\in [t_1, t_2], |   X_{\bot}(s)|\sim 2^{a_p }, {\alpha}_s M_s\leq  \alpha^{\star} M_t,  |  V_{\bot}(s)|\sim  2^{\gamma_1 M_t},  |  V(s)|\sim  2^{\gamma_2 M_t} $,  where  $\gamma_1\in  [ \alpha^{\star} -4\epsilon , \alpha^{\star}]  $, $\gamma_2 \leq (1-2\epsilon)  $.  Let 
\be\label{2025oct16eqn21}
\begin{split}
C_1(  X_{\bot}(s), V(s))&:=\frac{\big(  X_{\bot}(s), 0\big)}{\sqrt{1+|V(s)|^2}}- \frac{ X_{\bot}(s)\cdot   V_{\bot}(s) }{  \big({1+|V(s)|^2}\big)^{3/2}} V(s),\quad \mathcal{M}(C_1):=2^{a_p-\gamma_2 M_t},\\
 C_2( X_{\bot}(s), V(s))&:= (\frac{V_1(s)}{|   V_{\bot}(s)|},\frac{V_2(s)}{|  V_{\bot}(s)|},0 ),\quad \mathcal{M}(C_2):=  1,\\ 
 C_3(  X_{\bot}(s), V(s))&:=   (\frac{V_1(s)}{|  V(s)|},\frac{V_2(s)}{|  V(s)|}, \frac{V_3(s)}{|  V(s)|} ),\quad \mathcal{M}(C_3):= 2^{\gamma_2M_t-\gamma_1M_t}.\\  
\end{split}
\ee
 Then the following estimate holds for any $i\in\{1,2,3\}$,
\be\label{nov17eqn31}
\begin{split}
&\big|\int_{t_1}^{t_2} C_i(  X_{\bot}(s), V(s)) \cdot   K(s, X(s), V(s)) d s \big| \\
& \lesssim  \mathcal{M}(C_i) \big[\big(\sum_{b\in \mathcal{T}+\mathcal{T}}   2^{-ba_p  } 2^{ b(\gamma_1-\gamma_2)M_t+ (\gamma_1-2\epsilon )  M_t}\big)(t_2-t_1)+  2^{(3\alpha^{\star} +20\epsilon)M_t/4}\big],
\end{split}
\ee
where the index set $\mathcal{T}:= \{0, 1/8,1/6,1/4,1/3,3/8,1/2\}.$
\end{proposition}
\begin{proof}
See section \ref{mainimprovedfull}. 
\end{proof}

\begin{proposition}\label{bootstraplemma2}
Let $t\in [0, T),  \alpha^{\star}=2/3+\iota, \iota:=10^{-4}$,  $t_1, t_2\in [0, t]$ be fixed, s.t., $M_t\gg 1,$  $\forall s\in [t_1, t_2],   \alpha_s M_s \leq \alpha^{\star}M_t  ,  |  V(s)|\sim 2^{\gamma  M_t}$, where  $ \gamma \geq  1-4\epsilon  $.
 Then the following estimate holds, 
\be\label{oct29eqn70}
\big|\int_{t_1}^{t_2}  \big(\frac{V_1(s)}{| V(s)|}, \frac{V_2(s)}{| V(s)|}, \frac{V_3(s)}{| V(s)|}\big) \cdot   K(s, X(s), V(s)) d s \big|\lesssim    2^{   (\gamma-5\epsilon) M_t }  .
\ee
\end{proposition}
\begin{proof}
See section \ref{fullimproved}. 
\end{proof}

Firstly, we briefly outline the relevance of the two propositions mentioned above.  Proposition \ref{bootstraplemma1} will be used in estimating ${\alpha}_t$, see \eqref{may9en21}, specifically corresponding to the last two lines in the outline \eqref{skeletonofproof}. This allows us to control both the velocity and, crucially, the spatial characteristics. Proposition \ref{bootstraplemma2} will be used in  estimating  ${\beta}_t$, see \eqref{may9en21}. It corresponds to the third  line  in the outline \eqref{skeletonofproof}. In essence, both propositions indicate that the electromagnetic field serves a perturbative role in the bootstrap argument.

Secondly, we explain the relevance of   coefficients in \eqref{2025oct16eqn21} and \eqref{oct29eqn70}. The coefficients we address in the bootstrap argument are quite specific. For Proposition \ref{bootstraplemma1}, the   coefficients listed in \eqref{2025oct16eqn21} come  from the estimate of $| X_{\bot}(s)|,$   $ |  V_{\bot}(s)|$, and $|V(s)|$ respectively, see also \eqref{2024nov4eqn1}. For Proposition \ref{bootstraplemma2}, the  the   coefficient  $ ({V_1(s)}/{| V(s)|},   {V_2(s)}/{| V(s)|},{V_3(s)}/{| V(s)|}\big)$ comes from the estimate of  $|V(s)|$ because it's   the sole objective of the third  line  in the outline \eqref{skeletonofproof}.

Lastly, we explain the different sets of assumptions in  Proposition \ref{bootstraplemma1} and  Proposition \ref{bootstraplemma2}. Moreover, we highlight the key difference between these two sets of assumptions. 
\begin{enumerate}
\item[$\bullet$]    Proposition \ref{bootstraplemma1} will only be used to show that   ${\alpha}_t$ is bounded above by $\alpha^{\star}$. By the standard bootstrap argument, we start with a characteristics such that $|   V_{\bot}(s)|$ is sufficiently close to $2^{\alpha^{\star}M_t}$ and $|V(s)|$ is bounded by $2^{(1-\epsilon) M_t}$. Due to the continuity of characteristics, within a small short interval of time,  the size of $|  V_{\bot}(s)|$, $|V(s)|$, and $|  X_{\bot}(s)|$ can be measured in a dyadic interval. This explains the assumption on different parameters in Proposition \ref{bootstraplemma1}.

  As explained in \textbf{Step 2A}, the shorter the distance interval with respect to the $z$-axis, the shorter the time lapse the trajectory travels. Specifically, the time spent within the dyadic spatial interval $[2^{a_p-1}, 2^{a_p}]$ (where $|  X_{\bot}(s)|$ resides) is on the order of $2^{a_p-(\gamma_1-\gamma_2)M_t}$. Crucially, this short time duration offsets the singularity of $|   X_{\bot}(s)|^{-1}\sim 2^{-a_p}$. This justifies the singularity of $| {X}_{\bot}(s)|^{-1}$ permitted in estimate (\ref{nov17eqn31}).

\item[$\bullet$]   

Proposition \ref{bootstraplemma2} serves solely to establish an upper bound of $1-2\epsilon$ for $\beta_t$. The standard bootstrap argument begins with a characteristic trajectory where $|V(s)|$ is approximately $2^{(1-3\epsilon) M_t}$. A key difference from Proposition \ref{bootstraplemma1} is the possibility of $|  V_{\bot}(s)|$ and $| X_{\bot}(s)|$ being simultaneously zero, allowing for characteristics propagating solely along the $z$-axis.  Crucially, the gap between the upper bounds on $\alpha_t$ (less than $2/3 + \iota$) and $\beta_t$ enables us to demonstrate that the resonance part, where the smoothing effect is absent, plays a perturbative role within the bootstrap argument.
\end{enumerate}

\subsection{Overview of Part II's main results}\label{mainresultsPartIrough}

This section provides a non-technical overview of Part II's main results, offering a preliminary understanding. A detailed presentation of the main results is provided in Section \ref{mainresultsPartIIdetail}

The first result in Part II concerns the    effectiveness of the classic moment method. Furthermore, it provides a rough upper bound for the electromagnetic field. 
 
\begin{theorem}[Rough statement about the electromagnetic field]
For any $t\in [0, T],$ $\|(E(t), B(t))\|_{L^\infty_x}$ can be controlled by the high moment  defined in \eqref{may2eqn1}. 
Furthermore, 
\begin{enumerate}
\item[(i)] As the distance from the $z$-axis  increases, the upper bound on the electromagnetic field improves.  
\item[(ii)] Using an atomic type decomposition of the electromagnetic field, we find that only  finite number of atoms  are significant.
\end{enumerate} 
\end{theorem} 
\begin{remark}
See Theorem \ref{maintheorem1part1} and Theorem \ref{roughesttailpart} for the precise statements. 
\end{remark}

Very often, we find that the quantity we need to control, such as $E+\hat{v}\times B$, differs from the estimates we have available, such as  the conservation law for $E-\omega \times B$
  (see \eqref{march18eqn31}). Importantly,  the difference  only lies in the magnetic field. Next result in Part II concerns the fine structure of the magnetic field.

\begin{theorem}[Rough statement about the fine structure of magnetic field]
Using an atomic type decomposition of the magnetic field, the $L^2_x$-norm of the atom is very small (better than the conservation law in \eqref{conservationlaw}) if the $L^\infty_x$-norm of the atom is very big. Furthermore, the third component, $\mathbf{P}_3(B)$, of the magnetic field has a better estimate than the magnetic field as a whole.
\end{theorem} 
\begin{remark}
See section \ref{magneticfielddictomy} for   precise statements. 
\end{remark}

Unfortunately, the above results are too rough to prove the desired estimates in Proposition \ref{bootstraplemma1} and Proposition \ref{bootstraplemma2} directly. To exploit and   quantify the smoothing effect, we decompose the acceleration force $K(t, X(t), V(t))$ (see \eqref{backward}) into localized pieces.  It's important to  consider the acceleration force as a whole, which possesses a double null structure, yields better results compared to analyzing the electromagnetic field separately. See section \ref{refinedec} for more details about the double null structure.

  In Part II, we  establish two sets of pointwise estimates for the localized  acceleration force by exploiting the benefits of the cylindrical symmetry and conservation laws, the smoothing effect and structural observations related to the electromagnetic field, including the null structure, the double null structure, and the dichotomy of the magnetic field.    The first set provides a uniform upper bound for the localized electromagnetic field. The second set offers an upper bound that depends on the distance from the $z$-axis, providing better control when the electromagnetic field is located further away from the $z$-axis.   A defining feature of these estimates is that they are assessed concerning the magnitude of the oscillation phase over characteristic time, effectively quantifying the smoothing effect.

These two sets of estimates are fundamental to the iterative smoothing scheme (ISS), providing both the  starting point for its execution and the basis for analyzing the nonlinear interactions between different generations of the electromagnetic field.

\begin{theorem}[Rough statement about the  pointwise estimates for the localized  acceleration]\label{roughstatementloc}
For the localized acceleration force, the following holds:
\begin{enumerate}
\item[(i)] Localized acceleration forces without smoothing effects constitute error terms in the bootstrap argument. If the localized acceleration force does not contribute to these error terms, the quantified smoothing effect has a positive lower bound.

\item[(ii)] If the localized acceleration force does not contribute to these error terms, the angle of localization has a lower bound. 
\item[(iii)] As the distance from the $z$-axis  increases, the upper bound of  the localized acceleration force    improves.   
\end{enumerate}
\end{theorem}
 
 \begin{remark}
See Theorem  \ref{maintheoremellipitic} and Theorem \ref{mainresultsfirstpart} for   precise statements. 
\end{remark}

\subsection{The iterative smoothing scheme}\label{ISSintroduction}

  Roughly speaking, the iterative smoothing scheme(ISS), applied to the $3D$ RVM system with cylindrical symmetry,  provide a good control of the following time integral, 
\[
\int_{t_1}^{t_2} C(  X_{\bot}(t), V(t))\cdot K(t,X(t), V(t)) d t, 
\]
where $C(  X_{\bot}(t), V(t))$ denotes some given smooth function and $K(t,X(t),V(t))$ is the acceleration force defined in \eqref{backward}.

We begin with a general overview of the iterative smoothing scheme, followed by a detailed explanation of its application to Propositions \ref{bootstraplemma1} and \ref{bootstraplemma2}.

\subsubsection{General ideas}

 Firstly, we should emphasize that  the smoothing effect in the ISS is different from the smoothing effect in the classic Glassey-Strauss decomposition \cite{glassey3}. Recall \eqref{march14eqn1}. On the Fourier side, for $U\in \{E, B\}$, we have
\[
U_j(t,x)= \int_0^t \int_{\R^3} e^{ix\cdot \xi+ i\mu(t-s)|\xi|} \frac{1}{ 2i \mu |\xi|}    \widehat{ \mathcal{N}^j_U } (s, \xi) d \xi d s.
\]
The basis of the classic Glassey-Strauss decomposition is primarily a result of integrating with respect to $s$, while keeping $t$ fixed.   

The smoothing effect in the ISS concerns the integration of electromagnetic field along characteristics time. For example, for the following typical quantity,
\be\label{2024nov5eqn1}
\begin{split}
&\int_{t_1}^{t_2}   U_j(t,X(t))   d t = \int_{t_1}^{t_2} \int_0^t \int_{\R^3} e^{iX(t) \cdot \xi+ i\mu(t-s)|\xi|} \frac{1}{ 2i \mu |\xi|}     \widehat{ \mathcal{N}^j_U } (s, \xi) d \xi d s d t,
\end{split}
\ee
the smoothing effect in   the ISS  mainly comes from the integration with respect to the characteristic time $t$.    More precisely, we exploit the following equality to do integration by parts in characteristic time $t$,
\be\label{2024nov5eqn21}
 e^{iX(t) \cdot \xi+ i\mu(t-s)|\xi|}= \frac{\p_t\big( e^{iX(t) \cdot \xi+ i\mu(t-s)|\xi|} \big)}{ i \mu |\xi|+i\hat{V}(t)\cdot \xi}.
\ee

If $|V(t)|$ is bounded, a full derivative can be obtained. This key result, presented in Klainerman-Staffilani \cite{Klainerman3} (Theorem 1.4, Fact 2), represents a significant departure from the smoothing effect exploited in the Glassey-Strauss decomposition. The present ISS approach is motivated by this result. To the best of our knowledge, this undervalued observation has not been extensively explored in the existing literature.  

One potential obstacle to utilizing this process is the appearance of a new, potentially large electromagnetic field term when integrating by parts with respect to $t$. This added term complicates the analysis. Additionally, the smoothing effect itself becomes less pronounced as the magnitude of $V(t)$ increases.

Because the smoothing process affects different parts of the electromagnetic field differently, we decompose it into finer pieces to quantify these varying gains and losses. This is the primary motivation for the fixed-time pointwise estimates for the localized electromagnetic field presented in Part II.

Intuitively speaking, after representing the acceleration force in terms of the half-wave and  localizing the size of  frequency and the angle between $\xi$ and $\mu \tilde{V}(t)$,  we have the following approximation on the Fourier side,
\be\label{2024oct5eqn1}
\begin{split}
& \int_{t_1}^{t_2} \tilde{V}(t)\cdot K(t,X(t), V(t)) d t\\
&  \approx \sum_{\begin{subarray}{c}
 \mu\in\{+, -\}\\
k\in \Z_+, n\in[-M_t, 2]\cap \Z\\
\end{subarray}}\int_{t_1}^{t_2} e^{i X(t)\cdot \xi +it \mu |\xi|} \widehat{\mathcal{K}^{\mu}}(t,\xi, V(t)) \varphi^{\mu}_{k,n}(\xi, V(t))  d t,
  \end{split}
\ee
where $ \widehat{\mathcal{K}^{\mu}}(t,\xi, V(t))$ denotes the profile of the half wave associated with the acceleration force on the Fourier side,  $ \varphi^{\mu}_{k,n}(\xi, V(t)) $ denotes a smooth cutoff function s.t., $|\xi|\sim 2^k, \angle(\xi, 
\mu \tilde{V}(t))\sim 2^n.$ 

Very importantly, note that, $ \forall \xi\in supp(\varphi_{k,n}(\xi, V(t))),$ we have 
\be\label{2024oct6eqn1}
  | \hat{V}(t)\cdot \xi +  \mu |\xi||\sim  2^{k+2n}. 
\ee
From the above estimate and the equality in \eqref{2024nov5eqn21}, it's clear that the strength of the smoothing effect is measured by the quantity $k+2n.$

The bootstrap argument's initial difficulty arises when $k + 2n = 0$. We must determine whether the bootstrap argument can be closed in the absence of smoothing.

As stated in Theorem \ref{roughstatementloc}, Part II's pointwise estimates resolve the potential issue of the absence of a smoothing effect. Specifically,   the first estimate in (\ref{2024oct8eqn1}) yields the following pointwise estimate: 
\[
\big|\int_{t_1}^{t_2} e^{i X(t)\cdot \xi +i t \mu |\xi|}  \widehat{\mathcal{K}^{\mu}}(t,\xi, V(t)) \varphi^{\mu}_{k,n}(\xi, V(t)) d t \big|\lesssim 2^{(k+2n)/2+(\alpha_t+10\epsilon) M_t   }.
\]

In light of the preceding pointwise estimate, the bootstrap assumption ensures control over the case  $k+2n\geq  (2-10^{-3})M_t/3$. The contribution from resonant terms is demonstrably negligible owing to the established crucial gap between the upper bounds of  $\alpha_t$ and $\beta_t$.  For the non-resonance case, i.e., $k+2n\geq  (2-10^{-3})M_t/3$, we    do integration by parts in ``$s$'' once for \eqref{2024oct5eqn1} by using the equality \eqref{2024oct6eqn1}. This procedure is now commonly called the normal form transformation, see the classic works of Germain-Masmoudi-Shatah \cite{GerMasSha09,GerMasSha12}. Further details regarding this procedure and the general Fourier method, both of which have significantly contributed to recent advances in the study of global stability for nonlinear dispersive PDEs, may be found in the  pioneering works of   Ionescu-Pausader \cite{IP1,IP2,IP3,IP}.

For convenience of reference, we call the above argument as \textbf{comparing-smoothing argument}: firstly, comparing the size of phase with some positive constant, which are   $k+2n$ and $ (2-10^{-3})M_t/3$ in above example; lastly, doing nothing for the small phase case since it's  already controlled and doing integration by parts for the large  phase case to exploit  the smoothing effect.

Our ISS iteratively applies a comparing-smoothing argument, demonstrating the dominance of the smoothing effect within a finite number of iterations.

 Because each iteration step creates a new electromagnetic field, we must typically consider the interactions of multiple nonlinear waves, unlike the linear wave case in \eqref{2024nov5eqn1}. The following typical term from the second iteration exemplifies the smoothing effect in this multilinear form:
\[
\begin{split}
 & \int_{t_1}^{t_2} \underbrace{K^{\nu}_{k_1,n_1}(t,X(t), V(t))}_{\textup{second generation of localized acceleration force}} \cdot  \underbrace{\tilde{K}^{\mu}_{k,n}(t,X(t), V(t))}_{\textup{first generation of localized acceleration force}}  d s \\
 &\approx  \int_{t_1}^{t_2} \int_{\R^3} \int_{\R^3} e^{iX(t)\cdot (\xi+\eta) + it \mu|\xi| + it \nu |\eta|} \varphi^{\mu}_{k,n}(\xi, V(t))\\
 &\quad \times \widehat{K}_{k_1,n_1}(t,\eta , V(t)) \cdot   \widehat{K}_{k,n}(t,\xi, V(t)) \varphi^{\nu}_{k_1,n_1}(\eta, V(t)) d \eta d \xi d t, \\
 \end{split}
\]
where $\mu, \nu\in\{+, -\}$. 

In particular, since the frequencies are localized by the cutoff functions, we have
\[
 | \hat{V}(t)\cdot \eta +  \nu |\eta||\sim  2^{k_1+2n_1}, \quad  | \hat{V}(t)\cdot \xi +  \mu |\xi||\sim  2^{k+2n},
\]
As a result, if  $k_1+2n_1 \gg k+2n$,   we have 
\be\label{2024nov7eqn21}
\begin{split}
e^{ iX(t)\cdot (\xi+\eta) + it \mu|\xi| + it \nu |\eta| } & = \frac{\p_t \big( e^{iX(t)\cdot (\xi+\eta) + i t\mu|\xi| + i t \nu |\eta|}\big)}{i \hat{V}(t)\cdot (\xi+\eta) + i\mu|\xi| + i \nu |\eta|}, \\
  \big| \hat{V}(t)\cdot (\xi+\eta) +  \mu|\xi| +   \nu |\eta| \big| &\sim 2^{k_1+2n_1}. 
\end{split}
\ee
As in \eqref{2024nov5eqn21}, now it's clear that there is a smoothing effect available if $k_1+2n_1 \gg k+2n$. 

An important question arises: what happens if  $k_1+2n_1 = k+2n$? For this case, we don't gain anything from using the equality \eqref{2024nov7eqn21} because $\hat{V}(t)\cdot (\xi+\eta) +  \mu|\xi| +   \nu |\eta|$ could be zero. 

Given the strong pointwise estimates from Part II, the case $k_1 + 2n_1 \leq k + 2n + c_1 M_t$, for some absolute constant $c_1 > 1/10$, is demonstrably negligible in the first stage. Its impact is confined to error terms, thus playing a minor role in the bootstrap argument.

Through $l$ iterations, it is established that the dominant contribution arises from multilinear wave interaction chains where, for each $i \in {1, \dots, l}$, $k_{i+1} + 2n_{i+1} \geq k_i + 2n_i + c_i M_t$ with $c_i \geq 1/10$. All other interaction chains represent negligible error terms. The cumulative smoothing effect, quantified by $k_{l+1} + 2n_{l+1}$, exceeds $(c_1 + \dots + c_l)M_t$. Given the upper bound on the electromagnetic field  established in Theorem \ref{maintheoremroughest} (see \eqref{maintheoremroughest}), the smoothing effect becomes dominant after a finite number of iterations.

The preceding discussion provides an initial understanding of how ISS operates. The following sub-sections will elaborate on the details, clarifying its role in the proofs of Propositions \ref{bootstraplemma1} and \ref{bootstraplemma2}. 
\subsubsection{Result of integration by parts in characteristics time }

Given that we will perform integration by parts in characteristic time multiple times, this process will generate numerous terms. Therefore, it is helpful to first outline our general approach to classifying the different terms produced during integration by parts in characteristic time.

From \eqref{2024nov5eqn1}, performing integration by parts in time 
$t$ will generate several terms. While some of these terms can be immediately controlled, others cannot. We refer to the terms that can be controlled right away as error type terms. Specifically, the endpoint terms resulting from integration by parts and those arising when 
 ``$\p_t$''
  acts upon the spatial characteristics, as seen in the coefficient 
$C( X_{\bot}(t),V(t))$, are classified as error type terms.

Unfortunately, there are two   types of terms that we cannot control immediately. More precisely, 
\begin{enumerate}
\item[(i)] The first type of terms, referred to as   \textbf{Type I terms}, arise  when the time derivative $\p_t$ acts upon the velocity characteristic $ {V}(t)$, creating an independent electromagnetic field as shown in \eqref{backward}. 
\item[(ii)]The second type of terms arise  when $\p_t$ acts on $\int_0^t e^{-is\mu|\xi|} \frac{1}{2i\mu|\xi|} \widehat{\mathcal{N}^j_U}(s,\xi) ds$. This introduces the nonlinearities of the Maxwell equations, $\widehat{\mathcal{N}^j_U}(s,\xi)$, as seen in \eqref{sep5eqn1}, leading to terms of the following form
\be\label{2024nov5eqn11}
\int_{\R^3}  f(t,x,v)(E(t,x)+\hat{v}\times B(t,x))\cdot \nabla_v\big(\varphi_j(v) \hat{v}\big)  d v.
\ee
It is evident from the preceding formula that a new electromagnetic field is present. We accordingly designate the associated terms as \textbf{Type II terms}.

\end{enumerate}

The following three-node tree diagram (Figure \ref{fig:IBPtree}) provides a visual representation of the terms generated after integrating by parts in time once. 
\begin{figure}[ht]
 \centering
 \includegraphics[width=0.3\textwidth]{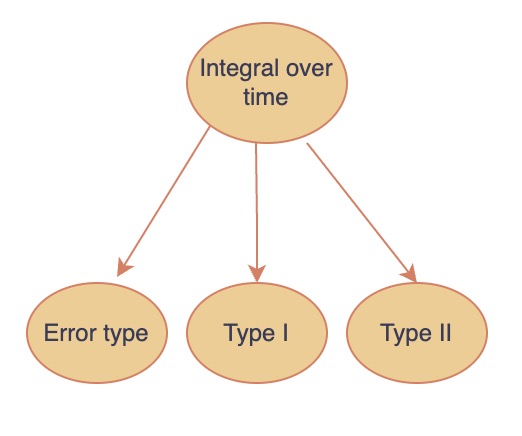}
 \caption{Classifying terms from  integration by parts in time.}
 \label{fig:IBPtree}
\end{figure}

Subsequent application of integration by parts with respect to time to terms of type I or type II will generate additional branches emanating from the corresponding nodes in the aforementioned tree.

Generally speaking, \textbf{Type II terms} are relatively easier than \textbf{Type I terms}. Because the strategy of exploiting  the $L^2_x$-conservation law of the electromagnetic field by  putting $\widehat{ \mathcal{N}^j_U } (s, \xi)$ in $L^2_\xi$ is available, see \eqref{conservationlaw}.  This strategy is not available to the first type of terms because the new generation of electromagnetic field evaluates at the location of space characteristics, which is a specific   value. For this scenario, the $L^2_x$ conservation law becomes irrelevant.

Consequently, new type II terms can be managed as error terms after at most two iterations\footnote{There is no a priori reason to expect the number of iterations to be less than two. Our argument's summary supports this conclusion. The elliptic estimate (Section \ref{ellipticpartPartII}) requires two iterations, while the hyperbolic estimate (Section \ref{hyperbolicTypeIIwithsymm}) requires none, as it's directly controllable without branching.  }, eliminating the need for additional time integration. In the tree structure, this translates to a maximum of two branches per node.

To sum up, the iterative smoothing scheme  (ISS) is representable as a finite-depth tree structure. Each node within this tree possesses either zero, two, or three child nodes. Importantly, any node  located at a depth greater than two from the root node will possess a maximum of two child nodes. A representative example of this tree structure is provided in the Figure \ref{fig:IBPtree2} below.

\begin{figure}[ht]
 \centering
 \includegraphics[width=0.55\textwidth]{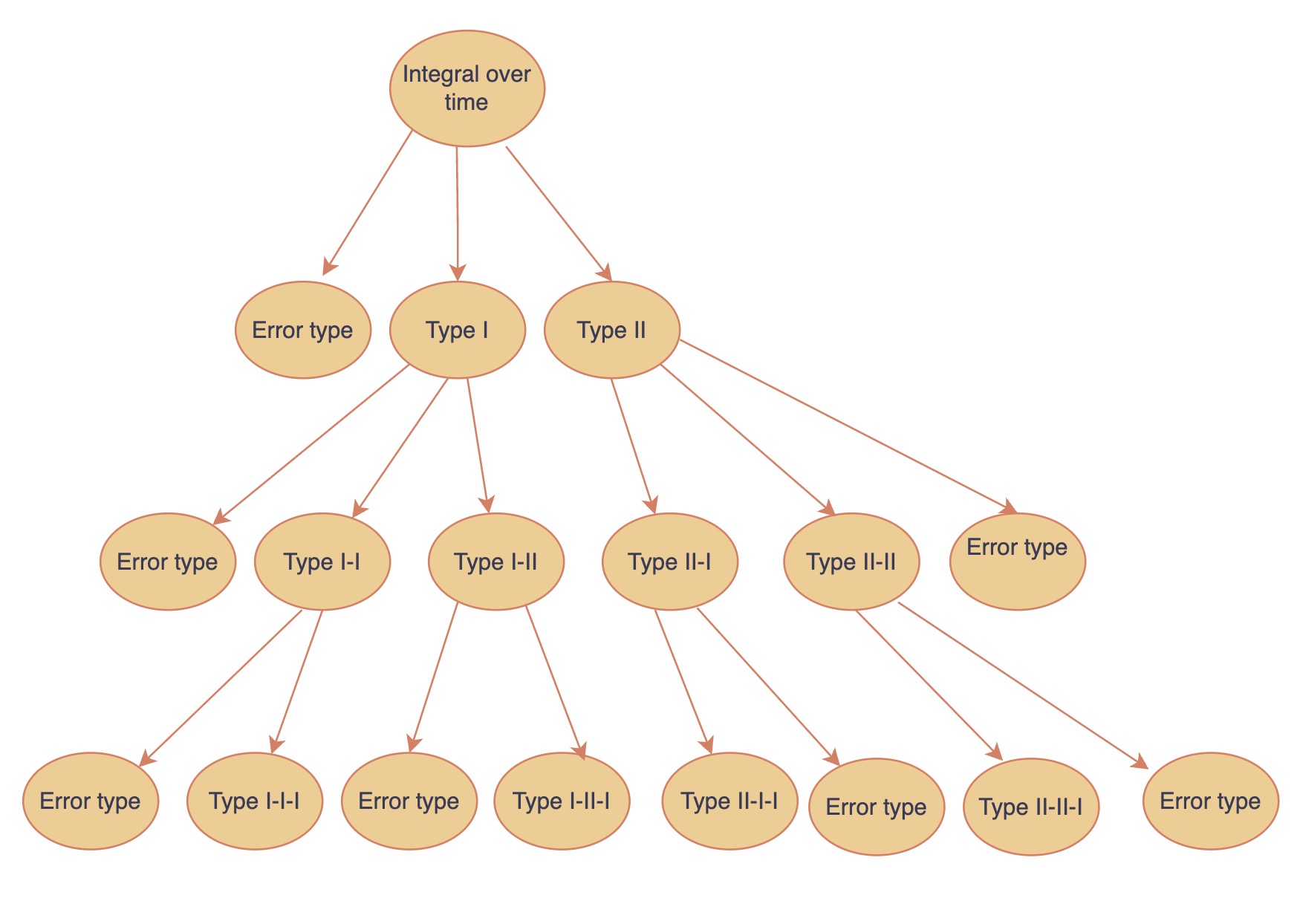}
 \caption{Classifying terms after triple integration by parts in time: An example}
 \label{fig:IBPtree2}
\end{figure}

Although the abstract ISS operates in a conceptually similar manner, variations arise in the number of children per branching node (two versus three). These differences stem from differing assumptions regarding the characteristics and distinct estimation goals outlined in Propositions \ref{bootstraplemma1} and \ref{bootstraplemma2}.

 In particular, the bootstrap assumption of Proposition \ref{bootstraplemma1} allows us to mitigate the singularity of $|  X_{\bot}(s)|^{-1}$ through exploitation of the smallness of the duration of time (cf. Section \ref{skeletonintroduction}). This, in conjunction with the cylindrical symmetry, permits the control of Type II terms without the introduction of branching nodes, effectively classifying them as error terms. Consequently, a simplified expansion tree structure is obtained for the proof of Proposition \ref{bootstraplemma1}.

\subsubsection{The first reduction}

In this  section,   we first   decompose the integral in estimates \eqref{nov17eqn31} and \eqref{oct29eqn70} into hyperbolic and elliptic components (modulo error terms).  These parts are distinguished by their    oscillatory behavior in characteristic time. The elliptic part is a consequence of the linear density terms present in the nonlinearity of the Maxwell system.

Roughly speaking, the elliptic parts have the following form, 
\[
\int_{t_1}^{t_2} \int_{\R^3}\int_{\R^3} e^{i X(t)\cdot \xi} \widehat{f}(t, \xi, v) d \xi d v  d t =\int_{t_1}^{t_2} \int_{\R^3}\int_{\R^3} e^{i X(t)\cdot \xi- i t\hat{v}\cdot \xi} \widehat{g}(t, \xi, v) d \xi d v  d t,
\]
where $g(t,x,v):=f(t,x+t\hat{v}, v)$ denote the profile of the distribution function\footnote{It's the pullback of the nonlinear solution along the linear flow. In the case of the linear equation, the profile simply becomes the initial data}.  Note that
\[
e^{i X(t)\cdot \xi- i t\hat{v}\cdot \xi}= \frac{\p_t \big( e^{i X(t)\cdot \xi- i t\hat{v}\cdot \xi}\big)}{i (\hat{V}(t)-\hat{v})\cdot \xi }.  
\]
The corresponding phase of oscillation in time is determined by the  function $(\hat{V}(t)-\hat{v})\cdot \xi$. It is very different from the phase function  $   \mu |\xi|+ \hat{V}(t)\cdot \xi$  that appears in the hyperbolic parts (see \eqref{2024nov5eqn21}) because $(\hat{V}(t)-\hat{v})\cdot \xi$ can vanish for a large set of frequencies.

More specifically, for the first reduction, we proceed in the following steps:
\begin{enumerate}
\item[] \textbf{Step 1}\quad  We first localize the acceleration force as in \eqref{sep5eqn66} and then decompose the localized acceleration force further into five pieces by using the decomposition \eqref{sep17eqn32}. 
\item[] \textbf{Step 2} \quad For the localized part, we employ the hyperbolic-elliptic decomposition given in \eqref{oct7eqn1}. By applying the results of Theorems \ref{maintheoremellipitic} (for the elliptic component) and \ref{mainresultsfirstpart} (for the hyperbolic component), we can exclude certain values of the parameters $k$ and $n$. This leaves us with a specific range of $k$ and $n$ (detailed in \eqref{oct15eqn1}), where the inequality $k + 2n \ge 2(1/3-5\iota)M_t$ holds. This inequality ensures sufficiently rapid phase oscillation, leading to the desired initial smoothing effect.

\item[] \textbf{Step 3} \quad For $k$ and $n$ within the previously defined range, and for $i \in \{0, 1, 2, 3\}$, the hyperbolic-elliptic decomposition \eqref{oct7eqn1} is directly applicable. This is because the decomposition already accounts for the first iteration of the smoothing effect. The resulting decomposition yields the hyperbolic and elliptic parts of the integral over the characteristic time.

The treatment of the $i=4$ case differs from the previous cases. The normal form transformation applied in \eqref{oct7eqn1} produces the symbol $(\mu |\xi| + \hat{v} \cdot \xi)^{-1}$, which can be highly singular if $v$ lies in the support of $\varphi_{j,n}^4(\cdot, \zeta)$ (see \eqref{sep4eqn6}). To avoid this singularity, instead of doing normal form transformation, we integrate by parts once in the characteristic time $t$ using the equality \eqref{2024nov5eqn21}. Type I terms are classified as hyperbolic. Considering the nonlinearities $\mathcal{N}^j_U(t, x)$ (defined in \eqref{sep5eqn1}), the linear density terms in Type II terms are classified as elliptic, and the nonlinear terms are classified as hyperbolic.

\end{enumerate}
 
 As a result of the above first reduction, to sum up, we conclude the following estimate, 
\be\label{2024nov9eqn211}
\begin{split}
\forall i\in\{0,1,2,3,4\},\quad & \big|\int_{t_1}^{t_2}     C (  X_{\bot}( t), V(t))  \cdot T_{k,j;n}^{\mu,i } (t, X(t), V(t)) d t \big|\\
&\lesssim 1 +   \big|  \mathfrak{E}^{\mu,i}_{k,j;n}(t_1, t_2)\big|  + \sum_{a \in\{0,1 \} }   \big|{}_a^{1}\mathfrak{H}^{\mu,i}_{k,j;n}(t_1, t_2)\big| +\big| {}_a^{1}Err^{\mu,i}_{k,j;n}(t_1, t_2)\big|,\\
\end{split}
\ee
where $ \mathfrak{E}^{\mu,i}_{k,j;n}(t_1, t_2)$ denotes the elliptic parts, and ${}_a^{1}\mathfrak{H}^{\mu,i}_{k,j;n}(t_1, t_2)$ denotes two different kinds of hyperbolic parts, and ${}_a^{1}Err^{\mu,i}_{k,j;n}(t_1, t_2)$ denotes the error part, see \eqref{oct25eqn1}--\eqref{nov12eqn61} for their detailed formulas.

\subsubsection{The estimate of the hyperbolic parts  ${}_a^{1}\mathfrak{H}^{\mu,i}_{k,j;n}(t_1, t_2)$, $a\in\{0,1\}$.}

The detailed estimates of the hyperbolic parts are presented in Section \ref{hyperbolicPartIest} as part of the proof of Proposition \ref{bootstraplemma1}, and in Section \ref{hyperbolipartISSpartII}  as part of the proof of Proposition \ref{bootstraplemma2}. This section provides a high-level overview of the main ideas and techniques used in those detailed estimates.

We consider two cases, depending on the proposition being proved.

\medskip 

\noindent  \textbf{Case 1.}\quad In the proof of Proposition \ref{bootstraplemma1}.  

\medskip

The singularity of $| {X}_{\bot}(t)|^{-1}$ in the proof of Proposition \ref{bootstraplemma1} is handled by exploiting the short time interval, which is possible due to the bootstrap assumption in Proposition \ref{bootstraplemma1}. This observation, together with the cylindrical symmetry, allows for direct control of the Type II terms ${}_1^{1}\mathfrak{H}^{\mu,i}_{k,j;n}(t_1, t_2)$ without the need for iterative smoothing.

 The Type I terms ${}_0^{1}\mathfrak{H}^{\mu,i}_{k,j;n}(t_1, t_2)$  are estimated using the methods described earlier in this section. Specifically, we apply Theorem \ref{mainresultsfirstpart} (estimates \eqref{2024oct8eqn2}--\eqref{2024Dec6eqn31}) to eliminate certain cases, then perform five time integrations by parts (due to the oscillatory phase). The estimate is completed using the electromagnetic field bound in \eqref{2024oct28eqn61}.

Due to the fact that the lapse of time can only contributes the smallness of size   $| X_{\bot}(t)|^{ }$, we can at most use the estimates in  \eqref{2024oct8eqn5} and \eqref{2024Dec6eqn31} twice for two generations of acceleration force. For other generations of acceleration force, we need to use the rough $L^\infty_{x}$-type estimate in \eqref{2024oct8eqn1}.

\medskip 

\noindent  \textbf{Case 2.}\quad In the proof of Proposition \ref{bootstraplemma2}. 

\medskip

The main difference here stems from Proposition \ref{bootstraplemma2}'s bootstrap assumption, which allows for the possibility of characteristics traveling along the $z$-axis. This scenario prevents the direct use of the pointwise estimates \eqref{2024oct8eqn2} from Theorem \ref{mainresultsfirstpart} and \eqref{2024oct28eqn61} from Theorem \ref{maintheorem1part1} in controlling the Type II terms ${}_1^{1}\mathfrak{H}^{\mu,i}_{k,j;n}(t_1, t_2)$. Therefore, the ISS becomes necessary. To handle this, we decompose the $E + \hat{v} \times B$ term into a new generation of acceleration force, $E + \hat{V}(t) \times B$, and an error term, $(\hat{v} - \hat{V}(t)) \times B$. We control the error term using the dichotomy of the magnetic field  in Proposition \ref{meanLinfest} and apply four steps of iterative smoothing to the new acceleration force term.

The estimate for  ${}_0^{1}\mathfrak{H}^{\mu,i}_{k,j;n}(t_1, t_2)$  mirrors the iterative steps of Case 1. However, there are two key distinctions: first, the parameter sets differ; second, only estimates \eqref{2024oct8eqn1} from Theorem \ref{mainresultsfirstpart} and the coarser estimate \eqref{maintheoremroughest} from Theorem \ref{maintheorem1part1} are utilized in the final iteration.

 \subsubsection{The estimate of the elliptic parts $ \mathfrak{E}^{\mu,i}_{k,j;n}(t_1, t_2)$.}

A complete analysis of the elliptic parts is provided in Section \ref{ellipticPartIest} for the proof of Proposition \ref{bootstraplemma1} and in Section \ref{ellipticpartPartII} for the proof of Proposition \ref{bootstraplemma2}. This section focuses on the main ideas and methods used in those detailed  estimates.

We first discuss about the general strategy. We first rule out certain set of parameters of $k,n$ by using the estimates in Theorem \ref{maintheoremellipitic} and Theorem \ref{mainresultsfirstpart}. For the rest of parameters of $k,n$, in which there is a smoothing effect,  we do integration by parts in time for the case  $i\in\{0,1,2,3\}$ because the oscillation function  $(\hat{V}(t)-\hat{v})\cdot \xi$ has a lower bound. The case $i=4$ is more delicate because   the oscillation function $(\hat{V}(t)-\hat{v})\cdot \xi$  can  vanish, see \eqref{sep4eqn6}.  For this case,    we use the   inhomogeneous type partition of unity  based on the size of $(\hat{V}(t)-\hat{v})\cdot \xi$ as follows,
\[
1= \sum_{\kappa\in [\bar{\kappa},2]\cap\Z} \varphi_{\kappa;\bar{\kappa}}\big(\frac{(\hat{V}(t)-\hat{v})\cdot \xi}{|\xi||\hat{V}(t)-\hat{v}|} \big).
\] 
 We select the threshold $\bar{\kappa}$ to ensure the threshold case—using the cutoff function $\varphi_{\bar{\kappa};\bar{\kappa}}(\cdot) = \psi_{\leq \bar{\kappa}}(\cdot)$—is an error term. When $\kappa > \bar{\kappa}$, the frequency is localized away from the vanishing set, allowing for doing  integration by parts in time.

The following analysis is divided into two cases, with the specific strategy employed depending on the proposition under consideration.

\medskip 

\noindent  \textbf{Case 1.}\quad In the proof of Proposition \ref{bootstraplemma1}.  

\medskip 

The strategy for estimating the elliptic parts mirrors that used for the hyperbolic parts: the short time interval is exploited to compensate for the singularity of $| {X}_{\bot}(s)|^{-1}$ in the proof of Proposition \ref{bootstraplemma1}. By carefully analyzing the terms generated during the ISS process and leveraging cylindrical symmetry, the  estimate of elliptic parts is obtained within two iterations of smoothing.

\medskip 

\noindent  \textbf{Case 2.}\quad In the proof of Proposition \ref{bootstraplemma2}. 

\medskip 

The ISS process is most delicate here. The bootstrap assumption in Proposition \ref{bootstraplemma2} prevents us from ruling out characteristics traveling along the $z$-axis, thus precluding the direct application of cylindrical symmetry.

The general iteration scheme applies to Type I terms, as in the hyperbolic part estimates. Type II terms, however, require a more complex approach. For the hyperbolic part estimates of Type II terms, we decomposed $E + \hat{v} \times B$ into $E + \hat{V}(t) \times B$ and $(\hat{v} - \hat{V}(t)) \times B$. The term $(\hat{v} - \hat{V}(t)) \times B$ is readily controlled in the hyperbolic part estimates.

 Unfortunately, the control of the corresponding term in the elliptic component is not immediately achievable due to the presence of an \textbf{additional derivative} when compared to its counterpart in the hyperbolic component. This disparity is a direct consequence of the linear density term within the nonlinearity $\mathcal{N}^j_U$ possessing a higher derivative order than the quadratic term, see \eqref{sep5eqn1}.

Beside the general iteration scheme we discussed, there are two \textbf{additional} main ingredients used to control the type II terms in the elliptic part. We elaborate them as follows.
\begin{enumerate}
\item[(i)] The role of $(\hat{v}-\hat{V}(t))$.

In Section \ref{singularweight}, we derive a singular weighted space-time estimate for the distribution function. This result, of intrinsic value, affords additional control of the distribution function beyond that provided by the conservation laws (Equations \eqref{2024nov9eqn111}--\eqref{march18eqn31}). The derivation relies on the $L^2$-conservation law for the electromagnetic field and the advantageous properties of cylindrical symmetry. It is noteworthy that this estimate is employed only once in the estimate of elliptic parts.

  Although the singular weighted space-time estimate from Section \ref{singularweight} is coarse, it allows us to rule out   the case when $|\hat{v} - \hat{V}(t)|$ is not very small. This simplifies the analysis to the case $|\hat{v} - \hat{V}(t)|$ is very small. Since $\mathbf{P}(\hat{V}(t)) \le 2^{(\alpha^{\star} - \gamma)M_t} \le 2^{-M_t/3 + \iota M_t}$ is very small due to the bootstrap assumption, we know that $\mathbf{P}(\hat{v})$ is small as well.
To simplify the subsequent exposition,   symbols with size $|\hat{v} - \hat{V}(t)|  $ are termed  ``improved symbols'', while those with size $|\hat{v} - \hat{V}(t)|(|\mathbf{P}(\hat{v})| + |\mathbf{P}(\hat{V}(t))|)  $ are termed ``doubly improved symbols''. 

\item[(ii)] Refined decomposition  of $ \big((\hat{v}-\hat{V}(t))\times B\big)\cdot\nabla_v$.

Ingredient (i) implies that $(\hat{v} - \hat{V}(t))$ is an improved symbol, and furthermore, that $\mathbf{P}_3(\hat{v} - \hat{V}(t))$ is doubly improved. Additionally, since $\mathbf{P}(\hat{v})$ is small, $\partial_{v_3}(\text{symbol})$ exhibits better behavior than $\partial_{v_i}(\text{symbol})$ for $i \in \{1, 2\}$.

Exploiting the observations above and the structure of the cross product $((\hat{v} - \hat{V}(t)) \times B)\cdot \nabla_v$, we find that terms without doubly improved symbols depend only on $\mathbf{P}_3(B)$. 
  Fortunately,  $\mathbf{P}_3(B)$ exhibits better behavior than $\mathbf{P}(B)$ (see Section \ref{magneticfielddictomy}), we utilize the refined decomposition of $\mathbf{P}_3(B)$ in   \eqref{nov6eqn47} and the improved estimates in Propositions \ref{goodpartprojmagn} and \ref{set1goodPartP3B}.
\end{enumerate} 

With the aforementioned results and the refined decomposition as described in   \eqref{findecompellipticpart2024oct}, the estimation of the elliptic parts is achieved within four iterative smoothing steps.

 \subsubsection{The estimate of the error parts}

There are three sources of error terms. First source is the error terms in  the first reduction, see \eqref{2024nov9eqn211}. The other two sources   are   the ISS process of estimating the hyperbolic parts and the elliptic parts respectively. The error parts can be estimated without exploiting the smoothing effect again. Generally, the estimate of the error parts can be obtained easily as by-products in the ISS process. 

\subsubsection{The summability issue }
 
Since we will apply the dyadic decomposition many times and will have many independent parameters, it's beneficial to  discuss the  issue of summability with respect to these parameters here. Generally speaking, the tail parts, i.e., $k\geq 50 M_t$, $j\geq (1+2\epsilon)M_t$ etc, can be ruled out easily. As a result, we are left with at most $(M_t)^C$ cases of parameters  to be considered, where $C$ is some absolute constant. Since the final estimates we obtain at least have room  of size $2^{-\epsilon M_t}$, which compensates the  loss of size  $(M_t)^C$ from the total number of cases. Hence, for most of time, we can simply let these parameters to be fixed.

\subsection{On the full problem without symmetry and open problems}\label{fullprobandopenprob}

In this section, we explore ideas and methods from the two-paper sequence that may be beneficial for further investigations into the global regularity problem without symmetry. We highlight that the abstract iterative smoothing scheme   is invariant to symmetry, yielding identical terms in the iterative process; however, the overall smoothing effect differs significantly.

 Firstly, we present key elements of our arguments that are independent of cylindrical symmetry.
\begin{enumerate}
\item[(i)] The moment method and the sublinear  framework of the moment function as in \eqref{2024oct12eqn31}, i.e., the first two lines in  \eqref{skeletonofproof}. 

\item[(ii)]  Two methods of exploiting the smoothing effect: (i) Dyadically  localized Glassy-Strauss   decomposition; (ii)  Normal form transformation. 
\item[(iii)] The  Structure observations: null structure, double null structure, and the decomposition \eqref{july1eqn11}, are also valid. 
\end{enumerate}

Secondly, it is important to explicitly identify key facts that are not expected to hold without symmetry. These represent the primary challenges that need to be overcome.
\begin{enumerate}
\item[(i)] We don't expect the upper bound of $\alpha_t$ is better than the   upper bound of $\beta_t$. 
\item[(ii)] Because the north pole and the south pole will not be special without the cylindrical symmetry,  we don't expect that $\mathbf{P}_3(B)$ is better than $ B $. 

\item[(iii)] The pointwise estimates of the localized acceleration force  in Theorem \ref{mainresultsfirstpart} depends heavily on the cylindrical symmetry.  While it is possible to establish analogous estimates, the resulting upper bounds may be significantly weaker, potentially hindering the initiation of the iterative smoothing scheme; specifically, the resonance part at the initial stage may not be eliminated. A possible solution is to demonstrate an upper bound of the form 
   $2^{\alpha_1(k+2n) +\alpha_2 M_t} $, where    $\alpha_1,\alpha_2\in (0,1)$, for the  the localized acceleration force. If this holds true, we can eliminate the resonance part, indicating that the iterative smoothing scheme could still be effective.
\end{enumerate}

Finally, we outline several interesting open problems as follows. It is important to emphasize that the primary challenge continues to be the full problem without symmetry. Additionally, there appears to be no definitive evidence that global regularity of  
$3D$ RVM system holds for any regular  finite energy    initial data  without symmetry.
\begin{enumerate}
   \item[(i)] Assume that $B=0$ as long as the solution exists for the $3D$ RVM system \eqref{mainequation}, does the   global regularity  holds for   any regular  finite energy   initial data   without   symmetry? 
    
\item[(ii)] Does the   global regularity  of $3D$ RVP-PP   system  \eqref{vlasovpo} holds for     any regular  finite energy   initial data  without symmetry?  
\item[(iii)] If we consider an initial data of the form $Data_{sym}^1+Data_{gen}^2$, where $Data_{sym}^1$ is large and possesses cylindrical symmetry and $Data_{gen}^2$ is general, lacks symmetry, and is small in size, does  the $3D$ RVM  system still have global solution? This problem was suggested to me by Sergiu Klainerman, to whom I am very grateful. 
 \item[(iv)] Now that global existence is established, is it possible to demonstrate decay estimates for the  electromagnetic field in  the $2D$ RVM system,  the $2.5D$RVM system with general data, and  the $3D$ RVM system with cylindrical symmetric initial data?
\end{enumerate}
A few remarks are in order. 
\begin{remark}
Certainly, the magnetic field may not always be zero for generic initial data, making the question in (i) irrelevant. This inquiry arises from the curiosity about whether it’s possible to solve the full problem without considering the contribution from $EB^2$ (see \eqref{july1eqn11}), which is the most challenging aspect in various estimates.

\end{remark}

\begin{remark}
The question in (i) is simpler than the question in (ii) since $3D$ RVM system with $B=0$ has an additional equation, making it over-determined compared to the $3D$  RVP-PP system, see \eqref{mainequation} and \eqref{vlasovpo}. 

\end{remark}

 \subsection{Table of notations and the outline of this paper}\label{outlinefirstpaper}\label{outlinepaper}

The universally applicable notations and definitions central to this two-paper series are systematically delineated in Table   \ref{essential_global_notations_}.

\begin{table}[H]
\centering
\resizebox{\columnwidth}{!}{%
\begin{tabular}{ |c|c|c|c| } 
 \hline
 Notation & Definition & Appearance  & Remarks \\ 
 \hline
$N_0$ & $10^{10}$ & Theorem \ref{maintheorem} &  \\ 
  \hline
  $\epsilon$ & $10^{-7}=100/N_0$ & Convention \ref{conventionconst}  & The smallest absolute constant\\ 
  \hline
   $\iota$ & $10^{-4}=1000\epsilon  $ & Convention \ref{conventionconst}  & The next level of small constant\\ 
  \hline
 $\tilde{v}$ & $\tilde{v}:=v/|v|$ & Definition \ref{varioureldef} & The direction of $v=(v_1, v_2,v_3)$\\ 
 \hline
 $\hat{v}$ & $\hat{v}:=v/\sqrt{1+|v|^2} $ & Equation \ref{mainequation} & The relativistic velocity\\ 
 \hline
 $v_{\bot}$ & $v_{\bot}:=(v_1,v_2)$ & Definition \ref{varioureldef} & The first two projection of $v$\\ 
 \hline
  $\mathbf{P}(v)$  &$\mathbf{P}(v):=(v_1,v_2)$ & Definition \ref{varioureldef} &  $\mathbf{P}$ is mainly used for long notations\\ 
 \hline
 $\mathbf{P}_i(v)$  &$\mathbf{P}_i(v):=v_i$, $i\in\{1,2,3\}$ & Definition \ref{varioureldef} & $\mathbf{P}_i$ is mainly used for long notations \\ 
 \hline
 $  X(x,v,s,t ) $ & The space characteristics  & \eqref{backward} & Dependence of $x,v,t$ are often   \\
  $V(x,v,s,t )$& The velocity characteristics   & & dropped since they are fixed\\ 
 \hline
$K(s,X(s), V(s)) $ & The acceleration force  & \eqref{backward} &  \\
  \hline
$ M_t$ & Definition \ref{scaleofv} & \eqref{dec2eqn1} &  Remark \ref{scaleofvphilosophical} \\ 
 \hline
$\alpha_t, \beta_t  $ &  Definition \ref{tmajorityset} & \eqref{may9en21} & Remark \ref{sizeofvphilosophical}\\ 
 \hline
 $\tilde{\alpha}_t  $ & $\min\{\alpha_t, 1+2\epsilon\}$ & Definition \ref{tmajorityset}  & A posteriori, $\tilde{\alpha}_t$ ($\tilde{\beta}_t$) is same  \\ 
  $\tilde{\beta}_t  $ & $\min\{\beta_t, 1+2\epsilon\}$ &    & as $\alpha_t$ ($\beta_t$). It was not clear a priori.\\ 
 \hline
 $\alpha^{\star}$ & $2/3+\iota$ & \eqref{skeletonofproof} & A posteriori,  it's the upper bound 
 \\
 & & & of  $ \alpha_t$, independent of time; Remark \ref{remarktwothird}\\ 
 \hline
\end{tabular}%
}
\caption{Essential global notations.}\label{essential_global_notations_}
\end{table}

To facilitate comprehensive understanding of the technical content,  we accompany each section with a notation table, which is systematically positioned at the conclusion of each section. Readers are advised to consult these tables during detailed analysis to ensure contextual consistency.

 The rest of this paper is organized as follows, 
 \begin{enumerate}
    \item[$\bullet$] In section \ref{mainresultsPartIIdetail},   we present the main results in  Part II —two sets of pointwise estimates—and discuss   the motivations behind them.
\item[$\bullet$] In section \ref{singularweight}, we prove a singular weighted space-time estimate for the distribution function, which plays an important role in the estimate of elliptic parts in the proof of Proposition \ref{bootstraplemma2}.
\item[$\bullet$] In section \ref{bootstraparg},  by assuming the validity of  Proposition \ref{bootstraplemma1} and  Proposition \ref{bootstraplemma2}, we prove   Proposition \ref{finalproposition} by using a standard bootstrap argument. Hence finishing the proof of our main theorem \ref{maintheorem}. 
\item[$\bullet$] In section \ref{mainimprovedfull}, we give a postponed proof of  Proposition \ref{bootstraplemma1}. 

\item[$\bullet$] In section \ref{fullimproved}, we  give a postponed proof of  Proposition \ref{bootstraplemma2}.
\end{enumerate}

\noindent \textbf{Acknowledgment}\qquad 
The author thanks Yu Deng, Pin Yu, and Fan Zheng for helpful comments and suggestions on the early version of draft.  The author acknowledges support from  NSFC-12322110, 12141102, 12326602, and MOST-2020YFA0713003, 2024YFA1015000.

\section{Main results in Part II}\label{mainresultsPartIIdetail}

In this section, we begin by presenting the main results in Part II, followed by an explanation of the rationale behind seeking such estimates. Finally, we provide a brief overview of the key observations and ingredients utilized in deriving these results.

 Our first result  in Part II concerns controlling the electromagnetic field by the moments function $\overline{\mathfrak{M}}(t)$, see \eqref{may2eqn1},  i.e., the effectiveness of the moment method.  More precisely, in Part II, we have shown that 
\begin{theorem}\label{maintheorem1part1}
 Let $\epsilon=10^{-7}$.For  any  $t\in[0, T^{})$,   we have 
\be\label{maintheoremroughest}
 \| E(s,x)\|_{L^\infty_{[0,t]}L^\infty_x} +   \| B(s,x)\|_{L^\infty_{[0,t]}L^\infty_x}   \lesssim 2^{  (1+2\tilde{\alpha}_t +6\epsilon)M_t}.
\ee
Moreover, for any $x\in \R^3$, s.t., $| x_{\bot}|\neq 0$, the following pointwise estimate holds, 
\be\label{2024oct28eqn61}
 \sup_{s\in [0, t]}| E(s,x)| +   | B(s,x)|   \lesssim 1   +|  x_{\bot}|^{-1/2} 2^{ 5 \epsilon M_t}(2^{M_t}+ 2^{2 \tilde{\alpha}_t M_t }) + |  x_{\bot}|^{-1/4}  2^{5M_t/4+\tilde{\alpha}_t M_t/4+5\epsilon M_t}   .
\ee
 \end{theorem}
\begin{proof}
See \cite{PartII}[Theorem 1.4]
\end{proof}
\subsection{Refined decompositions and estimates of the localized acceleration force}\label{refinedec}

 Unfortunately, the above estimate is too rough to conclude any useful conclusion immediately. To see better the structure of electromagnetic field, we decompose them into fine pieces as follows.

 \begin{definition}[Angular localization for the electromagnetic field]\label{angularlocalizationelect}

  For $ U\in \{E, B\} $, any fixed $ \mu\in \{+,-\}, k\in \Z_+, j\in   \Z_+, n\in [-M_t, 2]\cap \Z, \zeta\in \R^3/\{0\}$, and Fourier symbol $\mathfrak{m}(\xi, \zeta)\in \mathcal{S}^\infty$ (see \eqref{symbolclassdefinition}), we define the following localized electromagnetic field,   in which the   frequency   and the angle with respect to a fixed direction $\tilde{\zeta}$\footnote{  The quantity $\zeta$ will serve as the velocity characteristic $V(s)$ in subsequent arguments..} are localized,  
  \be\label{sep5eqn66}
  \begin{split}
  &T_{k,j;n}^{\mu}(\mathfrak{m}, U)(t,x, \zeta):= \int_0^t \int_{\R^3} e^{i x\cdot \xi + i \mu(t-s)|\xi | } |\xi|^{-1}  \mathfrak{m}(\xi, \zeta)\widehat{\mathcal{N}^j_U}(s, \xi) \varphi_k(\xi)\varphi_{n;-M_t}\big( \tilde{\xi} + \mu \tilde{\zeta}\big) d \xi d s,  \\
    &T_{k;n}^{\mu}(\mathfrak{m}, U)(t,x, \zeta):=\sum_{j\in\Z_+}  T_{k,j;n}^{\mu}(\mathfrak{m}, U)(t,x, \zeta),\\
  \end{split}
  \ee
  where $\forall u\in \R^3/\{0\}, \tilde{u}:=u/|u|$.
\end{definition}

Recall \eqref{march14eqn1}. Actually, we  only need to consider the case $j\in[0, (1+2\epsilon)M_t]$  because  the   tail part of the electromagnetic field, i.e., $j\geq (1+2\epsilon)M_t$, is well controlled. More precisely, we have
\begin{theorem}\label{roughesttailpart}
 Let $\epsilon=10^{-7}$.For  any  $t\in[0, T^{})$,   we have 
\be\label{dec2eqn31}
\sum_{j\in \Z, j\geq (1+2\epsilon)M_t}\sum_{U\in \{E, B\}}\| U_j(t,x  )\|_{L^\infty_x} \lesssim \sum_{j\in \Z, j\geq (1+2\epsilon)M_t} 2^{-9j + (3+10\epsilon)M_t} \lesssim 2^{-5M_t}. 
\ee
\end{theorem}
  \begin{proof}
 See \cite{PartII}[Theorem 1.5].
 \end{proof}

Despite the localization in \eqref{sep5eqn66} is good enough for the purpose of measuring the smoothing effect, it turns out the angle between $v$ and $\zeta$ also play  an important role.

 Roughly speaking, for any $U\in \{E, B\}$,  the symbol of the linear part of  $\mathcal{N}_U^j$ is of size $\tilde{v}\times \tilde{\xi}$, see \eqref{sep5eqn1}. In the case where $v$ and $\xi$  are parallel, which constitutes the  worst case scenario, the symbol vanishes. This is commonly referred to as null structure. For a deeper understanding of null structure, see Klainerman's classic works \cite{Kla83,Kl85,Kla86} on the null structure of nonlinear wave equations.

    In particular,  the symbol of the linear part of  $\mathcal{N}_E^j+\hat{\zeta}\times \mathcal{N}_B^j$ is of size $(\hat{v}-\hat{\zeta})\times (\tilde{v}\times \tilde{\xi})$, which provides extra smallness when $v$ and $\zeta$ are very close. For this reason, we refer this structure of the acceleration force  as the \textbf{double null structure}. Hence, instead of estimating the electromagnetic field separately, we  estimate  the combination of  the electric field and the magnetic field as in  the acceleration force $K(s,X(x,v,s,t), V(x,v,s,t))$, see \eqref{backward}.

To fully exploit the benefit of the double null structure, in Part II\cite{PartII}, we obtained a refined decomposition for the nonlinearity  $\mathcal{N}^j_U(t,x)$,  $U\in \{E, B\}$,   see \eqref{sep5eqn1},
based on the possible size of $v$ and  the angle between $v$ and $\zeta$, which is fixed, as follows, 
\be\label{sep5eqn10}
\begin{split}
 \mathcal{N}^j_U(t,x)&=\sum_{i=0,1,2,3,4 } \mathcal{N}^{j,n}_{U;i}(t,x, \zeta)  , \\ 
\mathcal{N}^{j,n}_{E;i}(t,x,\zeta)&:= - 4\pi\big[ \int_{\R^3} \big( \hat{v} \p_t f(t, x, v)   +  \nabla_x f(t, x , v)\big)\varphi_{j,n}^i(v, \zeta) d v\big], \\ 
\mathcal{N}^{j,n}_{B;i}(t,x,\zeta) &:=-4\pi\int_{\R^3} \hat v\times \nabla_x f(t, x, v) \varphi_{j,n}^i(v, \zeta)d v,
\end{split}
\ee
where the cutoff functions $\varphi_{j,n}^i(v, \zeta),i\in\{0,1,2,3,4 \},$ are defined as follows, 
\be\label{sep4eqn6}
\begin{split}
\varphi_{j,n}^0(v, \zeta)& := \varphi_j(v)\psi_{\geq  \vartheta^{\star}_0}(\tilde{\zeta}- \tilde{v})\psi_{ \leq    (1/2+3\iota + 55\epsilon ) M_{t^{\star  }} +2}(v), \\
  \varphi_{j,n}^1(v, \zeta)&:= \varphi_j(v)\psi_{\geq  \vartheta^{\star}_0 }(\tilde{\zeta}- \tilde{v})\psi_{ >    (1/2+3\iota + 55\epsilon ) M_{t^{  \star}}+2 }(v),\\ 
\varphi_{j,n}^2(v, \zeta)&:= \varphi_j(v) \psi_{ [\vartheta^{\star}_1, \vartheta^{\star}_0) }( \tilde{\zeta}- \tilde{v} )  ,  \\
 \varphi_{j,n}^3(v, \zeta)&:= \varphi_j(v)  \psi_{<  \vartheta^{\star}_2   }( \tilde{\zeta} - \tilde{v} )
,\\
  \varphi_{j,n}^4(v, \zeta)&:= \varphi_j(v)  \psi_{ [\vartheta^{\star}_2, \vartheta^{\star}_1)   }( \tilde{\zeta} - \tilde{v} ), \\ 
  \vartheta^{\star}_0 &:=\max\{  n + \epsilon M_{t^\star}, -  10^{-3}  M_{t^\star} \},  \quad   \vartheta^{\star}_1:=  n + \epsilon   M_{t^\star}, \quad \vartheta^{\star}_2:=  n -\epsilon  M_{t^\star},
\end{split}
\ee
where $t^{\star}\in [0,T)$ is some fixed time s.t., $t\in [0, t^{\star}].$

We briefly explain  the motivation of decomposing into five pieces as in \eqref{sep4eqn6}.  We compare   the angle between $\zeta$ and $v$, which is a free variable, with respect to $2^n$, which measures $\angle(\xi, -\mu\zeta)$ due to the cutoff function in \eqref{sep5eqn66}.  Roughly speaking, there are four cases: (i) either  $\angle(v, \zeta)$ is not too small or the case $n$ is not too small; (ii) $n$ is small, $\angle(v, \zeta)$ and $2^n$ are not comparable and  $\angle(v, \zeta)$ is bigger than  $2^n$; (iii) $n$ is small, $\angle(v, \zeta)$ and $2^n$ are not comparable and  $\angle(v, \zeta)$ is smaller than  $2^n$; (iv) $n$ is small, $\angle(v, \zeta)$ and $2^n$ are almost comparable. This intuition corresponds exactly the partition of unity in \eqref{sep4eqn6} modulo a technical issue caused by the size of $v$, which motivates the definition of $\varphi_{j,n}^0(v, \zeta)$ and $\varphi_{j,n}^1(v, \zeta)$.

 We choose to split into pieces based on the size of the angle $\angle(v, \zeta)$
  because this quantity is part of  the double null structure. Additionally, since 
  $\zeta$ is fixed, it is much easier to make measurements with a constant reference.

  Correspondingly, we decompose the localized electromagnetic field into five parts as follows, 
  \be\label{sep17eqn32}
 \begin{split}
T_{k,j;n}^{\mu}( \mathfrak{m}, U)(t,x, \zeta)&:=\sum_{ i\in\{0,1,2,3,4 \}} T_{k,j;n}^{\mu,i}(  \mathfrak{m}, U)(t,x, \zeta), \\ 
T_{k,j;n}^{\mu,i}( \mathfrak{m}, U)(t,x, \zeta)& :=  \int_0^t \int_{\R^3} e^{i x\cdot \xi + i \mu(t-s)|\xi | } |\xi|^{-1}  \mathfrak{m}(\xi, \zeta)\widehat{ \mathcal{N}^{j,n}_{U;i} }(s, \xi, \zeta) \\
&\qquad \times \varphi_k(\xi)\varphi_{n;-M_t}\big(  \tilde{\xi} + \mu \tilde{\zeta} \big) d \xi d s. 
 \end{split}
 \ee

 Since the nonlinearities $ \mathcal{N}^j_U(t,x), U\in\{E, B\},$ contain linear terms, which depend on the distribution function $f$ and lose one derivative. After exploiting the smoothing effect, we have the following elliptic-hyperbolic type decomposition for the localized electromagnetic field, 
\be\label{oct7eqn1}
\begin{split}
  & T_{k,j;n}^{\mu,i}(  \mathfrak{m}, E)(t,x, \zeta) + \hat{\zeta}\times  T_{k,j;n}^{\mu,i}(  \mathfrak{m}, B)(t,x, \zeta)  \\
   & =  Ini_{k,j,n}^{\mu,i}( \mathfrak{m})(t, x, \zeta) + \mathfrak{H}_{k,j;n}^{\mu,i}( \mathfrak{m})(t, x, \zeta )     +  \mathfrak{E}^{\mu, i}_{k,j;n}( \mathfrak{m})(t,  x, \zeta) \mathbf{1}_{i\in\{0,1,2,3\} },
\end{split}
\ee
where $Ini_{k,j,n}^{\mu}( \mathfrak{m})( s, x, \zeta)$ depends only on initial data, which don't play a role in the argument.

We  refer  terms  $\mathfrak{E}^{\mu, i}_{k,j;n}( \mathfrak{m})( s, x, \zeta)$  as elliptic parts because they resemble  the structure of the electric field of the Vlasov-Poisson system \eqref{vlasovpo}, in which the electric field solves an elliptic equation. Moreover, we refer terms  $\mathfrak{H}_{k,j;n}^{\mu,i}( \mathfrak{m})( s, x, \zeta)$ as hyperbolic parts because they solve nonlinear wave equations.

 The elliptic parts  $\mathfrak{E}^{\mu, i}_{k,j;n}( \mathfrak{m})( t, \cdot, \zeta), i\in\{0,1,2,3\}, $ are defined as follows, 
\be\label{2022feb24eqn81} 
\begin{split}
   \mathfrak{E}^{\mu, i}_{k,j;n}( \mathfrak{m})(t, x, \zeta)& :=  \int_{\R^3}\int_{\R^3} e^{i x\cdot \xi} \hat{f}( t, \xi, v)   \mathfrak{m}(\xi, \zeta)  m_i(\xi, v, \zeta) \\
   &\quad\times \psi_k(\xi) \varphi_{j,n}^a(v,\zeta)  \varphi_{n;-M_t}( \tilde{\xi} + \mu \tilde{\zeta})    d\xi d v,\\
 m_i(\xi, v, \zeta)&:= \frac{    4\pi\big( {(\hat{v}-\hat{\zeta} )\times (\hat{v}\times \xi)+\xi(1-|\hat{v}|^2)} \big)   }{   |\xi|(\mu |\xi|+ \hat{v}\cdot \xi  )  },  
\end{split}
\ee
and   the hyperbolic parts $\mathfrak{H}_{k,j;n}^{\mu,i}( \mathfrak{m})(s, x, \zeta), i \in\{0,1,2,3,4\},  $ are defined as follows,
\be
    \mathfrak{H}_{k,j;n}^{\mu,i}( \mathfrak{m})(t, x, \zeta) := \int_{\R^3} e^{ix\cdot \xi + i \mu t|\xi|} \mathfrak{m}(\xi, \zeta) \mathcal{F}[  \mathfrak{H}_{k,j;n}^{\mu,i}] (t, \xi, \zeta) d \xi,
\ee
where
\be\label{2022feb22eqn81}
 \begin{split}
   \mathcal{F}[  \mathfrak{H}_{k,j;n}^{\mu, a }] (t, x, \zeta)& :=\int_0^t \int_{\R^3}  e^{  - i\mu \tau |\xi| } \mathcal{F}\big((E+\hat{v}\times B)f\big)(\tau, \xi, v)  \cdot \nabla_v \big[  m_{j,n}^a  (v, \zeta, \xi) \big]  \\
&\quad \times   \varphi_{n;-M_t} ( \tilde{\xi} + \mu \tilde{\zeta} )  \varphi_k(\xi) d v  d \tau,\quad a\in\{0,1,2,3\}, \\ 
  m_{j,n}^a  (v , \xi , \zeta)&:=   \big(
 \frac{   4\pi\big( {(\hat{v}-\hat{\zeta} )\times (\hat{v}\times \xi)+\xi(1-|\hat{v}|^2)} \big) }{    |\xi|(\mu |\xi|+ \hat{v}\cdot \xi  )  }+  \frac{\hat{v}}{|\xi|} \big) \varphi_{j,n}^a  (v, \zeta),\quad a\in\{0,1,2,3\}, \\ 
 \mathcal{F}[  \mathfrak{H}_{k,j;n}^{\mu,4  }] (t, x, \zeta)&:=  \int_0^t\int_{\R^3} e^{  - i \mu \tau |\xi | } \frac{\varphi_k(\xi)}{|\xi|} \varphi_{n;-M_t}( \tilde{\xi} + \mu \tilde{\zeta}  ) \\
&\quad \times \big(\widehat{ \mathcal{N}^{j,n}_{E;4} }(\tau, \xi, \zeta) + \hat{\zeta} \times  \widehat{ \mathcal{N}^{j,n}_{B;4} }(\tau, \xi, \zeta)\big) d \xi d \tau , \\ 
\end{split}
\ee
where   $   \mathfrak{H}_{k,j;n}^{\mu,a  }(s, \xi, \zeta)$ with $a\in \{0,1,2,3\}$   have two sources, one source is  from the normal form transformation when $\p_s $ hits   the profile $g(s,x,v):=f(s,x+s\hat{v},v)$ and the other source is from the nonlinearity of the electric field $\mathcal{N}^j_E(t, x)$. 
Meanwhile,  $  \mathfrak{H}_{k,j;n}^{\mu,4  }(s, \xi, \zeta)$ only comes from the nonlinearity of the electric field $\mathcal{N}^j_E(t, x)$ since we didn't do normal form transformation at the initial stage.

The elliptic part is absent in \eqref{oct7eqn1} for the case $i=4$ because we did not apply a normal form transformation in this case.  If we were to apply the normal form transformation, the resulting symbol, $(\mu |\xi|+ \hat{v}\cdot \xi)^{-1}$, would exhibit significant singularities when $v\in supp( \varphi_{j,n}^4(\cdot, \zeta))$ (see \eqref{sep4eqn6}). Thus, performing the normal form transformation for $i=4$ is inadvisable at this point.

 As summarized in the following Theorem,   we obtain the following fixed time pointwise estimates for  the elliptic parts in Part II \cite{PartII},
\begin{theorem}\label{maintheoremellipitic}
Let  Fourier symbol $m(\xi, \zeta)\in  L^\infty_\zeta \mathcal{S}^\infty$,   $\mu\in\{+,-\},$  $ t^{\star}\in [0, T), \alpha^{\star}:= 2/3+\iota, \iota:=10^{-4},   \zeta \in\R^3/\{0\}, t \in [0,  t^{\ast}]$ be fixed     s.t., $\alpha_t M_{t_{ }}\leq  \alpha^{\star} M_{t^{\star}}, M_t\gg 1$. 
For any fixed   $k\in \Z_{+}, j\in[0, (1+2\epsilon)M_t]\cap \Z,   n\in [-  M_t, 2]\cap \Z$, $i\in\{0,1,2,3\},$  
\begin{enumerate}
   \item[(i)] The following $L^\infty_x$-type rough estimate holds, 
\be\label{2022feb25eqn1}
\begin{split}
    & \big\|\mathfrak{E}^{\mu, i}_{k,j;n}(\mathfrak{m})(t, \cdot, \zeta)\big\|_{L^\infty_x}\\
      &\lesssim \| \mathfrak{m}(\cdot, \zeta)\|_{\mathcal{S}^\infty}  \min\big\{2^{ 4{\alpha}^{\star}  M_{t^{\star}}/3 + 5\epsilon M_{t^{\star}} } \big(2^{{\alpha}^{\star}  M_{t^{\star}}/3 } + (2^{M_{t^{\star}} }\frac{| \zeta_{\bot}|}{|\zeta|})^{1/3}\big),    2^{(1-20\epsilon)M_{t^{\star}} } \\
     &\quad  +2^{50\epsilon M_{t^{\star}}}\mathbf{1}_{n\geq  (1-2 {\alpha}^{\star})M_{t^{\star}}-40\epsilon M_{t^{\star}} }    \min\{2^{(k+2n)/2+  {\alpha}^{\star}  M_{t^{\star}} },2^{(k+4n)/2+(3{\alpha}^{\star}  -1)   M_{t^{\star}} }\}     \big\}. \\
  \end{split}
\ee
  \item[(ii)]  The following pointwise estimates hold for any $x, \zeta\in \R^3$ s.t., $|  x_{\bot}|\in (0, 2^{M_{t^{\star}}/2}]$,  $| \zeta_{\bot}|\sim   2^{\gamma_1 M_{t^{\star}}  } , |\zeta|\sim  2^{\gamma_2 M_{t^{\star}}   } ,$  where $\gamma_1\geq   \alpha^{\star} -4\epsilon    $, $\gamma_2 \leq (1-2\epsilon) $, 
\begin{itemize}
  \item[(ii-a)]If $ | x_{\bot}|\leq  -k-n +2\epsilon M_{t}$, we have 
  \be\label{2024oct27eqn1}
   \big|     \mathfrak{E}^{\mu, i}_{k,j;n} (t, x,\zeta)\big| \lesssim   \| \mathfrak{m}(\cdot, \zeta)\|_{\mathcal{S}^\infty}  |  x_{\bot}|^{-1} 2^{(\gamma_1-\gamma_2)M_t} 2^{ 5 {\alpha}^{\star} M_{t^{\star}}/6} \| \mathfrak{m}(\cdot, \zeta)\|_{\mathcal{S}^\infty}.\\
  \ee
    \item[(ii-b)]If  
 $ |  x_{\bot}|\geq  -k-n +3\epsilon M_{t}/2$, we have 
\be\label{2022feb24eqn1}
\begin{split}
  \big|     \mathfrak{E}^{\mu, i}_{k,j;n} (t, x,\zeta)\big| &\lesssim  \| \mathfrak{m}(\cdot, \zeta)\|_{\mathcal{S}^\infty}   |  x_{\bot}|^{-1/2} 2^{(  {\alpha}^{\star}+10\epsilon)M_{t^{\star}}}, \\
  \big|    \mathfrak{E}^{\mu, i}_{k,j;n} (t, x,\zeta)\big| &\lesssim \| \mathfrak{m}(\cdot, \zeta)\|_{\mathcal{S}^\infty}   |   x_{\bot}|^{-1/2}2^{  (\gamma_1-\gamma_2)M_{t^{  }}/2}   \big[ 2^{(  {\alpha}^{\star}-10\epsilon)M_{t^{\star}}}   + 
  \mathbf{1}_{n\geq  -\alpha^\star M_{t^{  }}/2  }\\
  &\quad \times  \min\{2^{(k+2n)/2+ 2{\alpha}^{\star}M_{t^{  }}/3}, 2^{(k+4n)/2+{\alpha}^{\star}M_{t^{  }}-(\gamma_1-\gamma_2)M_{t^{  }}/3}\}\big].
  \end{split}
\ee
\end{itemize}
 
\end{enumerate}
\end{theorem}
 \begin{proof}
See \cite{PartII}[Theorem 1.6].
\end{proof}
 
For the hyperbolic parts $\mathfrak{H}_{k,j;n}^{\mu,i}( \mathfrak{m})(s, x, \zeta ), i\in\{0,1,2,3,4\}$, we have 
 \begin{theorem}\label{mainresultsfirstpart}
Let  Fourier symbol $m(\xi, \zeta)\in  L^\infty_\zeta \mathcal{S}^\infty$,   $\mu\in\{+,-\},$  $ t^{\star}\in [0, T), \alpha^{\star}:= 2/3+\iota, \iota:=10^{-4},   \zeta \in\R^3/\{0\}, t \in [0,  t^{\ast}]$ be fixed     s.t., $\alpha_t M_{t_{ }}\leq  \alpha^{\star} M_{t^{\star}}, M_t\gg 1$. 
For any fixed   $k\in \Z_{+}, j\in[0, (1+2\epsilon)M_t]\cap \Z,   n\in [-  M_t, 2]\cap \Z$,  
 \begin{enumerate}
 \item[(i)] The following $L^\infty_x$-norm type estimates hold for any $\zeta\in \R^3/\{0\}$,
\be\label{2024oct8eqn1}
\begin{split}
 \sum_{i=0,2}\| \mathfrak{H}_{k,j;n}^{\mu,i}( \mathfrak{m})(t, x, \zeta )\|_{L^\infty_x} 
 &\lesssim \| \mathfrak{m}(\cdot, \zeta)\|_{\mathcal{S}^\infty}   \big[2^{(1-19\epsilon)M_{t^{\star}} } 
+   2^{  128\epsilon M_{t^{\star}} }    \mathbf{1}_{n\geq  \mathfrak{n}_1}\\
&\quad \times \min\{ 2^{(k+2n)/2+ (\alpha^{\star}+3\iota) M_{t^{\star}} }  , 2^{(k+4n)/2 +(1+6\iota)M_{t^{\star}}} \} \big], \\
 \| \mathfrak{H}_{k,j;n}^{\mu,1}( \mathfrak{m})(t, x, \zeta )\|_{L^\infty_x} 
 &\lesssim \| \mathfrak{m}(\cdot, \zeta)\|_{\mathcal{S}^\infty}   \big[2^{(1-19\epsilon)M_{t^{\star}} } 
+     2^{   121\epsilon M_{t^{\star}} }    \mathbf{1}_{n\geq  \mathfrak{n}_1 } \\
  &\quad \times  \min\{   2^{(k+2n)/2 + (2{\alpha}^{\star} -1)M_{t^{\star}}}  , 2^{(k+4n)/2 +(1+6\iota)M_{t^{\star}}} \} \big],\\
  \sum_{i=3,4}\| \mathfrak{H}_{k,j;n}^{\mu,i}( \mathfrak{m})(t, x, \zeta )\|_{L^\infty_x} 
 & \lesssim    \| \mathfrak{m}(\cdot, \zeta )\|_{\mathcal{S}^\infty}   \big[2^{(1-19\epsilon) M_{t^\star} } + 2^{128\epsilon M_{t^\star} } \mathbf{1}_{n\geq \mathfrak{n}_2 } \\
  &\quad \times \min\{2^{(k+2n)/2 +  {\alpha}^{\star} M_{t^\star}}, 2^{(k+4n)/2 +(7/6+5\iota/2)M_{t^\star}}\} \big],  \\
\end{split}
\ee
 where 
 \be
  \mathfrak{n}_1:= -(\alpha^{\star}+3\iota+60\epsilon) M_{t^{\star}}, \quad \mathfrak{n}_2:= - ( (1 +3\iota)/2+40\epsilon) M_{t^{\star}}. 
 \ee
 \item[(ii)] The following pointwise estimates hold for any $x, \zeta\in \R^3,$ s.t., $|  x_{\bot}|\in (0, 2^{M_{t^{\star}}/2}]$,  $|  \zeta_{\bot}|\sim   2^{\gamma_1 M_{t^{\star}}  } , |\zeta|\sim  2^{\gamma_2 M_{t^{\star}}   } ,$  where $\gamma_1\geq   \alpha^{\star} -4\epsilon    $, $\gamma_2 \leq (1-2\epsilon) $, 
 \begin{itemize}
  \item[(ii-a)] If $| x_{\bot} |\leq 2^{-k-n+2\epsilon M_{t^{\star}}}$, we have 
\be\label{2024oct8eqn2}
\begin{split}
&\sum_{i=0,1,2,3,4} 2^{(\gamma_1-\gamma_2)M_{t^{\star}}}\big|\mathfrak{H}_{k,j;n}^{\mu,i}( \mathfrak{m})(t, x, \zeta ) \big|+\big|\mathbf{P}\big(\mathfrak{H}_{k,j;n}^{\mu,i}( \mathfrak{m})(t, x, \zeta ) \big)\big|\\  
&\lesssim   \| \mathfrak{m}(\cdot, \zeta )\|_{\mathcal{S}^\infty}  \big[\sum_{a\in \mathcal{T}+\mathcal{T}} |  x_{\bot}|^{-a}2^{a(\gamma_1-\gamma_2)M_{t^{\star}} } 2^{( \alpha^{\star}   -10\epsilon)M_{t^{\star}}}  \big],\\
\end{split}
\ee
  \item[(ii-b)] If $|  x_{\bot} |\geq 2^{-k-n+\epsilon M_{t^{\star}}}$, we have 
  \be\label{2024oct8eqn5}
\begin{split}
 &\sum_{i=0,1,2 } 2^{(\gamma_1-\gamma_2)M_{t^{\star}}}\big|\mathfrak{H}_{k,j;n}^{\mu,i}( \mathfrak{m})(t, x, \zeta ) \big|+\big|\mathbf{P}\big(\mathfrak{H}_{k,j;n}^{\mu,i}( \mathfrak{m})(t, x, \zeta ) \big)\big|\\
&\lesssim  \| \mathfrak{m}(\cdot, \zeta )\|_{\mathcal{S}^\infty} \Big[   2^{    40\epsilon M_{t^{\star}}}     \big( \min\{ |  x_{\bot}|^{-1/2} 2^{7 \alpha^{\star}  M_{t^{\star}} /8}, |  x_{\bot}|^{-1} 2^{3 \alpha^{\star}  M_{t^{\star}} /8  } \}\big)  \\
& \quad  +  \sum_{a\in \mathcal{T}} |  x_{\bot}|^{-a}2^{a(\gamma_1-\gamma_2)M_{t^{\star}} } \big[2^{( \alpha^{\star}   -10\epsilon)M_{t^{\star}}}+    \mathbf{1}_{n\geq -(\alpha^\star/2 + 30\epsilon)M_{t^\star}}   \\
&\quad   \times  \min\big\{2^{(k+2n)/2+ 2\alpha^{\star}M_{t^\star}/3- (\gamma_1-\gamma_2)M_{t^{\star} }/6},   2^{(k+2n)/2+  \alpha^{\star}M_{t^\star} - (\gamma_1-\gamma_2)M_{t^{\star} }/3 }\big\}  \big] \Big], \\ 
\end{split}
\ee
\be\label{2024Dec6eqn31}
\begin{split}
 &\sum_{i=3,4 } 2^{(\gamma_1-\gamma_2)M_{t^{\star}}}\big|\mathfrak{H}_{k,j;n}^{\mu,i}( \mathfrak{m})(t, x, \zeta ) \big|+\big|\mathbf{P}\big(\mathfrak{H}_{k,j;n}^{\mu,i}( \mathfrak{m})(t, x, \zeta ) \big)\big|\\
 &\lesssim  \| \mathfrak{m}(\cdot, \zeta )\|_{\mathcal{S}^\infty} \Big[   2^{    40\epsilon M_{t^{\star}}}     \big( \min\{ |  x_{\bot}|^{-1/2} 2^{7 \alpha^{\star}  M_{t^{\star}} /8}, |  x_{\bot}|^{-1} 2^{3 \alpha^{\star}  M_{t^{\star}} /8  } \} \big) \\
& \quad  +  \sum_{a\in \mathcal{T}} |  x_{\bot}|^{-a}2^{a(\gamma_1-\gamma_2)M_{t^{\star}} } \big[2^{( \alpha^{\star}   -10\epsilon)M_{t^{\star}}}+    \mathbf{1}_{n\in  \mathcal{N}_{t^{\star} }^1} \min\big\{2^{(k+2n)/2}   \\
&\qquad   \times  2^{ 2\alpha^{\star}M_{t^\star}/3- (\gamma_1-\gamma_2)M_{t^{\star} }/6},   2^{(k+2n)/2+  \alpha^{\star}M_{t^\star} - (\gamma_1-\gamma_2)M_{t^{\star} }/3 }\big\}  \\
& \quad +     \mathbf{1}_{n\in \mathcal{N}_{t^{\star}}^2  } \min\{   2^{(k+3n)/2 +3\alpha^{\star} M_{t^{\star} }/4  },   2^{(k+4n)/2 + \alpha^{\star} M_{t^{\star} }   - (\gamma_1-\gamma_2)M_{t^{\star} }/3}  \} \big]    \Big], \\
\end{split}
\ee
\end{itemize}
where $\mathbf{P}$ denotes the projection operator, see \eqref{definitionprojection}, and the sets $\mathcal{T}$, $\mathcal{N}_{t^{\star} }^1$, and $\mathcal{N}_{t^{\star} }^2$ are defined as follows, 
\be\label{2024oct8eqn8}
\begin{split}
&\mathcal{T}:=\{0,1/8, 1/6, 1/4,1/3, 3/8, 1/2\},\\
&\mathcal{N}_{t^{\star} }^1:=\{n: n\in [ (-  \alpha^{\star} +\gamma_1-\gamma_2) M_{t^{\star} }/2 -30\epsilon M_{t^{\star}}    , 2], \\ 
&\qquad n\notin  [    (\gamma_1-\gamma_2-2\epsilon)M_{t^{\star}},  (\gamma_1-\gamma_2+2\epsilon)M_{t^{\star}}] \}\cap \Z,  \\
&\mathcal{N}_{t^{\star} }^2:=\{n:  n\in  [    (\gamma_1-\gamma_2-2\epsilon)M_{t^{\star}},  (\gamma_1-\gamma_2+2\epsilon)M_{t^{\star}}] \}\cap \Z. 
\end{split}
\ee

 \end{enumerate}
\end{theorem}
\begin{proof}
See \cite{PartII}[Theorem 1.7]. 
\end{proof}

At first glance, the decompositions and estimates outlined above may seem intricate and complex. It is not immediately clear why we pursue these specific estimates.    To provide more context for the preceding results, a few remarks are in order.
\begin{remark}
We emphasize the importance of the sharpness of the upper bound. For instance, consider the following upper bound for the localized acceleration force,
\[
\begin{split}
\textup{(An example of not sufficient estimate)}\quad &\|T_{k,j;n}^{\mu}(\mathfrak{m}, E)(t,x, \zeta) + \hat{\zeta}\times  T_{k,j;n}^{\mu}(\mathfrak{m}, B)(t,x, \zeta)\|_{L^\infty_{x,v}} \\
&\quad \lesssim 2^{(k+2n)/2+ (1+3\iota) M_t },
\end{split}
\]
then this presents a challenge for cases where there is no smoothing effect, such as when $k+2n=0$. Additionally, ISS is not applicable here since we lack a starting point for our argument. Consequently, we are unable to establish the sub-linearity of the momentum function, which is central to our reasoning. 
\end{remark}
 
\begin{remark}
The reason we single out the projection part $\mathbf{P}(\mathfrak{H}_{k,j;n}^{\mu,i}( \mathfrak{m}))$
  in \eqref{2024oct8eqn2}, \eqref{2024oct8eqn5}, and \eqref{2024Dec6eqn31}, is that we observe it has a more favorable symbol compared to $\mathfrak{H}_{k,j;n}^{\mu,i}( \mathfrak{m})$.
  Specifically, the symbol of $\mathbf{P}(\mathfrak{H}_{k,j;n}^{\mu,i}( \mathfrak{m}))$
 introduces an additional smallness factor of  $| v_{\bot}|/|v|$
 , which is particularly small in the worst-case scenario, given that 
  $|  v_{\bot}|$ is bounded above by 
   $2^{\alpha_t M_t}$
 , while $|v|$
  can reach up to $2^{M_t}$.

\end{remark}

\begin{remark}
We adopt different assumptions for $\zeta$ in the two sets of estimates outlined in $(i)$ and $(ii)$ of Theorem \ref{mainresultsfirstpart} due to their distinct applications in bootstrap arguments, where the available information about velocity characteristics varies. Specifically, the estimates in $(ii)$ are intended solely to demonstrate that $\alpha_t \leq 2/3 + \iota$. In contrast, the estimates in $(i)$ can be utilized across all bootstrap arguments, as they do not impose any specific assumptions on $\zeta$. 
\end{remark}

\begin{remark}
In    estimates \eqref{2024oct8eqn2}, \eqref{2024oct8eqn5}, and \eqref{2024Dec6eqn31}, we consider two cases based on the size of $|   x_{\bot}|$ relative to $2^{-k-n+\epsilon M_{t^{\star}}/2}$, which defines the essential support\footnote{ In the two-paper sequence, ``essential support" refers to the region outside of which the tail of the kernel is negligible and well-controlled. } of the kernel due to frequency localization. We also include a margin of $2^{\epsilon M_{t^{\star}}}$ in our case division to allow flexibility in applying the smooth cutoff function.

Roughly speaking,  the estimate in  \eqref{2024oct8eqn2} indicates that the contribution from the case $|  x_{\bot}| \leq 2^{-k-n+2\epsilon M_{t^{\star}}}$ behaves like an error term in the bootstrap argument. Thus, it suffices to focus on the case $|  x_{\bot}| \geq 2^{-k-n+2\epsilon M_{t^{\star}}}$, where we have $|  x_{\bot} -   y_{\bot}| \sim |  x_{\bot}|$ for all $y \in B_{2^{-k-n+\epsilon M_{t^{\star}}}}(0)$. This means that the frequency localization process does not significantly alter the spatial distance of $x$ relative to the $z$-axis, which will be a key characteristic in the bootstrap argument.
\end{remark}
\begin{remark}
We emphasize  the importance of the lower bound of $n$. This explains the presence of characteristic functions $\mathbf{1}_{n\geq \cdot}$ in Theorem \ref{maintheoremellipitic} and Theorem \ref{mainresultsfirstpart}. The reason behind this is that there is an extra loss of $2^{-n}$ in the integration by parts process. It  happens when the time derivative hits the symbol $(\mu|
\xi|+\hat{V}(s)\cdot\xi)^{-1}$, which is introduced in the exploiting the  smoothing effect process. As a result,   we actually lose more than what we can gain from the smoothing effect if $n$ is very small.
\end{remark}
\begin{remark}
In   estimates \eqref{2024oct8eqn1},   \eqref{2024oct8eqn5}, and \eqref{2024Dec6eqn31}, we present two distinct estimates that highlight different aspects of the localized acceleration force and are applied at various stages of the comparing-smoothing argument, as detailed in \textbf{Step 2} of subsection \ref{skeletonintroduction}. During the comparison stage, we utilize the first estimate, as the critical quantity at this stage is $2^{k+2n}$, which represents the size of the oscillation phase (see \eqref{2024oct6eqn1}).

  Since the second estimate in \eqref{2024oct8eqn1} (and also in \eqref{2024oct8eqn5} and \eqref{2024Dec6eqn31}) is significantly better than the first when $n$ is very small, we employ the second estimate  when  we encounter an additional angular loss of size $2^{-n}$ that arises when the time derivative $\partial_s$ is applied on    the denominator (see \eqref{2024oct6eqn1}).

\end{remark}
\begin{remark}
The estimates derived in the above theorem, particularly \eqref{2024oct8eqn5} and \eqref{2024Dec6eqn31}, appear complex due to the presence of several independent parameters, such as $k$ for frequency, $n$ for angle, and $j$ for the size of $v$. Optimizing across different regions leads to varying results. We cannot sacrifice precision for simplicity in these estimates, as the iterative smoothing scheme relies on the small margins available between different estimates to function effectively.
\end{remark}

 Now, we briefly discuss the  main ingredients used in  obtaining the above main results. 
\begin{enumerate}
\item[(i)] Two ways of exploiting the smoothing effect. 
\begin{itemize}
\item[(1)] Dyadically  localized Glassy-Strauss type decomposition.
 \item[(2)]  Normal form transformation.
\end{itemize}
\item[(ii)] Structure observations: \begin{itemize}
\item[(1)] Null structure in both $E$ and $B$.
\item[(2)]  The double null structure for the acceleration force $E+\hat{\zeta}\times B$; 
\item[(3)] The following useful decomposition of the acceleration force in the S-part of the representation formula allows us to exploit the conservation laws,  
\be\label{july1eqn11}
\begin{split}
 &E(s, x )  + \hat{v}\times B(s, x  )
  = \sum_{i=1,2}EB^i(t,s,x ,\omega, v) ,
 \end{split}
\ee
where the precise formulas of $EB^i(t,s,x ,\omega, v), i\in\{1,2\},$ are not need for the discussion. Interested readers are refereed to part II \cite{PartII}[Section 1.6] for their precise formulas.     For $EB^1(t,s,x ,\omega, v)$, we can exploit the conservation law in Lemma \ref{conservationlawlemma} to control the electromagnetic field. Meanwhile, for $EB^2(t,s,x ,\omega, v)$, which depends only on the magnetic field,  there is an extra gain of smallness in terms of  $\angle(v, -\omega)$.
 
\end{itemize}
\item[(iii)]   A  dichotomy  for the localized magnetic field (see Proposition \ref{meanLinfest}), which provides a better control than the $L^2_x$-conservation law. It will be used to control the contribution from the $EB^2$-part of the S-part.

\end{enumerate}

\subsection{Refined decompositions and estimates of the    magnetic field}\label{magneticfielddictomy}

We often observe that the quantity we need to control, such as $E+\hat{v}\times B$
 in the hyperbolic parts (see \eqref{2022feb22eqn81}), differs from the estimates we have available, such as the estimate for the acceleration force $E+\hat{\zeta}\times B$ 
  or the conservation law for $E-\omega \times B$
  (see \eqref{march18eqn31}). It is important to note that the only difference between these quantities lies in the magnetic field.

To control these differences, in Part II, we established a dichotomy for the localized magnetic field. In essence, after decomposing the magnetic field into finer components, we demonstrated that the 
 $L^\infty_x$-norm
 and the $L^2_x$-norm of each piece cannot be large simultaneously.

More precisely,  for the magnetic field, we have the following atomic type decomposition, 
\be\label{2024oct30eqn2}
B(s,x)= \sum_{(m,k,j,l)\in \mathcal{S}_1(t)\cup \mathcal{S}_2(t)} B^{  {m}}_{  {k};  {j},    {l}}(s,x), 
\ee
where the precise formulas of $B^{  {m}}_{  {k};  {j},    {l}}(s,x)$ and index sets  $ \mathcal{S}_1(t)$ and $ \mathcal{S}_2(t)$
 don't play much role in Part I, we omit them here. Interested readers are referred to \cite{PartII}[Section 3.4] for details.  What we need to understand are the distinct properties of the different components. 

 For the localized magnetic field, the following dichotomy holds, 
 \begin{proposition}\label{meanLinfest}
For any $t\in [0, T^{})$,  we have 
\be\label{2024oct30eqn1}
\begin{split}
\sum_{(m,k,j,l)\in \mathcal{S}_1(t)} \sup_{s\in [0,t]}\|B^{  {m}}_{  {k};  {j},    {l}}(s,\cdot)\|_{L^\infty}  & \lesssim   2^{11\epsilon M_t} \big( 2^{ 2 \tilde{\alpha}_t  M_t }   + 2^{7M_t/6+\tilde{\alpha}_t M_t/4} \big),  \\
\sum_{(m,k,j,l)\in \mathcal{S}_2(t)}\big(\sup_{s\in [0,t]}\|B^{  {m}}_{  {k};  {j},    {l}}(s,\cdot)\|_{L^\infty} &  \big)^{\frac{1}{2}  } \big( \sup_{s\in [0,t]}\|B^{ {m}}_{  {k};  {j},  {l}}(s,\cdot)\|_{L^2}\big)^{ \frac{1}{2}}\\
&  \lesssim  2^{10\epsilon M_t} \big( 2^{\tilde{\alpha}_t M_t}   + 2^{7M_t/12+ \tilde{\alpha}_t M_t/8} \big). 
 \end{split}
\ee
\end{proposition}
\begin{proof}
See \cite{PartII}[Proposition 3.2].
\end{proof}

 Furthermore, in Part II, we demonstrated that the projection of the magnetic field onto the $z$-axis,
 $\mathbf{P}_3(B(t,x))$, is better than $ B(t,x)$, with certain components exhibiting similar estimates to the localized acceleration force, which has a double null structure. This advantage arises because 
   $\mathbf{P}_3(\hat{v}\times \xi)$ is better than $\hat{v}\times \xi$ 
 when 
 $\tilde{v}$
  and 
 $\tilde{\xi}$
  are near the north or south pole, allowing this symbol to fulfill a similar role as the double null structure.

To be more precise, we first decompose $\mathbf{P}_3(B(t,x))$ as follows based on  the size of frequency and  the angle between $\xi$ and $\pm \zeta$, 
\be\label{nov6eqn47} 
\begin{split}
\mathbf{P}_3(B(t,x))&= \sum_{\begin{subarray}{c}
k\in \Z_+, n\in [-M_t, 2]\cap \Z \\ 
  \mu\in \{+,-\}, n\geq \tilde{n}\\ 
\end{subarray}}   {}_{}^zT_{k,n}^{\mu}(B)(t,x,\zeta) \\
&\quad +   \sum_{\begin{subarray}{c}
k\in \Z_+, n\in [-M_t, 2]\cap \Z \\ 
  \mu\in \{+,-\}, n\leq \tilde{n}\\ 
\end{subarray}} {}_{}^zT_{k,n}^{\mu;1}(B)(t,x,\zeta) + {}_{}^zT_{k,n}^{\mu;2}(B)(t,x,\zeta), \\ 
{}_{}^zT_{k,n}^{\mu;2}(B)(t,x,\zeta)&= \sum_{(m,k,j,l)\in \mathcal{S}_1(t)\cup \mathcal{S}_2(t)} {}_{}^zT_{k,j;n,l}^{\mu;m,2}(B)(t,x,\zeta), 
\end{split}
\ee
where  $ \tilde{n}\in [-M_t, 2]\cap \Z$ is some fixed number s.t.,   $|  \zeta_{\bot}|/|\zeta|\leq 2^{\tilde{n}-10}$, and the detailed formulas of ${}_{}^zT_{k,n}^{\mu}(B)(t,x,\zeta)$ and  ${}_{}^zT_{k,n}^{\mu;i}(B)(t,x,\zeta), i\in\{1,2\},$ are given as follows, 
\be\label{P3Bdecomdef}
\begin{split}
{}_{}^zT_{k,n}^{\mu}(B)(t,x,\zeta) & : =\int_0^t \int_{\R^3}\int_{\R^3} e^{i x\cdot \xi +i \mu(t-s)|\xi|} \mathbf{P}_3(\hat{v}\times \xi )|\xi|^{-1} \\
&\quad \times  \widehat{f}(s, \xi, v) \varphi_k(\xi)  \varphi_{n;-M_t}( \tilde{\xi}+ \mu \tilde{\zeta})  d \xi d v d s,\\
{}_{}^zT_{k,n}^{\mu;i}(B)(t,x,\zeta)&: =\int_0^t \int_{\R^3} \int_{\R^3}e^{i x\cdot \xi  + i \mu(t-s)|\xi|} \mathbf{P}_3(\hat{v}\times \xi )|\xi|^{-1} \\
  &\quad \times  \widehat{f}(s, \xi, v) \varphi_k(\xi)   \varphi_{n;-M_t}( \tilde{\xi}+ \mu \tilde{\zeta}) \varphi_{\tilde{n}}^i(v, \zeta)  d \xi d v d s, \\ 
  \varphi_{\tilde{n}}^1(v, \zeta)&: =\psi_{\geq \tilde{n}+5}(\tilde{v}- \tilde{\zeta} ), \quad \varphi_{\tilde{n}}^2(v, \zeta): =\psi_{ < \tilde{n}+5}( \tilde{v}- \tilde{\zeta}).\\
\end{split}
\ee
Again, since the precise formulas of  ${}_{}^zT_{k,j;n,l}^{\mu;m,2}(B)(t,x,\zeta)$ don't play a   role in this paper, we omit them here.  Interested readers are referred to Part II\cite{PartII}[section 4.6] for details.

 As summarized in the following lemma, the quantity  ${}_{}^zT_{k,n}^{\mu;1}(B)(t,x,\zeta)$
 , which is part of 
 $ \mathbf{P}_3(B(t,x))$, enjoys an improved estimate due to the role of the symbol  $\mathbf{P}_{3}(\hat{v}\times \xi)$
 as the double null structure in the localized region. Additionally, the distinction between the null structure and the double null structure can be compensated by the smallness factor 
 $2^n$
 . For any localized piece $ {}_{}^zT_{k,n}^{\mu}(B)(t,x,\zeta)$ 
  that only incorporates the null structure instead of the double null structure, we obtain a slightly less favorable estimate (by a factor of 
 $2^n$
 ) compared to the localized acceleration force.
 
\begin{proposition}\label{goodpartprojmagn}
Let     $\mu\in\{+,-\},  k, j\in \Z_+, n, \tilde{n}\in [-M_t, 2]\cap \Z, $  $ t^{\star}\in [0, T), \alpha^{\star}:= 2/3+\iota, \iota:=10^{-4},   \zeta \in\R^3/\{0\}, t \in [0,  t^{\ast}]$ be fixed     s.t., $\alpha_t M_{t_{ }}\leq  \alpha^{\star} M_{t^{\star}}, M_t\gg 1$, and $|  \zeta_{\bot}|/ |\zeta|\leq 2^{\tilde{n}-10}$.   If  $n\leq \tilde{n}$,  then  the following estimates holds, 
\be\label{nov5eqn1}
\begin{split}
\big\|  {}_{}^zT_{k,n}^{\mu;1}(B)(t,x,\zeta) \big\|_{L^\infty_x}  &\lesssim   2^{(1-19\epsilon)M_{t^{\star}} }  + 2^{  130\epsilon M_{t^{\star}} }  \mathbf{1}_{n\geq  -(\alpha^{\star}+3\iota+60\epsilon) M_t    }    \\
&\qquad \times  \min\{ 2^{(k+2n)/2+ (\alpha^{\star}+3\iota) M_t }  , 2^{(k+4n)/2 +(1+6\iota)M_t} \}, \\
\big\|  {}_{}^zT_{k,n}^{\mu }(B)(t,x,\zeta) \big\|_{L^\infty_x}    &  \lesssim   2^{-n}     \big[   2^{(1-19\epsilon)M_{t^{\star}} } +   2^{  130\epsilon M_{t^{\star}} }  \mathbf{1}_{n\geq  -(\alpha^{\star}+3\iota+60\epsilon) M_t} 
\\
 &\qquad\times      \min\{ 2^{(k+2n)/2+ (\alpha^{\star}+3\iota) M_t }  , 2^{(k+4n)/2 +(1+6\iota)M_t} \}  \big]. \\
\end{split}
\ee 
 
\end{proposition}
\begin{proof}
See \cite{PartII}[Lemma 4.13].
\end{proof}

Although we do not have a similar double null structure for the other component of $ \mathbf{P}_3(B(t,x))$, namely ${}_{}^zT_{k,n}^{\mu;2}(B)(t,x,\zeta)$
 (see \eqref{nov6eqn47}), there remains an advantage in that the symbol 
    $\mathbf{P}_{3}(\hat{v}\times \xi)$ is better than $  (\hat{v}\times \xi)$.

   To exploit this advantage, comparing with the dichotomy     for the magnetic field $B$ in Proposition \ref{meanLinfest}, we have the following  improved dichotomy (by a factor of $2^{\tilde{n}}$) for the ${}_{}^zT_{k,n}^{\mu;2}(B)(t,x,\zeta)$ part of $ \mathbf{P}_3(B(t,x))$.

 \begin{proposition}\label{set1goodPartP3B}
For any fixed $\zeta\in \R^3/\{0\},  t\in [0, T)$,  $ \mu\in \{+,-\}, k, j\in \Z_+, n, \tilde{n}\in [-M_t, 2]\cap \Z, m\in [-10M_t, \epsilon M_t]\cap \Z$,  s.t.,  $n\leq \tilde{n}$,  $|  \zeta_{\bot}|/ |\zeta|\leq 2^{\tilde{n}-10}$, we have 
\be\label{2024oct30eqn21}
\begin{split}
&\sum_{ (m,k,j,l)\in \mathcal{S}_1(t) }  \big\|     {}_{}^zT_{k,j;n,l}^{\mu;m,2}(B)(t,x,\zeta) \big\|_{L^\infty_x} \\
&\lesssim  2^{ \tilde{n} + 11\epsilon M_t }  \big( 2^{ 2 \tilde{\alpha}_t  M_t }   + 2^{7M_t/6+\tilde{\alpha}_t M_t/4} \big),\\ 
&\sum_{ (m,k,j,l)\in \mathcal{S}_2(t) }\big(\big\|     {}_{}^zT_{k,j;n,l}^{\mu;m,2}(B)(t,x,\zeta) \big\|_{L^\infty_x}\big)^{1/2} \big(\big\|     {}_{}^zT_{k,j;n,l}^{\mu;m,2}(B)(t,x,\zeta) \big\|_{L^2_x}\big)^{1/2} \\
& \lesssim   2^{ \tilde{n} + 10\epsilon M_t }  \big( 2^{  \tilde{\alpha}_t  M_t }   + 2^{7M_t/12+\tilde{\alpha}_t M_t/8} \big). 
\end{split}
\ee
\end{proposition}
\begin{proof}
See \cite{PartII}[Lemma 4.14].
\end{proof}

The essential notations employed in this section are systematically detailed in Table \ref{tablesection2}.
\begin{table}[H]
\centering
\resizebox{\columnwidth}{!}{%
\begin{tabular}{ |c|c|c|c| } 
 \hline
 Notation & Definition & First appearance  & Remarks \\ 
 \hline
$T_{k,j;n}^{\mu}(\mathfrak{m}, U)(t,x, \zeta)$ & Definition \ref{angularlocalizationelect}  & \eqref{sep5eqn66} & $U\in\{E, B\}$; Angular localization \\
 & & & of the electromagnetic field\\
 \hline
$\varphi_{j,n}^i(v, \zeta)$ & \eqref{sep4eqn6}  & \eqref{sep5eqn10} & Partition of unity based on the size of $v$\\
 & & & and the angle between $v$ and $\zeta$\\
 \hline
 $ \mathfrak{E}^{\mu, i}_{k,j;n}( \mathfrak{m})(t, x, \zeta)$  & \eqref{2022feb24eqn81} & \eqref{oct7eqn1} &  The elliptic parts of the localized acceleration force\\
   \hline
    $\mathfrak{H}_{k,j;n}^{\mu,i}( \mathfrak{m})(t, x, \zeta )$  & \eqref{2022feb22eqn81} & \eqref{oct7eqn1} &  The hyperbolic parts of the localized acceleration force\\
   \hline
 $B^{  {m}}_{  {k};  {j},    {l}}(s,x)$ & Atomic piece of  & \eqref{2024oct30eqn2} &The precise formula of  $B^{  {m}}_{  {k};  {j},    {l}}(s,x)$\\
 &  the magnetic field $B$ & & doesn't play a role in part I. \\
  \hline
  $ {}_{}^zT_{k,n}^{\mu}(B)(t,x,\zeta)$ & \eqref{P3Bdecomdef}  & \eqref{nov6eqn47} & Angular localization of $P_3(B)$\\
    \hline
      $ {}_{}^zT_{k,n}^{\mu;i}(B)(t,x,\zeta)$ & \eqref{P3Bdecomdef}  & \eqref{nov6eqn47} & Further decomposition of  $ {}_{}^zT_{k,n}^{\mu}(B)(t,x,\zeta)$\\
      & & & based on the angle between $v$ and $\zeta$\\
    \hline
      $  {}_{}^zT_{k,j;n,l}^{\mu;m,2}(B)(t,x,\zeta)$ & precise definition doesn't  & \eqref{nov6eqn47} & Atomic decomposition for $ {}_{}^zT_{k,n}^{\mu;2}(B)(t,x,\zeta)$; an improved \\
    &  play a role in Part I& &    dichotomy holds for $ {}_{}^zT_{k,n}^{\mu;2}(B)(t,x,\zeta)$, see Proposition \ref{set1goodPartP3B}\\
    \hline
   $\gamma_1$ & Theorem \ref{maintheoremellipitic}[(ii)] & \eqref{2024oct27eqn1} & Measuring $|\zeta_{\bot}|$; local notation only \\
    $\gamma_2$ & & & Measuring $|\zeta |$; local notation only\\
       \hline
\end{tabular}%
}
\caption{Essential notations in section \ref{mainresultsPartIIdetail}.}\label{tablesection2}
\end{table}

\section{A singular weighted space-time estimate for the distribution function}\label{singularweight} 

  The primary purpose of this section is to demonstrate the validity of estimate \eqref{nov1eqn1} from Lemma \ref{singularweigh}. This estimate is essential for the subsequent analysis of the elliptic parts in Section \ref{fullimproved}. Specifically, it provides better control than the first momentum conservation law (see \eqref{conservationlaw}) in the regime where $|  x_{\bot}|$ is small and the ratio $|  v_{\bot} |/|v|$ is close to unity.

The space-time estimate \eqref{nov1eqn1} for the distribution function uses a singular weighted approach similar to that developed in our previous work on the Vlasov-Poisson system \cite{wang2}. The increased complexity of the hyperbolic structure in the current problem, however, prevents us from obtaining an estimate as sharp as the one in \cite{wang2} [Proposition 3.1]. Nevertheless, this estimate is sufficient for our subsequent analysis. Readers interested in   more details about the corresponding estimate on the  Vlasov-Poisson system are referred to \cite{wang2}.

\begin{lemma}\label{singularweigh}
  For any $t\in [0, T^{}), n\in (-\infty,2]\cap \Z$, the following estimate holds, 
\be\label{nov1eqn1}
 A_n(t):=\int_0^t \int_{\R^3} \int_{\R^3}  \frac{|  v_{\bot}|^{2+2\epsilon  }}{| x_{\bot}|^{1-2\epsilon  } \langle v \rangle} f(s,x,v) \psi_{n}\big(\frac{| v_{\bot}|}{|v|}\big) d x d v d s \lesssim 2^{6\epsilon M_t}\min\{2^{j+n}, 2^{\alpha_t M_t}\}.
\ee
\end{lemma}
\begin{proof}
For the sake of readers, we record the construction of weighted function used in  \cite{wang2}  in details as follows, 
\be\label{cutoff}
\phi(x):=\left\{\begin{array}{cc} 
2 &x\in [2^{},\infty)\\ 
 2+  (x-2)^3& x\in [1, 2]\\
x^3 & x\in [0,1)\\
0 & x\in (-\infty, 0]\\ 
\end{array}\right. ,\quad  \phi_l(x):= \phi(2^{-l}x).
\ee
From the above explicit formula of cutoff function, we have
\be\label{jan13eqn27}
\begin{split}
& \phi'(x)\geq 0, \quad \forall x\in (0, \infty),\quad \big| \frac{x\phi'(x)}{\phi(x)}\big|\lesssim  1, \quad \phi_l'(x):=2^{-l}\phi'(2^{-l}x),\quad  \big| \frac{x\phi_l'(x)}{\phi_l(x)}\big|\lesssim  1.\\
 \end{split}
\ee

For $\mu\in \{+,-\}$, we choose a weight function as follows,
 \be\label{may13eqn3}
\omega_{\mu}(x,v):= \big(\mu   |  v_{\bot} | | x_{\bot} |   x_{\bot} \cdot   v_{\bot} \phi_{-1 0M_t}(\mu \frac{ x_{\bot}\cdot v_{\bot}}{|  x_{\bot}||  v_{\bot} |}) +(  x_{\bot} \times   v_{\bot} )^2\big)^{\epsilon  }\phi\big(\mu\frac{ x_{\bot} \cdot v_{\bot}}{| x_{\bot} ||  v_{\bot}|} + \frac{1}{2}\big). 
\ee
As a result of direct computation, we have
\be\label{may13eqn1} 
 \begin{split}
 & {v}_{\bot} \cdot \nabla_{ {x}_{\bot}} \omega_{\mu}(x,v)\\ &=\epsilon   \big(\mu   | {v}_{\bot} | | {x}_{\bot} |  {x}_{\bot} \cdot  {v}_{\bot} \phi_{-10M_t}(\mu \frac{ {x}_{\bot}\cdot  {v}_{\bot}}{|  {x}_{\bot}|| {v}_{\bot} |}) +( {x}_{\bot} \times  {v}_{\bot} )^2\big)^{\epsilon -1}\phi\big(\mu \frac{  {x}_{\bot} \cdot {v}_{\bot}}{| {x}_{\bot} || {v}_{\bot}|} + \frac{1}{2}\big)\\
 &\quad \times \big[  \mu   |  {x}_{\bot} |  |  {v}_{\bot}|^3\big( 1+ \frac{( {x}_{\bot} \cdot   {v}_{\bot})^2}{|  {x}_{\bot} |^2|  {v}_{\bot}|^2}\big)   \phi_{-10M_t}(\mu \frac{ {x}_{\bot}\cdot   {v}_{\bot}}{|  {x}_{\bot}||{v}_{\bot} |}) \\
 &\quad +    |{v}_{\bot} | |{x}_{\bot} | {x}_{\bot} \cdot  {v}_{\bot} \phi_{-10M_t}'(\mu \frac{ {x}_{\bot}\cdot  {v}_{\bot}}{|  {x}_{\bot}||{v}_{\bot} |})\frac{ ( {x}_{\bot} \times {v}_{\bot})^2}{|{x}_{\bot} |^3 |{v}_{\bot}| }\big] \\
  &\quad+  \big(\mu   |{v}_{\bot} | |{x}_{\bot} |  {x}_{\bot} \cdot {v}_{\bot} \phi_{-10 M_t}(\mu \frac{ {x}_{\bot}\cdot {v}_{\bot}}{|{x}_{\bot}||{v}_{\bot} |}) +  ({x}_{\bot} \times {v}_{\bot} )^2\big)^{\epsilon  }\phi'\big(\mu\frac{{x}_{\bot} \cdot {v}_{\bot}}{|{x}_{\bot} ||{v}_{\bot}|} + \frac{1}{2}\big) \frac{\mu({x}_{\bot}\times {v}_{\bot})^2}{|{x}_{\bot} |^3 |{v}_{\bot}|}. 
  \end{split}
 \ee

 From the above equality, we conclude
  \be\label{may13eqn2}
 \begin{split}
  \mu {v}_{\bot} \cdot \nabla_{ {x}_{\bot}} \omega_{\mu}(x,v)&\gtrsim \displaystyle{\frac{ |  x_{\bot} |  | v_{\bot}|^3  \phi_{-10M_t}(\mu \frac{  x_{\bot}\cdot   v_{\bot}}{|  x_{\bot}||  v_{\bot} |}) }{\big(\mu   |  v_{\bot} | |  x_{\bot} |  x_{\bot} \cdot   v_{\bot} \phi_{-10M_t}(\mu \frac{ x_{\bot}\cdot  v_{\bot}}{| x_{\bot}||  v_{\bot} |}) +(  x_{\bot} \times   v_{\bot} )^2\big)^{ 1-\epsilon   }} }\\
  & \gtrsim \frac{|  v_{\bot}|^{1+2\epsilon  }}{|  x_{\bot}|^{1-2\epsilon  }}\phi_{-10M_t}(\mu \frac{ x_{\bot}\cdot   v_{\bot}}{|  x_{\bot}||  v_{\bot} |}) \geq 0. \\
  \end{split}
 \ee
Define
\be\label{may23eqn3}
I_n^{\mu}(t):= \int_{\R^3} \int_{\R^3}| v_{\bot}|  \omega_{\mu}(x,v) f(t,x,v)  \psi_{n}\big(\frac{|  v_{\bot}|}{|v|}\big)  d xd v.
\ee
As a result of direct computation, we have
\[
 \begin{split}
\frac{d }{dt}  I_n^{\mu}(t)&= \int_{\R^3} \int_{\R^3} \big(E(t,x)+ \hat{v}\times B(t,x)\big)  \cdot\nabla_v\big(|  v_{\bot}|  \omega_{\mu}( x,   v)  \psi_{n}\big(\frac{|  v_{\bot}|}{|v|}\big)  \big) f(t,x,v)  \\
 &\quad + |  v_{\bot}|  \hat{v}\cdot\nabla_x\big( \omega_{\mu}(x,v)\big)   \psi_{n}\big(\frac{|  v_{\bot}|}{|v|}\big)  f(t,x,v)   d xd v.
 \end{split}
\]
From the above equality and the estimate  \eqref{may13eqn2}, we have
\be\label{may13eqn71}
\begin{split}
& \big| \int_0^t \int_{\R^3}\int_{\R^3}   \frac{|  v_{\bot}|^{2+2\epsilon  }}{|  x_{\bot}|^{1-2\epsilon }\langle v \rangle } \phi_{-10M_t}( \mu \frac{  x_{\bot}\cdot   v_{\bot}}{|  x_{\bot}||  v_{\bot} |} )   \psi_{n}\big(\frac{|  v_{\bot}|}{|v|}\big) f(s,x,v)   d x d v d s \big|\\
 & \lesssim | I_n^{\mu}(t) | + | I_n^{\mu}(0) | + \big| \int_{0}^t\int_{\R^3} \int_{\R^3}  \big(E(s,x) \\
 &\quad + \hat{v}\times B(s,x)\big)  \cdot\nabla_v\big(|  v_{\bot}|  \omega_{\mu}(   x,    v)  \psi_{n}\big(\frac{| v_{\bot}|}{|v|}\big)  \big) f(s,x,v)  d xd v d s \big|.
 \end{split}
\ee
Note that, from the estimate \eqref{maintheoremroughest}, and  the conversation law  \eqref{conservationlaw}, we have
\be\label{may13eqn72}
  |  I_n^{\mu}(t)|\lesssim 1+  \int_{|x|\leq 2^{ M_t}} \int_{|v|\leq 2^{3.1M_t}} |  x_{\bot}|^{2\epsilon  }  |  v_{\bot}|^{1+2\epsilon }   f(t,x,v) d xd v\lesssim 2^{\epsilon M_t}.
\ee
Moreover, 
\be\label{may13eqn73}
\begin{split}
 &\int_{0}^t \int_{\R^3} \int_{\R^3} \frac{| v_{\bot}|^{2+2\epsilon  }}{|  x_{\bot}|^{1-2\epsilon  }\langle v \rangle }\big(1- \sum_{\mu\in\{+,-\}} \phi_{-10M_t}(  \mu \frac{  x_{\bot}\cdot  v_{\bot}}{| x_{\bot}||  v_{\bot}  |}  )  \big) f(s,x,v) d  x d v d s \\
&\lesssim 1+  \int_{0}^t \int_{|x|\leq 2^{ M_t}} \int_{|v|\leq 2^{3.1M_t}}  \frac{|  v_{\bot}|^{2+2\epsilon  }}{|  x_{\bot}|^{1-2\epsilon }\langle v \rangle } f(s,x,v)   \big(1- \sum_{\mu\in\{+,-\}} \phi_{-10M_t}(  \mu \frac{  x_{\bot}\cdot   v_{\bot}}{| x_{\bot}||  v_{\bot} |}  )  \big) d xd v d s\\
&\lesssim 1 + 2^{-10M_t + 9.5 M_t + 10\epsilon M_t}\lesssim 1. 
\end{split}
\ee

Lastly, we estimate the contribution from the nonlinear effect. Recall  \eqref{may13eqn3}. As a result of direct computation,   from the estimate \eqref{jan13eqn27},  for any $(x,v)\in supp(\phi\big(\mu\frac{  x_{\bot} \cdot  v_{\bot}}{|  x_{\bot} ||  v_{\bot}|} + \frac{1}{2} \big))\cup supp(\phi'\big(\mu\frac{ x_{\bot} \cdot  v_{\bot}}{|  x || v|} + \frac{1}{2}\big))$, we have
\be
\mu   |  v_{\bot} | |  x_{\bot} |   x_{\bot} \cdot   v_{\bot} \phi_{-10M_t}(\mu \frac{  x_{\bot}\cdot  v_{\bot}}{|  x_{\bot}|| v_{\bot} |}) +(  x_{\bot} \times v_{\bot} )^2 \sim | x_{\bot} |^{2} | v_{\bot}|^2.
\ee
Moreover, recall \eqref{may13eqn3}, from  the result of  computation and the estimate of the cutoff function in \eqref{jan13eqn27}, we have 
\be\label{may13eqn6}
\begin{split}
 \big|\nabla_v \omega_{\mu}(x,v)\big|& \lesssim \phi\big(\mu \frac{ x_{\bot} \cdot  v_{\bot}}{| x_{\bot} ||  v_{\bot}|} + \frac{1}{2} \big) (|  x_{\bot}||   v_{\bot}|)^{2\epsilon  -2}
  \big[ |  x_{\bot} |^2 |  v_{\bot}|+     | x_{\bot} |  x_{\bot} \cdot  v_{\bot} \phi_{-10M_t}'(\mu \frac{  x_{\bot}\cdot  v_{\bot}}{| x||  v_{\bot} |})   \big]\\ 
  &\quad + \phi'\big(\mu(\frac{  x_{\bot} \cdot  v_{\bot}}{|  x_{\bot} || v_{\bot}|} + \frac{1}{2})\big) \frac{ (| x_{\bot}||  v_{\bot}|)^{2\epsilon  }     }{|  v_{\bot} |}  \lesssim   \frac{|  x_{\bot}|^{2\epsilon  } }{  |  v_{\bot}|^{1-2\epsilon  }}. 
\end{split}
\ee
Recall  \eqref{backward}.  Note that,   if $ |x|\gtrsim 2^{2\epsilon M_t}$,
\[
\begin{split}
\big||x|-|X(  x,v,0,t)|\big| \leq    2^{\epsilon M_t}, \quad \Longrightarrow |x|\sim | X(  x,v,0,t)|.  
\end{split}
\]
Moreover, from \eqref{may2eqn1}, we have
\be
\begin{split}
\sum_{j\geq (1+\epsilon) M_t }\big| \int_{\R^3}\int_{\R^3} f(t,x,v) \psi_{j}(v) d x d v \big| & \lesssim \sum_{j\geq (1+\epsilon)  M_t} 2^{-N_0 j/10 } \mathfrak{M}_{}(t )  \\
& \lesssim 2^{-N_0(1+\epsilon)M_t/10 + (N_0/10-1)M_t}\\
& \lesssim 2^{-100M_t}.\\
\end{split}
\ee
  After localizing the sizes of   $|v|$,  from  the above estimates,  the decay rate of initial data in  \eqref{assumptiononinitialdata},   and the rough estimate of the electromagnetic field  \eqref{maintheoremroughest} in Theorem \ref{maintheorem1part1}, we have 
\be\label{nov1eqn36}
\begin{split}
  & \big| \int_{0}^t\int_{\R^3} \int_{\R^3}  \big(E(s,x)+ \hat{v}\times B(s,x)\big)  \cdot\nabla_v\big(|  v_{\bot}|  \omega_{\mu}(   x_{\bot},   v_{\bot})  \psi_{n}\big(\frac{| v_{\bot}|}{|v|}\big)  \big) f(s,x,v)  d xd v d s \big| \\
 & \lesssim 1+ \sum_{ j \in [0, (1+2\epsilon)M_t]\cap \Z_+} 2^{3\epsilon M_t}  H_j^n(t), \\
 \end{split}
\ee
where
\be\label{nov1eqn11}
\begin{split}
 H_j^n(t)&: =  \int_{0}^t\int_{|x|\leq 2^{M_t/2}}  \int_{|v|\leq 2^{  (1+2\epsilon) M_t}} \big(|E(s,x)| + \big|B(s,x)\big| \big)   f(s, x,v)\varphi_j(v) \psi_{n}\big(\frac{| v_{\bot}|}{|v|}\big)  d x d v d  s. 
 \end{split}
\ee 

After using the Cauchy-Schwarz inequality and the conservation law \eqref{conservationlaw},  we have
\be
\begin{split}
| H_j^n(t)| & \lesssim \int_0^t \big( \int_{\R^3} \big(\int_{\R^3}  f(s, x,v)\varphi_j(v) \psi_{n}\big(\frac{|  v_{\bot}|}{|v|}\big) d v \big)^2 d x\big)^{1/2} d s \\ 
& \lesssim \int_0^t \big( \int_{\R^3}  \int_{\R^3}  f(s, x,v)\varphi_j(v) \psi_{n}\big(\frac{|  v_{\bot}|}{|v|}\big) d v   d x\big)^{1/2} \|\int_{\R^3}  f(s, x,v)\varphi_j(v) \psi_{n}\big(\frac{|  v_{\bot}|}{|v|}\big) d v\|_{L^\infty_x}^{1/2} d s \\ 
&\lesssim 2^{ 2\epsilon M_t}\big(\min\{2^{3j+2n},2^{j+2\alpha_t M_t}\}\big)^{1/2} 2^{-j/2}\\
&\lesssim 2^{2\epsilon M_t}\min\{2^{j+n}, 2^{\alpha_t M_t}\} .
\end{split}
\ee
From the above estimate  and the obtained estimates  \eqref{may13eqn71},   \eqref{may13eqn72}, \eqref{may13eqn73}, and  \eqref{nov1eqn36},   we have 
\be
A_n(t)\lesssim  2^{6\epsilon M_t}\min\{2^{j+n}, 2^{\alpha_t M_t}\}.
\ee
Hence finishing the proof of our desired estimate  \eqref{nov1eqn1}. 
\end{proof}
 
The essential notations employed in this section are systematically detailed in Table \ref{tablesection3}.
\begin{table}[H]
\centering
\begin{tabular}{ |c|c|c|c| } 
 \hline
 Notation & Definition   & Remarks \\ 
 \hline
 $ A_n(t)$&\eqref{nov1eqn1}  &  Space-time singular weighted estimate for $f$;  \\
 &   &only used in section \ref{ellipticpartPartII} \\
\hline 
$ \phi(x)$ &  \eqref{cutoff}    &    Special cutoff function;local notation only.\\
 \hline
$\omega_{\mu}(x,v)$ & \eqref{may13eqn3}  & Special weight  function;local notation only.\\
 \hline
$I_n^{\mu}(t)$  & \eqref{may23eqn3}   & Weighted $L^1_{x,v}$ of $f$; controlled by  \eqref{conservationlaw} and characteristics\\
 \hline
 $ H_j^n(t)$ & \eqref{nov1eqn11} & Localized nonlinear source term; providing control for \eqref{may13eqn71}\\
\hline
\end{tabular} 
\caption{Essential notations in section \ref{singularweight}.}\label{tablesection3}
\end{table}

\section{Bootstrap argument: Proof of Proposition \ref{finalproposition}}\label{bootstraparg}

Let  $t\in [0,T)$   be fixed.   Recall  \eqref{may9en21}  and  \eqref{2021dec18eqn1}.  We define the first time that characteristics almost reaches the threshold as follows, 
\be\label{2021dec18eqn21}
t^{\ast}:=\inf\{s: s\in [0, t], \beta_t(s)\geq  (1-3\epsilon)  \}.
\ee
 Due to the continuity of characteristics,  and the rough estimate of the electromagnetic field \eqref{maintheoremroughest} in Theorem  \ref{maintheorem1part1}, we know that $t^{\ast}>0. $ Moreover,  $\forall s \in [0, t^{\ast}],$ we have $|V(s)|\leq 2^{(1-3\epsilon)M_t}. $  We make the following bootstrap assumption, 
\be\label{2021dec18eqn22}
\tau:= \sup\{ \kappa: \kappa \in [0,t], \forall s\in [t^{\ast}, \kappa],    2^{ \beta_t(s) M_t}\in  2^{(1-3 \epsilon)M_t }    [99/100, 101/100] \} .
\ee
We aim to improve the above bootstrap assumption and  prove that $\tau=t$.

We employ a two-step bootstrap argument. Initially, we focus on estimating the projection of the velocity characteristics, $|  V_{\bot}(x,v,s,0)|$. This estimate, obtained via a bootstrap argument, provides the first estimate in \eqref{finalestimate}. Subsequently, leveraging the upper bound on $| V_{\bot}(x,v,s,0)|$ established for $s \in [0, \tau]$, we proceed to estimate the full characteristics. To simplify notation, \textbf{we omit the explicit $(x,v)$ dependence of the characteristics}, assuming $(x,v) \in R_t(0)$ throughout.

\medskip

\noindent \textbf{Step 1.}  \quad Estimating the projection of the velocity characteristics within the time interval $[0, \tau]$.

\medskip

Let $\gamma_1 = \alpha^{\star} - 2\epsilon$. Define $\tau_{\ast}$ and $\tau^{\star}$ to be  the first times $\alpha_t(\kappa)$ reaches $\gamma_1$ and $\gamma_1 + 1/M_t$    respectively.
\be\label{may10eqn109}
\begin{split}
\tau_{\ast}&:= \sup\{ s: s\in [0,\tau], \forall\kappa\in [0, s],    {\alpha_t (\kappa) }\leq  \gamma_1   \},\\
 \tau^{\star}&:= \sup\{  s: s\in [0,\tau], \forall\kappa\in [0, s],    {\alpha_t (\kappa) }\leq    \gamma_1   +1/M_t \}. \\
\end{split}
\ee

Our goal is to show $\tau^{\star} = \tau$. If $\tau_{\ast} = \tau$ or $\tau^{\star} = \tau$, the result is immediate. Therefore, we consider the case $0 < \tau_{\ast} < \tau^{\star} < \tau$, where continuity of characteristics implies $|  V_{\bot}(x,v,\tau_{\ast}, 0)| = 2^{\gamma_1 M_t}$. Moreover, we characterize the size of the full characteristics at the time $\tau_{\ast}$ as follows
\be\label{gamma2definition}
\gamma_2:= \inf\{k: k\in \R_+, |V(x,v,\tau_{\ast}, 0)||\leq 2^{k M_t}\}. 
\ee
We make the    bootstrap assumption for the velocity characteristics  $  V(s) $ as follows, 
\be\label{bootstrap1}
\begin{split}
  \tau^{\ast}:=\sup\{s: s\in[\tau_{\ast},\tau^{\star} ], \forall \kappa\in [\tau_{\ast}, s],  &  | V_{\bot}(s)| \in   2^{  \gamma_1  M_t}  [99/100, 101/100],\\
  & |V(s)|  \in   2^{   \gamma_2 M_t}  [99/100, 101/100] \}.\\
\end{split}
\ee
Now, our goal is to show that $\tau^{\ast}= \tau^{\star}$ independent of $(x,v)\in R_t(0)$. As a consequence, we have $2^{\alpha_t (\tau^{\star}) M_t}<  2^{   \gamma_1 M_t + 1} $. This further yields contradiction to the definition of $\tau^{\star}$. Therefore, we conclude that  $\tau^{\star}$ equals to $\tau$.

Note that $[\tau_{\ast}, \tau^{\ast}]\subset[\tau_{\ast}, \tau^{\star}]\subset [0, \tau]$. Recall  \eqref{2021dec18eqn1},  \eqref{2021dec18eqn21},    \eqref{2021dec18eqn22}, and  \eqref{bootstrap1},  for any $s\in  [\tau_{\ast}, \tau^{\ast}]$, we have
\[
 \alpha_s M_s \leq  \alpha_t(s) M_t \leq \gamma_1  M_t +1, \quad \gamma_2 M_t\leq (1-3\epsilon) M_t+1. 
\]
Moreover, note that, from the equation satisfied by the space characteristics in  \eqref{backward}, the following rough estimate holds, 
\[
\forall s\in [0,t], \quad |X(s)|\leq |X(0)|+\int_{0}^t  |\hat{V}(s)| d s\leq 2^{M_t/2 +1 }.
\]

Based on the possible size of $| X_{\bot}(s)|$, we decompose the interval $[0,M_t/2+1]$ into a finite union of intervals, which overlap with at most with four other intervals. More precisely, we have 
\begin{multline}\label{overlapintervals}
[0,2^{M_t/2+1}]=\cup_{i=0}^K I_i, \quad I_0=[0,  2^{{\kappa_0 }}], \quad I_k=[2^{a_k},2^{a_k+10}], \quad a_{0}=\kappa_0-6, \kappa_0=-100M_t,  \\ 
 \forall k\in\{1,\cdots, K-2\}, a_{k+1}= a_{k}+6,\quad a_{K}= M_t/20+1-10,\quad   K=\lceil (M_t/2+1-\kappa_0)/6\rceil.
\end{multline}
Within the time interval $[\tau_{\ast}, \tau^{\ast}]$, under the bootstrap assumption  \eqref{bootstrap1},  we   show the following two facts about the characteristics, 
\begin{enumerate}
\item[$\bullet$]  \textbf{Fact (i)}:\qquad Given any $t_1, t_2\in  [\tau_{\ast}, \tau^{\ast}]$ and any $i\in\{0,\cdots, K-1\}$, if $\forall s\in [t_1, t_2], |  X_{\bot}(s)|\in I_i\cup I_{i+1}$, then the bounds of speed characteristics $|  V_{\bot}(s)|, |V(s)|$ can be improved for any $s\in [t_1,t_2].$ More, precisely, the increment of  $|  V_{\bot}(s)|$     is at most  $2^{ \gamma_1 M_t- \epsilon M_t }$, and  the increment of   $|V(s)|$   is at most  $2^{ \gamma_2 M_t- \epsilon M_t }$. 
\item[$\bullet$] \textbf{Fact (ii)}:\qquad Let $I_{-1}:=\emptyset$. For any  $i\in\{0,\cdots, K\}$ and any $s\in  [\tau_{\ast}, \tau^{\ast}]$, if $| X_{\bot}(s)|\in I_i/I_{i-1}$ with $d | X_{\bot}(t)|^2/dt\big|_{t=s}\geq 0 $, then for any later time $l \in[s,\tau^{\ast}]$,   $|  X_{\bot}(l)|\in \cup_{i\leq j\leq K} I_j$.
\end{enumerate}

Based on the possible size of $i$, we  prove  \textbf{Fact (i)}  in  two substeps  as follows. The   \textbf{Fact (ii)} will be obtained as a byproduct. 

\medskip

  \textbf{Step 1A.} \quad  Given any $t_1, t_2\in  [\tau_{\ast}, \tau^{\ast}]$, if $\forall s\in [t_1, t_2], |  X_{\bot}(s)|\in I_0\cup I_1$.

\medskip

As a result of direct computation, we have 
  \be\label{oct22eqn1}
   \frac{d^2}{ds^2}  |  X_{\bot}(s)|^2 =2\big( \frac{| V_{\bot}(s)|^2 }{1+|V(s)|^2}  +C_0(X(s), V(s))\cdot K(s, X(s), V(s) )\big),
\ee
where
\be\label{2024nov4eqn1}
C_0(X(s), V(s))=\frac{\big(  X_{\bot}(s), 0\big)}{\sqrt{1+|V(s)|^2}}- \frac{ X_{\bot} (s)\cdot  V_{\bot}(s) }{  \big({1+|V(s)|^2}\big)^{3/2}} V(s).
\ee

From the rough estimate of the electromagnetic  field in   \eqref{maintheoremroughest} in Theorem  \ref{maintheorem1part1}, the following estimate holds for any $s\in [t_1, t_2],$
\be\label{nov28eqn78}
\begin{split}
\frac{  X(s )\cdot  V_{\bot}(s  )}{\sqrt{1+|V(s )|^2}}-  \frac{ X_{\bot}(t_1  )\cdot  V(t_1  )}{\sqrt{1+|V(t_1 )|^2}}&\geq \frac{4}{5}2^{2  \gamma_1 M_t-2\gamma_2 M_t} (s-t_1) \\
&\quad -3 \int_{t_1}^s 2^{\kappa_0  -\gamma_2  M_t} \big(\|E (\kappa, x)\|_{L^\infty_x}+\|B (\kappa, x)\|_{L^\infty_x}\big) d \kappa,\\
\Longrightarrow \quad  \frac{ X_{\bot}(s )\cdot   V(s  )}{\sqrt{1+|V(s )|^2}} & \geq \frac{4}{5}2^{ 2  \gamma_1 M_t-2\gamma_2 M_t} (s-t_1) \\
&\quad  -2^{\kappa_0  -\gamma_2 M_t+ (1+2\tilde{\alpha}_t +6\epsilon) M_t-100}  (s-t_1) -\frac{11}{10} 2^{\kappa_0  +     \gamma_1 M_t-\gamma_2 M_t}\\
&\geq \frac{3}{4}2^{ 2  \gamma_1 M_t-2\gamma_2 M_t } (s-t_1)-\frac{11}{10} 2^{\kappa_0  +   \gamma_1 M_t-\gamma_2 M_t},\\
 \Longrightarrow\quad  |X(s)|^2 -|X(t_1)|^2 & \geq \frac{3}{4}2^{2 \gamma_1 M_t-2\gamma_2 M_t} (s-t_1)^2\\
&\quad -\frac{11}{5} 2^{\kappa_0  +   \gamma_1 M_t-\gamma_2 M_t } (s-t_1). 
\end{split}
\ee
Since $|  X_{\bot}(t_2)|\leq 2^{\kappa_0+10}$, from the above estimate, we have 
\be\label{2024nov19eqn11}
2^{2\gamma_1 M_t-2\gamma_2 M_t} (t_2-t_1)^2\leq 2^{2\kappa_0 +20}, \quad \Longrightarrow\quad  |t_2-t_1|\leq 2^{\kappa_0+ \gamma_2 M_t-\gamma_1 M_t+10}.
\ee
The above estimates imply that if the elapsed time exceeds $2^{\kappa_0 + \gamma_2 M_t - \gamma_1 M_t + 10}$,  $| X_{\bot} (s)|$ will leave $I_0 \cup I_1$ and enter $\bigcup_{i=2}^K I_i \cup (I_2 \setminus I_1)$.

  As a result of direct computation, from  \eqref{backward},   we have
\be\label{nov17eqn21}
\begin{split}
  \big|| V_{\bot}(t_2)|  -  |  V_{\bot}(t_1)| \big|  & \lesssim \big|\int_{t_1}^{t_2}  \big(  V_{\bot} (s)/|V(s)|, 0\big) \cdot K(s, X(s), V(s) )  ds\big| , \\
     \big| |  V(t_2)| - |V(t_1)|  \big| & \lesssim  \big|  \int_{t_1}^{t_2} \tilde{V}(s)  \cdot K(s, X(s), V(s) ) d s  \big|. \\
\end{split}
\ee 
From the above estimates, the obtained estimate \eqref{2024nov19eqn11}, and the rough estimate of the electromagnetic  field   \eqref{maintheoremroughest} in Theorem  \ref{maintheorem1part1}, for any $s\in [t_1, t_2],$ we have
 \[
 \begin{split}
   \big| |  V_{\bot}(s)|  -| V_{\bot}(t_1)| \big|  +   \big| | V(s)|  -|  V(t_1)|  \big|   &  \lesssim   \int_{t_1}^{t_2} \big(\|E(s, \cdot)\|_{L^\infty} + \| B(s, \cdot)\|_{L^\infty} \big) ds\\ 
   & \lesssim  2^{\kappa_0+2\gamma_2  M_t      }  2^{  (1+2\tilde{\alpha}_t +6\epsilon)M_t} \lesssim  1. \\
  \end{split}
 \]

 \medskip

  \textbf{Step 1B.} \quad   Given any $t_1, t_2\in  [\tau_{\ast}, \tau^{\ast}]$, if  $\forall s\in [t_1, t_2], |  X_{\bot}(s)|\in I_i\cup I_{i+1} $ . 
 \medskip

  Recall again   \eqref{oct22eqn1}.   For any $s \in [t_1, t_2]$,  from the estimate \eqref{nov17eqn31} in  Proposition \ref{bootstraplemma1},  we have
\be\label{oct30eqn41}
\begin{split}
 \frac{  X_{\bot}(  s  )\cdot  V_{\bot}( s )}{\sqrt{1+|V(s )|^2}}- \frac{  X_{\bot}( t_1  )\cdot V_{\bot}( t_1 )}{\sqrt{1+|V(t_1 )|^2}}  &   \geq \frac{9}{10} 2^{ 2\gamma_1 M_t-2\gamma_2 M_t} (s -t_1) \\
 &\quad -   C \big[ \sum_{b\in \mathcal{T}+\mathcal{T}}   2^{(1-b)a_i  } 2^{ b( \gamma_1  -\gamma_2   )M_t+ (\gamma_1-\gamma_2- \epsilon) M_t  }\\
 &\quad\times (s-t_1) + 2^{a_i+3\alpha^{\star} M_t/4 -\gamma_2 M_t+50\epsilon M_t}\big],\\
\Longrightarrow \quad    \frac{ X_{\bot}(s   )\cdot  V_{\bot}(s   )}{\sqrt{1+|V(s  )|^2}} & \geq \frac{4}{5}  2^{  2\gamma_1 M_t-2\gamma_2 M_t}   (s -t_1) -  \frac{11}{10}|  X_{\bot}(t_1)|2^{  \gamma_1 M_t-\gamma_2 M_t}.
\end{split}
\ee
From the above estimate, we have
\be\label{oct30eqn21}
\begin{split}
 |   X_{\bot}(s )|^2  -  |  X_{\bot}(t_1)|^2 & \geq \int_{0}^{s-t_1} \frac{8}{5}  2^{   2\gamma_1 M_t-2\gamma_2 M_t}\kappa  - \frac{22}{10 } |  X_{\bot}(t_1)|2^{   \gamma_1 M_t-\gamma_2 M_t} d \kappa \\
& \geq  \frac{4}{5} 2^{   2\gamma_1 M_t-2\gamma_2 M_t} (s -t_1)^2  -  \frac{22}{10 }|   X_{\bot}(t_1)|2^{    \gamma_1 M_t-\gamma_2 M_t}  (s -t_1), \\
\Longrightarrow\quad  |   X_{\bot}( s  )| & \geq 2^{ \gamma_1 M_t-\gamma_2 M_t-5}  (s -t_1)  -2|  X_{\bot}(  t_1)|, \\
 \Longrightarrow \quad |t_2-t_1| &\lesssim 2^{a_i+\gamma_2 M_t-\gamma_1 M_t}. \\
\end{split}
\ee
Moreover, since $t_1, t_2\in [0, t]$ and $t\leq 2^{\epsilon M_t/100}$, we have
\[
 |t_2-t_1| \lesssim  \min\{  2^{a_i+\gamma_2 M_t-\gamma_1 M_t} , 2^{\epsilon M_t/50}\}. 
\]
Recall  \eqref{nov17eqn21}. From the above estimate of the duration of time and  the estimate  \eqref{nov17eqn31} in  Proposition \ref{bootstraplemma1},  for any $s \in [t_1, t_2]$, we have 
\[
\begin{split}
\big| |  V_{\bot}(s )|   -  |  V_{\bot}(t_1)| \big| & \lesssim     2^{  (3 {\alpha}^{\star}+20\epsilon)M_t/4} + \big(  \sum_{b\in \mathcal{T}+\mathcal{T}}   2^{-ba_i } 2^{ b (\gamma_1 -\gamma_2) M_t   + (\gamma_1 -2 \epsilon) M_t}   \big)(s-t_1)  \\
& \leq  2^{   \gamma_1 M_t-3\epsilon M_t/2 },\\
\big| |  V(s )|   -  |   V(t_1)|\big| &\lesssim  2^{(\gamma_2 -\gamma_1)M_t}\big[     2^{  (3 {\alpha}^{\star}+20\epsilon)M_t/4} \\
&\quad  +  \big(  \sum_{b\in \mathcal{T}+\mathcal{T}}   2^{-ba_i } 2^{ b (\gamma_1  -\gamma_2) M_t  +   (\gamma_1 -2 \epsilon) M_t }   \big)(s-t_1)  \big]\\
&\leq  2^{ \gamma_2 M_t -3\epsilon M_t/2  } .
\end{split}
\]

 Moreover, if $  X_{\bot} ( t_1  )\cdot   V_{\bot}( t_1 )\geq 0$, i.e., $d |X(t )|^2/dt\big|_{t=t_1}\geq 0$, and $| X_{\bot}( t_1  )|\in I_i/I_{i-1}=(2^{a_i+5}, 2^{a_i+10}]$, then we can improve the obtained estimate   \eqref{oct30eqn41}  as follows, 
 \be\label{oct30eqn51}
 \begin{split}
 \textup{If $  X_{\bot}( t_1  )\cdot  V_{\bot}( t_1 )\geq 0$}, \quad  \frac{  X_{\bot}(s   )\cdot  V_{\bot}(s   )}{\sqrt{1+|V(s  )|^2}} & \geq \frac{4}{5}  2^{ 2(\gamma_1 -\gamma_2) M_t}   (s -t_1) -   2^{-10+a_i } 2^{ (\gamma_1 -\gamma_2) M_t},\\
 \Longrightarrow\quad \forall s\in [t_1, t_2], \quad   |   X_{\bot}( s  )| & \geq 2^{ (\gamma_1 -\gamma_2) M_t  -1}  (s -t_1)  +2^{-1}|  X_{\bot}( t_1)|.
 \end{split}
 \ee

 Recall the definition of overlapping intervals in  \eqref{overlapintervals}.   From the above estimate, we  know that  $|   X_{\bot}( s  )|\in [2^{a_k+3}, 2^{a_k+8}]\subsetneq (2^{a_k}, 2^{a_{k}+10}]$. 

 To sum up,  from the preceding fact and estimate \eqref{oct30eqn21}, it follows that if $| {X}(t_1)|$ lies in $I_i/ I_{i-1}$ and the derivative $d|  X_{\bot}(s)|^2/ds$  is non-negative at $ s=t_1$, then $| {X}_{\bot}(s)|$ will leave $I_i$ and enter $I_{i+1}$ (rather than $I_{i-1}$) within a time interval of at most $2^{a_i + \gamma_2 M_t - \gamma_1 M_t}$. Therefore, our desired \textbf{Fact (i)} and \textbf{Fact (ii)} hold from the above discussion.

Now, we are ready to improve the bootstrap assumption  \eqref{bootstrap1}. 
Due to Fact (i), we need only demonstrate that the number of times the trajectory $| {X}_{\bot} (s)|$, $s \in [\tau_{\ast}, \tau^{\ast}]$, visits intervals of the form $I_i \cup I_{i+1}$ ($i = 0, \dots, K-1$) is at most $CM_t$, for some absolute constant $C$. This contributes only a logarithmic loss.

Since the $d| X_{\bot}(s)|/ds$ is bounded, the trajectory of $| X_{\bot}(s)|$ can only visit intervals $J_i:=I_i/I_{i-1}, i\in\{0, \cdots, K\}$ for finite times. We order the visited intervals within $[\tau, \tau^{\star}]$ with respect to time as follows $(J_{i_1},J_{i_2},\cdots, J_{i_X})$, $\forall p\in\{1,\cdots, X\}, i_{p}\in \{0,\cdots, K\}$. Due to the possible revisit scenario, e.g., $J_1\longrightarrow J_2 \longrightarrow J_1\longrightarrow \cdots$, the size of  finite number  $X$ is not    clear.

Due to the continuity of characteristics, we have $ i_{p}-i_{p+1}\in\{1,-1\}$. Let  $ {i_{\iota_0}}$ be the first local minimum of the ordered set $(i_{1}, i_2, \cdots, i_{X})$, then $\iota_0\leq K+1, i_{\iota+1}= i_{\iota}+1$. Let $\tau_{ \iota_0}$ be the time such that the   $|  X_{\bot}(\tau_{ \iota_0})|$leaves $I_{i_{ \iota_0}}$ and enters $I_{i_{ \iota_0 }+1}/I_{ \iota_0 }$. As $| X_{\bot}(s)|$ increases at time $\tau_{ \iota_0 }$, we know that  $d |  X_{\bot}(t)|^2/dt\big|_{t=\tau_{ \iota_0}}\geq 0 $. Therefore, from the obtained \textbf{Fact (ii)}, we know that $i_{\kappa}\geq i_{ \iota_0 }$  for any $\kappa \geq  \iota_0$. Let $\iota_1:=\inf\{\kappa: i_{\kappa}\geq i_{\iota_0}+2,  \kappa\geq \iota_0\}$. We know that $J_{i_{\iota_1}}=I_{i_{\iota_0+2}}/I_{i_{\iota_0+1}}$ and $\cup_{\kappa\in [ {\iota_0},  {\iota_1}-1]\cap \Z} J_{i_{\kappa}} \subset I_{i_{\iota_0+1}}\cup  I_{ \iota_0 }$.

Similarly,  let $\tau_1$ be the time such that   $|  X_{\bot}(\tau_{ \iota_1})|$leaves $I_{i_{ \iota_1}-1}=I_{i_{\iota_0+1}}$ and enters $J_{\iota_{\iota_1}}= I_{i_{ \iota_0 }+2}/I_{ i_{ \iota_0 }+1 }$, from the obtained \textbf{Fact (ii)}, we know that $i_{\kappa}\geq i_{ \iota_0 }+1$  for any $\kappa \geq  \iota_1$. Therefore, inductively, we can define a sequence of   $\{\iota_k\}_{k=0}^{m}$ such that the following property holds, 
\be\label{nov4eqn12}
 \begin{split}
 \{\kappa&: i_{\kappa}\geq i_{\iota_m}+1\} =\emptyset,\quad \forall k\in\{0,\cdots, m-1\}, \quad  i_{\iota_{k+1}}\geq i_{\iota_k}+1,\\
   X_{\iota_k}&:=\cup_{\kappa\in [\iota_k, \iota_{k+1}-1]\cap \Z} J_{i_{\kappa}}\subset I_{i_{\iota_k}+1}\cup I_{i_{\iota_k}},\\
  X_{\iota_m}&:=\cup_{\kappa\in [\iota_m, K]\cap \Z} J_{i_{\kappa}}\subset I_{i_{\iota_m}+1}\cup I_{i_{\iota_m}}.
\end{split}
\ee
If $i_{\iota_m}=K$, we use the convention that $I_{K+1}:=\emptyset.$

Since    $\{i_{\iota_k}\}_{k=0}^{m}$  is an increasing sequence with upper bound $K$, we know that $m\leq K+1$.  From the above discussion, we regroup the ordered set $(J_{i_1},J_{i_2},\cdots, J_{i_X})$, as follows, 
\be\label{nov4eqn11}
(J_{i_1}, J_{i_2},\cdots, J_{i_{\iota_0-1}}, X_{\iota_0},X_{\iota_1},\quad X_{\iota_m}),
\ee
in which the order is still consistent with the  order of time such that the characteristic  $| X_{\bot}(s)|, s\in [\tau_{\ast}, \tau^{\ast}]$  travels. 

Since the total number of sets in  \eqref{nov4eqn11}   is less than $2(K+1)\leq 100 M_t$, see  \eqref{overlapintervals},  which   only causes logarithmic loss.  From the relation in  \eqref{nov4eqn12} and  \textbf{Fact (i)}, our bootstrap assumption is improved.  Hence finishing the bootstrap argument, i.e., $\tau^{\ast}= \tau^{\star}$, which further implies that  $\tau^{\ast}= \tau^{\star}=\tau.$

\medskip

 \noindent  \textbf{Step 2.}  \qquad Estimating  the full velocity characteristics within the time interval $[0, \tau]$. 

\medskip

As a result of the  \textbf{Step} $\mathbf{1}$  and  \eqref{2021dec18eqn1}, we know that  $ \alpha_\tau M_\tau \leq \alpha_t(\tau) M_t \leq (\alpha^{\star}-2 \epsilon)M_t$.  Now, our goal is to show that $\tau = t  .$ Recall   \eqref{backward}. For any $(x,v)\in R_t(0)$,  $t_1, t_2\in [t^{\ast}, \tau]$, the following estimate holds from  the estimate \eqref{oct29eqn70} in Proposition \ref{bootstraplemma2}, 
\[
\begin{split}
\big||V(t_2)| - |V(t_1)|  \big|&\leq  \big|\int_{t_1}^{t_2} \tilde{V}(s)\cdot K(s, X(s), V(s)) d s\big|\\
& \lesssim  2^{   (\gamma-5\epsilon) M_t   }\leq 2^{  \gamma M_t-4\epsilon M_t}.\\
\end{split}
\]
 Therefore, our bootstrap assumption in \eqref{2021dec18eqn22} is improved.  Hence finishing the bootstrap argument, i.e., $\tau = t.$  Recall the definition of the majority set in  \eqref{may9en21}. Since $\tau = t$, after rerunning the argument in  \textbf{Step} $\mathbf{1}$,   we have 
 \[
\beta_t = \sup_{s\in[0, t]}\beta_t(s) \leq 1-2\epsilon, \quad \alpha_t = \sup_{s\in[0, t]}\alpha_t(s) \leq (2/3+\iota)-  \epsilon. 
 \]
 Hence finishing the proof of our desired estimate  \eqref{finalestimate}. 
 
The essential notations employed in this section are systematically detailed in Table \ref{tablesection4}.
\begin{table}[H]
\centering
\resizebox{\columnwidth}{!}{%
\begin{tabular}{ |c|c|c|c| } 
 \hline
 Notation & Definition & First appearance  & Remarks \\
 & &in this section  & \\ 
 \hline
 $\beta_t(s) $ & Definition \ref{tmajorityset}; \eqref{may9en21} &\eqref{2021dec18eqn21}  &   Measuring the maximum of velocity characteristics  \\
 & & &  starting from the majority set $R_t(0)$\\ 
\hline 
 $\tau_{\ast}, \tau^{\star}$& \eqref{may10eqn109} & \eqref{may10eqn109} & The start  time and the end time of\\
 & & & bootstrap argument for $|V_{\bot}(s)|$ \\
\hline
 $ \tau^{\ast}$ & \eqref{bootstrap1} & \eqref{bootstrap1}  &  Additional bootstrap argument for  $|V (s)|$ \\
 & & &  in time interval $[\tau_{\ast}, \tau^\star]$\\
\hline
$\gamma_1 $  & $\alpha^{\star}-2\epsilon$ & Above \eqref{may10eqn109} & Measuring $|V_{\bot}(\tau_{\ast})|$ \\
$\gamma_2$ & $ \eqref{gamma2definition} $ & \eqref{gamma2definition}  & Measuring $|V (\tau_{\ast})|$\\
\hline
$\kappa_0  $ & $-100M_t$ &  \eqref{overlapintervals}  & The threshold of inhomogeneous dyadic  \\
& & & decomposition for $|X_{\bot}(s)|$; local notation only\\
\hline
$K$ & $ \lceil (M_t/2+1-\kappa_0)/6\rceil$ &\eqref{overlapintervals}  & The number of dyadic interval for  \\
& & & $|X_{\bot}(s)|$; local notation only\\ 
\hline
\end{tabular}%
}
\caption{Essential notations in section \ref{bootstraparg}.}\label{tablesection4}
\end{table}

\section{ISS Part I: Proof of   Proposition \ref{bootstraplemma1}}\label{mainimprovedfull}

In this section, we rigorously prove that the desired estimate \eqref{nov17eqn31}, as presented in Proposition \ref{bootstraplemma1}, extends to a general class of coefficients. Assume that $C(\cdot, \cdot):\R^2 \times \R^3\rightarrow \R^3$ is a smooth function s.t., the following estimate holds for any $  s \in [t_1, t_2]$,
\be\label{oct23eqn51}
\begin{split}
&\big|\mathbf{P}_3\big(C(  x_{\bot}, V(s))\big) \big| +| X_{\bot} (s)|\big|\nabla_{  x_{\bot}} \mathbf{P}_3\big(C(  x_{\bot}, V(s))\big)  |_{  x_{\bot} =   X_{\bot}(s)} \big| \\
&+ \sum_{|\alpha|\leq 10} |V(s)|^{|\alpha|} \big|\nabla_{v}^{\alpha} \mathbf{P}_3\big(C( x_{\bot},  v)\big) |_{v = V(s)} \big|\lesssim 2^{(\gamma_1-\gamma_2)M_t } \mathcal{M}(C), \\ 
 &\big|C(  x_{\bot}, V(s))\big| +|  X_{\bot}(s)|\big|\nabla_{ x_{\bot}}C(  x_{\bot}, V(s))|_{ x_{\bot} =   X_{\bot}(s)} \big| \\
 &+ \sum_{|\alpha|\leq 10} |V(s)|^{|\alpha|} \big|\nabla_{v}^{\alpha}C( x_{\bot}, v)|_{v = V(s)} \big|\lesssim \mathcal{M}(C).
\end{split}
\ee
Under the above assumptions on the coefficients, we prove the following estimate:
\be\label{2025oct16eqn15eqn2}
\begin{split}
&\big|\int_{t_1}^{t_2} C(  X_{\bot}(s), V(s)) \cdot   K(s, X(s), V(s)) d s \big| \\
& \lesssim  \mathcal{M}(C) \big[\big(\sum_{b\in \mathcal{T}+\mathcal{T}}   2^{-ba_p  } 2^{ b(\gamma_1-\gamma_2)M_t+ (\gamma_1-2\epsilon )  M_t}\big)(t_2-t_1)+  2^{(3\alpha^{\star} +20\epsilon)M_t/4}\big]. 
\end{split}
\ee
Direct computation shows that the coefficients listed in \eqref{2025oct16eqn21} satisfy the assumptions in \eqref{oct23eqn51} with the bounds specified in \eqref{2025oct16eqn21}.
\subsection{The first reduction}
Recall the Duhamel's formula  in \eqref{march14eqn1}.  For any $t_1, t_2\in [0, t]\subset  [0, T) $, s.t., $   \forall s\in[t_1,t_2],$ $|  X_{\bot}(s)|\in I_p\subset [2^{a_p}, 2^{a_p+10}]$, where $a_{p}, p\in \{1,\cdots, K\}$, are defined in \eqref{overlapintervals}.  After localizing the size of frequency, the angle between the frequency variable, and the velocity characteristics $V(s)$, and using the decomposition in \eqref{sep17eqn32}, from   the estimate  \eqref{dec2eqn31} , we have
\be\label{oct2eqn41}
\begin{split}
&\big|\int_{t_1}^{t_2} C (  X_{\bot}(s), V(s)) \cdot   K(s, X(s), V(s)) d s \big| \\
&\lesssim \sum_{\begin{subarray}{c}
k\in \Z,n\in [-  M_t, 2]\cap \Z,\mu\in \{+,-\} \\ 
 i\in \{0,1,2,3,4\}, j\in [0, (1+2\epsilon)M_t]\cap \Z_+\\ 
\end{subarray}}  \big|\int_{t_1}^{t_2}     C (  X_{\bot}(s), V(s))  \cdot T_{k,j;n}^{\mu,i } (s, X(s), V(s)) d s \big| 
+ \mathcal{M}(C),\\ 
\end{split}
\ee
where 
\[
  T^{\mu,i}_{k,j;n} (s, X(s), V(s)):=  T_{k,j;n}^{\mu,i}(1, E)(s, X(s), V(s))+ \hat{V}(t)\times T_{k,j;n}^{\mu,i}(1, B)(s, X(s), V(s)),   
\] 
and the bilinear operator $T_{k,j;n}^{\mu,i}(\cdot, \cdot), i\in\{0,1,2,3,4\}, $ are defined in  \eqref{sep17eqn32}. Define
\be\label{indexsetsec4}
\begin{split}
\mathcal{E}_0=\mathcal{E}_1=\mathcal{E}_2&:=  \{(n, k):  k\in \Z_+,        n\in    [- \alpha^{\star} M_t/2 -30\epsilon M_t ,2]\cap \Z , 
\\ &\quad k+4n\geq  2(\gamma_1-\gamma_2)M_t/3-60\epsilon M_t, \\ 
   &\quad k+2n\geq 2 \alpha^{\star} M_t/3 + (\gamma_1-\gamma_2)M_t/3   -80\epsilon M_t  
 \}, \\ 
\mathcal{E}_3=\mathcal{E}_4&:=\{(n, k):   k\in  \Z_+     ,  n\in  [ (-  \alpha^{\star} +\gamma_1-\gamma_2) M_t/2 -80\epsilon M_t ,2]\cap \Z,\,\,   
  \\ 
 &\quad k+4n\geq  2(\gamma_1-\gamma_2)M_t/3-80\epsilon M_t, k+2n\geq  2 \alpha^{\star} M_t/3   -  \mathbf{1}_{n\in \mathcal{N}_t^2 }\alpha^{\star} M_t  /6\\
 &\quad  + \mathbf{1}_{n\in \mathcal{N}_t^1 }  (\gamma_1-\gamma_2)M_t/3 -80\epsilon M_t \} .
\end{split}
\ee

Recall the assumptions in Proposition \ref{bootstraplemma1}.  From the   estimates  \eqref{2024oct27eqn1} and  \eqref{2022feb24eqn1} in Theorem \ref{maintheoremellipitic} and the estimates \eqref{2024oct8eqn2},   \eqref{2024oct8eqn5}, and \eqref{2024Dec6eqn31}   in Theorem \ref{mainresultsfirstpart}, the following estimate holds  if $( n,k)\notin \mathcal{E}_i,i\in\{0,1,2,3,4\},$ or $a_p \leq -k-n +3\epsilon M_{t}/2$,
\be\label{oct22eqn2}
\begin{split}
&\sum_{j\in \Z_+} \big|\int_{t_1}^{t_2}     C ( X_{\bot}(s), V(s))  \cdot T_{k,j;n}^{\mu,i } (s, X(s), V(s)) d s \big| \\
& \lesssim    \big( \sum_{b\in\mathcal{T} + \mathcal{T}}  \mathcal{M}(C)   2^{-b a_p } 2^{ b(\gamma_1 -\gamma_2)M_t}  2^{(\gamma_1-3 \epsilon)M_t  }  \big) (t_2-t_1) .
\end{split}
\ee

Now, we let $i\in\{0,1,2,3,4\}, ( n,k)\in \mathcal{E}_i$ and $a_{p}\geq  -k-n +3\epsilon M_{t}/2$ be fixed.     Recall the decomposition of the localized acceleration force in \eqref{oct7eqn1}.   

We first consider the contribution from the hyperbolic part.  Note that, on the Fourier side, for the hyperbolic part, from \eqref{2022feb22eqn81} we have 
\be\label{oct25eqn2}
\begin{split}
 &  \int_{t_1}^{t_2}     C ( X_{\bot}(s), V(s))  \cdot  \mathfrak{H}_{k,j;n}^{\mu,i }(1) (s, X(s), V(s)) d s \\
 &  =  \int_{t_1}^{t_2} \int_{\R^3} e^{i X(s)\cdot \xi + i \mu s|\xi|}  C ( X_{\bot}(s), V(s ))\cdot  \mathcal{F}[  \mathfrak{H}_{k,j;n}^{\mu,i}](s, \xi, V(t))  d \xi  d s. \\
 \end{split}
\ee
Moreover, recall the equations satisfied by characteristics in  \eqref{backward}, we have 
\[
e^{i X(s )\cdot \xi + i \mu  s |\xi | } =  ({ i \hat{V}(s)\cdot \xi + i \mu |\xi| })^{-1} \p_s \big(e^{i X(s )\cdot \xi + i \mu  s |\xi | }\big). 
\]
Therefore, we  can use the above equality to exploit the smoothing effect by doing  integration by parts in ``$s$''  once in \eqref{oct25eqn2}.

 After combining the result of doing integration by parts in ``$s$'' with the equality  \eqref{oct7eqn1},  we have 
\be\label{oct25eqn41}
\begin{split}
& \big|\int_{t_1}^{t_2}     C ( X_{\bot}(s), V(s))  \cdot T_{k,j;n}^{\mu,i } (s, X(s), V(s)) d s \big|\\
&\lesssim 1 +   \big|  \mathfrak{E}^{\mu,i}_{k,j;n}(t_1, t_2)\big|  + \sum_{a \in\{0,1 \} }   \big|{}_a^{1}\mathfrak{H}^{\mu,i}_{k,j;n}(t_1, t_2)\big| +\big| {}_a^{1}Err^{\mu,i}_{k,j;n}(t_1, t_2)\big|  ,\\
\end{split}
\ee
where the elliptic contribution  $ \mathfrak{E}^{\mu, i}_{k,j;n} (t_1, t_2), i\in \{0,1,2,3,4\} $ are defined as follows, 
\be\label{oct25eqn1}
\begin{split}
  \forall i\in \{0,1,2,3\}, \quad \mathfrak{E}^{\mu,i}_{k,j;n}(t_1, t_2)&:=  \int_{t_1}^{t_2} C ( X_{\bot}(s), V(s))\cdot      \mathfrak{E}^{\mu, i}_{k,j;n}(1) (s, X(s), V(s))  d s, \\ 
     \mathfrak{E}^{\mu,4}_{k,j;n}(t_1, t_2) &= \int_{t_1}^{t_2}   \int_{\R^3}\int_{\R^3} e^{ix\cdot \xi } \hat{f}( s, \xi, v) \varphi_{n;-M_t} ( \tilde{\xi} + \mu \tilde{V}(s) )      \varphi_{j,n}^4(v,  V(s))    \\
  &\quad \times  C ( X_{\bot}(s), V(s))\cdot m_4(\xi, v , V(s))  \varphi_k(\xi)   d \xi d v d s, \\
   m_4(\xi, v , V(s)) & :=
   \frac{   4\pi\big( (\hat{v}-\hat{\zeta} )\times (\hat{v}\times \xi)+\xi(1-|\hat{v}|^2) }{|\xi|\big({  \hat{V}(s)\cdot \xi +   \mu |\xi| }\big)}, 
\end{split}
\ee
where  $ \mathfrak{E}^{\mu,i}_{k,j;n} ( \cdot)(s, X(s), V(s)), i\in\{0,1,2,3\},$ are defined in  \eqref{2022feb24eqn81}. 

Moreover, the error part  $ {}_a^{1}Err^{\mu,i}_{k,j;n}(t_1, t_2)  $ and the hyperbolic contribution  $ {}_a^{1}\mathfrak{H}^{\mu,i}_{k,j;n}(t_1, t_2),$ $a\in \{0,1\},$  in  \eqref{oct25eqn41}  are defined as follows, 
 \be\label{oct7eqn31}
 \begin{split}
{}_0^{1}Err^{\mu,i}_{k,j;n}(t_1, t_2) & :=  \sum_{i=1,2}(-1)^{i}  \int_{\R^3} e^{i X(t_i)\cdot \xi + i \mu t_i|\xi|} ({ i \hat{V}(t_i)\cdot \xi + i \mu |\xi| })^{-1} \\
&\quad \times  C ( X_{\bot}(t_i), V(t_i))\cdot   \mathcal{F}[ \mathfrak{H}_{k,j;n}^{\mu,i}](t_i, \xi, V(t_i)) d \xi,\\
{}_1^{1}Err^{\mu,i}_{k,j;n}(t_1, t_2)& :=    \int_{t_1}^{t_2} \int_{\R^3} e^{i X(s)\cdot \xi + i \mu  s|\xi|} 
 \big({  \hat{V}(s)\cdot \xi +   \mu |\xi|}\big)^{-1}  \\
 &\quad \times  { \big(\hat{ {V}}_{\bot}(s) \cdot \nabla_{  x_{\bot} } C (  X_{\bot}(s), V(s))\big)\cdot   \mathcal{F}[ \mathfrak{H}_{k,j;n}^{\mu,i}](s, \xi, V(s))}   d \xi  ds. \\
\end{split}
\ee
 
\be\label{nov12eqn61}
\begin{split}
{}_0^{1}\mathfrak{H}^{\mu,i}_{k,j;n}(t_1, t_2) &:=  \int_{t_1}^{t_2} \int_{\R^3} e^{i X(s)\cdot \xi + i \mu s |\xi|} \big[  
   K(s,X(s), V(s))\cdot \mathcal{F}[  {}^1\mathfrak{H}] (s, \xi, X_{\bot}(s), V(s))\big] d \xi d s,\\
{}_1^{1}\mathfrak{H}^{\mu,i}_{k,j;n}(t_1, t_2)&:= \int_{t_1}^{t_2} \int_{\R^3}\int_{\R^3} e^{i X(s)\cdot \xi   } 
    |\xi|^{-1} \big({ i \hat{V}(s)\cdot \xi + i \mu |\xi| }\big)^{-1} \mathcal{F}\big((E+\hat{v}\times B)f\big)(s, \xi, v)  \\
&\quad \cdot \nabla_v \big( C (  X_{\bot}(s), V(s))\cdot
    \tilde{\varphi}_{k,j,n}^i (v,\xi,  V(s))\big)
d\xi d v d s,\\
\end{split}
\ee
 where 
\be\label{oct7eqn90}
\begin{split}
\mathcal{F}[  {}^1 \mathfrak{H}] (s, \xi,   X_{\bot}(s), V(s))&=\nabla_{\zeta} \big[({ i \hat{\zeta} \cdot \xi + i \mu |\xi| })^{-1}   C( X_{\bot}(s),\zeta)\cdot     \mathcal{F}[ \mathfrak{H}_{k,j;n}^{\mu,i}](s, \xi, \zeta) \big]\big|_{\zeta=V(s)}, \\
   \tilde{\varphi}_{k,j,n}^a (v, \xi, V(s))&= \big(
 \frac{ (\hat{V}(s)-\hat{v})\times (\hat{v}\times \xi)   }{  i  (\mu |\xi|+ \hat{v}\cdot \xi  )  }
+  \hat{v} \big)\varphi_{j,n}^a (v,  {V}(s))\\
&\quad \times  \varphi_{n;-M_t}( \tilde{\xi} + \mu \tilde{V}(s) )   \varphi_k(\xi)   , \quad a\in\{0,1,2,3\},\\ 
  \tilde{\varphi}_{k,j,n}^4 (v, \xi, V(s))&= 
   \hat{v}  \varphi_{j,n}^4 (v,  {V}(s))   \varphi_{n;-M_t}( \tilde{\xi} + \mu \tilde{V}(s) )   \varphi_k(\xi)   .\\
\end{split}
\ee
 \begin{remark}

For brevity, the dependence of $\mathcal{F}[{}^1\mathfrak{H}]$ on the parameters $k, j, n$, etc., will be suppressed. This notational convention will be maintained throughout the subsequent discussion.
\end{remark}

Recall the estimate  \eqref{oct25eqn41}. The rest of this section is organized as follows. 
\begin{enumerate}
\item[$\bullet$] In section \ref{hyperbolicPartIest},  we estimate the hyperbolic parts ${}_a^{1}\mathfrak{H}^{\mu,i}_{k,j;n}(t_1, t_2)$, $a\in \{0,1\}$,  $i\in\{0,1,2,3,4\}$.
\item[$\bullet$] In section \ref{ellipticPartIest}, we estimate the elliptic parts  $ \mathfrak{E}^{\mu,i}_{k,j;n}(t_1, t_2)$,   $i\in\{0,1,2,3,4\}$,  in \eqref{oct25eqn41}. 
\item[$\bullet$]  

In section \ref{errortypesPartIest}, we derive estimates for the error terms ${}_a^{1}Err^{\mu,i}_{k,j;n}(t_1, t_2)$, with $a \in \{0, 1\}$ and $i \in \{0, 1, 2, 3, 4\}$. Furthermore, this section provides estimates for all error terms generated during the ISS estimation process for both the hyperbolic and elliptic parts.
\end{enumerate}

\subsection{Estimating the  hyperbolic parts}\label{hyperbolicPartIest}

In this section, we mainly control the hyperbolic parts ${}_a^{1}\mathfrak{H}^{\mu,i}_{k,j;n}(t_1, t_2), a\in \{0,1\},$  in \eqref{oct25eqn41}. 

\subsubsection{The estimate of ${}_0^{1}\mathfrak{H}^{\mu,i}_{k,j;n}(t_1, t_2) $ }

The estimate of ${}_0^{1}\mathfrak{H}^{\mu,i}_{k,j;n}(t_1, t_2) $ is summarized in the following Lemma. 
\begin{lemma}
Let  $i\in\{0,1,2,3,4\}, ( n,k)\in \mathcal{E}_i$ and $a_{p}\geq  -k-n +3\epsilon M_{t}/2$,  under the assumption of Proposition \ref{bootstraplemma1}, the following estimate holds, 
\be\label{2021dec22eqn60}
|{}_0^{1}\mathfrak{H}^{\mu,i}_{k,j;n}(t_1, t_2)|\lesssim   \mathcal{M}(C) \big[\big(\sum_{b\in \mathcal{T}+\mathcal{T}}   2^{-ba_p } 2^{ b(\gamma_1-\gamma_2)M_t+ (\gamma_1-3\epsilon) M_t}\big)(t_2-t_1)+  2^{(3 \alpha^{\star}+20\epsilon)M_t/4}\big]. 
\ee
\end{lemma}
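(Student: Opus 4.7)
The plan is to treat ${}_0^{1}Hyp^{\mu,i}_{k,j;n}(t_1,t_2)$ as the second step of the iterative smoothing scheme advertised in the introduction. Since we are in the non-resonant regime $|\hat V(s)\cdot\xi+\mu|\xi||\gtrsim 2^{k+2n}$ (guaranteed by $(n,k)\in\mathcal{E}_i$), the factor $(i\hat V(s)\cdot\xi+i\mu|\xi|)^{-1}$ is harmless and the kernel ${}^1\clubsuit K(s,\xi,X(s),V(s))$, being a $\nabla_\zeta$-derivative of the first generation normal-form output, satisfies the same pointwise estimate as $\clubsuit K^{\mu,i}_{k,j;n}$ up to a harmless factor of $\langle V(s)\rangle^{-1}2^{-k-2n}$ and the same $\mathcal M(C)$ weight. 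The key observation is that the remaining $s$-integrand is exactly of the form $\bigl(\text{coefficient smooth in }V(s),X(s)\bigr)\cdot K(s,X(s),V(s))$, where $K$ is the genuine acceleration force — so we can apply the very decomposition (\ref{oct2eqn41}) to $K$ itself and produce a \emph{second generation} of localized pieces $T^{\mu_1,i_1}_{k_1,j_1;n_1}$.

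First I would insert the Littlewood–Paley and angular decomposition of $K$ in the $s$-integrand, so that
$$
{}_0^{1}Hyp^{\mu,i}_{k,j;n}(t_1,t_2)=\sum_{\mu_1,k_1,n_1,j_1,i_1}\int_{t_1}^{t_2} \mathcal{C}^{\mu,\mu_1}_{k,n;k_1,n_1}(s,X(s),V(s))\cdot T^{\mu_1,i_1}_{k_1,j_1;n_1}(s,X(s),V(s))\,ds+\mathrm{(err)},
$$
where $\mathcal C$ is a smooth $\mathcal S^\infty$ symbol whose $\mathcal M(\mathcal C)$ inherits the same weight structure as required by Propositions \ref{verest} and \ref{horizonest} (after using the smoothing factor $(\hat V\cdot\xi+\mu|\xi|)^{-1}\sim 2^{-k-2n}$ and the pointwise bound on $\clubsuit K$). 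For each summand I split into two regimes: if $(n_1,k_1)$ lies outside a suitable second generation non-resonance set $\mathcal{E}^{(2)}_{i_1}$ (depending on $k,n$ through the phase $\hat V\cdot\xi+\mu|\xi|+\hat V\cdot\eta+\mu_1|\eta|$), then the pointwise estimate (\ref{aug30eqn40}) in Proposition \ref{horizonest}, applied to $\mathcal{C}\cdot T^{\mu_1,i_1}_{k_1,j_1;n_1}$, gives a direct bound of size $\sum_{b\in\mathcal T+\mathcal T}2^{-ba_p}2^{b(\gamma_1-\gamma_2)M_t}2^{(\gamma_1-3\epsilon)M_t}(t_2-t_1)$, which is precisely the first term on the right-hand side of (\ref{2021dec22eqn60}).

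In the complementary (doubly non-resonant) regime $(n_1,k_1)\in\mathcal{E}^{(2)}_{i_1}$, I do a second integration by parts in $s$, producing an elliptic endpoint together with a third generation hyperbolic term and error terms, in exact analogy with (\ref{oct25eqn41}). The second generation elliptic endpoint is controlled by an iterated version of (\ref{2022feb24eqn1})–(\ref{2022feb25eqn3}) (which I expect to appear in the next subsection as Proposition \ref{elliptfinalpro}), yielding the endpoint contribution $\mathcal M(C)\,2^{(3\alpha^\star+20\epsilon)M_t/4}$; the corresponding error terms are handled exactly as ${}_a^1\mathrm{Err}$ in (\ref{oct7eqn31})–(\ref{oct7eqn32}) (c.f.\ Lemma \ref{erroesttyp1}), using that the smoothing gain $2^{-k_1-2n_1}$ beats the size of $\hat\slashed V\cdot\nabla_{\slashed x}C$. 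The third generation hyperbolic term is again of the form $\mathcal{C}^{(3)}\cdot K$, and the same dichotomy applies to it.

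The main obstacle, and the reason the iteration has to be carefully accounted for, is showing that the scheme terminates: each integration by parts in $s$ inside a non-resonance set gains a factor $2^{-c M_t}$ with $c\geq 1/7$ (this is where the choice of thresholds in the definition of $\mathcal{E}_i$ and $\vartheta^\star_{0,1,2}$ matters, and where the gap $\beta_t\leq 1-2\epsilon$, $\alpha_t\leq \alpha^\star$ between $|V|$ and $|\slashed V|$ is essential), while each iteration also loses a factor of at most $\|K\|_{L^\infty}\lesssim 2^{(1+2\alpha^\star+10\epsilon)M_t}$ from Proposition \ref{Linfielec}. Balancing $cM_t$ against $(1+2\alpha^\star+10\epsilon)M_t$ shows that after at most six iterations the cumulative smallness beats any remaining losses, at which point a terminal pointwise bound via Proposition \ref{horizonest} closes the estimate with total loss exactly $2^{(\gamma_1-3\epsilon)M_t}$ on the linear-in-time piece and $2^{(3\alpha^\star+20\epsilon)M_t/4}$ on the endpoint piece, matching (\ref{2021dec22eqn60}). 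The bookkeeping of the weights $\sum_{b\in\mathcal T+\mathcal T}2^{-ba_p}2^{b(\gamma_1-\gamma_2)M_t}$ — in particular the fact that each $s$-integration by parts produces at most one extra $\nabla_{\slashed x}C$ and hence at most one extra $|\slashed x|^{-1}$ — is what dictates the index set $\mathcal T+\mathcal T$ rather than a higher Minkowski sum, and is the single most delicate point in the bookkeeping.
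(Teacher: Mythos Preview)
Your overall strategy is correct and matches the paper's: iterate the normal-form transformation in $s$, at each step split off a resonant piece handled directly by Propositions \ref{verest} and \ref{horizonest}, and on the non-resonant remainder integrate by parts again. The paper carries out exactly this, with four further integrations by parts beyond the one that produced ${}_0^1Hyp$, defining nested index sets ${}^1\mathcal{E}_{k,n},\ldots,{}^4\mathcal{E}_{k_3,n_3}$ and controlling the error terms at each stage via Lemmas \ref{2021errhorstep1}--\ref{2022errhorstep5}.

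Two misattributions in your sketch would derail the bookkeeping if followed literally. First, the endpoint contribution $2^{(3\alpha^\star+20\epsilon)M_t/4}$ does \emph{not} come from ``elliptic endpoints'' and is \emph{not} controlled by Proposition \ref{elliptfinalpro}; that proposition concerns the first-generation $Ell^{\mu,i;l}_{k,j;n}(t_1,t_2)$, which is a separate term in (\ref{oct25eqn41}) and not part of this lemma at all. The endpoint term arises instead from the \emph{boundary terms} $Err^0_{i,i_1}, Err^0_{i,i_1,i_2},\ldots$ of each $s$-integration by parts (see (\ref{oct2eqn81}), (\ref{2021dec25eqn41}), (\ref{2021dec25eqn61}), (\ref{oct12eqn95}), (\ref{2022feb22eqn61})). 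The elliptic pieces of the decomposed $K$ that appear at each step via (\ref{oct7eqn1}) are controlled by the much simpler $L^\infty$ bound (\ref{2021dec23eqn21}) and contribute only linear-in-time terms. Second, the terminal step is not closed via Proposition \ref{horizonest} on localized pieces: after the last integration by parts the remaining $Hyp^5$ carries the full unlocalized $K(s,X(s),V(s))$, and the paper bounds it using the weighted pointwise estimate (\ref{sep21eqn31}) from Lemma \ref{bulkroughpoi}, which supplies the $|\slashed x|^{-1/4}$ and $|\slashed x|^{-1/2}$ factors. This is also why only $\mathcal T+\mathcal T$ appears: exactly two factors in the whole chain carry $|\slashed x|^{-a}$ weights---one from the first-generation kernel ${}^1\clubsuit K$ via Proposition \ref{horizonest}, and one from the terminal $K$ via (\ref{sep21eqn31})---while all intermediate factors are estimated in pure $L^\infty$ via Proposition \ref{verest}. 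Your explanation via $\nabla_{\slashed x}C$ is off: since $\nabla_{\slashed x}C$ appears only once and is absorbed into the error terms, it is not the mechanism governing the Minkowski sum.
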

\begin{proof}

Recall  \eqref{nov12eqn61},  and the decomposition of the localized acceleration force in \eqref{oct7eqn1}. We proceed in steps as follows. 

\medskip
\noindent \textbf{Step 1.}\quad Ruling out some trivial cases. 
\medskip

We first rule out the case $k$ is very large, e.g., $k\geq 50M_t$. Recall   \eqref{oct7eqn90}. From the estimates \eqref{2024oct8eqn5}   and \eqref{2024Dec6eqn31}  in Theorem \ref{mainresultsfirstpart},  the following estimate holds for any  $x  \in \R^3$, s.t., $|  x_{\bot}|   \in [2^{a_p  -5}, 2^{a_p   + 5}] $,
\be\label{2021dec24eqn2}
\begin{split}
\big|\int_{\R^3}  \mathcal{F}[{}^1\mathfrak{H} ](s, \xi,    X_{\bot}(s), V(s)) e^{ix\cdot \xi + i \mu   s |\xi|}   d \xi \big| &\lesssim \sum_{a\in \mathcal{T}}  2^{-a a_p + a(\gamma_1-\gamma_2)M_{t} -(k+2n)/2+4 0\epsilon M_t}    \mathcal{M}(C)\\
&\quad \times \big(  2^{2(\gamma_1-\gamma_2)M_t/3} \mathbf{1}_{n \in \mathcal{N}_t^1}  +2^{-\alpha^{\star}  M_t/4 }  \mathbf{1}_{n \in \mathcal{N}_t^2 }  \big). \\
\end{split}
\ee

From the above estimate and the rough estimate of the electromagnetic field   \eqref{maintheoremroughest}  in Theorem \ref{maintheorem1part1}, we  can rule out the case $k\geq 50M_t$ as follows, 
\be\label{2021dec24eqn1}
\sum_{k\in[50 M_t, \infty]\cap \Z_+} |{}_0^{1}\mathfrak{H}_{k,j;n}(t_1, t_2)|\lesssim \sum_{a\in \mathcal{T}}  2^{-a a_p + a(\gamma_1-\gamma_2)M_{t}}       \mathcal{M}(C)(t_2-t_1). 
\ee

Given that there are at most $(M_t)^2$ remaining cases, which introduce only a logarithmic loss, it is sufficient to consider a fixed pair $(n, k) \in \mathcal{E}_i$.

 Now,  we rule out the contribution comes from the elliptic parts of the  new introduced acceleration force in ${}_0^{1}\mathfrak{H}^{\mu,i}_{k,j;n}(t_1, t_2)$.   Recall  \eqref{2022feb24eqn81}, and the fact  that $\forall s\in [t_1, t_2], $ we have $|  X_{\bot}(s)|\geq 2^{-k-n+\epsilon M_t}$. From the first  estimate in \eqref{2022feb25eqn1} and the obtained estimate  \eqref{2021dec24eqn2}, for any $a_1\in \{0,1,2,3\},
 \mu_1\in\{+,-\} \cap\Z, $ we have
\be
\begin{split}
\sum_{\begin{subarray}{c}
k_1, j_1\in \Z_+\\
n_1\in [-M_t,2]\cap \Z\\
\end{subarray} } &\big|\int_{t_1}^{t_2} \int_{\R^3} e^{i X(s)\cdot \xi + i \mu   s |\xi|}    \mathfrak{E}^{\mu_1, a_1 }_{k_1,j_1;n_1}(s, X(s), V(s)) \cdot  \mathcal{F}[{}^1\mathfrak{H} ] (s, \xi,   X_{\bot}(s), V(s))  d \xi d s\big| \\
&  \lesssim  \sum_{b\in \mathcal{T}}  2^{-b a_p + (b+1/2)(\gamma_1-\gamma_2)M_{t}}   2^{  -(k+2n)/2 }  2^{-a_p/2 + (\alpha^{\star} +4\epsilon )M_t}  \mathcal{M}(C)(t_2-t_1)\\ 
 &\lesssim \sum_{d\in  \mathcal{T} +  \mathcal{T} }  2^{-d a_p + d(\gamma_1-\gamma_2)M_{t}} 2^{2\alpha^{\star}M_t/3+100\epsilon M_t} \mathcal{M}(C) (t_2-t_1).\\
\end{split}
\ee
 Therefore, recall again the decomposition  in \eqref{oct7eqn1},  from the above obtained estimate,   the estimate  \eqref{dec2eqn31},  we have 
\be\label{2024oct29eqn11}
\begin{split}
\big|{}_0^{1}\mathfrak{H}_{k,j;n}(t_1, t_2)\big|&\lesssim \sum_{\begin{subarray}{c}
k_1 \in \Z_+, j_1\in [0, (1+2\epsilon)M_t]\cap \Z,  \mu_1\in\{+,-\} \\ 
 i_1\in\{0,1,2,3,4\}, n_1\in [-M_t,2]\cap \Z, 
\end{subarray}}     |\mathfrak{H}_{ i, i_1}(t_1, t_2)|\\
&\quad +    \sum_{a \in  \mathcal{T} +  \mathcal{T} }  2^{-a a_p + a(\gamma_1-\gamma_2)M_{t}} 2^{(\alpha^{\star}-20\epsilon )M_t }(t_2-t_1) \mathcal{M}(C), \\
 \mathfrak{H}_{ i, i_1}(t_1, t_2)& =  \int_{t_1}^{t_2} \int_{\R^3} e^{i X(s)\cdot \xi + i \mu s|\xi|}  \mathfrak{H}_{k_1,j_1;n_1}^{\mu_1,i_1}(s, X(s), V(s) ) \cdot \mathcal{F}[{}^1\mathfrak{H} ] (s , \xi,   X_{\bot}(s), V(s))d \xi d s \\
\end{split}
\ee
where, for convenience, we suppress the dependence of $\mathfrak{H}_{ i, i_1}(t_1, t_2) $ with respect to $k,j,n,k_1,j_1,n_1,\mu,\mu_1$.

\medskip
\noindent \textbf{Step 2.}\quad The second iteration of smoothing.  

\medskip

 Let
\be\label{firstiterindex}
 \begin{split}
{}^{1}\mathcal{E}_{k,n}:=\{(k_1,n_1):  k_1 \in \Z_+, n_1\in [-M_t, 2]\cap \Z,    n_1&\geq (-\alpha^{\star}+\gamma_1-\gamma_2) M_t/2 -30\epsilon M_t  , \\ 
 (k_1+2n_1)- (k+2n)&\geq  \alpha^{\star} M_t/2 + \mathbf{1}_{n\in \mathcal{N}_t^2} \alpha^{\star} M_t/2  \\
 &\quad  - \mathbf{1}_{n\in \mathcal{N}_t^1} 4(\gamma_1-\gamma_2)M_t/3  -200\epsilon M_t \}. 
 \end{split}
 \ee
 
 We  can first rule out the case   $ (k_1,n_1)\notin {}^{1}\mathcal{E}_{k,n}$ easily. From   the obtained estimate  \eqref{2021dec24eqn2}  and  the estimates \eqref{2024oct8eqn5}   and \eqref{2024Dec6eqn31}  in Theorem  \ref{mainresultsfirstpart}, we have 
\be\label{2021dec23eqn32}
\begin{split}
 \big|  \mathfrak{H}_{ i, i_1}(t_1, t_2)\big|&\lesssim    \mathcal{M}(C) \big[ \sum_{b\in \mathcal{T}}  2^{-b a_p +  b (\gamma_1-\gamma_2)M_{t} -(k+2n)/2+40\epsilon M_t}   \\
 &\quad \times  \big(  2^{2(\gamma_1-\gamma_2)M_t/3} \mathbf{1}_{n \in \mathcal{N}_t^1}  +2^{-\alpha^{\star}  M_t/4 }  \mathbf{1}_{n \in \mathcal{N}_t^2 }  \big)      \big]  \\
&\quad \times\big[  \sum_{b\in\mathcal{T}} 2^{-b a_p} 2^{b (\gamma_1-\gamma_2)M_t}  2^{(\alpha^{\star}-10\epsilon)M_t -(\gamma_1-\gamma_2)M_t/4}   \\
&\quad  +  2^{-b a_p} 2^{b  (\gamma_1-\gamma_2) M_t } 2^{(k_1+2n_1)/2+3\alpha^\star M_t/4 +40\epsilon M_t } \big]\\
&\lesssim    \sum_{d \in  \mathcal{T} +  \mathcal{T} }  2^{- d a_p + d(\gamma_1-\gamma_2)M_{t}} 2^{(\alpha^{\star}-10\epsilon )M_t }(t_2-t_1) \mathcal{M}(C). \\
\end{split}
\ee
Now, it suffices to consider the case  $ (k_1,n_1)\in {}^{1}\mathcal{E}_{k,n}$.  Note that, on the Fourier side, we have
\[
\begin{split}
\mathfrak{H}_{ i, i_1}(t_1, t_2) &=  \int_{t_1}^{t_2} \int_{\R^3} \int_{\R^3} e^{i X(s)\cdot (\xi+\eta) + i \mu s|\xi| + i \mu_1 s|\eta|}  \\
  &\quad \times     \mathcal{F}[\mathfrak{H}_{k_1,j_1;n_1}^{\mu_1,i_1}](s, \eta , V(s) ) \cdot  \mathcal{F}[{}^1\mathfrak{H} ] (s, \xi,    X_{\bot}(s), V(s))   d \xi d\eta  d s .
  \end{split}
  \]

Thanks to  the fact that, $ \forall (k_1,n_1)\in {}^{1}\mathcal{E}_{k,n}$,  $  (k_1+2n_1)- (k+2n)\geq  \alpha^{\star} M_t/2-100\epsilon M_t  $, we have
\be
\big|\hat{V}( s)\cdot(\xi+\eta) +   \mu |\xi| +   \mu_1  |\eta|\big|\sim |\hat{V}(s)\cdot \eta    +   \mu_1  |\eta| | \sim 2^{k_1+2n_1}.
\ee
 
To take the advantage of high oscillation in time, we do integration by parts in  ``$s$'' one more time. As a result, after using the decomposition for the acceleration force in  \eqref{oct7eqn1},  the estimate \eqref{dec2eqn31}, we have  
\be\label{oct29eqn55}
\begin{split}
 \big| \mathfrak{H}_{ i, i_1}(t_1, t_2)\big| & \lesssim \mathcal{M}(C)+  \sum_{a=0,1,2  }  \big| Err^a_{ i, i_1}(t_1, t_2)\big|    \\
 &\quad +\sum_{\begin{subarray}{c}
  i_2 \in\{0,1,2,3,4\} \\
k_2 \in \Z_+, j_2\in [0, (1+2\epsilon)M_t]\cap \Z\\
 \mu_2\in\{+,-\},n_2 \in [-M_t,2]\cap \Z \\ 
\end{subarray}}  \big|  \mathfrak{H}_{ i, i_1,i_2}(t_1, t_2)\big|\\
\mathfrak{H}_{ i, i_1,i_2}(t_1, t_2)&:= \int_{t_1}^{t_2} \int_{\R^3}  \int_{\R^3} e^{i X(s)\cdot (\xi+\eta) + i \mu s|\xi| + i \mu_1 s|\eta|}  \\
  &\quad\times   \mathfrak{H}_{k_2,j_2;n_2}^{\mu_2,i_2}(s,X(s), V(s))\cdot \mathcal{F}[ {}^2\mathfrak{H}] (s, \xi,\eta,   X_{\bot}(s),V(s))  d \xi d\eta  d s, 
\end{split}
 \ee
where $\mathcal{F}[ {}^2\mathfrak{H}] (s, \xi,\eta,   X_{\bot}(s),V(s)) $ is defined as follows, 
\be\label{oct10eqn96}
\begin{split}
 \mathcal{F}[ {}^2\mathfrak{H}](s, \xi,\eta,   X_{\bot}(s),V(s)) 
&:= \nabla_\zeta \big[ \big( \Phi_2(\xi, \eta,  \zeta)\big)^{-1}  
 \mathcal{F}[\mathfrak{H}_{k_1,j_1;n_1}^{\mu_1,i_1}](s, \eta , \zeta) \\
 &\quad \cdot  \mathcal{F}[ {}^1\mathfrak{H}] (s, \xi,   X_{\bot}(s),\zeta)\big]\big|_{\zeta= V(s)},\\
 \Phi_2(\xi, \eta,  \zeta)&:=  \hat{\zeta} \cdot(\xi+ \eta   )     +   \mu  |\xi| +   \mu_1  |\eta|.
 \end{split}
\ee
and the error terms $Err^a_{ i, i_1}(t_1, t_2), a\in\{0,1,2\},$ are defined as follows, 
\be\label{oct8eqn1}
\begin{split}
 Err^0_{ i, i_1}(t_1, t_2)&:=  \sum_{i=1,2} (-1)^i   \int_{\R^3}\int_{\R^3} e^{i X(t_i)\cdot (\xi+\eta) + i \mu t_i|\xi| + i \mu_1 t_i|\eta|}\big( \Phi_2(\xi, \eta,  V(t_i))\big)^{-1}      \\
 &\quad\times     \mathcal{F}[\mathfrak{H}_{k_1,j_1;n_1}^{\mu_1,i_1}](t_i, \eta , V(t_i))  \cdot  \mathcal{F}[ {}^1\mathfrak{H}] (t_i, \xi,  X_{\bot}(t_i),V(t_i))   d \xi d\eta    \\
& \quad    +    \int_{t_1}^{t_2} \int_{\R^3} \int_{\R^3}e^{i X(s)\cdot (\xi+\eta) + i \mu s|\xi| + i \mu_1 s|\eta|}  \big( \Phi_2(\xi, \eta,  V(s))\big)^{-1} \\
 &\quad  \times            \mathcal{F}[\mathfrak{H}_{k_1,j_1;n_1}^{\mu_1,i_1}](s, \eta , V(s))  \cdot \big(  {\hat{V}}_{\bot}(s)\cdot \nabla_{ x_{\bot}}   \mathcal{F}[ {}^1\mathfrak{H}] (s , \xi,    X_{\bot}(s ),V(s ))  \big)       d \xi d\eta  d s, 
 \end{split}
 \ee
\be\label{oct10eqn77}
\begin{split}
Err^1_{ i, i_1}(t_1, t_2)&:=   \int_{t_1}^{t_2} \int_{\R^3}\int_{\R^3} e^{i X(s)\cdot (\xi+\eta) + i \mu s|\xi| + i \mu_1 s|\eta|}   \big( \Phi_2(\xi, \eta,  V(s))\big)^{-1}      \\
&\quad \times  \big( \p_s    \mathcal{F}[\mathfrak{H}_{k_1,j_1;n_1}^{\mu_1,i_1}](s, \eta , V(s))  \cdot  \mathcal{F}[ {}^1\mathfrak{H}] (s, \xi,    X_{\bot}(s),V(s))  \\
 &\quad+    \mathcal{F}[\mathfrak{H}_{k_1,j_1;n_1}^{\mu_1,i_1}](s, \eta , V(s))  \cdot  \p_s   \mathcal{F}[ {}^1\mathfrak{H}]  (s, \xi,   X_{\bot}(s),V(s)) \big)  d \xi d\eta  d s, 
 \end{split}
\ee
\be\label{oct10eqn93}
\begin{split}
 Err^2_{ i, i_1}(t_1, t_2):=&\sum_{\begin{subarray}{c}
k_2\in \Z_+, j_2\in [0, (1+2\epsilon)M_t]\cap \Z  \\
i_2\in\{0,1,2,3,4\}, \mu_2\in\{+,-\}, \\ 
 n_2 \in [-M_t,2]\cap \Z,  a_2\in\{0,1,2,3\}\\ 
\end{subarray}} \int_{t_1}^{t_2} \int_{\R^3}  \int_{\R^3} e^{i X(s)\cdot (\xi+\eta) + i \mu s|\xi| + i \mu_1 s|\eta|} \\
&\quad  \times \big(    \mathfrak{E}^{\mu_2, a_2 }_{k_2,j_2;n_2} (s, X(s), V(s))+ Ini_{k_2,j_2;n_2}^{\mu_2}(s, X(s), V(s))\big)\\
&\qquad \cdot \mathcal{F}[{}^2\mathfrak{H} ](s, \xi,\eta,    X_{\bot}(s),V(s))  d \xi d\eta  d s. \\
\end{split}
\ee

From  the estimate  \eqref{oct29eqn55}  and the estimate of error type terms  \eqref{2021dec22eqn32} in Lemma \ref{2021errhorstep1},    to estimate $   \mathfrak{H}_{ i, i_1}(t_1, t_2)$, it suffices to estimate $   \mathfrak{H}_{ i, i_1,i_2}(t_1, t_2)$.

\medskip
\noindent \textbf{Step 3.}\quad The third iteration of smoothing.  

\medskip

 Similar to the obtained estimate  \eqref{2021dec24eqn1}, we can first rule out the case $k_1\geq 50 M_t$. Now, it suffices to consider the fixed $(n_1, k_1)\in  {}^{1}\mathcal{E}_{k,n}$ s.t., $k_1\leq 50M_t$. 
Let
\be\label{seconditerindex}
 \begin{split}
{}^{2}\mathcal{E}_{k_1,n_1}:=\{( n_2,k_2):  k_2\in \Z_+,  &n_2\in [ (-\alpha^{\star}+\gamma_1-\gamma_2) M_t/2 -30\epsilon M_t  , 2]\cap \Z,\\
   & (k_2+2n_2)- (k_1+2n_1)\geq \alpha^{\star} M_t/6\\
    &\quad -6\iota M_t -560\epsilon M_t -(\gamma_1-\gamma_2)M_t \}  . 
 \end{split}
 \ee

Recall  \eqref{oct10eqn96}.    Similar to  the obtained  estimate \eqref{2021dec24eqn2},  from   the estimates in \eqref{2024oct8eqn1},  \eqref{2024oct8eqn5},  and \eqref{2024Dec6eqn31} in Theorem  \ref{mainresultsfirstpart},   for any  $ s\in [t_1,t_2], x,y \in \R^3$, s.t., $| x_{\bot}|, | y_{\bot}| \in [2^{a_p  -5}, 2^{a_p   + 5}] $, we have 
\be\label{oct11eqn20}
\begin{split}
&\big| \int_{\R^3} \int_{\R^3}  e^{i x\cdot \xi +  i y\cdot \eta + i \mu_1 s  |\eta| +  i \mu s |\xi| }   \mathcal{F}[{}^2\mathfrak{H} ]   (s, \xi,\eta,    X_{\bot}(s),V(s) )  d \xi d \eta \big|\\
& \lesssim  \big(\sum_{b\in \mathcal{T}}  2^{ -b a_p   +b(\gamma_1-\gamma_2)M_t  }   2^{40\epsilon M_t -(k+2n)/2}   (  2^{2(\gamma_1-\gamma_2)M_t/3} \mathbf{1}_{n \in \mathcal{N}_t^1}  +2^{-\alpha^{\star}  M_t/4 }  \mathbf{1}_{n \in \mathcal{N}_t^2 }  )  \big)\\
 &\quad  \times   \big( \big( 2^{(1-\epsilon)M_t}+ 2^{ 130\epsilon M_t} 
    2^{(k_1+2n_1)/2+ (\alpha^{\star}+3\iota) M_t  }\big) \big) 2^{-\gamma_2 M_t} 2^{-k_1-2n_1} (2^{-n}+2^{-n_1})\\
& \lesssim \sum_{b\in \mathcal{T} } 2^{- ba_p } 2^{(b+5/6) (\gamma_1-\gamma_2)M_t } 2^{ \alpha^{\star} M_t/6  + 220\epsilon M_{t} + 3\iota M_t   -(k_1+2n_1)/2}\mathcal{M}(C) .\\
\end{split}
\ee

From the above obtained estimate  \eqref{oct11eqn20}    and the estimates  in  \eqref{2024oct8eqn5} and \eqref{2024Dec6eqn31} in Theorem  \ref{mainresultsfirstpart},      we can rule out the case  $(k_2,n_2)\notin {}^{2}\mathcal{E}_{k_1,n_1} $ as follows, 
\be
\begin{split}
& \big| \int_{t_1}^{t_2} \int_{\R^3}  \int_{\R^3} e^{i X(s)\cdot (\xi+\eta) + i \mu s|\xi| + i \mu_1 s|\eta|}  \\
&\quad \times  \mathfrak{H}_{k_2,j_2;n_2}^{\mu_2,i_2}(s,X(s), V(s))\cdot  \mathcal{F}[{}^2\mathfrak{H} ] (s, \xi,\eta, X_{\bot}(s),V(s))  d \xi d\eta  d s\big|\\
& \lesssim \big( \sum_{b\in \mathcal{T}} 2^{-b a_p  }  2^{b (\gamma_1-\gamma_2) M_t }     ( 2^{  \alpha^{\star} M_t -(\gamma_1-\gamma_2)M_t/4 }+ 2^{40\epsilon M_t}   2^{(k_2+2n_2)/2+3  \alpha^{\star} M_t/4 }  )  \big)   \\
&\quad  \times \big(  \sum_{b\in \mathcal{T} } 2^{- ba_p } 2^{(b+5/6) (\gamma_1-\gamma_2)M_t } 2^{ \alpha^{\star} M_t/6  + 220\epsilon M_{t} + 3\iota M_t   -(k_1+2n_1)/2}\mathcal{M}(C)  \big)   \\
& \lesssim  \sum_{d\in \mathcal{T}+\mathcal{T} }   2^{- d a_p } 2^{d(\gamma_1-\gamma_2)M_t }   2^{ (\alpha^{\star} - 20\epsilon) M_t} \mathcal{M}(C).
\\
\end{split} 
\ee 

Now, it suffices to consider the case    the case  $(k_2,n_2)\in {}^{2}\mathcal{E}_{k_1,n_1}  $.  Note that, on the Fourier side, we have
\be\label{oct23eqn1}
\begin{split}
  \mathfrak{H}_{ i, i_1,i_2}(t_1, t_2)&=\int_{t_1}^{t_2} \int_{\R^3}  \int_{\R^3} e^{i X(s)\cdot (\xi+\eta + \sigma)  + i  \mu_2 s |\sigma| + i \mu s|\xi| + i \mu_1 s|\eta|} \\
  &\quad \times  \mathcal{F}[\mathfrak{H}_{k_2,j_2;n_2}^{\mu_2,i_2}](s,\sigma, V(s))\cdot    \mathcal{F}[{}^2\mathfrak{H} ]  (s, \xi,\eta,   X_{\bot}(s), V(s))  d \xi d\eta  d s.
  \end{split}
\ee 
Thanks to the fact that,  $(k_2+2n_2) - (k_1+2n_1) \geq \alpha^{\star}  M_t/12 $, we have
\[
\big|\hat{V}(t)\cdot(\xi+ \eta +\sigma) +  \mu_2   |\sigma| +   \mu  |\xi| +   \mu_1  |\eta| \big|\sim  |\hat{V}(t)\cdot \sigma  +  \mu_2   |\sigma|  |\sim 2^{k_2+2n_2}. 
\]

To exploit the high oscillation in time, we do integration by parts in ``$s$'' one more time. As a result,  after using the decomposition for the acceleration force in  \eqref{oct7eqn1}  for the new introduced acceleration force,    the estimate  \eqref{dec2eqn31},     we have   
\be\label{oct29eqn61}
\begin{split}
\big|    \mathfrak{H}_{ i, i_1,i_2}(t_1, t_2)\big|&\lesssim \mathcal{M}(C) +   \sum_{a=0,1,2  }  \big|Err^a_{ i, i_1,i_2}(t_1, t_2) \big| \\
&\quad +  \sum_{\begin{subarray}{c}
 i_3\in\{0,1,2,3,4\} \\ 
k_3 \in \Z_+, j_3\in [0, (1+2\epsilon)M_t]\cap \Z\\
 n_3 \in [-M_t,2]\cap \Z , \mu_3\in\{+,-\}\\ 
\end{subarray}}   \big|  \mathfrak{H}_{ i, i_1,i_2,i_3}(t_1, t_2)\big| , \\ 
  \mathfrak{H}_{ i, i_1,i_2,i_3}(t_1, t_2)&:=\int_{t_1}^{t_2} \int_{ \R^3 } \int_{ \R^3 } 
\int_{ \R^3 }   e^{i X(s)\cdot (\xi+\eta + \sigma)  + i  \mu_2 s |\sigma| + i \mu s |\xi| + i \mu_1 s |\eta|} \\ 
  &\quad \times  \mathfrak{H}_{k_3,j_3;n_3}^{\mu_3,i_3}(s,X(s), V(s))\cdot   \mathcal{F}[{}^3\mathfrak{H} ](s, \xi, \eta, \sigma,   X_{\bot}(s), V(s)) d\xi d \eta d \sigma ds, 
\end{split}
\ee
where    $ \mathcal{F}[{}^3\mathfrak{H} ](s, \xi, \eta, \sigma,    X_{\bot}(s), V(s))$ is defined as follows,  
\be\label{oct11eqn86}
\begin{split}
  \mathcal{F}[{}^3\mathfrak{H} ](s, \xi, \eta, \sigma,   X_{\bot}(s), V(s)) 
&:= \nabla_\zeta \big[\big(\Phi_3(\xi, \eta, \sigma ,\zeta) \big)^{-1}   \mathcal{F}[\mathfrak{H}_{k_2,j_2;n_2}^{\mu_2,i_2}](s,\sigma, \zeta)\\
&\quad \cdot   \mathcal{F}[{}^2\mathfrak{H} ] (s, \xi,\eta,    X_{\bot}(s),\zeta)   \big]\big|_{\zeta= V(s)}, \\
\Phi_3(\xi, \eta, \sigma ,\zeta)&:=  \hat{\zeta} \cdot(\xi+ \eta +\sigma )  +  \mu_2   |\sigma| +   \mu  |\xi| +   \mu_1  |\eta|.\\
\end{split}
\ee
and the error terms $Err^a_{ i, i_1,i_2}(t_1, t_2), a\in \{0,1,2,3\}$, are defined as follows, 
 \be\label{oct11eqn187}
 \begin{split}
  Err^0_{ i, i_1,i_2}(t_1, t_2) &:= \sum_{i=0,1}   \int_{\R^3}  \int_{\R^3} e^{i X(t_i)\cdot (\xi+\eta + \sigma)  + i  \mu_2 t_i |\sigma| + i \mu t_i|\xi| + i \mu_1 t_i |\eta|}\big(\Phi_3(\xi, \eta, \sigma ,V(t_i)) \big)^{-1} \\
  &\quad \times   \mathcal{F}[\mathfrak{H}_{k_2,j_2;n_2}^{\mu_2,i_2}] (t_i,\sigma, V(t_i))\cdot \mathcal{F}[{}^2\mathfrak{H} ] (t_i, \xi,\eta,    X_{\bot}(t_i),V(t_i))   d \xi d\eta \\
 &\quad   + 
  \int_{t_1}^{t_2} \int_{\R^3}  \int_{\R^3} e^{i X(s )\cdot (\xi+\eta + \sigma)  + i  \mu_2 s  |\sigma| + i \mu s |\xi| + i \mu_1 s  |\eta|} \big(\Phi_3(\xi, \eta, \sigma ,V(s)) \big)^{-1}\\
 &\quad \times     \mathcal{F}[\mathfrak{H}_{k_2,j_2;n_2}^{\mu_2,i_2}](s ,\sigma, V(s))\cdot \big(  { \hat{V}}_{\bot}(s)\cdot \nabla_{ x_{\bot}} \mathcal{F}[{}^2\mathfrak{H} ](s , \xi,\eta,    X_{\bot}(s ),V(s))\big)     d \xi d\eta  ds,\\
 \end{split}
\ee

 \be\label{oct11eqn42}
 \begin{split}
 Err^1_{ i, i_1,i_2}(t_1, t_2)&:= \int_{t_1}^{t_2} \int_{\R^3}  \int_{\R^3} e^{i X(s )\cdot (\xi+\eta + \sigma)  + i  \mu_2 s |\sigma| + i \mu s |\xi| + i \mu_1 s  |\eta|} \big(\Phi_3(\xi, \eta, \sigma ,V(s)) \big)^{-1} \\
 &\quad \times \big(    \mathcal{F}[\mathfrak{H}_{k_2,j_2;n_2}^{\mu_2,i_2}] (s,\sigma, V(s ))\cdot    \p_s \mathcal{F}[{}^2\mathfrak{H} ] (s, \xi,\eta,   X_{\bot}(s ),V(s)) \\
 &\quad +   \p_s   \mathcal{F}[\mathfrak{H}_{k_2,j_2;n_2}^{\mu_2,i_2}] (s ,\sigma, V(s))\cdot     \mathcal{F}[{}^2\mathfrak{H} ] (s , \xi,\eta,  X_{\bot}(s),V(s)) \big)  d \xi d\eta  ds, \\
 \end{split}
\ee

 \be\label{oct11eqn43}
 \begin{split}
  Err^2_{ i, i_1,i_2}(t_1, t_2) & :=    \sum_{\begin{subarray}{c}
k_3, j_3\in \Z_+,i_3\in\{0,1,2,3,4\}\\
 a_3\in\{0,1,2,3\} , \mu_3\in\{+,-\}\\ 
 n_3 \in [-M_t,2]\cap \Z  \\ 
\end{subarray}} \int_{t_1}^{t_2} \int_{\R^3}  \int_{\R^3}\int_{\R^3}  e^{i X(s)\cdot (\xi+\eta + \sigma)  + i  \mu_2 s |\sigma| + i \mu s|\xi| + i \mu_1 s|\eta|}  \\
 &\quad  \times  \big(   \mathfrak{E}^{\mu_3, a_3}_{k_3,j_3;n_3} (s, X(s), V(s))+ Ini_{k_3,j_3,n_3}^{\mu_3}(s, X(s), V(s)) \big) \\
 &\quad \cdot \mathcal{F}[{}^3\mathfrak{H} ](s, \xi, \eta, \sigma,  X_{\bot}(s), V(s)) d\xi d \eta d \sigma d s.
 \end{split}
\ee

From the estimate \eqref{oct29eqn61} and the estimate  \eqref{2021dec22eqn56}  in Lemma \ref{2021errhorstep2},   to estimate $  \mathfrak{H}_{ i, i_1,i_2}(t_1, t_2)$, it suffices to estimate $  \mathfrak{H}_{ i, i_1,i_2, i_3}(t_1, t_2)$.

\medskip
\noindent \textbf{Step 4.}\quad The fourth iteration of smoothing.  

\medskip

 As in the obtained estimate  \eqref{2021dec24eqn1}, we can rule out further the case $k_2\geq 50M_t$. It suffices to let $(k_2,n_2)\in {}^{2}\mathcal{E}_{k_1,n_1}  $ be fixed. Let
\be\label{thirditerindex}
 \begin{split}
{}^{3}\mathcal{E}_{k_2,n_2}:=\{( n_3, k_3): k_3 \in \Z_+,   &  n_3\in [-M_t, 2]\cap \Z,\, n_3 \geq (-\alpha^{\star}+\gamma_1-\gamma_2) M_t/2  -30\epsilon M_t, \\  
&  (k_3+2n_3)- (k_2+2n_2)\\
& \quad  \geq \alpha^{\star} M_t/3    -12\iota M_{t} -1100\epsilon M_t -2(\gamma_1-\gamma_2)M_t   \}.
 \end{split}
 \ee

Recall  \eqref{oct11eqn86}. Similar to the obtained estimate  \eqref{oct11eqn20},  from   the estimates in \eqref{2024oct8eqn1},  \eqref{2024oct8eqn5},  and \eqref{2024Dec6eqn31} in Theorem  \ref{mainresultsfirstpart},   the following estimate holds   for any   $
  s\in [t_1,t_2],$  $x,y,z\in \R^3$, s.t., $|  x_{\bot}|, | y_{\bot}|, |  z_{\bot}|\in [2^{a_p  -5}, 2^{a_p   + 5}] $,
\be\label{2021dec24eqn21}
\begin{split}
&\big| \int_{\R^3}  \int_{\R^3}  \int_{\R^3}   e^{i   (x\cdot\xi+y\cdot \eta + z\cdot \sigma)  + i  \mu_2 s |\sigma|  + i \mu_1 s |\eta|+ i \mu s |\xi|}    \mathcal{F}[{}^3\mathfrak{H} ] (s, \xi, \eta, \sigma,   X_{\bot}(s), V(s))  d \xi d \eta  d\sigma \big|\\
  &\lesssim \sum_{b\in \mathcal{T} }  \mathcal{M}(C)  2^{- ba_p } 2^{(b+11/6) (\gamma_1-\gamma_2)M_t } 2^{-\min\{n, n_1, n_2 \}} \\
  &\quad \times 2^{ \alpha^{\star} M_t/6  + 350\epsilon M_{t} + 6\iota M_{t}  -(k_1+2n_1)/2 - (k_2+2n_2)/2} \\
&\lesssim \sum_{b\in \mathcal{T} } \mathcal{M}(C) 2^{- ba_p } 2^{(b+4/3) (\gamma_1-\gamma_2)M_t + \alpha^{\star}M_{t^\star}/12+  490\epsilon M_{t} +6\iota M_{t}-(k_2+2n_2)/2} . \\
\end{split}
\ee

From the above estimate     and    the estimates in \eqref{2024oct8eqn5}  and \eqref{2024Dec6eqn31}   in Theorem  \ref{mainresultsfirstpart}, the following estimate holds if $(k_3,n_3)\notin {}^{3}\mathcal{E}_{k_2,n_2}, $
\be
\begin{split}
&|  \mathfrak{H}_{ i, i_1,i_2,i_3}(t_1, t_2)|\\
&\lesssim  \big[  \sum_{b\in \mathcal{T} } 2^{- ba_p } 2^{(b+4/3) (\gamma_1-\gamma_2)M_t + \alpha^{\star}M_{t^\star}/12+  490\epsilon M_{t} +6\iota M_{t} -(k_2+2n_2)/2} \mathcal{M}(C) \big] \\
&\quad \times \big( \sum_{b\in \mathcal{T}} 2^{-b a_p  }  2^{b (\gamma_1-\gamma_2) M_t }( 2^{  \alpha^{\star} M_t  -(\gamma_1-\gamma_2)M_{t^{\star}} /4 }+ 2^{40\epsilon M_t}     2^{(k_3+2n_3)/2+3  \alpha^{\star} M_t/4 } )  \big)\\
& \lesssim  \sum_{d\in \mathcal{T}+\mathcal{T} }   2^{- d a_p } 2^{d (\gamma_1-\gamma_2)M_t }   2^{ (\alpha^{\star} - 10\epsilon) M_t} \mathcal{M}(C) . 
\end{split}
\ee

It remains to consider the case   $(k_2,n_2)\in {}^{2}\mathcal{E}_{k_1,n_1}. $ Note that, on the Fourier side, we have
\[
\begin{split}
&\int_{t_1}^{t_2} \int_{\R^3} \int_{\R^3}  \int_{\R^3}  e^{i X(s)\cdot (\xi+\eta + \sigma)  + i s( \mu_2   |\sigma| +   \mu  |\xi| +  \mu_1 |\eta|) } \\
&\quad \times  \mathfrak{H}_{k_3,j_3;n_3}^{\mu_3,i_3}(s,X(s), V(s))\cdot  \mathcal{F}[{}^3\mathfrak{H} ] (s, \xi, \eta, \sigma,   X_{\bot}(s), V(s)) d\xi d \eta d \sigma d s\\
&=\int_{t_1}^{t_2} \int_{(\R^3)^4}  e^{i X(s)\cdot (\xi+\eta + \sigma+\kappa) +   i s ( \mu_3   |\kappa|  +  \mu_2   |\sigma| +   \mu  |\xi| +  \mu_1 |\eta|)}\\
&\quad \times  \mathcal{F}[\mathfrak{H}_{k_3,j_3;n_3}^{\mu_3,i_3}](s,\kappa, V(s)) \cdot  \mathcal{F}[{}^3\mathfrak{H} ] (s, \xi,\eta, \sigma,   X_{\bot}(s),V(s))  d \xi d\eta d \sigma d \kappa d s\\
\end{split}
\]
Thanks to the fact that,  $(k_3+2n_3)/2\gg (k_2+2n_2)/2\gg (k_1+2n_1)/2\gg (k +2n )/2  $, we have
\[
\big|\hat{V}(s)\cdot(\xi+ \eta +\sigma+\kappa) + \mu_3 |\kappa| +  \mu_2   |\sigma| +   \mu  |\xi| +   \mu_1  |\eta| \big|\sim  |\hat{V}(s)\cdot \kappa  +  \mu_3   |\kappa|  |\sim 2^{k_3+2n_3}. 
\]

To exploit the high oscillation in time, we do integration by parts in ``$s$'' one more time. As a result,    we have   
\be\label{oct12eqn76}
\begin{split}
  \big|\mathfrak{H}_{ i, i_1,i_2,i_3}(t_1, t_2)
&\big|\lesssim \mathcal{M}(C)+ \sum_{a=0,1,2  }    Err^a_{ i, i_1,i_2,i_3}(t_1, t_2) \\
&\quad  + \sum_{\begin{subarray}{c}
 i_4\in\{0,1,2,3,4\}\\
k_4 \in \Z_+, j_4\in   \in [0, (1+2\epsilon)M_t]\cap \Z \\ 
 n_4 \in [-M_t,2]\cap \Z , \mu_4\in\{+,-\}\\ 
\end{subarray}}    \mathfrak{H}_{ i, i_1,i_2,i_3,i_4}(t_1, t_2), \\
   \mathfrak{H}_{ i, i_1,i_2,i_3,i_4}(t_1, t_2)&:= \int_{t_1}^{t_2} \int_{(\R^3)^4}   e^{i X(s)\cdot (\xi+\eta + \sigma+\kappa) +i s( \mu_3   |\kappa|  +    \mu_2    |\sigma| +   \mu   |\xi| +  \mu_1   |\eta| )  } \\
&\quad \times  \mathfrak{H}_{k_4,j_4;n_4}^{\mu_4,i_4}(s,X(s), V(s)) \cdot \mathcal{F}[{}^4 \mathfrak{H}](s, \xi, \eta, \sigma,\kappa,    X_{\bot}(s), V(s)) d\xi d \eta d \sigma d \kappa d s, \\
\end{split}
\ee
where   $\mathcal{F}[{}^4 \mathfrak{H}](\cdots)$ is given as follows,
 \be\label{oct12eqn81}
 \begin{split}
   \mathcal{F}[{}^4 \mathfrak{H}] (s, \xi, \eta, \sigma,\kappa,    X_{\bot}(s), V(s))  
 & : =\nabla_{\zeta}\big[ \big( \Phi_4(\xi, \eta, \sigma, \kappa,\zeta)\big)^{-1}\\
 &\quad \times   \mathcal{F}[\mathfrak{H}_{k_3,j_3;n_3}^{\mu_3,i_3}](s,\kappa, \zeta) \cdot  \mathcal{F}[{}^3 \mathfrak{H}] (s, \xi,\eta, \sigma,  X_{\bot}(s),\zeta)  \big]\big|_{\zeta= V(s)},\\
 \Phi_4(\xi, \eta, \sigma, \kappa,\zeta)&:=  \hat{\zeta} \cdot(\xi+ \eta +\sigma+\kappa) + \mu_3 |\kappa| +  \mu_2   |\sigma| +   \mu  |\xi| +   \mu_1  |\eta|.
 \end{split}
\ee

Moreover, the error terms $Err^a_{ i, i_1,i_2,i_3}(t_1, t_2), a\in \{0,1,2\},$ are given as follows, 
\be\label{oct12eqn2}
\begin{split}
  Err^0_{ i, i_1,i_2,i_3}(t_1, t_2) 
  &:= \sum_{b=0,1}   \int_{(\R^3)^4}    e^{ i X(t_b)\cdot (\xi+\eta + \sigma+\kappa) + i t_b( \mu_3   |\kappa|  +    \mu_2    |\sigma| +   \mu   |\xi| +  \mu_1   |\eta| )} \big(  \Phi_4(\xi, \eta, \sigma, \kappa,V(t_b))\big)^{-1}\\
&   \times   \mathcal{F}[\mathfrak{H}_{k_3,j_3;n_3}^{\mu_3,i_3}](t_b,\kappa, V(t_b))\cdot \mathcal{F}[{}^3 \mathfrak{H}] (t_b, \xi,\eta,\sigma,   X_{\bot}(t_b),V(t_b))   d \xi d\eta d \eta d\kappa \\
& +  \int_{t_1}^{t_2}  \int_{(\R^3)^4}    e^{i X(s )\cdot (\xi+\eta + \sigma+\kappa)+    i s( \mu_3   |\kappa|  +    \mu_2    |\sigma| +   \mu   |\xi| +  \mu_1   |\eta| ) }   \big(  \Phi_4(\xi, \eta, \sigma, \kappa,V(s))\big)^{-1} \\
 &  \times    \mathcal{F}[\mathfrak{H}_{k_3,j_3;n_3}^{\mu_3,i_3}](s,\kappa, V(t_i))\cdot\big( \hat{  V}_{\bot}(s)\cdot  \nabla_{  x_{\bot}}  \mathcal{F}[{}^3 \mathfrak{H}] (s, \xi,\eta,\sigma,    X_{\bot}(s ),V(s)) \big)  d \xi d\eta d \eta d\kappa d s, \\
&\\
 Err^1_{ i, i_1,i_2,i_3}(t_1, t_2) 
 &:= \int_{t_1}^{t_2}  \int_{(\R^3)^4} e^{i X(s )\cdot (\xi+\eta + \sigma+\kappa)  +    i s( \mu_3   |\kappa|  +    \mu_2    |\sigma| +   \mu   |\xi| +  \mu_1   |\eta| ) } \big(  \Phi_4(\xi, \eta, \sigma, \kappa,V(s))\big)^{-1} \\
 &\quad \times  \big[\p_s   \mathcal{F}[\mathfrak{H}_{k_3,j_3;n_3}^{\mu_3,i_3}](s ,\kappa, V(s))\cdot \mathcal{F}[{}^3 \mathfrak{H}] (s, \xi,\eta,\sigma,     X_{\bot}(s),V(s)) \\
 & \quad +     \mathcal{F}[\mathfrak{H}_{k_3,j_3;n_3}^{\mu_3,i_3}](s,\kappa, V(s))\cdot \p_s\mathcal{F}[{}^3 \mathfrak{H}]  (s, \xi,\eta,\sigma,    X_{\bot}(s),V(s )) \big]\big]     d \xi d\eta d \eta d\kappa d s, \\
 &\\
 Err^2_{ i, i_1,i_2,i_3}(t_1, t_2)   
&:=  \sum_{\begin{subarray}{c}
k_4, j_4\in \Z_+,i_4\in\{0,1,2,3,4\}\\
 a_4\in\{0,1,2,3\} , \mu_4\in\{+,-\}\\ 
 n_ \in [-M_t,2]\cap \Z  \\ 
\end{subarray}} \int_{t_1}^{t_2} \int_{(\R^3)^4}   e^{i X(s)\cdot (\xi+\eta + \sigma+\kappa) +i s( \mu_3   |\kappa|  +    \mu_2    |\sigma| +   \mu   |\xi| +  \mu_1   |\eta| )  } \\
  & \quad  \times  \big(   \mathfrak{E}^{\mu_3, a_3}_{k_3,j_3;n_3} (s, X(s), V(s))+ Ini_{k_3,j_3,n_3}^{\mu_3}(s, X(s), V(s)) \big) \\
  & \quad \cdot \mathcal{F}[{}^4 \mathfrak{H}](s, \xi, \eta, \sigma,\kappa,   X_{\bot}(s), V(s)) d\xi d \eta d \sigma d \kappa d s. \\
\end{split}
\ee

The  estimate of error type terms  $Err^a_{ i, i_1,i_2,i_3}(t_1, t_2) $ are deferred to  Lemma \ref{2021errhorstep3}, see  \eqref{oct23eqn65}. Hence, from \eqref{oct12eqn76},   to estimate $ \mathfrak{H}_{ i, i_1,i_2,i_3}(t_1, t_2)$, it suffices to estimate $\mathfrak{H}_{ i, i_1,i_2,i_3,i_4}(t_1, t_2)$.

\medskip
\noindent \textbf{Step 5.}\quad The fifth (last) iteration of smoothing.  

\medskip

As in the obtained estimate  \eqref{2021dec24eqn1}, we can rule out further the case $k_3\geq 50M_t$. It suffices to let $(k_3,n_3)\in {}^{3}\mathcal{E}_{k_2,n_2}  $ be fixed.

 Recall  \eqref{oct12eqn76} and  \eqref{oct12eqn81}. Similar to  the obtained estimate  \eqref{2021dec24eqn21},   from   the estimates in \eqref{2024oct8eqn1},  \eqref{2024oct8eqn5},  and \eqref{2024Dec6eqn31} in Theorem  \ref{mainresultsfirstpart}, for any   $
  s\in [t_1,t_2],$  $x,y,z,w\in \R^3$, s.t., $| x_{\bot}|, | y_{\bot}|, | z_{\bot}|,|  w_{\bot}|\in [2^{a_p  -5}, 2^{a_p   + 5}] $,  we have 
\be\label{2022feb22eqn22}
\begin{split}
 &\big|\int_{\R^3} \int_{\R^3} \int_{\R^3}\int_{\R^3}   e^{i  x\cdot\xi + i y\cdot\eta+i z\cdot\sigma +  i w \cdot\kappa  + i s(\mu_3   |\kappa|  +   \mu_2   |\sigma| +  \mu |\xi| +   \mu_1  |\eta|) }\\
 &\quad \times   \mathcal{F}[{}^4 \mathfrak{H}](s, \xi, \eta, \sigma,\kappa,    X_{\bot}(s), V(s)) d\xi d \eta d \sigma d \kappa \big|\\
&\lesssim \sum_{b\in \mathcal{T} } 2^{- ba_p } 2^{(b+7/3) (\gamma_1-\gamma_2)M_t  + \alpha^{\star}M_{t^\star}/12+  540\epsilon M_{t} +9\iota M_{t} -(k_2+2n_2)/2} \\
&\quad \times  2^{-(k_3+2n_3)/2} 2^{-\min\{n, n_1,n_2,n_3\}}\mathcal{M}(C)\\
&\lesssim  \sum_{b\in \mathcal{T} } 2^{- ba_p } 2^{(b+2) (\gamma_1-\gamma_2)M_t   +12\iota M_{t} + 1600\epsilon M_{t} -   \alpha^{\star}M_{t^\star}/12 }2^{-(k_3+2n_3)/2}  \mathcal{M}(C). 
\end{split}
\ee

 Let
 \be\label{forthiterindex}
 \begin{split}
{}^{4}\mathcal{E}_{k_3,n_3}:=\big\{( n_4, k_4):  k_4 \in \Z_+,& n_4\in [-M_t, 2]\cap \Z,  n_4 \geq (-\alpha^{\star}+\gamma_1-\gamma_2) M_t/2   -30\epsilon M_t, \\  & (k_4+2n_4)- (k_3+2n_3)\\
&\geq 2\alpha^{\star} M_t/3 -24\iota M_{t} -4000\epsilon M_t -2(\gamma_1-\gamma_2)M_t  \big\}.
 \end{split}
 \ee

 From the   estimate  \eqref{2022feb22eqn22}   and   estimates in \eqref{2024oct8eqn5}   and \eqref{2024Dec6eqn31}  in Theorem  \ref{mainresultsfirstpart},  we can rule out the case  $(k_4,n_4)\notin {}^{4}\mathcal{E}_{k_3,n_3}, $ as follows, 
\be
\begin{split}
&\big|\mathfrak{H}_{ i, i_1,i_2,i_3,i_4}(t_1, t_2)\big|\\
&\lesssim \big(  \sum_{b\in \mathcal{T} } 2^{- ba_p } 2^{(b+2) (\gamma_1-\gamma_2)M_t   +12\iota  M_{t} + 1600\epsilon M_{t} -   \alpha^{\star}M_{t^\star}/12 }2^{-(k_3+2n_3)/2}  \mathcal{M}(C)\big) \\
&\quad\times \big( \sum_{b\in \mathcal{T}} 2^{-b a_p  }  2^{b (\gamma_1-\gamma_2) M_t }  ( 2^{  \alpha^{\star} M_t  -(\gamma_1-\gamma_2)M_{t^{\star}} /4 }+ 2^{40\epsilon M_t}     2^{(k_4+2n_4)/2+3  \alpha^{\star} M_t/4 } )\big) \\
& \lesssim   \sum_{d\in \mathcal{T}+\mathcal{T} }   2^{- d a_p } 2^{d (\gamma_1-\gamma_2)M_t }   2^{ (\alpha^{\star} - 10\epsilon) M_t} \mathcal{M}(C).\\
\end{split}
\ee

Therefore, it suffices to consider the case $(k_4,n_4)\in {}^{4}\mathcal{E}_{k_3,n_3} $. For this case, thanks to the high oscillation in time of the phase,  we do integration by parts in ``$s$'' one more time. As a result,    we have   
 \be\label{2022feb22eqn31}
 \begin{split}
 \mathfrak{H}_{ i, i_1,i_2,i_3,i_4}(t_1, t_2)
&= \sum_{a=0,1 }    Err^a_{ i, i_1,i_2,i_3,i_4}(t_1, t_2) + {}_{}^5\mathfrak{H}(t_1, t_2), \\
   {}_{}^5\mathfrak{H}(t_1, t_2)&:= \int_{t_1}^{t_2} \int_{(\R^3)^5 }   e^{i X(s)\cdot (\xi+\eta + \sigma+\kappa+\chi  )+ i s(  \mu_4 |\chi| + \mu_3   |\kappa|  +    \mu_2    |\sigma| +   \mu   |\xi| +  \mu_1   |\eta| )  } \\
&\times     K(s, X(s), V(s))\cdot  \mathcal{F}[{}^5 \mathfrak{H}](s, \xi, \eta, \sigma,\kappa, \chi ,    X_{\bot}(s), V(s)) d\chi d\xi d \eta d \sigma d \kappa d s,\\
\end{split}
\ee
 where, 
 \be 
 \begin{split}
 \mathcal{F}[{}^5 \mathfrak{H}](s, \xi, \eta, \sigma,\kappa,\chi ,    X_{\bot}(s), V(s))&: =\nabla_{\zeta}\big[ \big( \Phi_5 (\xi, \eta, \sigma, \kappa,\chi, \zeta)\big)^{-1} \mathcal{F}[\mathfrak{H}_{k_4,j_4;n_4}^{\mu_4,i_4}](s,\chi, \zeta)\\
 &\quad    \cdot  \mathcal{F}[{}^4 \mathfrak{H}](s, \xi,\eta, \sigma, \kappa,    X_{\bot}(s),\zeta)  \big]\big|_{\zeta= V(s)},\\
 \Phi_5(\xi, \eta, \sigma, \kappa,\chi, \zeta)&:=  \hat{\zeta} \cdot(\xi+ \eta +\sigma+\kappa+\chi)\\ 
 &\quad +\mu_4 |\chi|+ \mu_3 |\kappa| +  \mu_2   |\sigma|  +   \mu_1  |\eta| +   \mu  |\xi|.\\
 \end{split}
\ee
The error terms are given as follows, 
 \be\label{2022feb22eqn64}
\begin{split}
 &Err^0_{ i, i_1,i_2,i_3,i_4}(t_1, t_2)\\& := \sum_{b=0,1}   \int_{(\R^3)^5}  e^{i X(t_b)\cdot (\xi+\eta + \sigma+\kappa+\chi) +  i t_b( \mu_4 |\chi| +  \mu_3   |\kappa|  +    \mu_2    |\sigma| +   \mu   |\xi| +  \mu_1   |\eta| )}\big(  \Phi_5(\xi, \eta, \sigma, \kappa,\chi, V(t_b))\big)^{-1}\\
&\quad \times     \mathcal{F}[\mathfrak{H}_{k_4,j_4;n_4}^{\mu_4,i_4}](t_b,\kappa, V(t_b))\cdot  \mathcal{F}[{}^4 \mathfrak{H}](t_b, \xi,\eta,\sigma,\kappa,     X_{\bot}(t_b),V(t_b)) d \chi  d \xi d\eta d \kappa \\
  &\quad + \int_{t_1}^{t_2}  \int_{(\R^3)^5}   e^{i X(s )\cdot (\xi+\eta + \sigma+\kappa+\chi)}   e^{  i s( \mu_4 |\chi| + \mu_3   |\kappa|  +    \mu_2    |\sigma| +   \mu   |\xi| +  \mu_1   |\eta| ) }  \big(  \Phi_5(\xi, \eta, \sigma, \kappa,\chi, V(s))\big)^{-1}\\
& \quad \times     \mathcal{F}[\mathfrak{H}_{k_4,j_4;n_4}^{\mu_4,i_4}](s,\kappa, V(t_i))\cdot\big(  {\hat{V}}_{\bot}(s)\cdot  \nabla_{  x_{\bot} } \mathcal{F}[{}^4 \mathfrak{H}] (s, \xi,\eta,\sigma, \kappa,     X_{\bot}(s ),V(s)) \big) d\chi  d \xi d\eta d \eta d\kappa d s, \\
&\\
 &Err^1_{ i, i_1,i_2,i_3,i_4}(t_1, t_2) \\ & := \int_{t_1}^{t_2}  \int_{(\R^3)^5} e^{i X(s )\cdot (\xi+\eta + \sigma+\kappa+\chi) +i s( \mu_4 |\chi| + \mu_3   |\kappa|  +    \mu_2    |\sigma| +   \mu   |\xi| +  \mu_1   |\eta| )  } \big( \Phi_5(\xi, \eta, \sigma, \kappa,\chi, V(s))  \big)^{-1} \\
 &\quad \times   \big[\p_s    \mathcal{F}[\mathfrak{H}_{k_4,j_4;n_4}^{\mu_4,i_4}](s ,\kappa, V(s))\cdot \mathcal{F}[{}^4 \mathfrak{H}] (s, \xi,\eta,\sigma,\kappa,     X_{\bot}(s),V(s)) \\
  &\quad  +     \mathcal{F}[\mathfrak{H}_{k_4,j_4;n_4}^{\mu_4,i_4}](s,\kappa, V(s))\cdot \p_s \mathcal{F}[{}^4 \mathfrak{H}] (s, \xi,\eta,\sigma, \kappa,    X_{\bot}(s),V(s )) \big]\big]    d\chi d \xi d\eta d \eta d\kappa d s.
  \end{split}
\ee

The  estimate of error type terms  $Err^a_{ i, i_1,i_2,i_3}(t_1, t_2) $ are deferred to  Lemma \ref{2022errhorstep5}, see \eqref{2022feb22eqn71}. We  focus on the estimate of $  {}_{}^5\mathfrak{H}(t_1, t_2)$. Similar to the obtained estimate  \eqref{2022feb22eqn22},  from  the estimates in \eqref{2024oct8eqn1},  \eqref{2024oct8eqn5}  and \eqref{2024Dec6eqn31} in Theorem  \ref{mainresultsfirstpart},    $
\forall  s\in [t_1,t_2],$ we have 
\be
\begin{split}
\sum_{(k_4,n_4)\in {}^{4}\mathcal{E}_{k_3,n_3}}&\big|\int_{\R^3} \int_{\R^3} \int_{\R^3}\int_{\R^3}    e^{i X(s )\cdot (\xi+\eta + \sigma+\kappa+\chi)   +  i s( \mu_4 |\chi| + \mu_3   |\kappa|  +    \mu_2    |\sigma| +   \mu   |\xi| +  \mu_1   |\eta| )}  \\
&\quad \times   \mathcal{F}[{}^5 \mathfrak{H}] K(s, \xi, \eta, \sigma,\kappa,\chi,     X_{\bot}(s), V(s)) d\xi d \eta d \sigma d \kappa \big|\\
&\lesssim    \sum_{(k_4,n_4)\in {}^{4}\mathcal{E}_{k_3,n_3}}  \sum_{b\in \mathcal{T} } 2^{- ba_p } 2^{(b+2) (\gamma_1-\gamma_2)M_t   +15\iota M_{t} + 1600\epsilon M_{t} -   \alpha^{\star}M_{t^\star}/12 }\\
&\quad \times 2^{-(k_3+2n_3)/2}      2^{-(k_4+2n_4)/2} 2^{-\min\{n, n_1,n_2,n_3,n_4\}}\mathcal{M}(C)  \\
&\lesssim  \sum_{b\in \mathcal{T} } 2^{- ba_p }  2^{(b+2) (\gamma_1-\gamma_2)M_t  -3\alpha^{\star}M_t/2 }\mathcal{M}(C).\\
\end{split}
\ee
From the above estimate and the estimate of the electromagnetic field in \eqref{2024oct28eqn61} in  Theorem  \ref{maintheorem1part1}, we have 
\be\label{oct12eqn98}
\begin{split}
 \sum_{(k_4,n_4)\in {}^{4}\mathcal{E}_{k_3,n_3}} \big| {}_{}^5\mathfrak{H}(t_1, t_2)\big|&\lesssim  \mathcal{M}(C)(t_2-t_1)\big[  \sum_{b\in \mathcal{T} } 2^{- ba_p }  2^{(b+2) (\gamma_1-\gamma_2)M_t  -3\alpha^{\star}M_t/2 } \big]\\
 &\quad \times \big( 2^{-a_p/4}  2^{5M_t/4+  \alpha^{\star}  M_t/4+\epsilon M_t} +2^{2( \alpha^{\star}  + 4\epsilon)M_t} 2^{-a_p/2} \big)\\
& \lesssim  \sum_{d\in \mathcal{T}+\mathcal{T}} 2^{ -d a_p } 2^{ d (\gamma_1-\gamma_2)M_t   +5\alpha^{\star} M_t/8+300\epsilon M_{t} } \mathcal{M}(C)(t_2-t_1). 
  \end{split}
\ee
Hence, recall  \eqref{2022feb22eqn31}, our desired estimate  \eqref{2021dec22eqn60} holds after combining the above estimate and the estimate   \eqref{2022feb22eqn71}  in Lemma \ref{2022errhorstep5}. 
\end{proof}

 \subsubsection{The estimate of ${}_1^{1}\mathfrak{H}^{\mu,i}_{k,j;n}(t_1, t_2) $ }\label{hyperbolicTypeIIwithsymm}

The estimate of ${}_1^{1}\mathfrak{H}^{\mu,i}_{k,j;n}(t_1, t_2) $ is summarized in the following Lemma. 

\begin{lemma}
  Under the assumption of Proposition \ref{bootstraplemma1}, the following estimate holds for  any $i\in\{0,1,2,3,4\},$ $( n,k)\in \mathcal{E}_i$ s.t.,  $a_{p}\geq  -k-n +3\epsilon M_{t}/2$, 
\be\label{oct23eqn57}
\big|{}_1^{1}\mathfrak{H}^{\mu,i}_{k,j;n}(t_1, t_2)\big|  \lesssim   \sum_{b\in \mathcal{T}+\mathcal{T}} \mathcal{M}(C)  2^{-ba_p  } 2^{ b(\gamma_1-\gamma_2)M_t+\alpha^{\star} M_t-20\epsilon M_t}(t_2-t_1).
\ee  
\end{lemma}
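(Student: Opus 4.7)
\textbf{Proof proposal for \eqref{oct23eqn57}.} The structure of ${}_1^{1}Hyp^{\mu,i}_{k,j;n}(t_1,t_2)$ in \eqref{oct22eqn47} is exactly that of a bilinear acceleration integral with an extra elliptic factor $(\hat V(s)\cdot\xi+\mu|\xi|)^{-1}$, which is of size $\sim 2^{-k-2n}$ on the Fourier support of $\varphi_k(\xi)\varphi_{n;-M_t}(\tilde\xi+\mu\tilde V(s))$. My plan is to exploit this extra smoothing factor to reduce the estimate to an improved version of the point-wise bounds already established in Propositions \ref{verest} and \ref{horizonest}, and then run the same iterative smoothing / normal form scheme that was used for ${}_0^{1}Hyp$ in proving \eqref{2021dec22eqn60}.

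First I would rewrite ${}_1^{1}Hyp^{\mu,i}_{k,j;n}$ in the form $\int_{t_1}^{t_2} C(\slashed X(s),V(s))\cdot \widetilde K_{k,j;n}^{\mu,i}(s,X(s),V(s))\,ds$, where $\widetilde K_{k,j;n}^{\mu,i}$ is a new frequency/angle-localized bilinear operator whose symbol is the original one multiplied by $1/(\hat V(s)\cdot\xi+\mu|\xi|)$ and with $\nabla_v$ landing on $\tilde\varphi^i_{k,j,n}$ (since $C$ is independent of $v$). Re-deriving the localized Glassey-Strauss type decomposition of Lemma \ref{GSdecompsmall} with this improved symbol, the extra factor $2^{-k-2n}$ sharpens every point-wise estimate obtained in section \ref{linfacceloc} and section \ref{horizonestpotw} by $2^{-k-2n}$. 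This is sufficient to yield the bound $\mathcal{M}(C)\sum_{b\in\mathcal{T}}2^{-b a_p+b(\gamma_1-\gamma_2)M_t}\,2^{\alpha^\star M_t-20\epsilon M_t}(t_2-t_1)$ directly whenever $(n,k)$ lies outside the non-resonance sets $\mathcal{E}_i$ of \eqref{indexsetsec4} or $a_p\leq -k-n+3\epsilon M_t/2$, by the same case analysis leading to \eqref{oct22eqn2}.

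For the remaining ranges, namely $(n,k)\in\mathcal E_i$ with $a_p\geq -k-n+3\epsilon M_t/2$, I would integrate by parts in $s$ once more. This generates (i) elliptic endpoint pieces of the form \eqref{2022feb24eqn81} times the extra factor $2^{-k-2n}$, which are controlled as in \eqref{2022feb24eqn1}--\eqref{2022feb25eqn1}; (ii) error pieces analogous to \eqref{oct7eqn31}--\eqref{oct7eqn32} that are handled by Lemma \ref{erroesttyp1}-type arguments; and (iii) an iterated hyperbolic term of the shape $Hyp_{i,i_1}$ of \eqref{oct7eqn91}, now restricted to the first non-resonance stratum ${}^1\mathcal E_{k,n}$ of \eqref{firstiterindex}. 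On this stratum I would recursively perform the same normal form transformation, generating $Hyp_{i,i_1,i_2}$, $Hyp_{i,i_1,i_2,i_3}$, $Hyp^4$, $Hyp^5$ exactly as in \eqref{oct29eqn55}--\eqref{2022feb22eqn31}, subject to the nested constraints ${}^2\mathcal E,\ldots,{}^4\mathcal E$ of \eqref{seconditerindex}--\eqref{forthiterindex}. The rough $L^\infty$ bound \eqref{july10eqn89} caps $k_m+2n_m\lesssim 3M_t$, so the cascade terminates in at most six iterations, and the terminal piece is bounded by the pointwise estimate \eqref{sep21eqn31} of Lemma \ref{bulkroughpoi} as in \eqref{oct12eqn98}.

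The main obstacle I anticipate is symbolic bookkeeping at each iteration: one must verify that the projection $\mathbf P$ applied to the new symbol still enjoys the angular cancellation underlying \eqref{sep22eqn44} and \eqref{2022feb19eqn51}, so that the sharper bound in Proposition \ref{horizonest} (with the decisive factor $2^{(\gamma_1-\gamma_2)M_t/2}$) is applicable, and that the $\nabla_v$ sitting on $\tilde\varphi^i$ does not cost more than the gain $2^{-k-2n}$ already purchased. The symbol assumptions \eqref{oct23eqn51} on $C$ ensure that any additional $\nabla_v$-derivative of $C(\slashed X(s),V(s))$ costs only $|V(s)|^{-1}\mathcal M(C)\lesssim 2^{-\gamma_2 M_t}\mathcal M(C)$, which is absorbed by the gains already present in the iteration, so once the first step above is carried out with care the remaining estimates are entirely mechanical and produce exactly the exponent $\alpha^\star M_t-20\epsilon M_t$ asserted in \eqref{oct23eqn57}.
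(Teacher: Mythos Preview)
Your proposal is substantially over-engineered and misses the much simpler direct argument the paper uses. The paper's proof is only a few lines: one passes to physical space via the kernel representation
\[
{}_1^{1}Hyp^{\mu,i}_{k,j;n}(t_1,t_2)=\sum_{l}\int_{t_1}^{t_2}\!\!\int\!\!\int (E+\hat v\times B)(s,X(s)-y)\cdot\nabla_v\mathcal K^{\mu,i}_{k,j;n,l}(y,v,V(s))\,f(s,X(s)-y,v)\,dy\,dv\,ds,
\]
estimates the kernel by integration by parts in $\xi$ to get $|\nabla_v\mathcal K|\lesssim 2^{k-j-l}$ times the localized weight $(1+2^k|y\cdot\tilde V|)^{-100}(1+2^{k+n}|y\times\tilde V|)^{-100}$, and then applies Cauchy--Schwarz in $y$. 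One factor is controlled by $\|(E,B)\|_{L^2}$ (conservation law) together with the volume of the $v$-support, the other by $\|f\|_{L^1_v L^2_x}$; the cylindrical symmetry of $f$ and $(E,B)$, combined with the assumption $a_p\ge -k-n+3\epsilon M_t/2$, yields an extra $2^{-a_p}$ from each factor. This gives $2^{-a_p-n+5\epsilon M_t}$ directly, and since $(n,k)\in\mathcal E_i$ forces $n\gtrsim -\alpha^\star M_t/2$ (or one picks up $2^{\max\{n,(\gamma_1-\gamma_2)M_t\}}$ for $i=3,4$ from the improved projection structure of $\mathbf P(\tilde\varphi^i)$), the claimed bound follows with $b=1\in\mathcal T+\mathcal T$.

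Your plan has a structural mismatch. The term ${}_1^{1}Hyp$ carries no oscillating phase $e^{i\mu s|\xi|}$ (see \eqref{oct22eqn47}), and the acceleration force $(E+\hat v\times B)$ sits at the integration point $X(s)-y$, paired with $f$, rather than at the characteristic $X(s)$ as in ${}_0^{1}Hyp$. So your proposed rewriting as $\int C\cdot\widetilde K\,ds$ with $\widetilde K$ ``of acceleration-force type at $X(s)$'' is not accurate, and there is no time-oscillation to drive the normal-form cascade you outline. One could in principle decompose $(E,B)$ into half-waves to manufacture such oscillation and then iterate, but that would re-derive the full machinery of section~\ref{fullimproved} when a single Cauchy--Schwarz suffices. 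The key idea you are missing is that cylindrical symmetry plus the $L^2$ conservation laws for $(E,B)$ and $f$ already close the estimate on physical space without any smoothing-in-time.
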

\begin{proof}
Recall  \eqref{nov12eqn61}. Note that, in terms of kernel, we have 
 \be\label{oct7eqn69}
 \begin{split}
  {}_1^{1}\mathfrak{H}^{\mu,i}_{k,j;n}(t_1, t_2) &= \sum_{l\in [n+\epsilon M_t/2,2]\cap \Z}  \int_{t_1}^{t_2} \int_{\R^3} \int_{\R^3}f(s,X(s)-y, v)  \\
&\quad \times     \nabla_v  \mathcal{K}^{\mu,i}_{k,j,n}(y, v,   X_{\bot}(s), V(s))  \cdot \big(E(s,  X(s)-y)\\
&\quad + \hat{v}\times B(s,  X(s) -y)\big) d y d v d s,  \\
\end{split}
\ee
where
\[
\begin{split}
\mathcal{K}^{\mu,i}_{k,j;n,l}(y, v,  X_{\bot}(s), V(s)) & : =\int_{\R^3} e^{i y\cdot \xi }|\xi|^{-1} \big({ i \hat{V}(s)\cdot \xi + i \mu |\xi| }\big)^{-1} \varphi_{l; n+\epsilon M_t/2}(\tilde{v}-\tilde{V}(s)) \\
&\quad \times   C(  X_{\bot}(s), V(s))\cdot \tilde{\varphi}_{k,j,n}^i (v, \xi, V(s))   d\xi.\\
\end{split}
\]

Recall the assumption of the coefficient $C(\cdot, \cdot)$ in  \eqref{oct23eqn51}. Moreover, recall   \eqref{oct7eqn90}  and  \eqref{sep4eqn6}, note that   $\mathbf{P}\big(\tilde{\varphi}_{k,j,n}^i (v, \xi, V(t))\big)$ is smaller than $ \big(\tilde{\varphi}_{k,j,n}^i (v, \xi, V(t))\big)_3, i\in\{3,4\}$. Based on the preceding observations, the following estimate for the kernels is obtained by integrating by parts with respect to $\xi$, along the $\zeta$ direction, and in directions orthogonal to $\zeta$,
\be
\begin{split}
\sum_{i=0,1,2} &|\nabla_v \mathcal{K}^{\mu,i}_{k,j; n,l}(y, v,  X_{\bot}(s), V(s))| + \sum_{a=3,4} 2^{-\max\{n, (\gamma_1-\gamma_2) M_{t^\star}\} -\epsilon M_t} |\nabla_v \mathcal{K}^{\mu,a }_{k,j ; n,l}(y, v,  X_{\bot}(s), V(s))|\\
 & \lesssim  \mathcal{M}(C) 2^{k} 2^{-j-l + 2\epsilon M_t}(1+2^{k}|y\cdot \tilde{\zeta}|)^{-100}(1+2^{k+n}|y\times \tilde{\zeta}|)^{-100}. \\
 \end{split}
\ee

Recall the   definition of the index sets $\mathcal{E}_i$ in  \eqref{indexsetsec4}.   From the above estimate of kernels, the cylindrical symmetry of solution,   the Cauchy-Schwarz inequality,   the volume of support of $v$, and the conservation law \eqref{conservationlaw}, we have
\be\label{2022feb23eqn41}
\begin{split}
\sum_{i=0,1,2}  \big| {}_1^{1}\mathfrak{H}^{\mu,i}_{k,j;n}(t_1, t_2) \big|&\lesssim  \sum_{l\in [n+\epsilon M_t/2,2]\cap \Z}  2^{k -j-l + 2\epsilon M_t}\big( 2^{-a_p} 2^{-k-n+\epsilon M_t} 2^{3j+2l} \big)^{1/2} \\
&\quad \times  \big( 2^{-a_p} 2^{-k-n+\epsilon M_t} 2^{-j} \big)^{1/2} \mathcal{M}(C) (t_2-t_1)\\
&\lesssim 2^{-a_p-n +5\epsilon M_t} \mathcal{M}(C)   (t_2-t_1)\\
& \lesssim  2^{-a_p +\alpha^\star M_{t }/2+40\epsilon M_t} \mathcal{M}(C)   (t_2-t_1)\\
& \lesssim 2^{-a_p + (\gamma_1-\gamma_2)M_t + (\alpha^\star -  20\epsilon )M_t} \mathcal{M}(C)   (t_2-t_1).\\
\end{split}
\ee
Similarly, we have 
\be\label{2022feb23eqn42}
\begin{split}
\sum_{a=3,4} \big| {}_1^{1}\mathfrak{H}^{\mu,a}_{k,j;n}(t_1, t_2) \big|&\lesssim  2^{-a_p-n + \max\{n, (\gamma_1-\gamma_2) M_{t^\star}\} +6\epsilon M_t} \mathcal{M}(C)   (t_2-t_1)\\
&\lesssim 2^{-a_p + (\gamma_1-\gamma_2)M_t + (\alpha^\star -  20\epsilon )M_t} \mathcal{M}(C)   (t_2-t_1).\\
\end{split}
\ee
Hence the desired estimate  \eqref{oct23eqn57}  holds from the obtained estimates  \eqref{2022feb23eqn41}  and  \eqref{2022feb23eqn42}. 
\end{proof}

The proof of the preceding lemma provides, by a similar argument, estimates for $\partial_s \mathcal{F}[\mathfrak{H}_{k,j;n}^{\mu,a}](s, \xi, \zeta)$, for $a \in \{0, 1, 2, 3, 4\}$. More precisely, we have

\begin{lemma}\label{secondhypohori}
For any $s\in  [0,t],$   $  \zeta \in \R^3/\{0\}$,  we have  
\be\label{oct7eqn21}
\begin{split}
 \sum_{a\in\{0,1,2,3\}} & \big| \int_{\R^3} e^{ix\cdot \xi + i \mu  s |\xi|}    \p_s \mathcal{F}[ \mathfrak{H}_{k,j;n}^{\mu,a}](s, \xi, \zeta) d \xi  \big| \\
 & \lesssim  2^{5\epsilon M_t}  \min\{ |  x_{\bot}|^{-1} 2^{k+n },  |  x_{\bot}|^{-1/2}  2^{n/2} 2^{j+  \alpha_s M_s}, 2^{(k+2n)/2} 2^{j+ \alpha_s M_s}\},\\
 &\\
  &\big| \int_{\R^3} e^{ix\cdot \xi + i \mu s|\xi|}  \p_s \mathcal{F}[ \mathfrak{H}_{k,j;n}^{\mu,4}](s, \xi, \zeta) d \xi  \big| \\
&\lesssim  2^{5\epsilon M_t}   \min\big\{ |  x_{\bot}|^{-1/2}  2^{n/2} 2^{j+   \alpha_s M_s },   |  x_{\bot}|^{-1}  2^{k+n },  2^{(k+2n)/2}  \min\{ 2^{2M_s +n},  2^{(1+\alpha_s   ) M_s}\}   \big \}\\
&\quad +      2^{  2 n+3\epsilon M_t}   \min\big\{  2^{k/2+n/3 +2j/3+5  \alpha_s M_s/3},|  x_{\bot}|^{-1/2}2^{k+n/2+   \alpha_s M_s }, \\
&\quad   | x_{\bot}|^{-1/4} 2^{k/2+n/4+j/2+3 \alpha_s M_s/2}\big\}, \\
\end{split}
\ee 
Moreover, for the   projection of $ \p_s \mathcal{F}[ \mathfrak{H}_{k,j;n}^{\mu,a}](s, \xi, \zeta),$ onto the $xy$-plane, we have an improved estimate as follows, 
\be\label{2024oct27eqn61}
\begin{split}
 \sum_{a=0,1,2,3,4} & \big| \int_{\R^3} e^{ix\cdot \xi + i \mu s|\xi|}  \mathbf{P}\big(     \p_s \mathcal{F}[ \mathfrak{H}_{k,j;n}^{\mu,a}] (s, \xi, \zeta)\big) d \xi  \big|\\
 & \lesssim 2^{(k+2n)/2+ 4\epsilon M_t} \big(2^{2\alpha_s M_s} + (2^n+  |  \zeta_{\bot}| |\zeta|^{-1}) 2^{j+\alpha_s M_s}\big)\\
&\quad +   2^{(k+2n)/2+ 3\epsilon M_t} \big(2^{7 \alpha_s M_s /3}  + | \zeta_{\bot}| |\zeta|^{-1} ( 2^{2j/3 + 5\alpha_s M_s/3} + 2^{ j+    \alpha_s M_s  }) \big). 
\end{split}
\ee 
 
\end{lemma}
\begin{proof}
 Recall  \eqref{2022feb22eqn81}  and  \eqref{sep5eqn10}.   As a result of direct computation,  in terms of kernel, $\forall a \in \{0,1,2,3\},$ we have
\be\label{2021dec25eqn1}
\begin{split}
& \int_{\R^3} e^{i x\cdot \xi + i s \mu |\xi|} \p_s \mathcal{F}[ \mathfrak{H}_{k,j;n}^{\mu,a}](s, \xi,\zeta)  d \xi \\
& =  \sum_{l\in [ n + \epsilon M_t/4, 2]\cap \Z}  \int_{(\R^3)^2}   (E(s, x-y)+ \hat{v}\times B(s, x-y))\cdot \nabla_v K^{a;l,n}_{k,j }(y, v, \zeta) f(s,x-y, v) d y d v,
\end{split}  
\ee
where
\be\label{2021dec25eqn2}
 K^{a;l,n}_{k,j }(y, v, \zeta): =\int_{\R^3} e^{i y\cdot \xi }|\xi|^{-1}  m_{j,n}^a  (v , \xi , \zeta) 
  \varphi_{n;-M_t}\big(\tilde{\xi} + \mu \tilde{\zeta} \big)   \varphi_k(\xi)   \varphi_{l ; n + \epsilon M_t/4} (\tilde{\zeta}- \tilde{v}) d\xi. 
\ee

Meanwhile, for the case $a=4$, the formula is slightly different since we didn't do integration by parts in $s$ at the initial stage, see \eqref{oct7eqn1}. More precisely,  we have 
\be\label{oct7eqn29} 
\int_{\R^3} e^{i x\cdot \xi + i s\mu |\xi|} \p_s \mathcal{F}[ \mathfrak{H}_{k,j;n}^{\mu,4}](s, \xi,\zeta)  d \xi = \mathfrak{E}^4_{k,j,n}(s,x,\zeta)+Err^4_{k,j,n}(s,x,\zeta), 
\ee
where
\be\label{oct2eqn110}
\begin{split}
  \mathfrak{E}^4_{k,j,n}(s,x,\zeta)&:= \int_{\R^3}\int_{\R^3} e^{ix\cdot \xi } \hat{f}(s, \xi, v)   |\xi|^{-1}    i4\pi\big( {(\hat{v}-\hat{\zeta} )\times (\hat{v}\times \xi)+\xi(1-|\hat{v}|^2)} \big)   \\
  &\quad \times \varphi_k(\xi) \varphi_{n;-M_t}\big(\tilde{\xi} + \mu \tilde{\zeta} \big)   \varphi_{j,n}^4(v, \zeta)  d \xi d v, \\
  Err^4_{k,j,n}(s,x,\zeta)&:=\int_{\R^3}\int_{\R^3} e^{i x\cdot \xi   }   |\xi|^{-1} \mathcal{F}\big((E+\hat{v}\times B)f\big)(s, \xi, v) \\
  &\quad  \cdot \nabla_v \big(
   \hat{v}   \varphi_{j,n}^4 (v, \zeta)\big)
 \varphi_{n;-M_t}\big(\tilde{\xi} + \mu \tilde{\zeta} \big)  \varphi_k(\xi)  d\xi d v .
  \end{split}
\ee

 With the above preparation, now we proceed in steps based on the size of $a$ as follows. 

 \medskip

 \noindent \textbf{Step 1.}\quad  If $a\in\{0,1,2,3\}$.  

 \medskip

Recall  \eqref{2021dec25eqn2}. After doing integration by parts in $\xi$ along $\zeta$ directions and directions perpendicular to $\zeta$, we have
\be\label{2022feb22eqn90}
\begin{split}
  &  |  K^{a ;l,n}_{k,j }(y, v, \zeta)| +   (\max\{2^l, |  \zeta_{\bot}|/|\zeta|\})^{-1}   |  {\mathbf{P}} \big(  K^{a ;l,n}_{k,j }(y, v, \zeta)\big)| \\
 &  \lesssim  2^{2k+2n} 2^{-j-l}(1+2^{k}|y\cdot \tilde{\zeta}|)^{-100}(1+2^{k+n}|y\times \tilde{\zeta}|)^{-100}.
 \end{split}
\ee

Note that, after using the above estimate of kernel, the Cauchy-Schwarz inequality,   the volume of support of $v$, and the estimate  \eqref{nov24eqn41}  if $|v|\geq 2^{(\alpha_s + \epsilon)M_s}$,   we have
\be\label{oct7eqn11}
\begin{split}
&\big| \int_{(\R^3)^2}   (E(s, x-y)+ \hat{v}\times B(s, x-y))\cdot \nabla_v K^{a;l,n}_{k,j }(y, v, \zeta) f(s,x-y, v) d y d v\big|\\
&\lesssim 
    2^{2k+2n} 2^{-j-l + 2\epsilon M_t}   \big(    \min\{2^{3j+2l}, 2^{j+2\alpha_s M_s}\}  \big)^{1/2} \\
    &\quad \times  \big(\min\{  2^{-j},  2^{-3k-2n } \min\{2^{3j+2l}, 2^{j+2\alpha_s M_s}\} \} \big)^{1/2}\\
    &  \lesssim 2^{2k+2n} 2^{-j-l + 2\epsilon M_t} \min\big\{ (2^{3j+2l})^{1/2} (2^{-j})^{1/2},   (2^{j+2\alpha_s M_s})^{1/2} (2^{-3k-2n+3j+2l})^{1/2},  \\ 
    &\quad   (2^{j+2\alpha_s M_s})^{1/2}  (2^{-3k-2n+ j+2\alpha_s M_s})^{1/2}   \big\}  \\ 
& \lesssim 2^{4\epsilon M_t}\min\big\{  2^{2k+2n },     2^{(k+2n)/2} \min\{ 2^{j+\alpha_s M_s}, 2^{2\alpha_s M_s-l}\}\big\}. 
\end{split}
\ee
Recall  \eqref{2021dec25eqn2}. From the above estimate, we have 
\[
\big|\int_{\R^3} e^{i x\cdot \xi + i s \mu |\xi|}  \p_s \mathcal{F}[ \mathfrak{H}_{k,j;n}^{\mu,a}]s, \xi,\zeta)  d \xi\big|\lesssim 2^{5\epsilon M_t}\min\big\{  2^{2k+2n },     2^{(k+2n)/2}   2^{j+\alpha_s M_s}\}.
\]
Similarly, from the improved estimate of kernels $ {\mathbf{P}} \big(  K^{a ;l,n}_{k,j }(y, v, \zeta)\big)$ in  \eqref{2022feb22eqn90}, we have
\be\label{oct26eqn43}
|\int_{\R^3} e^{i x\cdot \xi + i s\mu |\xi|}  \mathbf{P}\big(   \p_s \mathcal{F}[ \mathfrak{H}_{k,j;n}^{\mu,a}](s, \xi,\zeta)\big)    d \xi| \lesssim  2^{(k+2n)/2+ 5\epsilon M_t} \big(2^{2\alpha_s M_s} + |  \zeta_{\bot}| |\zeta|^{-1} 2^{j+\alpha_s M_s}\big).
\ee

Moreover, if  $| x_{\bot} |\geq 2^{-k-n+\epsilon M_t/2},$ from cylindrical symmetry of the solution,  the volume of support of $v$, and the estimate  \eqref{nov24eqn41}  if $|v|\geq 2^{(\alpha_s + \epsilon)M_s}$,  we have 
\be\label{oct7eqn13}
\begin{split}
&|\int_{\R^3} e^{i x\cdot \xi + i s \mu |\xi|} \p_s \mathcal{F}[\mathfrak{H}_{k ,j ;n }^{\mu ,a}](s, \xi,\zeta)  d \xi|\\
& \lesssim   \sum_{l\in [ n + \epsilon M_t/4, 2]\cap \Z}  2^{2k+2n} 2^{-j-l + 2\epsilon M_t} \big(  \frac{2^{-k-n+\epsilon M_t}}{| x_{\bot} |}   \min\{2^{3j+2l}, 2^{j+2\alpha_s M_s}\}  \big)^{1/2}\\
&\quad \times \big(\min\{  \frac{2^{-k-n+\epsilon M_t-j}}{|  x_{\bot}|}, 2^{-3k-2n } \min\{2^{3j+2l}, 2^{j+2\alpha_s M_s}\} \} \big)^{1/2} \\
&\lesssim \sum_{l\in [ n + \epsilon M_t/4, 2]\cap \Z}  2^{2k+2n} 2^{-j-l + 2\epsilon M_t} \min\big\{ ( |  x_{\bot}|^{-1} 2^{-k-n+\epsilon M_t +3j+2l} )^{1/2}  \\ 
&\quad  \times ( |  x_{\bot}|^{-1} 2^{-k-n+\epsilon M_t -j} )^{1/2}, ( |  x_{\bot}|^{-1} 2^{-k-n+\epsilon M_t +3j+2l} )^{1/2} (2^{-3k-2n+j+2\alpha_s M_s})^{1/2} \big\} \\ 
&\lesssim 2^{ 4\epsilon M_t}\min\{ |  x_{\bot}|^{-1} 2^{k+n },  |   x_{\bot}|^{-1/2}  2^{n/2} 2^{j+\alpha_s M_s}\}. 
\end{split}
\ee
After combining the above two estimates   \eqref{oct7eqn11}  and  \eqref{oct7eqn13}, we have
\be\label{oct23eqn31}
\begin{split}
&|\int_{\R^3} e^{i x\cdot \xi + i s\mu |\xi|} \p_s  \mathcal{F}[\mathfrak{H}_{k ,j ;n }^{\mu ,a}] (s, \xi,\zeta)  d \xi|\\
&\lesssim 2^{4\epsilon M_t}  \min\{ | x_{\bot}|^{-1} 2^{k+n },  | x_{\bot}|^{-1/2}  2^{n/2} 2^{j+\alpha_s M_s}, 2^{(k+2n)/2} 2^{j+\alpha_s M_s}\}.
\end{split}
\ee
Hence finishing the proof of the desired estimate \eqref{oct7eqn21}.

 \medskip

 \noindent \textbf{Step 2.}\quad  If $a=4$. 

 \medskip
  
Recall   \eqref{oct7eqn29}  and the definition of cutoff function  in $\varphi^4_{j, n}(v, \zeta),  $ see  \eqref{sep4eqn6}.  We have $|\tilde{v}-\tilde{\zeta}|\in  [2^{ n - \epsilon M_t }, 2^{ n + \epsilon M_t }].$ As a result, if   $  2^{ n}\notin [     2^{-2\epsilon M_t}| \zeta_{\bot}|/|\zeta|, 2^{ 2\epsilon M_t}| \zeta_{\bot}|/|\zeta|] $, then we have $|  v_{\bot}|/|v|\sim   |  \zeta_{\bot}|/|\zeta|  $. From the estimate  \eqref{nov24eqn41}, it suffices to consider the case $ \max\{2^n, |  \zeta_{\bot}|/|\zeta|\} 2^j\leq 2^{(\alpha_s + 2\epsilon)M_s}$ if    $  2^{ n}\notin [  2^{-2\epsilon M_t}| \zeta_{\bot}|/|\zeta|, 2^{ 2\epsilon M_t}|  \zeta_{\bot}|/|\zeta|]. $

With minor modifications in the estimate of the case $a\in\{0,1,2,3\}$,  we have 
\be\label{oct7eqn44}
\begin{split}
|Err^4_{k,j,n}(s,x,\zeta)|& \lesssim  2^{4 \epsilon M_t}  \min\big\{ |  x_{\bot}|^{-1} 2^{k+n },  |  x_{\bot}|^{-1/2}  2^{n/2} 2^{j+\alpha_s M_s},  \\
&\quad  2^{(k+2n)/2} \min\{ 2^{2M_t + \vartheta^\star_0 }, 2^{(1+\alpha_s) M_s}\}  \big\}, \\ 
|\mathbf{P}\big(Err^4_{k,j,n}(s,x,\zeta)\big)|&\lesssim  2^{(k+2n)/2+ 4\epsilon M_t} 2^{2\alpha_s M_s}  .
\end{split}
\ee
Recall \eqref{oct2eqn110}. Note that, in terms of kernel, the following estimate holds, 
\[
\begin{split}
|   \mathfrak{E}^4_{k,j,n}(s,x,\zeta)|&\lesssim \sum_{l\in [-M_t,  n + \epsilon M_t]\cap \Z}  |  \mathfrak{E}^{4;l}_{k,j,n}(s,x,\zeta)|, \\
    \mathfrak{E}^{4;l}_{k,j,n}(s,x,\zeta)&:=2^{3k+4n+3\epsilon M_t } \int_{\R^3} \int_{\R^3} f(s, x-y, v)\varphi_{l;-M_t}(\tilde{v}-\tilde{\zeta}) \varphi_{j,n}^4(v, \zeta) \\
&\quad \times (1+2^{k}|y\cdot \tilde{\zeta}|)^{-N_0^3 }  (1+2^{k+n}|y\times \tilde{\zeta}|)^{- N_0^3 }  d y d v. \\
\end{split}
\]

From the above estimate, the cylindrical symmetry of the distribution function, and the conservation law  \eqref{conservationlaw}, we have 
\be\label{oct7eqn26} 
\begin{split}
\big|\mathfrak{E}^{4;l}_{k,j,n}(s,x,\zeta)\big| & \lesssim  2^{  2n +5\epsilon M_t} \min\{2^{3k+2n-j}, |  x_{\bot}|^{-1}2^{2k+ n-j},2^{j+2\alpha_s M_s}\}\\
& \lesssim  2^{  2n +5\epsilon M_t} \min\big\{ \big(|  x_{\bot} |^{-1}2^{2k+ n-j}\big)^{1/2}\big(2^{j+2\alpha_s M_s}\big)^{1/2}, \big( 2^{3k+2n-j}\big)^{1/6} \\
&\quad\times  \big( 2^{j+2\alpha_s M_s}\big)^{5/6}, \big( |  x_{\bot}|^{-1}2^{2k+ n-j}\big)^{1/4} \big( 2^{j+2\alpha_s M_s}\big)^{3/4}\big\}\\
& \lesssim    2^{  2n +5\epsilon M_t} \min\big\{ | x_{\bot}|^{-1/2}2^{k+n/2+ \alpha_s M_s },  2^{k/2+n/3 +2j/3+5 \alpha_s M_s/3},\\
&\quad | x_{\bot}|^{-1/4} 2^{k/2+n/4+j/2+3\alpha_s M_s/2}\big\}.\\
\end{split}
\ee
 Recall  \eqref{oct7eqn29}. After combining the above estimate, and the obtained estimate    \eqref{oct7eqn44}, our desired estimate  \eqref{oct7eqn21} holds.   Moreover, the desired estimate  \eqref{2024oct27eqn61}  holds from the obtained estimates  \eqref{oct26eqn43},  \eqref{oct7eqn44}, and  \eqref{oct7eqn26}. 
\end{proof}

\subsection{Estimating the   elliptic parts}\label{ellipticPartIest}

As summarized in the following Proposition, the goal of this subsection is devoted to the estimate of elliptic part  $\mathfrak{E}^{\mu,i}_{k,j;n}(t_1, t_2)$   in  \eqref{oct25eqn41}.

\begin{proposition}\label{elliptfinalpro}
Let  $i\in\{0,1,2,3,4\}, ( n,k)\in \mathcal{E}_i$ and $a_{p}\geq  -k-n +3\epsilon M_{t}/2$,  under the assumption of Proposition \ref{bootstraplemma1}, the following estimate holds, 
\be\label{oct25eqn21}
  \big|   \mathfrak{E}^{\mu, i }_{k,j;n}(t_1, t_2)\big|  \lesssim    \mathcal{M}(C)\big[\sum_{b\in \mathcal{T}+\mathcal{T}}   2^{-b a_p }2^{b(\gamma_1-\gamma_2)M_t  +( \alpha^{\star}-10\epsilon)M_t}  (t_2-t_1)  +2^{2\alpha^{\star}M_t/3 } \big].
\ee 

\end{proposition}

\subsubsection{The first reduction and the outline of proof}

This subsection first analyzes the characteristic time oscillations of the elliptic parts $\mathfrak{E}^{\mu,i}_{k,j;n}(t_1, t_2)$, $i \in \{0,1,2,3,4\}$, and   then decompose the elliptic parts into different pieces to exploit  the smoothing effect. The   strategy of estimating these elliptic parts is then outlined.

 Define $g(t,x,v)=f(t,x+t\hat{v}, v)$ to be  the profile of the distribution function $f(t,x,v)$, which is the pull-back of the nonlinear solution along the linear flow.  Recall  \eqref{2022feb24eqn81}  and  \eqref{oct25eqn1}.   In terms of the profile, for any $i \in \{0,1,2,3,4\},$ we have 
 \[
 \begin{split}
   \mathfrak{E}^{\mu, i }_{k,j;n} (t_1, t_2) & =    \int_{t_1}^{t_2}   \int_{\R^3}\int_{\R^3} e^{i X(s)\cdot \xi - i s \hat{v}\cdot \xi } \hat{g}(s, \xi, v) \psi_k(\xi)   \varphi_{j,n}^i(v, V(s)) 
\\
&\quad \times C (  X_{\bot}(s), V(s))\cdot     m_i(\xi, v,  V(s)) \varphi_{n;-M_t} ( \tilde{\xi} + \mu \tilde{V}(s) )      d\xi d v d s, \end{split}   
\]
where $m_i(\xi, v, V(s)),  i \in \{0,1,2,3,4\}, $ are defined in   \eqref{2022feb22eqn81}  and   \eqref{oct25eqn1}. 
 
Recall  \eqref{sep4eqn6}. For $v\in supp(\varphi_{j,n}^a(\cdot, V(s))\psi_{l}(\tilde{v}-\tilde{V}(s)), a\in\{0,1,2\}$, we have $\angle(v,  V(s))\sim 2^l \geq 2^{n+\epsilon M_t/2}$. For   $v\in supp(\varphi_{j,n}^3(\cdot, V(s))\psi_{l}( \tilde{v}-\tilde{V}(s) )$, we have $\angle(v,   V(s))\sim 2^l \leq  2^{n-\epsilon M_t/2}$. As a result, we have 
\begin{enumerate}
\item[(i)] For any $   a\in\{0,1,2\},  v\in supp(\varphi_{j,n}^a(\cdot, V(s))\psi_{l}(\tilde{v}-\tilde{V}(s))$, we have
\be\label{2024oc27eqn81}
|\hat{V}(s)\cdot \xi - \hat{v}\cdot \xi | = |\hat{V}(s)\cdot \xi +\mu |\xi|-\mu |\xi| - \hat{v}\cdot \xi|\sim 2^{k+2l}. 
\ee
\item[(ii)] For any $v\in supp(\varphi_{j,n}^3(\cdot, V(s))\psi_{l}(\tilde{v}-\tilde{V}(s))$, we have
\be\label{2024oc27eqn82}
|\hat{V}(s)\cdot \xi - \hat{v}\cdot \xi | = |\hat{V}(s)\cdot \xi   +\mu |\xi|-\mu |\xi| - \hat{v}\cdot \xi|\sim 2^{k+2n}.
\ee
\end{enumerate} 
 
 Unfortunately, the case $a = 4$ presents a significant departure from the others. The function  $\hat{V}(t) \cdot \xi - \hat{v} \cdot \xi$ may vanish within the support of $\varphi_{j,n}^4(v, V(t))$ (see \eqref{sep4eqn6}), which occurs when $\angle(v, \mu\xi)$ and $\angle(V(t), \mu\xi)$ are comparable.

 Based on the size of $\angle(v, V(t))$ and the size of $(\hat{v}-\hat{V}(t))\cdot \xi$, after choosing the threshold $ \bar{\kappa}:=(\gamma_1-\gamma_2)M_t-30\epsilon M_t-n,$  we decompose $   \mathfrak{E}^{\mu, 4 }_{k,j;n}(t_1, t_2)$ into different pieces as follows, 
\be\label{oct3eqn111}
\begin{split}
   \mathfrak{E}^{\mu, 4 }_{k,j;n}(t_1, t_2)&= \sum_{\kappa\in[\bar{\kappa},2]\cap \Z}       \mathfrak{E}^{\mu, 4;\kappa}_{k,j,n}(t_1, t_2), \\
  \mathfrak{E}^{\mu, 4;\kappa }_{k,j,n}(t_1, t_2)&=  \int_{t_1}^{t_2}  \int_{\R^3}  \int_{\R^3} e^{i X(s)\cdot \xi - i s\hat{v}\cdot \xi   }    \hat{g}(s, \xi, v)   \varphi_{n;-M_t} ( \tilde{\xi} + \mu \tilde{V}(s) )   \varphi_{j,n}^4(v, V(s))  \\
  &\quad \times   \varphi_{  \kappa;\bar{\kappa}}( \frac{\xi \cdot(\hat{v}-\hat{V}(s))}{|\hat{v}-\hat{V}(s)| |\xi|})    C ( X_{\bot}(s), V(s))\cdot     m_4(\xi, v,  V(s))   \varphi_k(\xi)  
   d \xi d v  d s.\\
\end{split}
\ee
 
 Roughly speaking, if we are away from the vanishing set of frequencies,   i.e., for any $\kappa\in (\bar{\kappa}, 2]\cap\Z$, $\xi\in  supp\big(\varphi_{  \kappa;\bar{\kappa}}(  {\xi \cdot(\hat{v}-\hat{V}(s))}\big/{|\hat{v}-\hat{V}(s)| |\xi|}) \varphi_{j,n}^4(v, V(s)) \big)$,   we have 
\be\label{2024oct29eqn65}
|\hat{V}(s)\cdot \xi - \hat{v}\cdot \xi |\in [ 2^{k+\kappa +  n-2\epsilon M_t },  2^{k+\kappa + n+2\epsilon M_t }].
\ee
 
Thanks to the above estimate and  the estimates \eqref{2024oc27eqn81}, \eqref{2024oc27eqn82}, we do integration by parts in ``$s$'' once  to exploit  the smoothing effect.  As a result, we have the following decomposition for the elliptic parts $\mathfrak{E}^{\mu,a}_{k,j,n}(t_1, t_2), a\in\{0,1,2,3\}$, and $ \mathfrak{E}^{\mu, 4;\kappa, l}_{k,j,n}(t_1, t_2).$
 
\begin{enumerate}
\item[(i)] If $ a\in \{0,1,2,3\}, $ we have
\be\label{2024oct23eqn11}
\mathfrak{E}^{\mu,a}_{k,j,n}(t_1, t_2)=  Err^{\mu,a}_{k,j;n}(t_1, t_2)  + \sum_{b=0,1}  {}_{b}^1\mathfrak{E}^{\mu,a}_{k,j;n}(t_1, t_2),
\ee
\item[(ii)] If $a=4,$ and any $\kappa\in (\bar{\kappa}, 2]\cap\Z$, we have
\be\label{2024oct23eqn12}
\begin{split}
 \mathfrak{E}^{\mu, 4;\kappa }_{k,j,n}(t_1, t_2)=  Err^{\mu,4;\kappa }_{k,j,n}(t_1, t_2)+ \sum_{b=0,1}  {}_{b}^1 \mathfrak{E}^{\mu,4;\kappa }_{k,j;n}(t_1, t_2)
\end{split}
\ee
\end{enumerate}
where  the error type terms $ Err^{\mu,a}_{k,j;n}(t_1, t_2), a\in \{0,1,2,3\}$, and ${}_{}^{} Err^{\mu,4;\kappa,l}_{k,j,n}(t_1, t_2) $ are defined as follows, 
\be\label{oct7eqn62}
\begin{split}
 Err^{\mu,a}_{k,j;n}(t_1, t_2) & = \sum_{ b=1,2} (-1)^b   \int_{\R^3}  \int_{\R^3} e^{i X(t_b   )\cdot \xi    }       \hat{f}(t_b, \xi, v) \mathcal{F}[ {}_{}^{1}\mathfrak{E}^a ](   X_{\bot}(t_b),   \xi, v, V(t_b))     d \xi d v \\
 &+  \int_{t_1}^{t_2}   \int_{\R^3}\int_{\R^3} e^{i X(s)\cdot \xi   } \hat{f}(s, \xi, v)      {\hat{V}}_{\bot}(s)\cdot \nabla_{  x_{\bot}}\big( \mathcal{F}[ {}_{}^{1}\mathfrak{E}^a ](   x_{\bot} ,   \xi, v, V(s)) \big)\big|_{  x_{\bot}= {X}_{\bot}(s)}  d\xi d v ds,\\
&\\
{}_{}^{} Err^{\mu,4;\kappa }_{k,j,n}(t_1, t_2)  &   =   \sum_{ b=1,2} (-1)^b   \int_{\R^3}   \int_{\R^3} e^{i X(t_b   )\cdot \xi    }       \hat{f}(t_b, \xi, v)    \mathcal{F}[ {}_{}^{1}\mathfrak{E}^{\kappa} ](  X_{\bot}(t_b),   \xi, v, V(t_b)) d \xi d v \\
&   +   \int_{t_1}^{t_2} \int_{\R^3}   \int_{\R^3} e^{i X(s   )\cdot \xi    }       \hat{f}(s, \xi, v)   {\hat{{V}}}_{\bot}(s)\cdot \nabla_{  x_{\bot}}\big(  \mathcal{F}[ {}_{}^{1}\mathfrak{E}^{\kappa} ](  x_{\bot},   \xi, v, V(s )) \big)\big|_{ x_{\bot}=   X_{\bot}(s)} d \xi d v d s, \\
 \end{split}
\ee
where
\be\label{2024oct28eqn11}
\begin{split}
\mathcal{F}[ {}_{}^{1}\mathfrak{E}^{a } ](   x_{\bot},   \xi, v, \zeta)&:= C(  x_{\bot}, \zeta)\cdot m_a(\xi, v, \zeta)   (i(\hat{\zeta} -\hat{v})\cdot \xi)^{-1}  
 \varphi_k(\xi)    \varphi_{n;-M_t} ( \tilde{\xi} + \mu \tilde{\zeta}  )      \varphi_{j,n}^a(v, \zeta)  , \\
\mathcal{F}[ {}_{}^{1}\mathfrak{E}^{\kappa} ](   x_{\bot},   \xi, v, \zeta)&:=  C(  x_{\bot}, \zeta)\cdot m_4(\xi, v, \zeta) (i(\hat{\zeta} -\hat{v})\cdot \xi)^{-1}   \varphi_k(\xi)    \varphi_{n;-M_t} ( \tilde{\xi} + \mu \tilde{\zeta} ) \\ 
&\quad \times \varphi_{j,n}^4(v, \zeta)       \varphi_{  \kappa; \bar{\kappa} }( \frac{\xi \cdot(\hat{v}- \tilde{\zeta} )}{|\hat{v}-\hat{\zeta}| |\xi|})       .
\end{split}
\ee

The first iteration terms ${}_{b}^1\mathfrak{E}^{\mu,a}_{k,j;n}(t_1, t_2)$ and ${}_{b}^1 \mathfrak{E}^{\mu,i;\kappa, l}_{k,j;n}(t_1, t_2), b\in\{0,1\},$  are defined as follows, 
\be\label{oct29eqn82}
\begin{split}
 {}_{0}^1\mathfrak{E}^{\mu,a}_{k,j;n}(t_1, t_2)&:= \int_{t_1}^{t_2}    \int_{\R^3}  \int_{\R^3} e^{i X(s  )\cdot \xi    }   \mathcal{F}\big[(E+\hat{v}\times B)f \big](s,\xi, v)\\
 &\quad \cdot   \nabla_v \big[  \mathcal{F}[ {}_{}^{1}\mathfrak{E}^{a} ](   X_{\bot}(s),   \xi, v, \zeta) \big]  d \xi d v  d s, \\
  {}_{1}^1\mathfrak{E}^{\mu,a}_{k,j;n}(t_1, t_2)&:=  \int_{t_1}^{t_2}   \int_{\R^3}\int_{\R^3} e^{i X(s)\cdot \xi   }  \hat{f}(s, \xi, v)    \\
 &\quad  \times   K(s, X(s),V(s)) \cdot \nabla_{\zeta}\big( \mathcal{F}[ {}_{}^{1}\mathfrak{E}^{a} ](  X_{\bot}(s),   \xi, v, \zeta) \big)\big|_{\zeta=V(s)}  d\xi d v d s,\\
 \end{split}
\ee
\be\label{2024oct28eqn1}
\begin{split}
{}_{0}^1\mathfrak{E}^{\mu,4;\kappa  }_{k,j;n}(t_1, t_2) &=  \int_{t_1}^{t_2}   \int_{\R^3}\int_{\R^3} e^{i X(s)\cdot \xi   }         \mathcal{F}\big[(E+\hat{v}\times B)f \big](s,\xi, v) \\
&\quad \cdot   \nabla_v\big(  \mathcal{F}[ {}_{}^{1}\mathfrak{E}^{\kappa} ](  X_{\bot}(s),   \xi, v, \zeta) \big)\big|_{\zeta=V(s)}  d\xi d v d s,\\
  {}_{1}^1\mathfrak{E}^{\mu,4;\kappa  }_{k,j;n}(t_1, t_2) &=  \int_{t_1}^{t_2}   \int_{\R^3}\int_{\R^3} e^{i X(s)\cdot \xi   }  \hat{f}(s, \xi, v)      \\
 &\quad\times  K(s, X(s),V(s))\cdot \nabla_{\zeta}\big(  \mathcal{F}[ {}_{}^{1}\mathfrak{E}^{\kappa} ](  X_{\bot}(s),   \xi, v, \zeta) \big)\big|_{\zeta=V(s)}  d\xi d v d s,\\
\end{split}
\ee

To obtain  \eqref{2024oct23eqn11} and \eqref{2024oct23eqn12}, we used the fact that $\p_t g(t,x-t\hat{v}, v)= (\p_t + \hat{v}\cdot \nabla_x)f(t,x , v) = -\nabla_v\cdot\big((E+\hat{v}\times B)f\big)(t,x , v) $ and we did integration by parts in $v$.

The rest of this subsection is organized as follows. 
\begin{enumerate}
\item[$\bullet$] In section \ref{trivialcasespartIISS}, we first rule out some trivial cases. Moreover, we estimate the threshold case $\kappa=\bar{\kappa}$ for the case $i=4.$ As a result, it allows us to focus on certain range of parameters, for which we exploit the smoothing effect, i.e., using the decompositions in \eqref{2024oct23eqn11} and \eqref{2024oct23eqn12}. 

\item[$\bullet$] In section \ref{mainpartsISSpart1firs}, we estimate $ {}_{0}^1\mathfrak{E}^{\mu,a}_{k,j;n}(t_1, t_2)$ in \eqref{2024oct23eqn11} for the case $a\in\{0,1,2,3\}$ and estimate $ {}_{0}^1 \mathfrak{E}^{\mu,4;\kappa }_{k,j;n}(t_1, t_2)$  in \eqref{2024oct23eqn12} for the case $a=4.$

\item[$\bullet$] In section \ref{mainpartsISSpart1seco}, we estimate $ {}_{1}^1\mathfrak{E}^{\mu,a}_{k,j;n}(t_1, t_2)$ in \eqref{2024oct23eqn11} for the case $a\in\{0,1,2,3\}$ and estimate  $ {}_{1}^1 \mathfrak{E}^{\mu,4;\kappa }_{k,j;n}(t_1, t_2)$ in \eqref{2024oct23eqn12} for the case $a=4.$
\item[$\bullet$] In section \ref{mainpartsISSpart1error}, we estimate  the error type terms created in the ISS process of estimating the elliptic parts. 
\end{enumerate}

\subsubsection{Ruling out some trivial cases. }\label{trivialcasespartIISS}

In the following Lemma, we first rule out some range of parameters of $k,n, j$ etc. More precisely,

\begin{lemma}\label{trivialcaseell}
Let  $i\in\{0,1,2,3,4\}, ( n,k)\in \mathcal{E}_i$ and $a_{p}\geq  -k-n +3\epsilon M_{t}/2$,  under the assumption of Proposition \ref{bootstraplemma1},  the following estimate holds if either   $ a_p  \leq -k+\alpha^{\star } M_t/2$, or $n\leq (\gamma_1-\gamma_2)M_t/3-30\epsilon M_t$, or $j\leq  \alpha^{\star } M_t +(\gamma_1-\gamma_2)M_t/2-15\epsilon M_t$, or $k+2n \leq  2 \alpha^{\star } M_t+3(\gamma_1 -\gamma_2) M_t/2-30\epsilon M_t,$
\be\label{oct3eqn52}
 \big| \mathfrak{E}_{k,j;n}^{\mu, i}(t_1, t_2)\big|\lesssim  \sum_{b\in \mathcal{T}+\mathcal{T}} 2^{-b a_p} 2^{b(\gamma_1-\gamma_2)M_t +(\alpha^{\star}-10\epsilon)M_t}\mathcal{M}(C) (t_2-t_1). 
\ee
\end{lemma}
 
 \begin{proof}

 Recall  \eqref{oct25eqn1}. If $l\geq (\gamma_1-\gamma_2+\epsilon)M_t$, then we have $|  v_{\bot}|\sim 2^{j+l}$. From the estimate  \eqref{nov24eqn41}, it suffices to consider the case $j+l\leq (\alpha_t+2\epsilon)M_t$  if $l\geq (\gamma_1-\gamma_2+\epsilon)M_t$. 

 From  the assumption of the coefficient $C$  in  \eqref{oct23eqn51}, after representing $ \mathfrak{E}_{k,j;n}^{\mu, i} (t_1, t_2), i\in \{0,1,2,3,4\},$  in terms of kernel, and doing integration by parts in  $\xi$ along $V(s)$ direction and directions perpendicular to $V(s)$, we have 
\[
\begin{split}
\big| \mathfrak{E}_{k,j;n}^{\mu, i} (t_1, t_2)\big|&\lesssim  \mathcal{M}(C) \int_{t_1}^{t_2} \int_{\R^3} 2^{2k+2n+\epsilon M_t}(2^{\max\{l,n\}}+2^{(\gamma_1-\gamma_2)M_t}) f(s , X(s)-y , v )  \\
&\quad \times    (1+2^k|y\cdot \tilde{V}(s )|)^{-100}(1+2^{k+n}|y\times  \tilde{V}(s )|)^{-100}    \varphi_{[j-2,j+2]}(v)dy d v d s. \\
\end{split}
\]
From the cylindrical symmetry, the conservation law \eqref{conservationlaw},   the volume of support of $v$, and the estimate \eqref{nov24eqn41} if $|  v_{\bot} |\geq 2^{(\alpha_s + \epsilon)M_s}$,   we have 
\be\label{oct13eqn61}
\begin{split}
  &\big|  \mathfrak{E}_{k,j;n}^{\mu, i}  (t_1, t_2)\big| \\
  &\lesssim  \int_{t_1}^{t_2} \mathcal{M}(C) 2^{\max\{l,n, (\gamma_1-\gamma_2)M_t\}+3\epsilon M_t}\min\big\{\frac{2^{k+n+\epsilon M_t/2-j}}{|  X_{\bot}(t)|}, 2^{-k}\min\{2^{3j+2n},  2^{ j+2 \alpha_s M_s}\}  \big\} d s \\
  &\lesssim  2^{\max\{l,n, (\gamma_1-\gamma_2)M_t\}+3\epsilon M_t} \min\big\{ ( {2^{-a_p+k+n+\epsilon M_t/2-j}}  )^{1/2}  (2^{-k}\min\{2^{3j+2n},  2^{ j+2 \alpha_s M_s}\} )^{1/2},\\
  &\quad    ( {2^{-a_p+k+n+\epsilon M_t/2-j}}  )^{2/3}   (2^{-k}\min\{2^{3j+2n},  2^{ j+2 \alpha_s M_s}\} )^{1/3}\big\} \mathcal{M}(C)(t_2-t_1) \\
  &\lesssim  2^{\max\{l, n, (\gamma_1-\gamma_2)M_t\}+3\epsilon M_t}  \mathcal{M}(C)(t_2-t_1)  \\
  &\quad \times  \min\big\{2^{-a_p /2} \min\{2^{j+3n/2}, 2^{n/2+ \alpha^{\star} M_t}\}, 2^{-2a_p /3}2^{k/3+2n/3} 2^{\alpha^{\star} M_t/3+n/3} \big\}. 
  \end{split}
\ee

From the above estimate, the following estimate always holds if either $ a_p \leq -k+ \alpha^{\star} M_t/2$, or $n\leq  (\gamma_1-\gamma_2)M_t/3-30\epsilon M_t,$ or $j\leq  (\gamma_1-\gamma_2)M_t/2+(\alpha^{\star}-15\epsilon) M_t$,
\[
 \big|  \mathfrak{E}_{k,j;n}^{\mu, i} (t_1, t_2)\big|\lesssim   \sum_{b\in \mathcal{T}+\mathcal{T}} 2^{-b a_p} 2^{b(\gamma_1-\gamma_2)M_t +(\alpha^{\star}-11\epsilon)M_t}\mathcal{M}(C) (t_2-t_1).
\]

Moreover, from the obtained estimate \eqref{oct13eqn61}, the following estimate holds if $k+2n\leq 2(\alpha^{\star}-15\epsilon) M_t+3(\gamma_1-\gamma_2) M_t/2,$
  \be 
  \begin{split}
\big|  \mathfrak{E}_{k,j;n}^{\mu, i} (t_1, t_2)\big|&\lesssim   2^{-3a_p/4 + 3\epsilon M_t} 2^{(k+2n)/2}\mathcal{M}(C) (t_2-t_1) \\
& \lesssim  2^{-3 a_p/4} 2^{3(\gamma_1-\gamma_2)M_t/4 +(\alpha^{\star}-11\epsilon)M_t}\mathcal{M}(C) (t_2-t_1). 
\end{split}
\ee
Hence finishing the proof of the desired estimate \eqref{oct3eqn52}. 
\end{proof}

In view of the obtained estimate  \eqref{oct3eqn52}   in the above Lemma,  now,   it suffices to   focus on the case    $ a_p  \geq -k+\alpha^{\star } M_t/2$,  $n\geq (\gamma_1-\gamma_2)M_t/3-30\epsilon M_t$,   $j\geq  \alpha^{\star } M_t +(\gamma_1-\gamma_2)M_t/2-15\epsilon M_t$,   $k+2n \geq    2 \alpha^{\star } M_t+3(\gamma_1 -\gamma_2) M_t/2-30\epsilon M_t,$   for the estimate of elliptic parts $   \mathfrak{E}_{k,j;n}^{\mu, i}(t_1, t_2), i \in \{0,1,2, 3,4\}.  $

In the following lemma, we estimate the threshold case $\kappa=\bar{\kappa}$ for the case $i=4.$

\begin{lemma}\label{smallangest}
Let  $ \bar{\kappa}:=(\gamma_1-\gamma_2)M_t-100\epsilon M_t-n$. Assume that $ \forall s\in[t_1,t_2],|  X_{\bot}(s)|\in I_p, (n,k)\in \mathcal{E}_4$,   the following estimate holds if $ a_p   \geq   -k+ \alpha^{\star} M_t/2$,  $n\geq  (\gamma_1-\gamma_2)M_t/3-30\epsilon M_t$,   $j\geq  (\gamma_1-\gamma_2)M_t/2+(\alpha^{\star}-15\epsilon) M_t $, and $k+2n \geq  2(\alpha^{\star}-15\epsilon) M_t+3(\gamma_1-\gamma_2) M_t/2,$  
\be\label{oct3eqn115}
      \mathfrak{E}^{\mu, 4;\bar{\kappa} }_{k,j,n}(t_1, t_2)  \lesssim     2^{-a_p/2}2^{(\gamma_1-\gamma_2)M_t/2 +( \alpha^{\star}-10\epsilon)M_t}  \mathcal{M}(C)(t_2-t_1).
\ee
 \end{lemma}

\begin{proof}

Recall  \eqref{oct3eqn111}  and the definition of $\varphi_{j,n}^4(v, \zeta)$ in  \eqref{sep4eqn6}. For simplicity in notation, we  denote $\tilde{\theta}(v,s):={(\hat{v}-\hat{V}(s))}/{|\hat{v}-\hat{V}(s)|}$. Let    $\{ \tilde{\theta}(v,s), \theta_1(v, s) \} \subset span\{v, V(s)\}$ be an orthonormal frame of the linear space $ span\{v, V(s)\}$ and  $\theta_2(v, s) $ be an unit vector in the linear space $\big(span\{v, V(s)\}\big)^{\bot}$. Hence $\{  \tilde{\theta}(v,s) , \theta_1(v,s), $ $ \theta_2(v, s)\}$ is an orthonormal frame of $\R^3.$

 From  the assumption of the coefficient $C$  in  \eqref{oct23eqn51}, after writing $  \mathfrak{E}^{\mu, 4;\bar{\kappa} }_{k,j,n}(t_1, t_2)$  in terms of kernel, and doing integration by parts in  $\xi$  with respect to the orthonormal frame $\{(\hat{v}-\hat{V}(s))/|\hat{v}-\hat{V}(s), \theta_1(v,s),\theta_2(v,s)\}$, we have 
\[
\begin{split}
&\big|  \mathfrak{E}^{\mu,4;\bar{\kappa} }_{k,j,n}(t_1, t_2)\big| \\
& \lesssim \int_{t_1}^{t_2}   \int_{\R^3}\int_{\R^3} \mathcal{M}(C) 2^{2k+n+\bar{\kappa} +\max\{n, (\gamma_1-\gamma_2)M_t\}+\epsilon M_t}  f(s, X(s)-y , v )  \varphi_{[j-2,j+2]}(v)    \\
&\quad \times  \psi_{ [n-\epsilon M_t,  n +\epsilon M_t]} (\tilde{v}-\tilde{V}(s)  )   (1+2^{k+\kappa}(y\cdot (\hat{v}-\hat{V}(s)/|\hat{v}-\hat{V}(s)|) ))^{-100}\\
&\quad \times ( 1+ 2^{k+n}|y\cdot \theta_1(v,s)|)^{-100}  (1+2^k|y\cdot \theta_2(v, s)|)^{-100}  d y d v ds. 
\end{split}
\]

To fully exploit the cylindrical symmetry of the solution and quantify the resulting gains, we need to estimate the Jacobian of the coordinate transformation. Let
\[
\begin{split}
& \tilde{y}:=y\cdot \tilde{\theta} , \quad \tilde{y}_1:= y\cdot \theta_1(v, s), \quad \tilde{y}_2:=y\cdot \theta_{2}(v,s), \quad y = \tilde{y} \tilde{\theta} + \tilde{y}_1 \theta_1(v,s) + \tilde{y}_2\theta_2(v,s)\\
&\forall i \in \{1,2,3\},\quad y_i=y\cdot {e}_i, \quad r:=\sqrt{|X_1(s)-y_1|^2 + |X_2(s)-y_2|^2}.
\end{split}
\]
Note that, we have the following two options of changing coordinates $(\tilde{y}_1, \tilde{y})\rightarrow(r,y_3)$ or  $(\tilde{y}_2, \tilde{y})\rightarrow(r,y_3)$. 
Recall that $|  X_{\bot}(s)|\sim 2^{a_p }\geq 2^{-k-n- \bar{\kappa}  +\epsilon M_t}$.  For any $y\in \R^3$, s.t., $|y|\leq 2^{-k-\bar{\kappa}+\epsilon M_t/2}$, we have 
\[
\begin{split}
J_1&=det\big(\begin{bmatrix}
\p_{\tilde{y}_1} r & \p_{\tilde{y}} r\\
\p_{\tilde{y}_1} y_3 & \p_{\tilde{y}} y_3\\
\end{bmatrix}\big)= \p_{\tilde{y}_1} r\p_{\tilde{y}} y_3- \p_{\tilde{y}} r \p_{\tilde{y}_1} y_3\\ 
&= \frac{-1}{r}\big[\theta_1\cdot  (X_1(s)-y_1),(X_2(s)-y_2), 0  \big)\tilde{\theta}\cdot e_3 -  \tilde{\theta}\cdot  (X_1(s)-y_1),(X_2(s)-y_2), 0  \big){\theta}_1\cdot e_3  \big]\\
&= \frac{1}{r}\big( -(X_2(s)-y_2),(X_1(s)-y_1), 0  \big)\cdot \big(\theta_1\times \tilde{\theta} \big),\\
J_2&=det\big(\begin{bmatrix}
\p_{\tilde{y}_2} r & \p_{\tilde{y}} r\\
\p_{\tilde{y}_2} y_3 & \p_{\tilde{y}} y_3\\
\end{bmatrix}\big) = \frac{1}{r}\big( -(X_2(s)-y_2),(X_1(s)-y_1), 0  \big)\cdot \big(\theta_2\times \tilde{\theta} \big)\\
\end{split}
\]
From the above results of computation, we conclude that
\be\label{jacobianthreshold}
\begin{split}
 |J_1|+|J_2|&\gtrsim \big| \tilde{\theta}(v,s)\times  \frac{\big( -(X_2(s)-y_2), (X_1(s)-y_1), 0  \big)}{\sqrt{|X_1(s)-y_1|^2 + |X_2(s)-y_2|^2}}   \big|\\
&=\big|   \tilde{\theta}(v,s)\times \frac{\big(  (-X_2(s) ,X_1(s), 0  \big)}{|  X_{\bot}(s)|} \big|+\mathcal{O}(|  X_{\bot}(s)|^{-1} 2^{-k-\bar{\kappa}+\epsilon M_t/2}) .\\
\end{split}
\ee

Moreover, note that, for any fixed  $a, b\in \mathbb{S}^2,  $ $0< \epsilon \ll 1$, we have $|\{c: c\in\mathbb{S}^2,  |(a-c)\times b|\leq \epsilon\}|\lesssim \epsilon^{3/2}$.   This  claim follows straightforwardly if $a=b$. It suffices to consider the case $a\neq b.$   Let 
 $
  \mathbf{e}_1:=b,\quad \mathbf{e}_2:= ({a - (b\cdot a) b})/{|a-(b\cdot a) b |},  \quad \mathbf{e}_3= \mathbf{e}_1\times \mathbf{e}_2.$ Then $\{\mathbf{e}_1, \mathbf{e}_2, \mathbf{e}_3\}$ is a orthonormal frame. In terms of the orthonormal frame,  we have
 \[
 \begin{split}
  &a= \cos\phi_0 \mathbf{e}_1 + \sin\phi_0  \mathbf{e}_2, \quad c=\cos\theta\cos \phi \mathbf{e}_1+ \cos\theta \sin  \phi \mathbf{e}_2+\sin\theta \mathbf{e}_3,\\ 
 &(a-c)\times b  = -( \sin\phi_0 -\cos\theta \sin  \phi) \mathbf{e}_3 + \sin\theta \mathbf{e}_2,  \\
 &|(a-c)\times b|\leq \epsilon \quad \Longrightarrow |\sin \theta  |\leq \epsilon, \quad | \sin\phi_0 -\sin \phi|\lesssim \epsilon,\\
  \end{split}
\] 
From the above  estimate of $\theta, \phi$, we conclude that 
\be
\begin{split}
    &|\{c: c\in\mathbb{S}^2,  |(a-c)\times b|\leq \epsilon\}|\\
 &\lesssim |\{(\theta, \phi): \theta, \phi\in[0,2\pi], |\sin \theta  |\leq \epsilon, \, | \sin\phi_0 -\sin \phi|\lesssim \epsilon \}|\lesssim \epsilon^{3/2}.\\
 \end{split}
\ee 
From the above estimate and the fact that $|\hat{v}-\hat{V}(s)|\lesssim 2^{n+\epsilon M_t}$, we have 
 \be\label{oct3eqn100}
 \begin{split}
 & \big|\{v: v\in  \varphi_{j,n}^i(v, \zeta), | X_{\bot}(s)|^{-1} \big|  \tilde{\theta}(v,s)\times  {(-X_2(s), X_1(s),0)} \big| \lesssim  2^{a} \} \big|\\
 &\lesssim 2^{3j+3a/2+3n/2+2\epsilon M_t}. \\ 
\end{split}
\ee
Moreover, we have the following facts:
\begin{enumerate}
\item[(i)]  If $ n\geq (\gamma_1-\gamma_2+2\epsilon)M_t, $ we have
\be\label{2024feb13eqn1}
|  X_{\bot}(s)|^{-1} \big|  \tilde{\theta}(v,s)\times  {(-X_2(s), X_1(s),0)} \big| \gtrsim 2^{n }\geq  2^{-\alpha^{\star} M_t/2-\bar{\kappa}+10\epsilon M_t}.
\ee
\item[(ii)] If $ |  X_{\bot}(s)|^{-1} \big|  \tilde{\theta}(v,s)\times  {(-X_2(s), X_1(s),0)} \big|\lesssim  2^{(\gamma_1-\gamma_2-2\epsilon)M_t}$, we have
\be
 \frac{| v_{\bot}|}{|v|}\sim 2^{(\gamma_1-\gamma_2)M_t}.
\ee
\end{enumerate}

  With the above preparation, we first  do  inhomogeneous dyadic decomposition for the size of  $\tilde{\theta}(v,s)\times  \big({(-X_2(s), X_1(s),0)}/{|  X_{\bot}(s)|}\big) $ with the threshold $\bar{a}:= -\alpha^{\star} M_t/2-\bar{\kappa}+\epsilon M_t$. After using the cylindrical symmetry of the distribution function, the volume of support of $v$,   the estimate  \eqref{nov24eqn41}  if $|  v_{\bot}|\geq 2^{(\alpha_s+\epsilon)M_s}$, and  the fact that $|  v_{\bot}|/|v|\sim 2^{(\gamma_1-\gamma_2)M_t}$ if $a \leq (\gamma_1-\gamma_2)M_t-2\epsilon M_t $, we have
\be\label{oct3eqn131}
\begin{split}
&\big|  \mathfrak{E}^{\mu,4;\bar{\kappa} }_{k,j,n}(t_1, t_2)\big|\\
&\lesssim \sum_{a\in  [\bar{a},2]}  2^{2k+n+\bar{\kappa} +\max\{n, (\gamma_1-\gamma_2)M_t\}+3\epsilon M_t} \mathcal{M}(C) \Big[\int_{t_1}^{t_2}  \min\big\{ \frac{2^{-k-n-a-j }\mathbf{1}_{a> \bar{a}} + 2^{-k-\bar{\kappa}-j}\mathbf{1}_{a=\bar{a}}}{| X_{\bot}(s)|},  \\
&\quad  \frac{2^{-k-\bar{\kappa}-j }}{|  X_{\bot}(s)|},  2^{-3k-n-\bar{\kappa}}  \min\{2^{j+2 \alpha^{\star}  M_t},   2^{3j+3a/2+3n/2+2\epsilon M_t} \}  \big\}  d s\Big]\\
 &\lesssim \sum_{a\in [ \bar{a},2]}   2^{-a_p/2+\max\{n, (\gamma_1-\gamma_2)M_t\}+5\epsilon M_t}  \mathcal{M}(C)(t_2-t_1) \big[2^{\alpha^{\star}  M_t+\bar{\kappa}/2-a/2}\mathbf{1}_{a> (\gamma_1-\gamma_2-4\epsilon)M_t } \\
 &\quad +\mathbf{1}_{n\leq (\gamma_1-\gamma_2+2\epsilon)M_t}2^{\alpha^{\star}  M_t-(\gamma_1-\gamma_2)M_t}\big(   2^{3a/4+5n/4 }\mathbf{1}_{a=\bar{a}}   +   2^{\bar{\kappa}/2+a/4+3n/4} \mathbf{1}_{\bar{a}<a< (\gamma_1-\gamma_2-4\epsilon)M_t} \big) \big] \\
 &\lesssim   2^{-a_p/2}2^{(\gamma_1-\gamma_2)M_t/2 +( \alpha^{\star}-10\epsilon)M_t}  \mathcal{M}(C)(t_2-t_1).
\end{split}
\ee
Hence finishing the proof of the desired estimate  \eqref{oct3eqn115}. 
\end{proof}

\subsubsection{The estimate of $ {}_{0}^1\mathfrak{E}^{\mu,a}_{k,j;n}(t_1, t_2), a\in\{0,1,2,3\}$ and $  {}_{0}^1 \mathfrak{E}^{\mu,4;\kappa }_{k,j;n}(t_1, t_2)$ }\label{mainpartsISSpart1firs}

 In this section, we mainly estimate   $ {}_{0}^1\mathfrak{E}^{\mu,a}_{k,j;n}(t_1, t_2), a\in\{0,1,2,3\}$ in \eqref{2024oct23eqn11} and $  {}_{0}^1 \mathfrak{E}^{\mu,4;\kappa }_{k,j;n}(t_1, t_2)$ in \eqref{2024oct23eqn12}. More precisely, we have 

\begin{lemma}\label{ellerrtyp2}
Let  $ \bar{\kappa}:=(\gamma_1-\gamma_2)M_t-100\epsilon M_t-n$, $i\in\{0,1,2,3,4\}, ( n,k)\in \mathcal{E}_i$ and $a_{p}\geq  -k-n +3\epsilon M_{t}/2$,  under the assumption of Proposition \ref{bootstraplemma1},  the following estimate holds  if $ a_p  \geq   -k+ \alpha^{\star} M_t/2$,  $n\geq  (\gamma_1-\gamma_2)M_t/3-30\epsilon M_t$,   $j\geq  (\gamma_1-\gamma_2)M_t/2+(\alpha^{\star}-15\epsilon) M_t $, and $k+2n \geq  2(\alpha^{\star}-15\epsilon) M_t+3(\gamma_1-\gamma_2) M_t/2,$    
\be\label{oct7eqn65}
\begin{split}
  \sum_{a=0,1,2,3}&|  {}_{0}^1\mathfrak{E}^{\mu,a}_{k,j;n}(t_1, t_2) | +\sum_{\kappa\in (\bar{\kappa},2]\cap \Z } |  {}_{0}^1 \mathfrak{E}^{\mu,4;\kappa }_{k,j;n}(t_1, t_2) | \\
  & \lesssim  2^{-a_p+(\gamma_1-\gamma_2) M_t + (\alpha^{\star}-15\epsilon)M_t}\mathcal{M}(C)(t_2-t_1) .
  \end{split}
\ee
\end{lemma}
\begin{proof}

 Based on the size of $a$, we proceed in steps as follows. 

\medskip

\noindent \textbf{Step 1.} \quad The estimate of $  {}_{0}^1\mathfrak{E}^{\mu,a}_{k,j;n}(t_1, t_2), a\in \{0,1,2,3\}.$

\medskip

Recall  \eqref{oct29eqn82}. From the estimates \eqref{2024oc27eqn81} and \eqref{2024oc27eqn82},  after doing integration by parts in $\xi$ along $V(s)$ direction and directions perpendicular to $V(s)$ many times  for the kernel, we have
\[
\begin{split}
|{}_{0}^1\mathfrak{E}^{\mu,a}_{k,j;n}(t_1, t_2) |&\lesssim  \int_{t_1}^{t_2} \int_{\R^3} \int_{\R^3}    2^{k+2n-2\max\{l,n\}-j-\min\{l,n\} +2\epsilon M_t}\mathcal{M}(C)  \psi_{[l-2, l+2]} (\tilde{v}-\tilde{V}(s)) \\
&\quad \times   f(s, X(s)-y,v )  \big(|E(s, X(s )-y)|+ |B(s, X(s)-y)|\big)   \\
 &\quad  \times  (1+2^k|y\cdot \tilde{V}(s)|)^{-100}(1+2^{k+n}|y\times  \tilde{V}(s)|)^{-100} d y dv  ds.
 \end{split}
\]

Recall that $\forall s\in [t_1, t_2], |  X_{\bot}(s)|\geq 2^{-k-n+\epsilon M_t}$.  From the above estimate, the Cauchy-Schwarz inequality, the conservation law \eqref{conservationlaw}, and the cylindrical symmetry of the distribution function and the electromagnetic field, we have 
\be\label{2021dec27eqn25}
\begin{split}
| {}_{0}^1\mathfrak{E}^{\mu,a}_{k,j;n}(t_1, t_2)  |&\lesssim   2^{k+ n- l-j-\max\{l,n\}+4\epsilon M_t} \mathcal{M}(C)  (t_2-t_1) \\
&\quad  \times \big( 2^{-k-n+\epsilon M_t} 2^{-a_p} 2^{3j+2l} \big)^{1/2}  \big( 2^{-k-n+\epsilon M_t} 2^{-a_p} 2^{-j} \big)^{1/2}  \\
&\lesssim 2^{-a_p-(\gamma_1-\gamma_2)M_t/3+ 10\epsilon M_t} \mathcal{M}(C)(t_2-t_1).\\
\end{split} 
\ee

\medskip

\noindent \textbf{Step 2.} \quad The estimate of $   {}_{0}^1 \mathfrak{E}^{\mu,4;\kappa }_{k,j;n}(t_1, t_2), \kappa\in (\bar{\kappa},2]\cap \Z,  \bar{\kappa}= (\gamma_1-\gamma_2)M_t-n-100\epsilon M_t.$

\medskip

Recall  \eqref{2024oct28eqn1}. Due to the cutoff function $\varphi_{j,n}^4(v, \zeta)$ (see   \eqref{sep4eqn6}), for this case we have $|\tilde{v}-\tilde{\zeta}|\in [ 2^{n-3\epsilon M_t/2}, 2^{n+3\epsilon M_t/2}]$. 
  After doing 
integration by parts in $\xi$ with respect to the orthonormal frame  $\{(\hat{v}-\hat{V}(s))/|\hat{v}-\hat{V}(s), \theta_1(v, s),\theta_2(v, s)\}$ (defined in  Lemma \ref{smallangest}), many times  for the  kernel, 
 we have 
\[
\begin{split}
  \big| {}_{0}^1 \mathfrak{E}^{\mu,4;\kappa }_{k,j;n}(t_1, t_2)\big| &\lesssim   
 \int_{t_1}^{t_2} \int_{\R^3} \int_{\R^3}   2^{k-j-\min\{n, \kappa\}  +4\epsilon M_t }\mathcal{M}(C)    \varphi_{[j-2,j+2]}(v)\\
 &\quad \times f(s, X(s)-y,v ) \big(|E(s, X(s )-y)|+ |B(s, X(s)-y)|\big)     \\
 & \quad \times(1+2^{k+  {\kappa} }(y\cdot (\hat{v}-\hat{V}(s )/|\hat{v}-\hat{V}(s)|) )) ( 1+ 2^{k+n}|y\cdot \theta_1(v, s )|)^{-100}   \\
 &\quad \times (1+2^k|y\cdot \theta_2(v, s)|)^{-100}   \psi_{[n-2\epsilon M_t, n+2\epsilon M_t]} (\tilde{v}-\tilde{V}(s))  dy d v  d s. 
 \end{split}
\]

Note that $\forall s\in [t_1, t_2], |  X_{\bot}(s)|\geq 2^{ -k+\alpha^{\star } M_t/2}\geq 2^{-k-n-\kappa+\epsilon M_t}$.   From the above estimate,   the first fact in \eqref{2024feb13eqn1},   the Cauchy-Schwarz inequality, the conservation law  \eqref{conservationlaw}, and the cylindrical symmetry of the distribution function and the electromagnetic field, we have 
\be\label{oct3eqn132}
\begin{split}
&|  {}_{0}^1 \mathfrak{E}^{\mu,4;\kappa }_{k,j;n}(t_1, t_2) |\\& \lesssim      2^{k-j-\min\{n, \kappa\} +10\epsilon M_t }\mathcal{M}(C) (t_2-t_1)   \\
&\quad  \times    \big( (2^{-k-2n }\mathbf{1}_{n\geq (\gamma_1-\gamma_2+2\epsilon)M_t} + 2^{-k-\min\{n,\kappa\}} \mathbf{1}_{n\leq (\gamma_1-\gamma_2+2\epsilon)M_t}) 2^{-a_p-j} \big)^{1/2} \\
&\quad \times  \big( (2^{-k-2n }\mathbf{1}_{n\geq (\gamma_1-\gamma_2+2\epsilon)M_t} + 2^{-k-\min\{n,\kappa\}} \mathbf{1}_{n\leq (\gamma_1-\gamma_2+2\epsilon)M_t}) 2^{-a_p} 2^{3j+2n} \big)^{1/2} \\
   & \lesssim 2^{-a_p-(\gamma_1-\gamma_2)M_t +200\epsilon M_t}\mathcal{M}(C)(t_2-t_1)\\
   &\lesssim 2^{-a_p+(\gamma_1-\gamma_2) M_t + (\alpha^{\star}-20\epsilon)M_t}\mathcal{M}(C)(t_2-t_1) . \\
    \end{split}
\ee
In the above estimate, we used the fact that $(\gamma_1-\gamma_2)M_t\geq -50\epsilon M_t$ if $n\leq  (\gamma_1-\gamma_2+2\epsilon)M_t $ because  $n\geq  (\gamma_1-\gamma_2)M_t/3-30\epsilon M_t$. 

To sum up, our desired estimate  \eqref{oct7eqn65}  holds from the obtained estimate \eqref{2021dec27eqn25}  and \eqref{oct3eqn132}.
\end{proof}

\subsubsection{The estimate of $ {}_{1}^1\mathfrak{EH}^{\mu,a}_{k,j;n}(t_1, t_2), a\in\{0,1,2,3\}$ and $  {}_{1}^1 \mathfrak{E}^{\mu,4;\kappa}_{k,j;n}(t_1, t_2)$ }\label{mainpartsISSpart1seco}

This section is devoted to prove the following Lemma.

 \begin{lemma}\label{mainell3part1}
Let  $ \bar{\kappa}:=(\gamma_1-\gamma_2)M_t-100\epsilon M_t-n$, $i\in\{0,1,2,3,4\}, ( n,k)\in \mathcal{E}_i$ and $a_{p}\geq  -k-n +3\epsilon M_{t}/2$,  under the assumption of Proposition \ref{bootstraplemma1},  the following estimate holds if $ a_p  \geq   -k+ \alpha^{\star} M_t/2$,  $n\geq  (\gamma_1-\gamma_2)M_t/3-30\epsilon M_t$,   $j\geq  (\gamma_1-\gamma_2)M_t/2+(\alpha^{\star}-15\epsilon) M_t $, and $k+2n \geq  2(\alpha^{\star}-15\epsilon) M_t+3(\gamma_1-\gamma_2) M_t/2,$   
\be\label{oct14eqn61}
\begin{split}
 \sum_{a=0,1,2,3}&| {}_{1}^1\mathfrak{EH}^{\mu,a}_{k,j;n}(t_1, t_2)| + \sum_{\kappa\in (\bar{\kappa},2]\cap \Z }    | {}_{1}^1 \mathfrak{E}^{\mu,4;\kappa }_{k,j;n}(t_1, t_2) |\\
&\lesssim  \mathcal{M}(C)\big[   \sum_{b\in \mathcal{T}+\mathcal{T}} 2^{-b a_p+b(\gamma_1-\gamma_2)M_t + (\alpha^{\star}-10\epsilon)M_t}(t_2-t_1) + 2^{\alpha^{\star}M_t/2}\big].  
\end{split}
\ee
\end{lemma}
\begin{proof}
   
Recall  \eqref{oct29eqn82}  and  \eqref{2024oct28eqn1}. We first dyadically  decompose the new introduced electromagnetic field $K(s,X(s),V(s))$ and then decompose the localized  electromagnetic field into elliptic parts and the hyperbolic parts  by using    the decomposition of the localized  acceleration force in  \eqref{oct7eqn1}. As a result, from the estimate \eqref{dec2eqn31}, we have 
 \[
 \begin{split}
 \sum_{a=0,1,2,3}&| {}_{1}^1\mathfrak{EH}^{\mu,a}_{k,j;n}(t_1, t_2)| + \sum_{\kappa\in (\bar{\kappa},2]\cap \Z }   | {}_{1}^1 \mathfrak{E}^{\mu,4;\kappa }_{k,j;n}(t_1, t_2) |\\
  & \lesssim  \sum_{\begin{subarray}{c}
  k_1\in \Z_{+}, \mu_1\in \{+, -\}\\ 
   j_1\in[0, (1+2\epsilon)M_t]\cap \Z\\
    n_1\in [-M_t, 2]\cap \Z\\
    i, i_1\in\{0,1,2,3,4\}
  \end{subarray}}\big[   \mathfrak{HE}_{k_1,j_1;n_1}^{i,i_1, \mu_1}(t_1,t_2) 
 +\mathfrak{EE}_{k_1,j_1;n_1}^{i,i_1, \mu_1}(t_1, t_2) \big] + \mathcal{M}(C),
 \end{split} 
 \]
 where, $\forall a \in \{0,1,2,3\}$, 
 \be\label{2022feb25eqn61}
 \begin{split}
\mathfrak{HE}_{k_1,j_1;n_1}^{a,i_1, \mu_1}(t_1, t_2)&:=\int_{t_1}^{t_2}   \int_{\R^3}\int_{\R^3} e^{i X(s)\cdot \xi   }  \hat{f}(s, \xi, v)    \\
&\quad \times   \mathfrak{H}_{k_1,j_1;n_1}^{\mu_1,i_1}(s, X(s), V(s) ) \cdot \nabla_{\zeta}\big(  \mathcal{F}[ {}_{}^{1}\mathfrak{E}^{a } ](   X_{\bot}(s),   \xi, v, \zeta) \big)\big|_{\zeta=V(s)}  d\xi d v d s, \\ 
&\\
\mathfrak{HE}_{k_1,j_1;n_1}^{4,i_1, \mu_1}(t_1, t_2)&:= \sum_{\kappa\in (\bar{\kappa},2]\cap \Z    } \int_{t_1}^{t_2}   \int_{\R^3}\int_{\R^3} e^{i X(s)\cdot \xi   }  \hat{f}(s, \xi, v)     \\ 
&\quad  \times \mathfrak{H}_{k_1,j_1;n_1}^{\mu_1,i_1}(s, X(s), V(s) )  \cdot \nabla_{\zeta}\big(     \mathcal{F}[ {}_{}^{1}\mathfrak{E}^{\kappa} ](   X_{\bot}(s),   \xi, v, \zeta)\big)\big|_{\zeta=V(s)}
  d\xi d v d s,
\end{split}
\ee

\be\label{2026may1eqn1}
\begin{split}
\mathfrak{EE}_{k_1,j_1;n_1}^{a,i_1, \mu_1}(t_1, t_2)&:=  \int_{t_1}^{t_2}   \int_{\R^3}\int_{\R^3} e^{i X(s)\cdot \xi   } \hat{f}(s, \xi, v)   \mathbf{1}_{i_1\in\{0,1,2,3\} }  \\
&\quad \times   \mathfrak{E}^{\mu_1, i_1 }_{k_1,j_1;n_1} (s,x, V(s))  \cdot \nabla_{\zeta}\big(  \mathcal{F}[ {}_{}^{1}\mathfrak{E}^{ a} ](   X_{\bot}(s),   \xi, v, \zeta) \big)\big|_{\zeta=V(s)}    d\xi d v d s, \\ 
&\\
\mathfrak{EE}_{k_1,j_1;n_1}^{4,i_1, \mu_1}(t_1, t_2)&:=   \sum_{\kappa\in (\bar{\kappa},2]\cap \Z    }  \int_{t_1}^{t_2}   \int_{\R^3}\int_{\R^3} e^{i X(s)\cdot \xi   }  \hat{f}(s, \xi, v)   \mathbf{1}_{i_1\in\{0,1,2,3\} } \\ 
&\quad \times     \mathfrak{E}^{\mu_1, i_1;l_1}_{k_1,j_1;n_1} (s,x, V(s))  \cdot \nabla_{\zeta}\big(    \mathcal{F}[ {}_{}^{1}\mathfrak{E}^{\kappa} ](   X_{\bot}(s),   \xi, v, \zeta) \big)\big|_{\zeta=V(s)}  d\xi d v d s.
\end{split}
\ee

With the above preparation, now we proceed in steps as follows. 

\medskip 

\noindent \textbf{Step 1.}\quad The estimate of $\mathfrak{EE}_{k_1,j_1;n_1}^{i,i_1, \mu_1}(t_1, t_2) , i,i_1\in \{0,1,2,3,4\}.$

\medskip

Recall  \eqref{2024oct28eqn11}. By doing integration by parts in $\xi$ along $V(s)$ and directions perpendicular to $V(s)$ for the kernel and using the cylindrical symmetry, the volume of support of $v$, the estimate  \eqref{nov24eqn41}  if $|  v_{\bot}|\geq 2^{(\alpha_s +\epsilon)M_s}$, and the conservation law  \eqref{conservationlaw},   for any   $s\in [t_1, t_2], x\in \R^3$, s.t., $|  x_{\bot}|\in [2^{a_p  -5}, 2^{a_p  +5}]$, we have
 \be\label{oct13eqn52}
\begin{split}
& \big| \int_{\R^3} \int_{\R^3} e^{i x\cdot \xi} \hat{f}(s, \xi, v)  \nabla_\zeta\big(   \mathcal{F}[ {}_{}^{1}\mathfrak{E}^{ a} ](   X_{\bot}(s),   \xi, v, \zeta)\big)\big|_{\zeta=V(s)}  d\xi d v \big| \\
& \lesssim \mathcal{M}(C) 2^{  - \gamma_2 M_t +k - n }\int_{\R^3}  \int_{\R^3}  f(s, X(s)-y, v) \\
&\quad (1+2^k|y\cdot \tilde{V}(s)|)^{-100}(1+2^{k+n}|y\times  \tilde{V}(s)|)^{-100}  dy d v \\
&\lesssim \mathcal{M}(C) 2^{ - \gamma_2 M_t +3\epsilon M_t }  \min\big\{ 2^{-a_p  - 2n -j}, 2^{-2k-3n} \min\{2^{j+2\alpha^{\star} M_t}, 2^{3j}\}, 2^{k-n-j}\big\}\\
&\lesssim\mathcal{M}(C) 2^{ - \gamma_2 M_t +3\epsilon M_t }\min\big\{ (2^{-a_p  - 2n -j})^{1/2} (2^{-2k-3n+j+2\alpha^{\star} M_t})^{1/2},\\
&\quad  (2^{k-n-j})^{2/3} (2^{-2k-3n+2j+\alpha^{\star} M_t})^{1/3}   \big\}  \\
&\lesssim \mathcal{M}(C)2^{20\epsilon M_t} \min\{ 2^{-a_p/2 -k-5n/2+(\gamma_1-\gamma_2)M_t}, 2^{-5n/3-\gamma_2M_t+\alpha^{\star}M_t/3} \}   \\
&\lesssim \mathcal{M}(C) 2^{ 100\epsilon M_t}\min\{  2^{-a_p/2 +(\gamma_1-\gamma_2)M_t/2-k-n}, 2^{-2\alpha^{\star} M_t/3}\}  .  
\end{split}
\ee

 Similarly, after  doing integration by parts in $\xi$ with respect to the orthonormal frame  $\{(\hat{v}-\hat{V}(s))/|\hat{v}-\hat{V}(s), \theta_1(v,s), $ $\theta_2(v,s)\}$ (defined in  Lemma \ref{smallangest}) for the kernel and using the   first estimate in   \eqref{2024feb13eqn1}, the cylindrical symmetry, the volume of support of $v$, the estimate \eqref{nov24eqn41} if $|  v_{\bot}|\geq 2^{(\alpha_s +\epsilon)M_s}$, and the conservation law  \eqref{conservationlaw}, the following estimate holds for any $s\in [t_1, t_2], x\in \R^3$, s.t., $|  x_{\bot}|\in [2^{a_p -5}, 2^{a_p  +5}]$, 
\be\label{oct13eqn53} 
\begin{split}
&\big| \int_{\R^3} \int_{\R^3} e^{i x\cdot \xi} \hat{f}(s, \xi, v)  \nabla_\zeta\big(   \mathcal{F}[ {}_{}^{1}\mathfrak{E}^{\kappa } ](   X_{\bot}(s),   \xi, v, \zeta)\big)\big|_{\zeta=V(s)}   d\xi d v \big|\\
&\lesssim \mathcal{M}(C)  2^{ - \gamma_2 M_t +k   - \min\{n,  {\kappa}\}  }\int_{\R^3}  \int_{\R^3}   f(s, X(s)-y, v) ( 1+ 2^{k+n}|y\cdot \theta_1(v,s )|)^{-100}  \\
&\quad \times     (1+2^k|y\cdot \theta_2(v, s )|)^{-100}   (1+2^{k+    {\kappa} }(y\cdot (\hat{v}-\hat{V}(s )/|\hat{v}-\hat{V}(s )|) ))^{-100}    dy d v \\
& \lesssim  \mathcal{M}(C)   2^{ - \gamma_2 M_t +3\epsilon M_t   - \min\{n,   {\kappa}\}  }    \min\big\{  2^{-2k-n-   {\kappa} }\min\{2^{j+2\alpha^{\star}M_t}, 2^{3j+2n}\}, 2^{k-j}, 2^{-a_p -j}  \\
&  \quad   \times   (2^{-2n }\mathbf{1}_{n\geq (\gamma_1-\gamma_2+2\epsilon)M_t} + 2^{-\min\{n,\kappa\}} \mathbf{1}_{n\leq (\gamma_1-\gamma_2+2\epsilon)M_t}) \big\} \\
 &\lesssim  \mathcal{M}(C)  2^{200\epsilon M_t}\min\{ 2^{-a_p/2 +(\gamma_1-\gamma_2)M_t/2-(k+n+ \min\{n,   {\kappa}\}) }, 2^{-\alpha^{\star}M_t/2  } \}. 
 \end{split}
\ee

   From the  first estimate in \eqref{2022feb24eqn1} in Theorem \ref{maintheoremellipitic}  and the obtained estimates  \eqref{oct13eqn52}   and \eqref{oct13eqn53}, the following estimate holds for any $i,i_1\in\{0,1\cdots,4\},\mu_1\in \{+,-\}, $
\be
\begin{split}
 | \mathfrak{EE}_{k_1,j_1;n_1}^{i,i_1, \mu_1}(t_1, t_2) | & \lesssim  2^{-a_p/2 +(\gamma_1-\gamma_2)M_t/2-(k+n+ \min\{n,   {\kappa}\}) +200\epsilon M_t}\\
 &\quad \times 2^{-a_p/2 + (\alpha^{\star} +4\epsilon )M_t} \mathcal{M}(C)(t_2-t_1)\\
 &\lesssim  2^{-a_p  +(\gamma_1-\gamma_2)M_t+  \alpha^{\star} M_t/2}\mathcal{M}(C)(t_2-t_1).\\
 \end{split}
\ee

\medskip 

\noindent \textbf{Step 2.}\quad The estimate of $\mathfrak{HE}_{k_1,j_1;n_1}^{i,i_1, \mu_1}(t_1, t_2) , i,i_1\in \{0,1,2,3,4\}.$

\medskip

We first rule out some trivial cases. More precisely, if    $(k_1+2n_1)/2\leq    (k-n+\kappa)/2  + 5\alpha^{\star} M_t/8-300\epsilon M_t $ or $n_1< -\alpha^{\star} M_t/2+  (\gamma_1-\gamma_2)M_t/2-30\epsilon M_t$, from the obtained estimates  \eqref{oct13eqn52}  and  \eqref{oct13eqn53}  and  the estimates in \eqref{2024oct8eqn5}  and \eqref{2024Dec6eqn31} in Theorem \ref{mainresultsfirstpart},    for any $i,i_1\in\{0,1\cdots,4\},\mu_1\in \{+,-\}, $ we have 
 \[
 \begin{split}
 \big|\mathfrak{HE}_{k_1,j_1;n_1}^{i,i_1, \mu_1}(t_1, t_2) \big|& \lesssim   \mathcal{M}(C) (t_2-t_1)  2^{-a_p/2 +(\gamma_1-\gamma_2)M_t/2-(k+n+ \min\{n,   {\kappa}\}) +200\epsilon M_t}  \\
  &\quad  \times\big(\sum_{a\in \mathcal{T}} 2^{-a a_p + a(\gamma_1-\gamma_2)M_t}  (2^{(\alpha^{\star} -9\epsilon) M_t-(\gamma_1-\gamma_2)M_t/4}\\
  &\quad + 2^{(k_1+2n_1)/2+3\alpha^{\star}M_t/4 } \mathbf{1}_{n_1\geq    (-\alpha^{\star}+\gamma_1-\gamma_2)M_t/2-30\epsilon M_t}  ) \big)\\
  &\lesssim \sum_{b\in \mathcal{T}+\mathcal{T} } 2^{-b a_p + b(\gamma_1-\gamma_2)M_t+(\alpha^{\star}-20\epsilon)M_t} \mathcal{M}(C)(t_2-t_1).
  \end{split}
\]

Now, it remains to consider the case  $(k_1+2n_1)/2\geq    (k-n+\kappa)/2  + 5\alpha^{\star} M_t/8-300\epsilon M_t $ and $n_1\geq -\alpha^{\star} M_t/2+  (\gamma_1-\gamma_2)M_t/2-30\epsilon M_t$. Recall  \eqref{2022feb25eqn61}.  On the Fourier side, we have
\be 
\begin{split}
\mathfrak{HE}_{k_1,j_1;n_1}^{i,i_1, \mu_1}(t_1, t_2) &=  \int_{t_1}^{t_2}   \int_{(\R^3)^3}   e^{i X(s)\cdot (\xi+ \eta) +i s \mu_1 |\eta| - is\hat{v}\cdot \xi }  \\
  &\quad \times  \mathcal{F}[ {}_{}^{2} \mathfrak{E}_{i,i_1} ](s,  X_{\bot}(s), \xi, \eta, v, \zeta)\big|_{\zeta= V(s)}   d\xi d\eta  d v d s,\\
\end{split}
\ee
where 
\be
\begin{split}\label{2021dec27eqn31}
\mathcal{F}[ {}_{}^{2}\mathfrak{E}_{a,i_1} ](s,  x_{\bot}, \xi, \eta, v, \zeta) &:=\mathcal{F}[ \mathfrak{H}_{k_1,j_1;n_1}^{\mu_1,i_1}](s, \eta , \zeta)     \cdot \nabla_{\zeta}\big( \mathcal{F}[ {}_{}^{1}\mathfrak{E}^{a } ](    x_{\bot},   \xi, v, \zeta) \big)  \hat{g}(s, \xi, v) ,  a\in\{0,1,2,3\},\\ 
\mathcal{F}[ {}_{}^{2}\mathfrak{E}_{4,i_1} ](s,  x_{\bot}, \xi, \eta, v, \zeta) &:=\sum_{\kappa\in (\bar{\kappa},2]\cap \Z    }  \mathcal{F}[ \mathfrak{H}_{k_1,j_1;n_1}^{\mu_1,i_1}](s, \eta , \zeta)     \cdot \nabla_{\zeta}\big( \mathcal{F}[ {}_{}^{1}\mathfrak{E}^{\kappa} ](  x_{\bot},   \xi, v, \zeta) \big)        \hat{g}(s, \xi, v) , \\  
\end{split}
\ee

Note that, due to the fact that  $(k_1+2n_1)/2\geq    (k-n+\kappa)/2  + 5\alpha^{\star} M_t/8- 300\epsilon M_t $,  we have
\[
\big|\hat{V}(t)\cdot (\xi+ \eta) +   \mu_1 |\eta| -  \hat{v}\cdot \xi\big| \sim |\hat{V}(t)\cdot \eta+  \mu_1 |\eta| | \sim 2^{k_1+2n_1}.
\]
To exploit high oscillation of phase  in $s$, we do integration by parts in  ``$s$'' once. As a result, for any $i,i_1\in\{0,1,2,3,4\}, \mu_1\in \{+, -\},$   we have
\[
  | \mathfrak{HE}_{k_1,j_1;n_1}^{i,i_1, \mu_1}(t_1, t_2)| \leq  \sum_{a=0,1,2}  | {}_{a}^2 \mathfrak{E}_{k_1,j_1;n_1}^{i,i_1, \mu_1}(t_1, t_2)|, 
\]
where
\be\label{oct13eqn80}
\begin{split}
  {}_{0}^2 \mathfrak{E}_{k_1,j_1;n_1}^{i,i_1, \mu_1}(t_1, t_2) 
 &= \sum_{a=1,2} \big| \int_{\R^3} \int_{\R^3}\int_{\R^3} e^{i X(t_a)\cdot (\xi+ \eta) +i t_a \mu_1 |\eta| - it_a \hat{v}\cdot \xi }   ( \hat{V}(t_a)\cdot (\xi+ \eta) +   \mu_1 |\eta| -  \hat{v}\cdot \xi)^{-1}\\
&\quad \times    \mathcal{F}[ {}_{}^{2}\mathfrak{E}_{i,i_1} ](t_a,  X_{\bot}(t_a), \xi, \eta, v, \zeta)\big|_{\zeta= V(t_a)}  d\xi d\eta  d v\big| \\
&\quad  + \big| \int_{t_1}^{t_2}    \int_{\R^3} \int_{\R^3}\int_{\R^3}  e^{i X(s)\cdot (\xi+ \eta) +i s  \mu_1 |\eta| - is \hat{v}\cdot \xi }  ( \hat{V}(s)\cdot (\xi+ \eta) +   \mu_1 |\eta| -  \hat{v}\cdot \xi)^{-1}\\
&\quad \times {\hat{V}}_{\bot}(s)\cdot \nabla_{  x_{\bot} } \mathcal{F}[ {}_{}^{2}\mathfrak{E}_{i,i_1} ](s,  x_{\bot}, \xi, \eta, v, \zeta)\big|_{  x_{\bot}=  X_{\bot}(s), \zeta= V(s)}   d\xi d\eta  d v  ds\big|,\\
  {}_{1}^2 \mathfrak{E}_{k_1,j_1;n_1}^{i,i_1, \mu_1}(t_1, t_2) 
&=  \int_{t_1}^{t_2}   \int_{\R^3} \int_{\R^3}\int_{\R^3} e^{i X(s)\cdot (\xi+ \eta) +i s \mu_1 |\eta| - is  \hat{v}\cdot \xi } ( \hat{V}(s)\cdot (\xi+ \eta) +   \mu_1 |\eta| -  \hat{v}\cdot \xi)^{-1}\\
&\quad \times    \p_s \mathcal{F}[ {}_{}^{2}\mathfrak{E}_{i,i_1} ](s, x_{\bot}, \xi, \eta, v, \zeta)\big|_{  x_{\bot} =  X_{\bot}(s), \zeta=V(s)}      d\xi d\eta  d v ds, \\
  {}_{2}^2 \mathfrak{E}_{k_1,j_1;n_1}^{i,i_1, \mu_1}(t_1, t_2) 
&=  \int_{t_1}^{t_2}  \int_{\R^3} \int_{\R^3}\int_{\R^3} e^{i X(s)\cdot (\xi+ \eta) +i s \mu_1 |\eta| - i s \hat{v}\cdot \xi } K(s, X(s), V(s))\\
&\quad \cdot  \nabla_\zeta \big[ i ( \hat{\zeta} \cdot (\xi+ \eta) +   \mu_1 |\eta| -  \hat{v}\cdot \xi)^{-1} \mathcal{F}[ {}_{}^{2}\mathfrak{E}_{i,i_1} ](s,  X_{\bot}(s), \xi, \eta, v, \zeta)  \big]\big|_{\zeta= V(s)} d\xi d\eta  d v d s   .\\
\end{split}
\ee

To estimate the above terms, we proceed in three sub-steps as follows. 

\medskip

  \textbf{Step 2A.}\quad The estimate of $  {}_{0}^2 \mathfrak{E}_{k_1,j_1;n_1}^{i,i_1, \mu_1}(t_1, t_2)$.

\medskip

Recall  \eqref{oct13eqn80}, \eqref{2021dec27eqn31} and the assumption of the coefficient ``$C$'' in Proposition \ref{bootstraplemma1}. From  the obtained estimates \eqref{oct13eqn52}  and   \eqref{oct13eqn53},  and the estimates in \eqref{2024oct8eqn1}, \eqref{2024oct8eqn5},  and \eqref{2024Dec6eqn31} in Theorem \ref{mainresultsfirstpart}, after writing  $   {}_{0}^2 \mathfrak{E}_{k_1,j_1;n_1}^{i,i_1, \mu_1}(t_1, t_2)$ on the physical space, we have 
\be
\begin{split}
 \big|    {}_{0}^2 \mathfrak{E}_{k_1,j_1;n_1}^{i,i_1, \mu_1}(t_1, t_2) \big|&\lesssim \mathcal{M}(C)\big[2^{-a_p+(\gamma_1-\gamma_2)M_t}(t_2-t_1) + 1 \big]2^{-\alpha^{\star} M_t/2+100\epsilon M_t}\\
&\quad \times(2^{(1-\epsilon)M_t} + 2^{(k_1+2n_1)/2+(\alpha^{\star} +3\iota + 40\epsilon) M_t  }) 2^{-(k_1+2n_1)} \\
& \lesssim  \mathcal{M}(C)\big[2^{-a_p+(\gamma_1-\gamma_2)M_t}(t_2-t_1) + 1 \big]2^{\alpha^{\star}M_t/2}.
\end{split} 
\ee

\medskip

  \textbf{Step 2B.}\quad  The estimate of  $  {}_{1}^2 \mathfrak{E}_{k_1,j_1;n_1}^{i,i_1, \mu_1}(t_1, t_2)$.

\medskip

Recall  \eqref{oct13eqn80}  and  \eqref{2021dec27eqn31}. There are two cases in computing   $\p_s \mathcal{F}[ {}_{}^{2}\mathfrak{E}_{i,i_1} ](s,  x_{\bot}, \xi, \eta, v, \zeta)$. The first case happens  when $\p_s$ hits $\mathcal{F}[ \mathfrak{H}_{k_1,j_1;n_1}^{\mu_1,i_1}](s, \eta , \zeta)$. For this case, from the estimates in  \eqref{oct7eqn21}  and  \eqref{2024oct27eqn61}  in Lemma \ref{secondhypohori} and the obtained estimates   \eqref{oct13eqn52}  and  \eqref{oct13eqn53},  we have
\be\label{oct14eqn11}
\begin{split}
&\big| \int_{t_1}^{t_2}   \int_{(\R^3)^3}   e^{i X(s)\cdot (\xi+ \eta) +i s \mu_1 |\eta| - i s  \hat{v}\cdot \xi }  \big(\hat{V}(s)\cdot (\xi+ \eta) +   \mu_1 |\eta| -  \hat{v}\cdot \xi \big)^{-1}  \\
&\quad \times \p_s  \mathcal{F}[ \mathfrak{H}_{k_1,j_1;n_1}^{\mu_1,i_1}](s, \eta , V(s) )    \cdot \nabla_{\zeta}\big(  \mathcal{F}[ {}_{}^{1}\mathfrak{E}^{ a} ](    x_{\bot},   \xi, v, \zeta) \big) \hat{g}(s, \xi, v)   d\xi d\eta  d v ds \big|\\
&+\big| \int_{t_1}^{t_2}   \int_{(\R^3)^3}   e^{i X(s)\cdot (\xi+ \eta) +i s \mu_1 |\eta| - i s  \hat{v}\cdot \xi }\big( \hat{V}(s)\cdot (\xi+ \eta) +   \mu_1 |\eta| -  \hat{v}\cdot \xi\big)^{-1}  \\
&\quad \times    {  \p_s \mathcal{F}[ \mathfrak{H}_{k_1,j_1;n_1}^{\mu_1,i_1}](s, \eta , V(s) )      \cdot \nabla_{\zeta}\big(  \mathcal{F}[ {}_{}^{1}\mathfrak{E}^{\kappa} ](   x_{\bot},   \xi, v, \zeta) \big)  \hat{g}(s, \xi, v) }{ }     d\xi d\eta  d v ds \big|\\
&\lesssim   2^{ -a_p /2  +10\epsilon M_t } 2^{-a_p/2 +(\gamma_1-\gamma_2)M_t/2-(k+n+ \min\{n,   {\kappa}\}) }\mathcal{M}(C) (t_2-t_1) \\
&\quad \times \big( 2^{(1+\alpha^{\star})M_t/2-k_1/4-n_1} + 2^{ \alpha^{\star} M_t +n_1/2}  \big)\\
&\lesssim   2^{-a_p + (\gamma_1-\gamma_2)M_t + 2\alpha^{\star} M_t/3}\mathcal{M}(C) (t_2-t_1). \\
\end{split}
\ee

The other  case happens  when   $\p_s$ hits $\widehat{g}(s,\cdot,v)$.  After doing integration by parts in $v$ for the term $\p_s \hat{g}(s, \xi, v)$, we have 
\be\label{2024oct23eqn31}
\begin{split}
 & \big|\int_{\R^3} \int_{\R^3} e^{i x\cdot \xi} e^{-is\hat{v}\cdot \xi }  \nabla_{\zeta}\big( \mathcal{F}[ {}_{}^{1}\mathfrak{E}^{a} ](    x_{\bot},   \xi, v, \zeta) \big) \p_s \hat{g}(s, \xi, v) d v  d \xi  \big|\\
  &+  \big|\int_{\R^3} \int_{\R^3} e^{i x\cdot \xi} e^{-is\hat{v}\cdot \xi }   \nabla_{\zeta}\big( \mathcal{F}[ {}_{}^{1}\mathfrak{E}^{\kappa} ](   x_{\bot},   \xi, v, \zeta) \big) \p_s \hat{g}(s, \xi, v) d v  d \xi  \big| \\
  &\lesssim \big| \int_{\R^3} \int_{\R^3} e^{i x\cdot \xi} \mathcal{F}\big[(E+\hat{v}\times B)f](s, \xi, v)\cdot \nabla_v\big[ \psi_k(\xi)  \nabla_{\zeta}\big( \mathcal{F}[ {}_{}^{1}\mathfrak{E}^{a} ] (   x_{\bot},   \xi, v, \zeta) \big|_{\zeta=V(s)} \big] d \xi d v\big| \\
  &\quad +\big| \int_{\R^3} \int_{\R^3} e^{i x\cdot \xi} \mathcal{F}\big[(E+\hat{v}\times B)f](s, \xi, v)\cdot \nabla_v\big[ \psi_k(\xi)  \nabla_{\zeta}\big( \mathcal{F}[ {}_{}^{1}\mathfrak{E}^{\kappa} ](  x_{\bot},   \xi, v, \zeta) \big)  \big|_{\zeta=V(s)} \big] d \xi d v\big|
  \end{split}
 \ee
 From the above estimate, after   using the estimate of kernel, which is obtained by doing integration by parts in $\xi$ along $V(s)$ direction and directions perpendicular to $V(s)$,   for any $a\in \{0,1,2,3\},\kappa\in (\bar{\kappa},2]\cap \Z, $ $x\in \R^3$, s.t., $| x_{\bot}| \in [2^{a_p  -5}, 2^{a_p   +5}]$, we have 
\be\label{2021dec27eqn41} 
\begin{split}
\eqref{2024oct23eqn31}&\lesssim \sum_{U\in \{E, B\}} \int_{\R^3} \int_{\R^3} 2^{  - \gamma_2 M_t +k-j  +3\epsilon M_t} \mathcal{M}(C)\big|U(s, x-y)\big| f(s, x-y, v)\\
&\quad \times  \big[2^{-2n}(1+2^k|y\cdot \tilde{V}(s)|)^{-100}(1+2^{k+n}|y\times  \tilde{V}(s)|)^{-100}\\
&\quad  +2^{   -2\min\{n,\kappa\} }    ( 1+ 2^{k+n}|y\cdot \theta_1(v,s )|)^{-100}(1+2^k|y\cdot \theta_2(v, s )|)^{-100} \\
&\quad   \times   (1+2^{k+    {\kappa} }(y\cdot (\hat{v}-\hat{V}(s )/|\hat{v}-\hat{V}(s )|) ))^{-100} \big]   dy d v .\\
\end{split}
\ee   
After  using the localizing of kernel, the Cauchy-Schwarz inequality,  the cylindrical symmetry of the solution, from the first fact in  estimate   \eqref{2024feb13eqn1},    we have 
\be\label{2021dec27eqn53}
\begin{split}
 \eqref{2021dec27eqn41}  & \lesssim   \mathcal{M}(C) 2^{  - \gamma_2 M_t +k-j-2\min\{n,\kappa\}  +3\epsilon M_t}\\ 
&\quad \times \big(2^{-k +\epsilon M_t}(2^{-2n }\mathbf{1}_{n\geq (\gamma_1-\gamma_2+2\epsilon)M_t} + 2^{-\min\{n,\kappa\}} \mathbf{1}_{n\leq (\gamma_1-\gamma_2+2\epsilon)M_t})  2^{-a_p} 2^{3j+2n} \big)^{1/2} \\
&\quad \times \big(2^{-k+\epsilon M_t}(2^{-2n }\mathbf{1}_{n\geq (\gamma_1-\gamma_2+2\epsilon)M_t} + 2^{-\min\{n,\kappa\}} \mathbf{1}_{n\leq (\gamma_1-\gamma_2+2\epsilon)M_t}) 2^{-a_p} 2^{-j} \big)^{1/2} \\ 
& \lesssim 2^{-a_p -n-2\min\{n,\kappa\}+200\epsilon M_t-\gamma_2 M_t}\mathcal{M}(C) . 
\end{split}
\ee
From the above estimate  \eqref{2021dec27eqn53},    the estimate  \eqref{oct14eqn11},   and the estimates in \eqref{2024oct8eqn1}, \eqref{2024oct8eqn5},  and \eqref{2024Dec6eqn31} in Theorem \ref{mainresultsfirstpart}, we have 
\be\label{oct14eqn31}
\begin{split}
 \big|  {}_{1}^2 \mathfrak{E}_{k_1,j_1;n_1}^{i,i_1, \mu_1}(t_1, t_2) \big|&\lesssim\mathcal{M}(C) (t_2-t_1)\big[ 2^{-a_p + (\gamma_1-\gamma_2)M_t +2 \alpha^{\star} M_t/3}\\
 &\quad + 2^{-a_p -n-2\min\{n,\bar{\kappa} \}+200\epsilon M_t-\gamma_2 M_t}\\
&\quad \times (2^{(1-\epsilon)M_t}+2^{(k_1+2n_1)/2+\alpha^{\star}M_t +5\iota M_t} )2^{-(k_1+2n_1)} \big]\\
& \lesssim \mathcal{M}(C) (t_2-t_1)  2^{-a_p + (\gamma_1-\gamma_2)M_t + 2\alpha^{\star} M_t/3}. \\
\end{split}
\ee
 
\medskip

  \textbf{Step 2C.}\quad The estimate of ${}_{2}^2 \mathfrak{E}_{k_1,j_1;n_1}^{i,i_1, \mu_1}(t_1, t_2)$.

\medskip

Recall \eqref{oct13eqn80}.  With minor modifications in obtaining the estimates  \eqref{oct13eqn52}  and  \eqref{oct13eqn53}, from   the estimates in \eqref{2024oct8eqn1}, \eqref{2024oct8eqn5}, and \eqref{2024Dec6eqn31} in Theorem \ref{mainresultsfirstpart},    for any $x\in \R^3$, s.t., $| x_{\bot}|\in[2^{a_p-5}, 2^{a_p+5}]$, we have 
\be
\begin{split}
 &\big|   \int_{ \R^3}  \int_{ \R^3}   \int_{ \R^3}  e^{i x\cdot (\xi+ \eta) +i s \mu_1 |\eta| - i s \hat{v}\cdot \xi } \nabla_\zeta  \big[    \mathcal{F}[ {}_{}^{2}\mathfrak{E}_{i,i_1} ](s, X_{\bot}(s), \xi, \eta, v, \zeta)  \\
 &\quad \times  ( \hat{\zeta} \cdot (\xi+ \eta) +   \mu_1 |\eta| -  \hat{v}\cdot \xi)^{-1} \big]\big|_{\zeta= V(s)}d\xi d\eta  d v \big|\\
 &\lesssim   2^{-a_p/2 +(\gamma_1-\gamma_2)M_t/2-(k+n+ \min\{n,   \bar{\kappa} \}) +50\epsilon M_t }  \mathcal{M}(C)\\
 &\quad \times2^{-(k_1+2n_1)-\gamma_2 M_t-\min\{n,\kappa, n_1\}} (2^{(1-\epsilon)M_t}  +2^{(k_1+2n_1)/2+(\alpha^{\star}+3\iota+40\epsilon) M_t})\\
&\lesssim  2^{-a_p/2 -3(\gamma_1-\gamma_2)M_t/2-5\alpha^{\star}M_t/2+3\iota M_t} \mathcal{M}(C). \\
\end{split}
\ee
From the above estimate and the estimate \eqref{2024oct28eqn61}  in Theorem \ref{maintheorem1part1},   for any $i, i_1\in\{0,1,2\},\mu_1\in \{+, -\},$ we have 
 \be\label{2021dec27eqn71}
 \begin{split}\big|  {}_{2}^2 \mathfrak{E}_{k_1,j_1;n_1}^{i,i_1, \mu_1}(t_1, t_2)\big|&\lesssim  2^{-a_p/2 -3(\gamma_1-\gamma_2)M_t/2-5\alpha^{\star}M_t/2 +3\iota M_t }  \mathcal{M}(C)(t_2-t_1)\\
 &\quad \times  \big(2^{-a_p 
 /2 + 2\alpha^{\star} M_t}  + 2^{-a_p 
 /4 + 5M_t/4+\alpha^{\star} M_t/4}  \big)\\
 &\lesssim  \sum_{b\in \mathcal{T}+\mathcal{T}} 2^{-b a_p+b(\gamma_1-\gamma_2)M_t + 7\alpha^{\star}M_t/8+3\iota M_t}\mathcal{M}(C)(t_2-t_1).\\
 \end{split}
\ee
Hence finishing the proof of our desired estimate  \eqref{oct14eqn61}.
\end{proof}

 \subsection{Estimating   error type terms}\label{errortypesPartIest}

This section addresses the estimation of all error terms arising from the iterative smoothing scheme presented in the preceding two sections. This estimation is naturally divided into two subsections, one for the hyperbolic part and one for the elliptic part.  
 \subsubsection{Error terms in estimating   the hyperbolic part}

Firstly, we estimate the error type terms in  \eqref{oct25eqn41}. 
\begin{lemma}\label{erroesttyp1}
 Under the assumption of Proposition \ref{bootstraplemma1}, the following estimate holds for  $i\in\{0,1,2,3,4\}$, $( n,k)\in \mathcal{E}_i$ s.t., $a_{p}\geq  -k-n +3\epsilon M_{t}/2$,
\be\label{2021dec22eqn11}
\begin{split}
&\sum_{a\in\{0,1\}}   \big|{ }_a^{1}Err^{\mu,i}_{k,j;n}(t_1, t_2)\big|  \\
&\lesssim  \mathcal{M}(C) \big[  \sum_{b\in \mathcal{T}+\mathcal{T}}  2^{-ba_p } 2^{ b(\gamma_1-\gamma_2)M_t+(\alpha^{\star}-20\epsilon)  M_t} (t_2-t_1) +   2^{3\alpha^{\star} M_t/4+100\epsilon M_t} \big]. 
\end{split}
\ee
\end{lemma}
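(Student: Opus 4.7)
The plan is to treat the two error terms separately, using the structural gain of $2^{-(k+2n)}$ from the inverse phase $(i\hat{V}(s)\cdot\xi+i\mu|\xi|)^{-1}$ (which has size $\sim 2^{-(k+2n)}$ on the Fourier support of $\clubsuit K_{k,j;n}^{\mu,i}$) combined with the pointwise bounds on the localized acceleration force.

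For ${}_0^{1}Err^{\mu,i}_{k,j;n}(t_1,t_2)$, observe that this is a pure boundary term: no time integration appears, so the output is $(t_2-t_1)$-independent and should feed into the constant $2^{3\alpha^{\star}M_t/4+100\epsilon M_t}\mathcal{M}(C)$. I will decompose the integrand via $C\cdot K = \mathbf{P}(C)\cdot \mathbf{P}(K)+\mathbf{P}_3(C)\mathbf{P}_3(K)$. On the horizontal piece, Proposition~\ref{horizonest} yields the desired pointwise bound, which, combined with the assumption $|\mathbf{P}(C)|\lesssim \mathcal{M}(C)$ and the extra gain $2^{-(k+2n)}$ from $1/\Phi$, gives roughly $\sum_{b\in\mathcal{T}}|\slashed X(t_i)|^{-b}2^{b(\gamma_1-\gamma_2)M_t}\,2^{-(k+2n)/2+(3\alpha^{\star}/4+\textup{small})M_t}$. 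On the vertical piece, the size bound $|\mathbf{P}_3(C)|\lesssim 2^{(\gamma_1-\gamma_2)M_t}\mathcal{M}(C)$ compensates the loss of horizontal smallness, and Proposition~\ref{verest} (or Proposition~\ref{Linfielec} at high frequency) gives the matching bound on $\mathbf{P}_3(K)$. Summing over the at most $O(M_t^2)$ allowed pairs $(n,k)\in\mathcal{E}_i$ with $a_p\ge -k-n+3\epsilon M_t/2$ contributes only a logarithmic factor, producing the claimed constant term.

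For ${}_1^{1}Err^{\mu,i}_{k,j;n}(t_1,t_2)$, the key observation is that the factor $\hat{\slashed V}(s)\cdot\nabla_{\slashed x}C(\slashed X(s),V(s))$ enjoys two independent decays: from $|\hat{\slashed V}(s)|=|\slashed V(s)|/\langle V(s)\rangle\sim 2^{(\gamma_1-\gamma_2)M_t}$ and from the hypothesis $|\slashed X(s)||\nabla_{\slashed x}C|\lesssim 2^{(\gamma_1-\gamma_2)M_t}\mathcal{M}(C)$ in \eqref{oct23eqn51}, which together yield
\[
|\hat{\slashed V}(s)\cdot\nabla_{\slashed x}C(\slashed X(s),V(s))|\lesssim 2^{2(\gamma_1-\gamma_2)M_t-a_p}\mathcal{M}(C).
\]
The remainder of the integrand has exactly the structure of a single normal-form step already handled in the proof of \eqref{2021dec22eqn60}: after integrating in $\xi$, the inverse phase $1/\Phi$ produces the familiar $2^{-(k+2n)}$ gain against the pointwise bounds for $K_{k,j;n}^{\mu,i}(s,X(s),V(s))$ furnished by Propositions~\ref{verest} and \ref{horizonest}. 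Since the time integration contributes $(t_2-t_1)$, and the extra $2^{-a_p}$ decay from $\nabla_{\slashed x}C$ combines with the index constraint $(n,k)\in\mathcal{E}_i$ in the same way as in the estimate of $Hyp_{i,i_1}$, summing produces the $\sum_{b\in\mathcal{T}+\mathcal{T}}2^{-ba_p+b(\gamma_1-\gamma_2)M_t+(\alpha^{\star}-20\epsilon)M_t}(t_2-t_1)$ contribution.

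The main technical obstacle is to correctly balance the $1/\Phi$ gain, which is only of size $2^{-(k+2n)}$ and can be as small as $2^{-\alpha^{\star}M_t}$ on $\mathcal{E}_i$, against the possible growth of $K_{k,j;n}^{\mu,i}$, which can reach $2^{(k+2n)/2+\alpha^{\star}M_t}$ in the worst scenario. Using the horizontal-projection bound from Proposition~\ref{horizonest} (rather than the rough bound on $K$ itself) is essential in order to produce the exponent $3\alpha^{\star}/4$ rather than $\alpha^{\star}$ in the constant term; without this refinement, one would fall short of \eqref{2021dec22eqn11}.
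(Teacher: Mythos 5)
Your proposal lands on the right bounds, but it takes a noticeably heavier route than the paper and contains two misstatements worth correcting. The paper's proof of both error terms is a direct two-line application of the single $L^\infty$ estimate (\ref{aug30eqn1}) in Proposition~\ref{verest}: on the Fourier support of $\clubsuit K_{k,j;n}^{\mu,i}$ one has $|\hat{V}(s)\cdot\xi+\mu|\xi||\sim 2^{k+2n}$, so the inverse phase contributes $2^{-(k+2n)}$, and this is played against the bound $2^{(1-\epsilon)M_t}+2^{(k+2n)/2+\alpha^{\star}M_t+O(\epsilon M_t)}$; the lower bound on $k+2n$ built into $\mathcal{E}_i$ then yields $2^{-(k+2n)/2+\alpha^{\star}M_t}\lesssim 2^{3\alpha^{\star}M_t/4+O(\epsilon M_t)}$ for the boundary term ${}_0^1Err$. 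For ${}_1^1Err$ the paper simply multiplies by $|\hat{\slashed V}(s)|\sim 2^{(\gamma_1-\gamma_2)M_t}$ and $|\nabla_{\slashed x}C|\lesssim 2^{-a_p}\mathcal{M}(C)$ and integrates in $s$. No decomposition into $\mathbf{P}$ and $\mathbf{P}_3$ components and no appeal to Proposition~\ref{horizonest} is made for either term.

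Consequently your closing claim — that the horizontal-projection bound of Proposition~\ref{horizonest} is \emph{essential} and that the rough bound on $K$ itself would fall short of the exponent $3\alpha^{\star}/4$ — is incorrect: the exponent comes entirely from the constraint $k+2n\gtrsim 2\alpha^{\star}M_t/3+(\gamma_1-\gamma_2)M_t/3$ in $\mathcal{E}_i$ combined with the $2^{-(k+2n)}$ phase gain, exactly as in the paper. (Your route via Proposition~\ref{horizonest} uses a \emph{stronger} input and so still closes, but the necessity claim is a misdiagnosis of where the gain originates.) Second, your intermediate bound $|\hat{\slashed V}(s)\cdot\nabla_{\slashed x}C|\lesssim 2^{2(\gamma_1-\gamma_2)M_t-a_p}\mathcal{M}(C)$ is not what (\ref{oct23eqn51}) provides: the $2^{(\gamma_1-\gamma_2)M_t}$-weighted gradient bound there is stated only for $\mathbf{P}_3(C)$, while the full coefficient satisfies $|\slashed X(s)||\nabla_{\slashed x}C|\lesssim\mathcal{M}(C)$, so the justified product is $2^{(\gamma_1-\gamma_2)M_t-a_p}\mathcal{M}(C)$. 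Since $\gamma_1\le\gamma_2$ your claimed bound is an unjustified strengthening; the correct one is precisely what the paper uses and, with $b=1\in\mathcal{T}+\mathcal{T}$, it already yields (\ref{2021dec22eqn11}). Neither issue is fatal to the argument, but both should be fixed.
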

\begin{proof}

Recall   \eqref{oct7eqn31}  and the definition of the index sets $\mathcal{E}_i$ in  \eqref{indexsetsec4}.   From the estimate \eqref{2024oct8eqn1} in Theorem \ref{mainresultsfirstpart}, $\forall i \in \{0,1,2,3,4\},$ we have 
\be\label{oct2eqn81}
\begin{split}
\big| { }_0^{1}Err^{\mu,i}_{k,j;n}(t_1, t_2)\big|&\lesssim  2^{-(k+2n) }\big( 2^{(1-\epsilon)M_t} + 2^{(k+2n)/2+\alpha^{\star} M_t + 30\epsilon M_t}\big)\mathcal{M}(C)\\
& \lesssim 2^{3\alpha^{\star} M_t/4+ 100\epsilon M_t} \mathcal{M}(C).
\end{split}
\ee
Recall    the assumption of the coefficient $C(\cdot, \cdot)$ in  \eqref{oct23eqn51}.  From the estimate \eqref{2024oct8eqn1} in Theorem \ref{mainresultsfirstpart}, we have
 \be\label{oct3eqn2}
 \begin{split}
 \big| { }_1^{1}Err^i_{k,j;n}(t_1, t_2)\big| &\lesssim  2^{-(k+2n)+(\gamma_1- \gamma_2)M_t}  2^{-a_p}  \mathcal{M}(C) (t_2-t_1)\\
 &\quad \times \big[2^{(1-\epsilon)M_t} +  2^{ 40\epsilon M_t}\min\{2^{(k+2n)/2+ \alpha^{\star}  M_t } \big]\\
 &\lesssim  2^{-a_p + (\gamma_1-\gamma_2)M_t+ 3\alpha^{\star} M_t/4+200\epsilon M_t}\mathcal{M}(C) (t_2-t_1). 
 \end{split}
 \ee
Hence the desired estimate  \eqref{2021dec22eqn11}  holds from the obtained estimate  \eqref{oct2eqn81}  and  \eqref{oct3eqn2}.

\end{proof}

Now, we estimate the error terms produced in the second iteration process. For the error terms in  \eqref{oct29eqn55}, we have

\begin{lemma}\label{2021errhorstep1}
 Under the assumption of Proposition \ref{bootstraplemma1}, the following estimate holds for any  $i,i_1\in\{0,1,2,3,4\}, ( n,k)\in \mathcal{E}_i$, $(n_1, k_1)\in {}_{}^1\mathcal{E}_{k,n}$,  and  $a_{p}\geq  -k-n +3\epsilon M_{t}/2$,   
\be\label{2021dec22eqn32}
\begin{split}
\sum_{a\in\{0,1,2 \}} \big| Err^a_{ i, i_1}(t_1, t_2)\big|  &\lesssim  \big[  \sum_{b\in \mathcal{T}+\mathcal{T}}  2^{-ba_p } 2^{ b(\gamma_1-\gamma_2)M_t} 2^{ (\alpha^{\star}-20\epsilon) M_t} (t_2-t_1) + 2^{3\alpha^{\star}M_t/4}\big] \mathcal{M}(C) .
\end{split}
\ee
\end{lemma}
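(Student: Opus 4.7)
The plan is to treat each of the three error pieces $Err^0, Err^1, Err^2$ separately, in each case exploiting the non-resonance denominator $\Phi_2(\xi,\eta,V(s))$, which has size $2^{k_1+2n_1}$ on the support of the integrand by the defining condition of ${}^1\mathcal{E}_{k,n}$ in \eqref{firstiterindex}. Structurally, every term is a bilinear expression in which one factor is (essentially) the first-generation acceleration kernel $\clubsuit K^{\mu_1,i_1}_{k_1,j_1;n_1}$ or its time derivative and the other factor is the ${}^1\clubsuit K$-type kernel carrying the projection $\mathbf{P}$-structure inherited from the original $\nabla_\zeta[C(\slashed X(s),\zeta)\cdot (\cdot)]$. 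Since $C$ is assumed to satisfy the gain \eqref{oct23eqn51}, the projection gives an extra factor $2^{(\gamma_1-\gamma_2)M_{t^\star}}$ whenever it is combined with the full $\mathcal{S}^\infty$-bound on the kernel. The overall gameplan is therefore: (i) view $\Phi_2^{-1}$ as producing a factor $2^{-(k_1+2n_1)}$ modulo harmless $\zeta$-derivative losses; (ii) insert the already-proven pointwise bounds; (iii) verify that summing over $(k_1,n_1)\in {}^1\mathcal{E}_{k,n}$ and $(n,k)\in\mathcal{E}_i$ gives the claimed decay in powers of $2^{a_p}$ together with the $2^{(\alpha^\star-20\epsilon)M_t}$ weight.

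For $Err^0$, the boundary terms at $s=t_1,t_2$ do not produce a factor $(t_2-t_1)$, so they must be absorbed into the $2^{3\alpha^\star M_t/4}$ summand. Here I would integrate the $\eta$-variable first using the $L^\infty_x$-estimate \eqref{2022feb25eqn3} on $K^{\mu_1,i_1}_{k_1,j_1;n_1}$, and the $\xi$-variable using the analogue of \eqref{oct11eqn20} (which is precisely how ${}^1\clubsuit K$ was already controlled in the paragraph containing \eqref{oct11eqn20}). The key cancellation is that the resonance-gap factor $2^{(k_1+2n_1)/2}$ appearing in the pointwise bound of $\clubsuit K^{\mu_1,i_1}_{k_1,j_1;n_1}$ is exactly overwhelmed by the $\Phi_2^{-1}$ gain of $2^{-(k_1+2n_1)}$, leaving a manageable $2^{-(k_1+2n_1)/2}$ factor summable on ${}^1\mathcal{E}_{k,n}$. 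The second piece of $Err^0$ is a time integral, and here one uses $|\hat{\slashed V}(s)|\lesssim 2^{(\gamma_1-\gamma_2)M_t}$ together with $|\nabla_{\slashed x} C|\lesssim 2^{-a_p}\mathcal{M}(C)$; the resulting net weight $2^{-a_p + (\gamma_1-\gamma_2)M_t}$ delivers one power of $\mathcal{T}$ and combines with \eqref{oct11eqn20} to match the $\sum_{b\in\mathcal{T}+\mathcal{T}}$ structure of the target bound.

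For $Err^1$, I would substitute the time-derivative pointwise estimates \eqref{oct7eqn21}, \eqref{oct7eqn24}, and \eqref{oct26eqn41} of Lemma \ref{secondhypohori} directly into the Fourier integral. These estimates degrade only by $2^{j+\alpha_s M_s}$-type losses relative to the bounds on $\clubsuit K$ itself, which are strictly better than what the $L^\infty$-bound used in $Err^0$ loses; consequently the denominator gain $2^{-(k_1+2n_1)}$ continues to dominate. For $\partial_t{}^1\clubsuit K$ one uses the product rule: the $\zeta$-derivative commutes with $\partial_t$ and the resulting expression is bounded by the same means as ${}^1\clubsuit K$ itself, replacing $\clubsuit K$ with $\partial_t\clubsuit K$. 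For $Err^2$, I use the decomposition \eqref{oct7eqn1} on the second-generation side: the elliptic and initial-data parts $Ell^{\mu_2,a_2;l_2}_{k_2,j_2;n_2}$ and $Ini$ are pointwise bounded by \eqref{2022feb24eqn1} (respectively by a trivial bound), and pairing with the estimate \eqref{oct11eqn20} on ${}^2\clubsuit K$ again yields the desired bound after summation.

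The main technical obstacle I anticipate is the coupling of $\xi$ and $\eta$ through $\Phi_2(\xi,\eta,V(s)) = \hat V(s)\cdot(\xi+\eta)+\mu|\xi|+\mu_1|\eta|$, which prevents a naive product splitting. The resolution is to perform a further dyadic localization in the distance of $\xi+\eta$ from the cone $\{\mu|\xi|+\mu_1|\eta|+\hat V\cdot(\xi+\eta)=0\}$; on each dyadic piece of size $2^{\kappa}$, the factor $\Phi_2^{-1}$ can be treated as an $\mathcal{S}^\infty$ multiplier of norm $2^{-\kappa}$, and summation in $\kappa\geq k_1+2n_1-O(\epsilon M_t)$ produces the asserted $2^{-(k_1+2n_1)}$ gain with only a logarithmic loss. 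After this reduction the verification reduces to the bookkeeping of powers of $2^{a_p}$, $2^{(\gamma_1-\gamma_2)M_t}$, and $2^{\alpha^\star M_t}$ already carried out implicitly in the derivation of \eqref{2021dec23eqn32}.
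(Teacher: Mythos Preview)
Your proposal is essentially correct and follows the same route as the paper: handle each $Err^a$ separately, exploit the uniform lower bound $|\Phi_2|\sim 2^{k_1+2n_1}$, and feed in the already-proved pointwise bounds on $\clubsuit K$, $\partial_t\clubsuit K$, ${}^1\clubsuit K$, and the elliptic pieces.

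Two small corrections. First, the extra dyadic localization of $\Phi_2$ in your last paragraph is unnecessary: on the support of the cutoffs one has $|\hat V\cdot\xi+\mu|\xi||\sim 2^{k+2n}$ and $|\hat V\cdot\eta+\mu_1|\eta||\sim 2^{k_1+2n_1}$, and the very definition of ${}^1\mathcal{E}_{k,n}$ in \eqref{firstiterindex} forces $(k_1+2n_1)-(k+2n)\gg 1$, so $|\Phi_2|\sim 2^{k_1+2n_1}$ uniformly and $\Phi_2^{-1}$ is already a nice $\mathcal{S}^\infty$ symbol of size $2^{-(k_1+2n_1)}$ without any further decomposition; the paper simply uses it this way. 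Second, for $Err^2$ you cite \eqref{2022feb24eqn1} to bound $Ell^{\mu_2,a_2;l_2}_{k_2,j_2;n_2}$, but that estimate requires $|\slashed X(s)|\gtrsim 2^{-k_2-n_2}$, which is not guaranteed for all second-generation indices $(k_2,n_2)$ appearing in the sum \eqref{oct10eqn93}. The paper instead uses the summed bound \eqref{2021dec23eqn21}, $\sum_{k,j,i}|Ell^{\mu,i;l}_{k,j;n}(s,X(s),V(s))|\lesssim 2^{-a_p/2+(\alpha^\star+4\epsilon)M_t}$, which has no such restriction and pairs directly with \eqref{oct11eqn20}.
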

\begin{proof}

Recall the definition of $\mathcal{E}_i,i\in\{0,1,2,3,4\}$ in \eqref{indexsetsec4} and the definition of ${}_{}^1 \mathcal{E}_{k,n}$ in \eqref{firstiterindex}. Based on the size of $a\in \{0,1,2\}$, we proceed in three steps as follows. 

\medskip

\noindent \textbf{Step 1.}\quad  The estimate of  $Err^0_{ i, i_1}(t_1, t_2)$.

\medskip

Recall  \eqref{oct8eqn1},  \eqref{oct7eqn90}, and the assumption  of the coefficient  ``$C$'' in Proposition \ref{bootstraplemma1}.    From       the estimates in  \eqref{2024oct8eqn1} in Theorem  \ref{mainresultsfirstpart},  we have 
\be\label{2021dec25eqn41}
\begin{split}
|  Err^0_{ i, i_1}(t_1, t_2)|&\lesssim   \mathcal{M}(C) \big(2^{-a_p +(\gamma_1-\gamma_2) M_t} +1 \big)  2^{ (\alpha^{\star}M_t+3\iota M_t+130\epsilon M_t - 4\vartheta^\star_0 )-  \gamma_2 M_t }2^{-k-2n} \\
&\quad \times 2^{- (k_1+2n_1)/2 -  n } \big(2^{(1-\epsilon)M_t}+2^{(k+2n)/2+ (\alpha^{\star}M_t+3\iota M_t+130\epsilon M_t - 4\vartheta^\star_0 )} \big)  \\
&\lesssim\mathcal{M}(C) \big(2^{-a_p +(\gamma_1-\gamma_2) M_t} +1 \big) 2^{4\alpha^{\star}M_t/5  }. \\
\end{split}
\ee

\medskip

\noindent \textbf{Step 2.}\quad  The estimate of  $  Err^1_{ i, i_1}(t_1, t_2)$.

\medskip

Recall  \eqref{oct10eqn77}  and  \eqref{oct7eqn90}. With minor modifications in obtaining estimates in  \eqref{oct7eqn21}  and  \eqref{2024oct27eqn61}  in Lemma \ref{secondhypohori}, we have 
\be\label{oct11eqn50}
\begin{split}
 \sup_{\zeta \in \R^3}&\big| \int_{\R^3} e^{ix\cdot \xi + i \mu s |\xi|}    \p_s\mathcal{F}[ {}^1 \mathfrak{H}] (s, \xi,   X_{\bot}(s),V(s))  d \xi  \big| \\
 &\lesssim  \big[   \min\{ |  x_{\bot}|^{-1} 2^{k+n },    2^{(k+2n)/2} 2^{j+\alpha_s M_s}\}+ 2^{2n }   | x_{\bot}|^{-1/2}2^{k+n/2+ \alpha_s M_s } \big]\\
&\quad \times 2^{ -\gamma_2 M_t  -k-3n+ 5\epsilon M_t} \mathcal{M}(C) \\
& \lesssim \big[   \big(|  x_{\bot}|^{-1} 2^{k+n }\big)^{1/2}  \big(2^{(k+2n)/2} 2^{j+\alpha_s M_s}\}\big)^{1/2}+ 2^{2n }   | x_{\bot}|^{-1/2}2^{k+n/2+ \alpha_s M_s } \big] \\
& \quad \times 2^{ -\gamma_2 M_t  -k-3n+ 5\epsilon M_t} \mathcal{M}(C) \\
& \lesssim  2^{ -a_p /2-  \gamma_2 M_t +10\epsilon M_t }\big( 2^{(j+\alpha_s)M_s/2-k/4-2n} + 2^{ \alpha_s M_s -n/2}  \big) \mathcal{M}(C). \\
\end{split}
\ee
  From the above estimate,  the estimates in  \eqref{oct7eqn21}  and  \eqref{2024oct27eqn61}  in Lemma \ref{secondhypohori},   we have 
\be\label{oct10eqn85}
\begin{split}
&\big|   Err^1_{ i, i_1}(t_1, t_2)\big|\\
&\lesssim \big[\sum_{b\in \mathcal{T}}  2^{-b a_p + b (\gamma_1-\gamma_2)M_{t} -(k+2n)/2+20\epsilon M_t}      (  2^{2(\gamma_1-\gamma_2)M_t/3} \mathbf{1}_{n \in \mathcal{N}_t^1}  +2^{-\alpha^{\star}  M_t/4 }  \mathbf{1}_{n \in \mathcal{N}_t^2 } ) \big]\\
&\quad \times  \big[2^{ -a_p /2  +10\epsilon M_t } ( 2^{(j+\alpha^{\star})M_t/2-k_1/4-n_1} + 2^{ \alpha_s M_s +n_1/2}   ) \big] \mathcal{M}(C)(t_2-t_1)\\
& \quad +  \big[2^{ -a_p /2-  \gamma_2 M_t +10\epsilon M_t } ( 2^{(j+\alpha_s)M_s/2-k/4-2n} + 2^{ \alpha_s M_s -n/2}   )\big]   \\
&\quad \times   \big(\sum_{a\in \mathcal{T}}  2^{-a a_p + a(\gamma_1-\gamma_2)M_{t}}  2^{-(k_1+2n_1)/2+3\alpha^{\star}M_t/4} \big) \mathcal{M}(C) (t_2-t_1)\\
&\lesssim \sum_{b\in \mathcal{T}+\mathcal{T}}   2^{-b a_p + (b+1)(\gamma_1-\gamma_2)M_{t} + 5\alpha^{\star}M_t/6+200\epsilon M_t}    \mathcal{M}(C)(t_2-t_1).\\
\end{split}
\ee

\medskip

\noindent \textbf{Step 3.}\quad    The estimate of  $  Err^2_{ i, i_1}(t_1, t_2)$.

\medskip

Recall  \eqref{oct10eqn93}.  From the estimate  \eqref{oct11eqn20}   and the estimate of elliptic 
parts 
in \eqref{2022feb24eqn1}  in Theorem \ref{maintheoremellipitic},  we have
\be
\begin{split} 
\big| Err^2_{ i, i_1}(t_1, t_2)\big|& \lesssim   \sum_{b\in \mathcal{T} }  2^{- ba_p } 2^{(b+5/6) (\gamma_1-\gamma_2)M_t }  2^{-a_p/2 + (\alpha^{\star} +4\epsilon )M_t}\\
&\quad \times  2^{ \alpha^{\star} M_t/6  + 220\epsilon M_{t} + 3\iota M_t   -(k_1+2n_1)/2} \mathcal{M}(C)  (t_2-t_1)  \\
&\lesssim  \sum_{b\in \mathcal{T}+\mathcal{T}}   2^{-b a_p + (b+1)(\gamma_1-\gamma_2)M_{t} +  2\alpha^{\star}M_t/3 }    \mathcal{M}(C)(t_2-t_1). \\
\end{split}
\ee
Therefore, the desired estimate  \eqref{2021dec22eqn32}  holds from the above estimate and the obtained estimates \eqref{2021dec25eqn41},   and  \eqref{oct10eqn85}. 
 
\end{proof}
 
 For the error terms produced in the third iteration process  in \eqref{oct29eqn61}, we have
\begin{lemma}\label{2021errhorstep2}
 Under the assumption of Proposition \ref{bootstraplemma1}, the following estimate holds for any  $i,i_1, i_2\in\{0,1,2,3,4\}, ( n,k)\in \mathcal{E}_i$, $(n_1, k_1)\in {}_{}^1\mathcal{E}_{k,n}$,  $(n_2, k_2)\in {}_{}^2\mathcal{E}_{k_1,n_1}$, and  $a_{p}\geq  -k-n +3\epsilon M_{t}/2$,    
\be\label{2021dec22eqn56}
\sum_{a=0,1,2  }  \big| Err^a_{ i, i_1,i_2}(t_1, t_2)\big| \lesssim  \big[ \sum_{b\in \mathcal{T}+\mathcal{T}}   2^{-ba_p  } 2^{ b(\gamma_1-\gamma_2)M_t+\gamma_1 M_t-20\epsilon M_t}(t_2-t_1) + 2^{2\alpha^{\star}M_t/3}\big] \mathcal{M}(C).
\ee
\end{lemma}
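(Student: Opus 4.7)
The approach mirrors the strategy used in Lemma \ref{2021errhorstep1}, now applied one iteration deeper. The three error types $Err^a_{i,i_1,i_2}$ for $a \in \{0,1,2\}$ arise from the second normal form transformation (integration by parts in $s$) that produced the high-oscillation gain $2^{-(k_2+2n_2)}$ from the phase $\Phi_3$. The plan is to exploit this gain together with the already-established kernel estimate (\ref{oct11eqn20}) for ${}^2\clubsuit K$ and the pointwise bound (\ref{aug30eqn1}) in Proposition \ref{verest} for $\clubsuit K_{k_2,j_2;n_2}^{\mu_2,i_2}$, summing over all admissible indices.

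First I would handle $Err^0_{i,i_1,i_2}$, see (\ref{oct11eqn187}). The endpoint terms at $t_1,t_2$ do not carry the $(t_2-t_1)$ factor, so they fall into the additive perturbative bucket $\mathcal{M}(C)\cdot 2^{\alpha^\star M_t/2}$: apply (\ref{aug30eqn1}) to $\clubsuit K_{k_1,j_1;n_1}^{\mu_1,i_1}$, use (\ref{oct11eqn20}) for ${}^2\clubsuit K$, and use the smallness $2^{-k_2-2n_2}$ from the phase together with the index constraint $(k_2+2n_2)-(k_1+2n_1)\gtrsim \alpha^\star M_t/6 - O(\iota M_t) -(\gamma_1-\gamma_2)M_t$ from (\ref{seconditerindex}). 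The time-integrated piece contains $\hat{\slashed V}(s)\cdot\nabla_{\slashed x}{}^2\clubsuit K$, which produces an extra factor $|\slashed V|/|V|\sim 2^{(\gamma_1-\gamma_2)M_t}$ together with a spatial derivative on $X(s)$; since $|\slashed X(s)|\sim 2^{a_p}$, the horizontal pointwise estimate (\ref{aug30eqn40}) in Proposition \ref{horizonest} absorbs this derivative and produces the $\sum_b 2^{-ba_p}2^{b(\gamma_1-\gamma_2)M_t}$ sum with the desired $\gamma_1$-loss.

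Next I would estimate $Err^1_{i,i_1,i_2}$ in (\ref{oct11eqn42}), where time derivatives land on one of the two kernels. For $\p_t\clubsuit K_{k_2,j_2;n_2}^{\mu_2,i_2}$ I would invoke the estimates (\ref{oct7eqn21})--(\ref{oct26eqn41}) of Lemma \ref{secondhypohori} directly. For $\p_t{}^2\clubsuit K$ I would establish a variant of Lemma \ref{secondhypohori}-type bound by repeating its derivation but with the extra $\nabla_\zeta$ that defines ${}^2\clubsuit K$; this is analogous to the bound (\ref{oct11eqn50}) used one level lower. The combined estimate is again distributed between the $|\slashed x|^{-b}2^{b(\gamma_1-\gamma_2)M_t}$ sum with a $\gamma_1 M_t - c\epsilon M_t$ factor times $(t_2-t_1)$.

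For $Err^2_{i,i_1,i_2}$, see (\ref{oct11eqn43}), I apply the already-proved $L^\infty_x$ bound on the summed elliptic/initial-data contribution (\ref{2021dec23eqn21}) — which gives $2^{-a_p/2+(\alpha^\star+4\epsilon)M_t}$ after using the cylindrical-symmetry gain — multiplied by the kernel estimate (\ref{oct11eqn20}) for ${}^2\clubsuit K$ and integrated in $s$. This produces exactly the $\sum_{d\in\mathcal{T}+\mathcal{T}}2^{-d a_p}2^{d(\gamma_1-\gamma_2)M_t}2^{(\alpha^\star-10\epsilon)M_t}(t_2-t_1)$ form, which is compatible with the right-hand side of (\ref{2021dec22eqn56}) (note $\gamma_1\leq\alpha^\star$ here). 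The main obstacle is bookkeeping: making sure that every combination of the three loss factors (from ${}^2\clubsuit K$, from the index gaps in $\mathcal{E}_i,{}^{1}\mathcal{E}_{k,n},{}^2\mathcal{E}_{k_1,n_1}$, and from the $\p_t$ bounds) actually telescopes to a total power of $2^{M_t}$ strictly below $2^{\gamma_1 M_t - 20\epsilon M_t}$. As in the previous lemma, this relies crucially on the quantitative gap conditions built into (\ref{firstiterindex})--(\ref{seconditerindex}), which were chosen precisely so that each round of integration by parts donates a net $2^{-c M_t}$ with $c\geq 1/7$.
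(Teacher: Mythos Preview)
Your approach is essentially the paper's: split into the three error types and, for each, combine the appropriate kernel estimate with either Proposition \ref{verest}, Lemma \ref{secondhypohori}, or the elliptic bound (\ref{2021dec23eqn21}). However, two of your citations are off by one iteration level, and one of them matters.

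For $Err^0_{i,i_1,i_2}$ (see (\ref{oct11eqn187})) the factor paired with ${}^2\clubsuit K$ is $\clubsuit K_{k_2,j_2;n_2}^{\mu_2,i_2}$, not $\clubsuit K_{k_1,j_1;n_1}^{\mu_1,i_1}$; you wrote the latter. This is only a slip, since you then correctly invoke the gain $2^{-(k_2+2n_2)}$ from $\Phi_3$ and the ${}^2\clubsuit K$ bound (\ref{oct11eqn20}). The paper actually unfolds ${}^2\clubsuit K$ and applies (\ref{aug30eqn1}) to each constituent directly, arriving at (\ref{2021dec25eqn61}); your route via (\ref{oct11eqn20}) is equivalent.

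For $Err^2_{i,i_1,i_2}$ (see (\ref{oct11eqn43})) the kernel is ${}^3\clubsuit K$, defined in (\ref{oct11eqn86}), \emph{not} ${}^2\clubsuit K$; accordingly the correct pointwise bound is (\ref{2021dec24eqn21}), not (\ref{oct11eqn20}). This is the same pattern as in Lemma \ref{2021errhorstep1} shifted up one level: at step $m$ the elliptic error is $Ell\cdot {}^{m+1}\clubsuit K$. The distinction is not cosmetic, because (\ref{2021dec24eqn21}) carries the additional smallness $2^{-(k_2+2n_2)/2}$ coming from the second phase denominator, which is what makes the product with (\ref{2021dec23eqn21}) land below $2^{\gamma_1 M_t}$ in (\ref{oct11eqn95}). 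With (\ref{oct11eqn20}) alone you would only have $2^{-(k_1+2n_1)/2}$, and while in this particular index regime the numerics may still squeak by, the argument as stated does not match the integrand.

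Your treatment of $Err^1_{i,i_1,i_2}$ is exactly the paper's: use (\ref{oct7eqn21})--(\ref{oct7eqn24}) for $\partial_t\clubsuit K_{k_2,j_2;n_2}^{\mu_2,i_2}$ together with (\ref{oct11eqn20}), and derive a $\partial_t{}^2\clubsuit K$ bound (the paper records it as (\ref{oct12eqn41})) to pair with (\ref{aug30eqn1}).
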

\begin{proof}
Recall the definition of $\mathcal{E}_i,i\in\{0,1,2,3,4\}$ in  \eqref{indexsetsec4}, the definition of ${}_{}^1 \mathcal{E}_{k,n}$ in  \eqref{firstiterindex}, and  the definition of ${}_{}^2 \mathcal{E}_{k_1,n_1}$ in \eqref{seconditerindex}.  Based on the size of $a\in \{0,1,2\}$, we proceed in three steps as follows.

\medskip

\noindent \textbf{Step 1.}\quad The estimate of $   Err^0_{ i, i_1,i_2}(t_1, t_2) $. 

\medskip

Recall  \eqref{oct11eqn187},  \eqref{oct10eqn96}, the assumption of the coefficient ``$C$'' in Proposition \ref{bootstraplemma1}.  From       the estimates in  \eqref{2024oct8eqn1} in Theorem  \ref{mainresultsfirstpart}, we have 
\be\label{2021dec25eqn61}
\begin{split}
\big|   Err^0_{ i, i_1,i_2}(t_1, t_2)\big| &\lesssim\big(2^{-a_p +(\gamma_1-\gamma_2) M_t} +1 \big)   2^{2(\alpha^{\star}M_t+3\iota M_t+130\epsilon M_t - 4\vartheta^\star_0 )- 2\gamma_2 M_t }\\
&\quad \times \big(2^{(1-\epsilon)M_t}+2^{(k+2n)/2+ (\alpha^{\star}M_t+3\iota M_t+130\epsilon M_t - 4\vartheta^\star_0 )}\big)\\
&\quad \times 2^{-k-2n} 2^{- (k_1+2n_1)/2-  (k_2+2n_2)/2- 2\min\{n, n_1\} } \mathcal{M}(C)\\
&\lesssim \big(2^{-a_p +(\gamma_1-\gamma_2) M_t} +1 \big) 2^{2\alpha^{\star}M_t/3  }\mathcal{M}(C). \\
\end{split}
\ee
 
\medskip

\noindent \textbf{Step 2.}\quad  The estimate of $  Err^1_{ i, i_1,i_2}(t_1, t_2)$. 

\medskip

Recall  \eqref{oct11eqn42}. From the obtained estimate  \eqref{oct11eqn20}  and  the    estimates  \eqref{oct7eqn21}  and  \eqref{2024oct27eqn61}  in Lemma \ref{secondhypohori}, we have
\be\label{oct11eqn60}
\begin{split}
&\Big|\int_{t_1}^{t_2} \int_{\R^3}  \int_{\R^3} e^{i X(s )\cdot (\xi+\eta + \sigma)  + i  \mu_2 s |\sigma| + i \mu s |\xi| + i \mu_1 s  |\eta|}  \\
&\quad \times   \big( \hat{V}(s ) \cdot(\xi+ \eta +\sigma) +  \mu_2   |\sigma| +   \mu  |\xi| +   \mu_1  |\eta| \big)^{-1} \\
&  \quad \times \p_s  \mathcal{F}[\mathfrak{H}_{k_2,j_2;n_2}^{\mu_2,i_2}]  (s ,\sigma, V(s ))\cdot    \mathcal{F}[{}_{}^{2}\mathfrak{H}]  (s , \xi,\eta,    X_{\bot}(s ),V(s ))     d \xi d\eta  d s \Big|\\
& \lesssim    \sum_{b\in \mathcal{T} } 2^{- ba_p } 2^{(b+5/6) (\gamma_1-\gamma_2)M_t } 2^{ \alpha^{\star} M_t/6  + 220\epsilon M_{t} + 3\iota M_t   -(k_1+2n_1)/2}  \mathcal{M}(C)(t_2-t_1)\\
 &\quad \times  2^{ -a_p /2  +10\epsilon M_t }\big( 2^{(j+\alpha^{\star})M_t/2-k_2/4-n_2} + 2^{ \alpha_s M_s +n_2/2}  \big)   \\
 &\lesssim \sum_{b\in \mathcal{T} +  \mathcal{T}} 2^{- ba_p } 2^{(b+1/2) (\gamma_1-\gamma_2)M_t }2^{3\alpha^{\star} M_t/4}\mathcal{M}(C) (t_2-t_1). \\
 \end{split}
 \ee

Moreover, from the obtained estimates   \eqref{2021dec24eqn2},  \eqref{oct11eqn50}, the estimates in \eqref{2024oct8eqn5}  and \eqref{2024Dec6eqn31} in Theorem \ref{mainresultsfirstpart},  the    estimates \eqref{oct7eqn21} and  \eqref{2024oct27eqn61} in Lemma \ref{secondhypohori},   for any $s\in [t_1, t_2],$ $x, y \in \R^3$, s.t., $|  x_{\bot}|, |  y_{\bot} |\in [2^{a_p  -5}, 2^{a_p  +5}]$, we have 
\be\label{oct12eqn41}
\begin{split}
&\big| \int_{\R^3} \int_{\R^3} e^{i x\cdot \xi}   e^{ i y\cdot \eta} e^{  i \mu s |\xi| + i \mu_1 s  |\eta|}\, \p_s   \mathcal{F}[{}_{}^{2}\mathfrak{H}] (s, \xi,\eta,    X_{\bot}(s),V(s))\big|\\
&\lesssim  \sum_{d\in \mathcal{T} + \mathcal{T}} \mathcal{M}(C)   2^{-d a_p + a(\gamma_1-\gamma_2)M_{t} - \alpha^\star M_t -\min\{n,n_1\}} \\
&\quad \times\big[\big( 2^{(j+\alpha_s)M_s/2-k/4-2n} + 2^{ \alpha_s M_s -n/2}  \big)2^{-\alpha^\star M_t/4- (k_1+2n_1)/2  } \\
&\quad + \big(2^{ (j+a_sM_s)/2-k_1/4-n_1 } + 2^{\alpha_s M_s +  n_1/2}\big)2^{-(k+2n)/2}
  \big] \\
    &\lesssim \sum_{d\in \mathcal{T}+\mathcal{T}}  2^{-d a_{p} } 2^{d  (\gamma_1-\gamma_2)M_t  }  2^{ 5\alpha^{\star} M_t/12  } \mathcal{M}(C).
\end{split}
\ee
From the above estimate, and   the estimates in  \eqref{2024oct8eqn1} in Theorem  \ref{mainresultsfirstpart}, we have
\be
\begin{split}
&\Big|\int_{t_1}^{t_2} \int_{\R^3}  \int_{\R^3} e^{i X(s )\cdot (\xi+\eta + \sigma)  + i  \mu_2 s  |\sigma| + i \mu s |\xi| + i \mu_1 s  |\eta|} \\
&\quad \times   \big( \hat{V}(s) \cdot(\xi+ \eta +\sigma) +  \mu_2   |\sigma| 
+   \mu  |\xi| +   \mu_1  |\eta| \big)^{-1} \\
&\quad \times   \mathcal{F}[\mathfrak{H}_{k_2,j_2;n_2}^{\mu_2,i_2}] (s ,\sigma, V(s ))\cdot    \p_s  \mathcal{F}[{}_{}^{2}\mathfrak{H}]  (s , \xi,\eta,     X_{\bot}(s), V(s ))      d \xi d\eta  d s \Big|\\
&\lesssim   \big(\sum_{d\in \mathcal{T}+\mathcal{T}}  2^{-d a_{p} } 2^{(d+1) (\gamma_1-\gamma_2)M_t }  2^{  5\alpha^{\star} M_t/12 } \big) 2^{\alpha^{\star} M_t-(k_2+2n_2)/2} \mathcal{M}(C)(t_2-t_1).
\end{split}
\ee
After combining the above estimate and the obtained estimate  \eqref{oct11eqn60}, we have
\be\label{oct11eqn70}
\big| Err^1_{ i, i_1,i_2}(t_1, t_2) \big|\lesssim  \sum_{b\in \mathcal{T} +  \mathcal{T}} 2^{- ba_p } 2^{(b+1/2) (\gamma_1-\gamma_2)M_t }2^{4\alpha^{\star} M_t/5}\mathcal{M}(C) (t_2-t_1). 
\ee

\medskip

\noindent \textbf{Step 3.}\quad  The estimate of $   Err^2_{ i, i_1,i_2}(t_1, t_2) $. 

\medskip

Recall  \eqref{oct11eqn43}. From the obtained  estimate      \eqref{2021dec24eqn21},  and the estimate of elliptic 
parts 
in \eqref{2022feb24eqn1}  in Theorem \ref{maintheoremellipitic}, we have 
\be\label{oct11eqn95}
\begin{split}
\big|   Err^2_{ i, i_1,i_2}(t_1, t_2)\big|&\lesssim    \sum_{b\in \mathcal{T} }   2^{-a_p/2 + (\alpha^{\star} +4\epsilon )M_t}  (t_2-t_1) \mathcal{M}(C)  \\
&\quad \times  2^{- ba_p } 2^{(b+4/3) (\gamma_1-\gamma_2)M_t + \alpha^{\star}M_{t^\star}/12+  490\epsilon M_{t} +6\iota -(k_2+2n_2)/2} \\
 &  \lesssim  \sum_{d\in \mathcal{T}+\mathcal{T} }2^{- d a_p } 2^{d (\gamma_1-\gamma_2)M_t + \alpha^{\star}M_t/2  }\mathcal{M}(C) (t_2-t_1). \\
\end{split}
\ee
To sum up, the desired estimate \eqref{2021dec22eqn56} holds after combining the obtained estimates \eqref{2021dec25eqn61},  \eqref{oct11eqn70}, and  \eqref{oct11eqn95}. 
\end{proof}

For the error terms produced in the fourth iteration process  in \eqref{oct12eqn76}, we have
\begin{lemma}\label{2021errhorstep3}
 Under the assumption of Proposition \ref{bootstraplemma1}, the following estimate holds for any  $i,i_1, i_2, i_3\in\{0,1,2,3,4\}, ( n,k)\in \mathcal{E}_i$, $(n_1, k_1)\in {}_{}^1\mathcal{E}_{k,n}$,  $(n_2, k_2)\in {}_{}^2\mathcal{E}_{k_1,n_1}$, $(n_3, k_3)\in {}_{}^3\mathcal{E}_{k_2,n_2}$, and  $a_{p}\geq  -k-n +3\epsilon M_{t}/2$,
\be\label{oct23eqn65}
\sum_{a=0,1 }  \big| Err^a_{ i, i_1,i_2,i_3}(t_1, t_2)\big| \lesssim   \mathcal{M}(C)\big[\sum_{b\in \mathcal{T}+\mathcal{T}} 2^{- ba_p } 2^{(b+1) (\gamma_1-\gamma_2)M_t }  2^{5\alpha^{\star} M_t/8}(t_2-t_1) +  2^{\alpha^{\star} M_t/2} \big]. 
\ee
\end{lemma}
\begin{proof}

Recall the definition of $\mathcal{E}_i,i\in\{0,1,2,3,4\}$ in  \eqref{indexsetsec4}, the definition of ${}_{}^1 \mathcal{E}_{k,n}$ in  \eqref{firstiterindex},    the definition of ${}_{}^2 \mathcal{E}_{k_1,n_1}$ in  \eqref{seconditerindex}, and the definition of ${}_{}^3 \mathcal{E}_{k_2,n_2}$ in  \eqref{thirditerindex}.  Based on the size of $a\in \{0,1,2\}$, we proceed in two steps as follows.

\medskip

\noindent \textbf{Step 1.}\quad The estimate of $  Err^0_{ i, i_1,i_2,i_3}(t_1, t_2) $.

\medskip

Recall  \eqref{oct12eqn2},   \eqref{oct11eqn86},  and the assumption of the coefficient ``$C$'' in Proposition \ref{bootstraplemma1}.   From the obtained estimate  \eqref{2021dec24eqn21},   the estimates in  \eqref{2024oct8eqn1} in Theorem  \ref{mainresultsfirstpart}, we have 
\be\label{oct12eqn95}
\begin{split}
\big| Err^0_{ i, i_1,i_2,i_3}(t_1, t_2)  \big| & \lesssim  \big( 2^{-a_p}2^{(\gamma_1-\gamma_2)M_t}(t_2-t_1) +1\big)   2^{-k-2n} 2^{3\alpha^{\star}M_t- 3\gamma_2 M_t}\\
&\quad \times  2^{ - (k_1+2n_1)/2-  (k_2+2n_2)/2-(k_3+2n_3)/2-3\min\{n, n_1\}+200\epsilon M_t} \\
&\quad \times (2^{(1-\epsilon)M_t}+2^{(k+2n)/2+\alpha^{\star}M_t+30\epsilon M_t})\mathcal{M}(C)\\
& \lesssim  \big(  2^{-a_p} 2^{(\gamma_1-\gamma_2)M_t}(t_2-t_1) +1\big)  2^{\alpha^{\star} M_t/2} \mathcal{M}(C). 
\end{split}
\ee

\medskip

\noindent \textbf{Step 2.}\quad The estimate of $   Err^1_{ i, i_1,i_2,i_3}(t_1, t_2) $.

\medskip

Recall  \eqref{oct11eqn42} and   \eqref{oct11eqn86}.  From the obtained estimates  \eqref{oct12eqn41}  and   the estimates in  \eqref{2024oct8eqn1} in Theorem  \ref{mainresultsfirstpart},  for any $x,y,z\in \R^3$, s.t., $|  x_{\bot}|, | y_{\bot}|, | z_{\bot}|\in [2^{a_p  -5}, 2^{a_p   + 5}] $, we have 
\be
\begin{split}
   &\big| \int_{\R^3}\int_{\R^3} \int_{\R^3}  e^{i x\cdot \xi +i y\cdot \eta + i z \cdot \sigma    + i\mu_3 s |\kappa|  + i  \mu_2 s  |\sigma| + i \mu s |\xi| + i \mu_1 s |\eta|}  \\
  &\quad\times  \nabla_\zeta \big[  \big( \hat{\zeta}\cdot(\xi+ \eta +\sigma) +  \mu_2   |\sigma| +   \mu  |\xi| +   \mu_1  |\eta| \big)^{-1} \\
&\quad \times \mathcal{F}[ \mathfrak{H}_{k_2,j_2;n_2}^{\mu_2,i_2}](s,\sigma, \zeta)\cdot \p_s \mathcal{F}[ {}^2\mathfrak{H} ](s, \xi,\eta,  X_{\bot}(s),\zeta) \big]\big|_{\zeta= V(s)} d\xi d \eta d \sigma  \big| \\
&  \lesssim   \sum_{d\in \mathcal{T}+\mathcal{T}}  2^{-d a_{p} } 2^{d  (\gamma_1-\gamma_2)M_t  }  2^{ 5\alpha^{\star} M_t/12  }  2^{-\gamma_2 M_t+100\epsilon M_t} 2^{\alpha^{\star}M_t - (k_2+2n_2)/2} 2^{-\min\{n, n_1, n_2\}} \mathcal{M}(C)  \\
 &\lesssim  \sum_{b\in \mathcal{T}+\mathcal{T}}  2^{-b a_{p} } 2^{(b+1) (\gamma_1-\gamma_2)M_t+\alpha^{\star} M_t/4 +1000\epsilon M_t} \mathcal{M}(C)   .
 \end{split}
\ee
Moreover, from the obtained estimate  \eqref{oct11eqn20}, estimates \eqref{oct7eqn21} and \eqref{2024oct27eqn61} in Lemma \ref{secondhypohori}, the following estimate holds   if $x,y,z\in \R^3$, s.t., $|  x_{\bot}|, | y_{\bot}|, | z_{\bot}|\in [2^{a_p  -5}, 2^{a_p   + 5}] $, 
\be
\begin{split}
  &\big| \int_{\R^3}\int_{\R^3} \int_{\R^3}  e^{i x\cdot \xi +i y\cdot \eta + i z \cdot \sigma    + i\mu_3 s |\kappa|  + i  \mu_2 s  |\sigma| + i \mu s |\xi| + i \mu_1 s |\eta|}  \\
  &\quad \times  \nabla_\zeta \big[  \big( \hat{\zeta}\cdot(\xi+ \eta +\sigma) +  \mu_2   |\sigma| +   \mu  |\xi| +   \mu_1  |\eta| \big)^{-1} \\
&\quad\times \p_s \mathcal{F}[ \mathfrak{H}_{k_2,j_2;n_2}^{\mu_2,i_2}] (s,\sigma, \zeta)\cdot  \mathcal{F}[ {}^2\mathfrak{H} ](s, \xi,\eta,    X_{\bot}(s),\zeta) \big]\big|_{\zeta= V(s)} d\xi d \eta d \sigma  \big| \\
& \lesssim  \sum_{b\in \mathcal{T} } 2^{- ba_p } 2^{(b+5/6) (\gamma_1-\gamma_2)M_t }    2^{ -a_p /2 -\gamma_2 M_t -\min\{n,n_1, n_2\} }\\
 &\quad \times  2^{ \alpha^{\star} M_t/6  + 220\epsilon M_{t} + 3\iota M_t   -(k_1+2n_1)/2}\big( 2^{(1+\alpha_s)M_s/2-k_2/4-  n_2} + 2^{ \alpha_s M_s +n_2/2}  \big)    \mathcal{M}(C)\\
 &\lesssim   \sum_{d\in \mathcal{T}+\mathcal{T}}  2^{-d a_{p} } 2^{(d+1) (\gamma_1-\gamma_2)M_t+\alpha^{\star} M_t/4 }\mathcal{M}(C). \\
 \end{split}
\ee

After combining the above two estimates,     for any $x,y,z\in \R^3$, s.t., $|  x_{\bot}|, |  y_{\bot}|, |  z_{\bot}|\in [2^{a_p  -5}, 2^{a_p   + 5}] $, we have 
\be\label{oct11eqn72}
\begin{split}
 & \big| \int_{\R^3}\int_{\R^3} \int_{\R^3}  e^{i x\cdot \xi +i y\cdot \eta + i z \cdot \sigma      + i  \mu_2 s  |\sigma| + i \mu s |\xi| + i \mu_1 s |\eta|}  \p_s\mathcal{F}[ {}^3\mathfrak{H} ] (s , \xi,\eta,\sigma,     X_{\bot}(s ),V(s))d \xi d \eta d \sigma \big|\\
 &\lesssim  \sum_{b\in \mathcal{T}+\mathcal{T}}  2^{-b a_{p} } 2^{(b+1) (\gamma_1-\gamma_2)M_t+\alpha^{\star} M_t/4 +1000\epsilon M_t}\mathcal{M}(C) . \\
\end{split}
\ee
From the above estimate \eqref{oct11eqn72}, the estimate of $  \mathcal{F}[ {}^3\mathfrak{H} ] (t , \xi,\eta,\sigma,      X_{\bot}(t ),V(t ))$ in  \eqref{2021dec24eqn21},      the estimates in  \eqref{2024oct8eqn1} in Theorem  \ref{mainresultsfirstpart},  and the estimates  \eqref{oct7eqn21}    and  \eqref{2024oct27eqn61}  in Lemma \ref{secondhypohori},   we have
\be\label{oct12eqn97}
\begin{split}
&\big| Err^1_{ i, i_1,i_2,i_3}(t_1, t_2)\big| \\
&\lesssim     \mathcal{M}(C)(t_2-t_1)\big[\sum_{b\in \mathcal{T}+\mathcal{T}}  2^{-b a_{p} } 2^{(b+1) (\gamma_1-\gamma_2)M_t+\alpha^{\star} M_t/4 +1000\epsilon M_t}  2^{\alpha^{\star} M_t - (k_3+2n_3)/2}\\
&\quad +  \sum_{b\in \mathcal{T} } 2^{- ba_p } 2^{(b+4/3) (\gamma_1-\gamma_2)M_t + \alpha^{\star}M_{t^\star}/12+  490\epsilon M_{t} +6\iota -(k_2+2n_2)/2}    2^{ -a_p /2    }\\
&\quad \times \big( 2^{(1+\alpha_s)M_s/2-k_3/4-  n_3} + 2^{ \alpha_s M_s +n_3/2}  \big)   \big]\\
&\lesssim \sum_{d\in \mathcal{T}+\mathcal{T}}  2^{-d a_{p} } 2^{(d+1) (\gamma_1-\gamma_2)M_t+ \alpha^{\star} M_t/2 }\mathcal{M}(C)(t_2-t_1).\\
\end{split}
\ee
To sum up, after combining the obtained estimates \eqref{oct12eqn95} and  \eqref{oct12eqn97}, our desired estimate \eqref{oct23eqn65} holds. 
\end{proof}
 
For the error terms produced in the last iteration process  in \eqref{2022feb22eqn31}, we have
 \begin{lemma}\label{2022errhorstep5}
 Under the assumption of Proposition \ref{bootstraplemma1}, the following estimate holds for any  $i,i_1, i_2, i_3\in\{0,1,2,3,4\}, ( n,k)\in \mathcal{E}_i$, $(n_1, k_1)\in {}_{}^1\mathcal{E}_{k,n}$,  $(n_2, k_2)\in {}_{}^2\mathcal{E}_{k_1,n_1}$, $(n_3, k_3)\in {}_{}^3\mathcal{E}_{k_2,n_2}$, $(n_4, k_4)\in {}_{}^3\mathcal{E}_{k_3,n_3}$, and  $a_{p}\geq  -k-n +3\epsilon M_{t}/2$,
\be\label{2022feb22eqn71}
\sum_{a=0,1   }  \big| Err^a_{ i, i_1,i_2,i_3,i_4}(t_1, t_2)\big| \lesssim    \mathcal{M}(C)\big[ \sum_{b\in \mathcal{T}+\mathcal{T}} 2^{- ba_p } 2^{(b+1) (\gamma_1-\gamma_2)M_t }  2^{ \alpha^{\star} M_t/3}(t_2-t_1)  + 1\big].
\ee
\end{lemma}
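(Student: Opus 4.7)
\noindent\textbf{Proof plan for Lemma \ref{2022errhorstep5}.} The strategy mirrors the proofs of Lemma \ref{2021errhorstep1}, \ref{2021errhorstep2}, and \ref{2021errhorstep3}: we treat the boundary-in-time contribution together with the $\hat{\slashed V}\!\cdot\!\nabla_{\slashed x}$ correction as $Err^0$, and the two $\partial_s$ contributions as $Err^1$. In every case, the gain produced by the denominator $\Phi_5$, which is of size $2^{k_4+2n_4}$ on the support of the integrand because $(k_4,n_4)\in {}^4\mathcal{E}_{k_3,n_3}$, is combined with the pointwise estimates of the iterated kernel $ {}^4\clubsuit K$ already obtained in \eqref{2022feb22eqn22} and with the pointwise bounds for the localized acceleration force from Proposition~\ref{verest} and Proposition~\ref{horizonest}. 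For the $Err^0$ part, I would first bound $\clubsuit K_{k_4,j_4;n_4}^{\mu_4,i_4}(\cdot,\kappa,V(\cdot))$ by \eqref{aug30eqn1} (so we pay at worst $2^{(1-\epsilon)M_t}+2^{(k_4+2n_4)/2+(\alpha^\star+3\iota+130\epsilon)M_t}$), multiply by the $\Phi_5^{-1}\sim 2^{-(k_4+2n_4)}$ gain, and by the four-fold kernel estimate from \eqref{2022feb22eqn22}, which already contains the factor $2^{-(k_2+2n_2)/2}2^{-(k_3+2n_3)/2}2^{-\alpha^\star M_t/12}$; since the iteration thresholds \eqref{forthiterindex}, \eqref{thirditerindex}, \eqref{seconditerindex}, \eqref{firstiterindex} force $\sum_a (k_a+2n_a) \gtrsim 3\alpha^\star M_t$, one arrives at a bound of type $\lesssim \mathcal{M}(C)$ for the endpoint terms and of type $\lesssim \sum_{b\in\mathcal{T}+\mathcal{T}} 2^{-ba_p}2^{(b+1)(\gamma_1-\gamma_2)M_t}2^{\alpha^\star M_t/3}(t_2-t_1)$ for the $\hat{\slashed V}\!\cdot\!\nabla_{\slashed x}$ term, analogously to \eqref{2021dec25eqn61}.

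For $Err^1$, the difficulty is that $\partial_s$ can land either on $\clubsuit K_{k_4,j_4;n_4}^{\mu_4,i_4}$ or on the fourth-generation kernel ${}^4\clubsuit K$. When it falls on $\clubsuit K$, we invoke Lemma~\ref{secondhypohori}, which provides exactly the analogue of the bounds used at previous levels; the resulting factor, say of order $2^{(k_4+2n_4)/2+(j_4+\alpha^\star)M_t/2-\cdots}$ from \eqref{oct7eqn21}--\eqref{oct7eqn24}, combines with the $\Phi_5^{-1}$ gain and with \eqref{2022feb22eqn22}, yielding the same final form. When $\partial_s$ hits ${}^4\clubsuit K$, I would first derive, in complete analogy with \eqref{oct11eqn72}, the pointwise bound
\[
\Bigl|\int_{(\R^3)^4} e^{i(x\cdot\xi+y\cdot\eta+z\cdot\sigma+w\cdot\kappa)+is(\mu|\xi|+\mu_1|\eta|+\mu_2|\sigma|+\mu_3|\kappa|)}\,\partial_s{}^4\!\clubsuit K\,d\xi d\eta d\sigma d\kappa\Bigr|
\]
by cycling: either $\partial_s$ penetrates to one of the inner $\clubsuit K_{k_a,j_a;n_a}$ (handled by Lemma~\ref{secondhypohori}) or to $\partial_t{}^3\clubsuit K$ (which has already been controlled in \eqref{oct11eqn72}). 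Multiplying by the $\Phi_5^{-1}$ gain and by the outer $\clubsuit K_{k_4,j_4;n_4}$ via Proposition~\ref{verest}, one recovers the same bound as for $Err^0$.

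The main obstacle, as at the previous iteration, is \emph{bookkeeping}: to make sure that the five successive factors $2^{-(k_a+2n_a)/2}$ (one from each $\Phi^{-1}$) strictly dominate the three $2^{(k_a+2n_a)/2+\alpha^\star M_t}$ factors coming from the three kernels that must be estimated in $L^\infty$, and that the cumulative loss in $(\gamma_1-\gamma_2)M_t$ and the logarithmic loss $\min\{n,n_1,\ldots\}$ remain within the tolerance $2^{\alpha^\star M_t/3}$. The thresholds \eqref{firstiterindex}--\eqref{forthiterindex} have been chosen precisely so that the sum of these $2^{-(k_a+2n_a)/2}$ factors beats the accumulated loss; verifying the inequality under each of the endpoint index configurations (in particular the worst case $n\in\mathcal{N}_t^2$) is the tedious point of the argument, but each individual estimate is already of the same flavor as \eqref{2021dec25eqn61} and \eqref{oct11eqn95}. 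Combining the two contributions gives exactly the stated bound \eqref{2022feb22eqn71}, completing the proof.
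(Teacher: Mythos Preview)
Your proposal is correct and follows essentially the same approach as the paper. The paper handles $Err^0$ slightly more directly---rather than invoking the accumulated kernel bound \eqref{2022feb22eqn22} for ${}^4\clubsuit K$, it expands all five $L^\infty$ factors from Proposition~\ref{verest} in one line and reaches the cleaner endpoint bound $\lesssim(2^{(\gamma_1-\gamma_2)M_t}(t_2-t_1)+1)\mathcal{M}(C)$---but this is a cosmetic difference; your route via \eqref{2022feb22eqn22} lands on the same conclusion. For $Err^1$ your two-case split (either $\partial_s$ hits $\clubsuit K_{k_4,j_4;n_4}^{\mu_4,i_4}$ and is controlled by Lemma~\ref{secondhypohori}, or $\partial_s$ hits ${}^4\clubsuit K$ and one first proves the analogue of \eqref{oct11eqn72} at the next level) is exactly what the paper does, and the bookkeeping you describe---verifying that the cumulative gain $\prod_a 2^{-(k_a+2n_a)/2}$ from the iteration thresholds \eqref{firstiterindex}--\eqref{forthiterindex} absorbs the losses in $2^{-\min\{n,\ldots,n_4\}}$ and $(\gamma_1-\gamma_2)M_t$---is precisely the content of the final displayed estimate in the paper's proof.
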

 \begin{proof}
 Recall the definition of $\mathcal{E}_i,i\in\{0,1,2,3,4\}$ in \eqref{indexsetsec4}, the definition of ${}_{}^1 \mathcal{E}_{k,n}$ in \eqref{firstiterindex},    the definition of ${}_{}^2 \mathcal{E}_{k_1,n_1}$ in  \eqref{seconditerindex},   the definition of ${}_{}^3 \mathcal{E}_{k_2,n_2}$ in  \eqref{thirditerindex}, and the definition of ${}_{}^4 \mathcal{E}_{k_3,n_3 }$ in \eqref{forthiterindex}.  Moreover, recall  \eqref{2022feb22eqn64}  and the assumption of the coefficient ``$C$'' in Proposition \ref{bootstraplemma1}, from the        estimates in  \eqref{2024oct8eqn1} in Theorem  \ref{mainresultsfirstpart},    we have 
\be\label{2022feb22eqn61}
\begin{split}
&\big| Err^0_{ i, i_1,i_2,i_3,i_4}(t_1, t_2)  \big| \\
&\lesssim  \mathcal{M}(C) \big( 2^{-a_p}2^{(\gamma_1-\gamma_2)M_t}(t_2-t_1) +1\big)   2^{4(\alpha^{\star} +3\iota  + 130\epsilon  )M_t - 4\gamma_2 M_t}\\
&\quad \times (2^{(1-\epsilon)M_t}+2^{(k+2n)/2+ (\alpha^{\star} +3\iota  + 130\epsilon  ) }) 2^{-k-2n}  \\
&\quad \times  2^{ - (k_1+2n_1)/2-  (k_2+2n_2)/2-(k_3+2n_3)/2-(k_4+2n_4)/2-4\min\{n, n_1,n_2,n_3,n_4\}+200\epsilon M_t}   \\
&\lesssim   \big( 2^{-a_p} 2^{(\gamma_1-\gamma_2)M_t}(t_2-t_1) +1\big) \mathcal{M}(C). \\
\end{split}
\ee

Lastly, we estimate $Err^1_{ i, i_1,i_2,i_3,i_4}(t_1, t_2)$. Recall  \eqref{oct12eqn81}. Similar to the obtained estimate  \eqref{oct11eqn72}, after using the estimates   \eqref{oct11eqn72} and  \eqref{2021dec24eqn21} ,  the        estimates in  \eqref{2024oct8eqn1} in Theorem  \ref{mainresultsfirstpart},  the  estimates  \eqref{oct7eqn21}  and   \eqref{2024oct27eqn61}  in Lemma \ref{secondhypohori},   for any $x,y,z,w\in \R^3$, s.t., $|  x_{\bot}|, |  y_{\bot}|,$ $ |  z_{\bot}|, |  w_{\bot}|\in [2^{a_p  -5}, 2^{a_p   + 5}] $, we have 
\[
\begin{split}
 &\big| \int_{\R^3} \int_{\R^3}\int_{\R^3} \int_{\R^3}  e^{i x\cdot \xi +i y\cdot \eta + i z \cdot \sigma  + i w\cdot \kappa  + i\mu_3 s |\kappa|  + i  \mu_2 s  |\sigma|+ i \mu_1 s |\eta|  + i \mu s |\xi|  }  \\
 &\quad \times   \p_s  \mathcal{F}[ {}^4\mathfrak{H} ] (s, \xi, \eta, \sigma,\kappa,    X_{\bot}(s), V(s)) d \xi d \eta d \sigma d \kappa \big|\\
&\lesssim  \sum_{b\in \mathcal{T}+\mathcal{T}}  \mathcal{M}(C)  2^{-b a_{p} } 2^{(b+1) (\gamma_1-\gamma_2)M_t-\gamma_2 M_t+100\epsilon M_t-\min\{n, n_1, n_2,n_3\} } \\
&\quad \times  \big[ 2^{\alpha^{\star} M_t/4 +1000\epsilon M_t}    2^{\alpha^{\star}M_t - (k_3+2n_3)/2} +  2^{  \alpha^{\star}M_{t^\star}/12+  490\epsilon M_{t} +6\iota -(k_2+2n_2)/2} \\
& \quad \times \big( 2^{(1+\alpha_s)M_s/2-k_3/4-  n_3} + 2^{ \alpha_s M_s +n_3/2}  \big)\big]   \\
&\lesssim   \sum_{b\in \mathcal{T}+\mathcal{T}}  \mathcal{M}(C)  2^{-b a_{p} } 2^{(b+1) (\gamma_1-\gamma_2)M_t }.
\end{split}
\]

Recall  \eqref{2022feb22eqn64}. From the above estimate, the estimate \eqref{2022feb22eqn22},  the        estimates in  \eqref{2024oct8eqn1} in Theorem  \ref{mainresultsfirstpart}, the  estimates  \eqref{oct7eqn21}  and  \eqref{2024oct27eqn61}  in Lemma \ref{secondhypohori}, we have
\[
\begin{split}
\big| Err^1_{ i, i_1,i_2,i_3,i_4}(t_1, t_2)  \big|&\lesssim   \sum_{b\in \mathcal{T}+\mathcal{T}}  \mathcal{M}(C) (t_2-t_1)  2^{-b a_{p} } 2^{(b+1) (\gamma_1-\gamma_2)M_t }\\
&\quad \times \big[2^{\alpha^\star M_t-(k_4+2n_4)/2 }+ \big( 2^{(1+\alpha_s)M_s/2-k_4/4-  n_4} + 2^{ \alpha_s M_s +n_4/2}  \big) \\
&\quad \times  2^{  12\iota  + 1600\epsilon M_{t} -   \alpha^{\star}M_{t^\star}/12 -(k_3+2n_3)/2}    \big] \\
& \lesssim  \sum_{b\in \mathcal{T}+\mathcal{T}}  \mathcal{M}(C) (t_2-t_1)  2^{-b a_{p} } 2^{(b+1) (\gamma_1-\gamma_2)M_t +\alpha^{\star}M_{t^\star}/3  }.\\
\end{split}
\]
 Therefore, the desired estimate  \eqref{2022feb22eqn71}  holds from the above estimate and the estimate  \eqref{2022feb22eqn61}. 
 \end{proof}

 \subsubsection{Error terms in estimating   the elliptic parts}\label{mainpartsISSpart1error}
 
In estimating the elliptic parts, error terms  only produced in  the first interation, see \eqref{2024oct23eqn11} and \eqref{2024oct23eqn12}. For these error terms, we have 

\begin{lemma}\label{error3part1}
Let $ \bar{\kappa}:=(\gamma_1-\gamma_2)M_t-100\epsilon M_t-n$,  $i\in\{0,1,2,3,4\}, ( n,k)\in \mathcal{E}_i$ and $a_{p}\geq  -k-n +3\epsilon M_{t}/2$,  under the assumption of Proposition \ref{bootstraplemma1},  the following estimate holds if $ a_p  \geq   -k+ \alpha^{\star} M_t/2$,  $n\geq  (\gamma_1-\gamma_2)M_t/3-30\epsilon M_t$,   $j\geq  (\gamma_1-\gamma_2)M_t/2+(\alpha^{\star}-15\epsilon) M_t $, and $k+2n \geq  2(\alpha^{\star}-15\epsilon) M_t+3(\gamma_1-\gamma_2) M_t/2,$   
\be\label{oct3eqn31}
\begin{split}
\sum_{a=0,1,2,3}&\big|   Err^{\mu,a }_{k,j;n}(t_1, t_2)  \big| + \sum_{\kappa\in (\bar{\kappa}, 2]\cap \Z}  |  Err^{\mu,4;\kappa}_{k,j;n}(t_1, t_2) | \\
& \lesssim   \mathcal{M}(C)\big(2^{-a_p+(\gamma_1-\gamma_2)M_t}(t_2-t_1) +1 \big) 2^{2\alpha^{\star}M_t/3    }. 
\end{split}
\ee
\end{lemma}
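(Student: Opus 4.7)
The plan is to handle the eight contributions appearing in the statement in a uniform way: four boundary terms (evaluated at $s=t_1,t_2$) and four time-integral terms containing $\hat{\slashed V}(s)\cdot\nabla_{\slashed x}$, in the cases $a\in\{0,1,2,3\}$ and $a=4$ with $\kappa\in(\bar\kappa,2]\cap\Z$. The common ingredient is a pointwise estimate for the kernels $\clubsuit\widetilde{Ell}^{\mu,a;l}_{k,j,n}$ and $\clubsuit\widetilde{Ell}^{\mu,4;l,\kappa}_{k,j,n}$ defined in \eqref{nov9eqn1}. The divisor $(\hat V(s)-\hat v)\cdot\xi$ present in these kernels enjoys, by \eqref{oct3eqn35}, a lower bound of size $2^{k+2l}$ for $a\in\{0,1,2\}$, of size $2^{k+2n}$ for $a=3$, and (because of the cutoff $\varphi_{\kappa;\bar\kappa}$) of size $2^{k+\kappa+n}$ for the $\kappa$-localized $a=4$ case. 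Consequently the kernels satisfy the analog of the basic point-wise bound used in Lemma \ref{trivialcaseell} multiplied by the corresponding smallness factor $2^{-(k+2\max\{l,n\})}$ (or $2^{-(k+\kappa+n)}$).

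First I would treat the boundary terms. After integrating by parts in $\xi$ along $V(s)$ and directions perpendicular to it, the resulting kernel admits the two point-wise bounds used in \eqref{oct13eqn61}: a volume-of-support bound $\lesssim 2^{-k}\min\{2^{3j+2n},2^{j+2\alpha^\star M_t}\}$ coming from the conservation law, and a cylindrical-symmetry bound $\lesssim|\slashed X(s)|^{-1}2^{k+n-j}$. Interpolating between these (exactly as in \eqref{oct13eqn61}) under the standing assumptions $a_p\geq -k+\alpha^\star M_t/2$, $n\geq(\gamma_1-\gamma_2)M_t/3-30\epsilon M_t$, and $k+2n\geq 2\alpha^\star M_t+3(\gamma_1-\gamma_2)M_t/2-30\epsilon M_t$, and then absorbing the smallness factor $2^{-(k+2\max\{l,n\})}$ or $2^{-(k+\kappa+n)}$, yields a uniform bound $\lesssim \mathcal{M}(C)\,2^{2\alpha^\star M_t/3}$. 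The geometric smallness in $\kappa$ makes the $\kappa$-sum convergent. For the time-integral pieces, the $\nabla_{\slashed x}$ derivative falls only on $C_0(\slashed X(s),\zeta)|_{\zeta=V(s)}$, producing by \eqref{oct23eqn51} an extra factor $\mathcal{M}(C)\cdot|\slashed X(s)|^{-1}=\mathcal{M}(C)\,2^{-a_p}$, while $\hat{\slashed V}(s)$ contributes $|\slashed V(s)|/\langle V(s)\rangle\sim 2^{(\gamma_1-\gamma_2)M_t}$. Multiplying by $(t_2-t_1)$ gives the other summand $2^{-a_p+(\gamma_1-\gamma_2)M_t}(t_2-t_1)\cdot 2^{2\alpha^\star M_t/3}\mathcal{M}(C)$.

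The step most requiring care is the small-angle regime, namely the $a=4$ case with $\kappa$ only slightly above the threshold $\bar\kappa$, where the simple integration-by-parts in $\xi$ along $V(s)$ does not yet exploit the smallness of $|\slashed X(s)|^{-1}$. Here the remedy is to pass to the orthonormal frame $\{(\hat v-\hat V(s))/|\hat v-\hat V(s)|,\theta_1(v,s),\theta_2(v,s)\}$ adapted to $\mathrm{span}\{v,V(s)\}$, exactly as was done in \eqref{oct3eqn100}--\eqref{oct3eqn131} for the threshold case; the lower bound \eqref{oct3eqn100} combined with the cylindrical symmetry of $f$ and the assumption $a_p\geq -k+\alpha^\star M_t/2$ allows the $|\slashed X(s)|^{-1/2}$ decay to be extracted. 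A straightforward geometric $\kappa$-summation then completes the proof of \eqref{oct3eqn31}.
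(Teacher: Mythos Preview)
Your proposal is correct and reaches the target bound, but it takes a more elaborate route than the paper's proof. Two differences are worth noting. First, the paper does \emph{not} use cylindrical symmetry or any $|\slashed X(s)|^{-1}$ decay for these error terms; it simply estimates the kernel (after integration by parts in $\xi$) by $|K^{a,l}_{k,j,n}|+2^{a_p-(\gamma_1-\gamma_2)M_t}|\widetilde K^{a,l}_{k,j,n}|\lesssim 2^{k+2n-2\max\{l,n\}}\mathcal{M}(C)$ times the standard localization, and then combines only the conservation law bound $2^{-j}$ with the volume-of-support bound $2^{-3k-2n}2^{3j+2l}$, interpolating (exponent $3/4$ on $2^{-j}$) to get $\lesssim 2^{M_t/3}\leq 2^{\alpha^\star M_t/2}$ directly. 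Your detour through the $|\slashed X(s)|^{-1}2^{k+n-j}$ bound from \eqref{oct13eqn61} also closes but is unnecessary. Second, for $a=4$ the paper uses the adapted frame $\{(\hat v-\hat V)/|\hat v-\hat V|,\theta_1,\theta_2\}$ for \emph{all} $\kappa\in(\bar\kappa,2]$, not only $\kappa$ near $\bar\kappa$; the purpose of the frame here is to handle the $\varphi_{\kappa;\bar\kappa}$ cutoff (smooth at scale $2^{k+\kappa}$ along $\hat v-\hat V$) when doing integration by parts in $\xi$, not to extract $|\slashed X|^{-1/2}$ decay. The lower bound \eqref{oct3eqn100} is not invoked in this lemma at all---the paper reserves that geometric input for Lemmas~\ref{smallangest}, \ref{ellerrtyp2}, and \ref{mainell3part1}, where cylindrical symmetry is genuinely needed. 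Your identification of the factor $2^{-a_p+(\gamma_1-\gamma_2)M_t}$ coming from $\hat{\slashed V}\cdot\nabla_{\slashed x}C_0$ is exactly what the paper encodes in the combined kernel bound, and the $\kappa$-sum is indeed geometrically convergent.
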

\begin{proof}

Based on the size of $a\in \{0,1,2,3,4\}$, we proceed in two steps as follows. 

\medskip

\noindent \textbf{Step 1.}\quad  If $a\in\{0,1,2,3\}. $

\medskip

 Recall  \eqref{oct7eqn62}. Note that, in terms of kernel,  we have 
\[
\begin{split}
\big|Err^{\mu,a }_{k,j;n}(t_1, t_2)\big|&\lesssim \sum_{i=1,2}\int_{\R^3} \int_{\R^3} f(t_i, X(t_i)-y, v) \big|K^a_{k,n}(  X_{\bot}(t_i), y, v, V(t_i))\big| dyd v \\
 &\quad + \int_{t_1}^{t_2} \int_{\R^3}  \int_{\R^3}  f(s, X(s)-y, v) \big|\widetilde{K}^a_{k,n}(  X_{\bot}(s),y, v, V(s))\big| dy d v d s, \\
 \end{split}
\]
where,  $\forall s\in [t_1, t_2],  $  the kernels are defined as follows,
\[
\begin{split}
 K^{a }_{k,j,n}(  X_{\bot}(s), y, v, V(s)) &= \int_{\R^3} e^{i y\cdot \xi }  \mathcal{F}[ {}_{}^{1}\mathfrak{E}^{a } ](   X_{\bot}(s),   \xi, v, V(s))   d \xi, \\ 
 \widetilde{K}^{a }_{k,j,n}(  X_{\bot}(s), y, v, V(s)) & = \int_{\R^3} e^{i y\cdot \xi }  {\hat{V}}_{\bot}(s)\cdot \nabla_{  x_{\bot}}\big(    \mathcal{F}[ {}_{}^{1}\mathfrak{E}^{ a} ]( x_{\bot} ,   \xi, v, V(s))\big)\big|_{  x_{\bot} =   X_{\bot}(s)} d \xi .
\end{split}
 \]
  Recall \eqref{2024oct28eqn11}. After doing integration by parts in $\xi$ along $V(s)$ direction and directions perpendicular to $V(s)$,  the following estimate holds for the above defined kernels, 
\be 
\begin{split}
&| K^{a }_{k,j,n}(  X_{\bot}(s),y, v, V(s)) | 
+ 2^{a_p-(\gamma_1-\gamma_2)M_t} | \widetilde{K}^{a }_{k,j,n}(  X_{\bot}(s), y, v, V(s)) | \\&
\lesssim \mathcal{M}(C)    2^{k+2n-2\max\{l,n\} + 2\epsilon M_t} (1+2^k|y\cdot \tilde{V}(s)|)^{-100}(1+2^{k+n}|y\times  \tilde{V}(s)|)^{-100} .\\
\end{split}
\ee
From the above estimate of kernels, the volume of support of $v$, and the conservation law  \eqref{conservationlaw},    we have 
\be\label{oct7eqn81}
\begin{split}
\big|Err^{\mu,a}_{k,j;n}(t_1, t_2)\big|&\lesssim \mathcal{M}(C) 2^{k+  2n-2\max\{l,n\} +3\epsilon M_t} \min\{  2^{-j},  2^{-3k-2n} 2^{3j+2l} \}\\
&\quad \times \big(2^{-a_p+(\gamma_1-\gamma_2)M_t}(t_2-t_1) +1 \big)\\
& \lesssim  \mathcal{M}(C)\big(2^{-a_p+(\gamma_1-\gamma_2)M_t }(t_2-t_1) +1 \big) 2^{M_t/3 + 5\epsilon M_t}\\
& \lesssim  \mathcal{M}(C)\big(2^{-a_p+(\gamma_1-\gamma_2)M_t}(t_2-t_1) +1 \big) 2^{\alpha^{\star}M_t/2  }. 
\end{split}
\ee
 
\medskip

\noindent \textbf{Step 2.}\quad   If $a=4$. 

\medskip

 Note that, due to the cutoff function $\varphi_{j,n}^4(v, \zeta)$ (see \eqref{sep4eqn6}), for this case, we have $|
 \tilde{v}-\tilde{V}(s)|\in [ 2^{n-3\epsilon M_t/2}, 2^{n+3\epsilon M_t/2}]$. Recall \eqref{oct7eqn62}. In terms of kernels, 
 we have
 \be\label{2021dec27eqn21}
 \begin{split}
   | Err^{\mu,4;\kappa}_{k,j;n}(t_1, t_2) | &\lesssim     \sum_{  a=1,2}     \int_{ \R^3  }  \int_{ \R^3  } f(t_a, X(t_a)-y , v ) \mathcal{K}_{k,j,n}^{\kappa}(  X_{\bot}(t_a), y, v, V(t_a)) dy d v \\
&\quad +  \int_{t_1}^{t_2}   \int_{ \R^3  }  \int_{ \R^3  }  f(s, X(s)-y , v ) \widetilde{\mathcal{K}}_{k,j,n}^{\kappa}( X_{\bot}(s),y, v, V(s)) dy d v ds, \\
\end{split}
 \ee
where $\forall s\in [t_1, t_2],  $  the kernels are defined as follows,
\[
\begin{split}
  \mathcal{K}_{k,j,n}^{\kappa}(  X_{\bot}(s), y, v, V(s)) & = \int_{\R^3} e^{i y\cdot \xi }   \mathcal{F}[ {}_{}^{1}\mathfrak{E}^{\kappa} ](   X_{\bot}(s),   \xi, v, V(s))   d \xi, \\ 
 \widetilde{\mathcal{K}}_{k,j,n}^{\kappa}( X_{\bot}(s), y, v, V(s)) &= \int_{\R^3} e^{i y\cdot \xi }  {\hat{V}}_{\bot}(s)\cdot \nabla_{ x_{\bot}}\big(   \mathcal{F}[ {}_{}^{1}\mathfrak{E}^{\kappa} ](   x_{\bot},   \xi, v, V(s))\big)\big|_{  x_{\bot} =  X_{\bot}(s)}     d \xi .
\end{split}
\]

 As in the proof of Lemma \ref{smallangest}, after doing 
integration by parts in $\xi$ with respect to the orthonormal frame  $\{(\hat{v}-\hat{V}(s))/|\hat{v}-\hat{V}(s), \theta_1(v, s),\theta_2(v, s)\}$ for the  kernel, for  $\forall s\in [t_1, t_2]$, we have
\be 
\begin{split}
&|   \mathcal{K}_{k,j,n}^{ \kappa}(  X_{\bot}(s),y, v, V(s))  | 
+ 2^{a_p-(\gamma_1-\gamma_2)M_t} |  \widetilde{\mathcal{K}}_{k,j,n}^{ \kappa}(  X_{\bot}(s), y, v, V(s))  |\\
&\lesssim   2^{k }\mathcal{M}(C)     (1+2^{k+  {\kappa} }(y\cdot (\hat{v}-\hat{V}(s)/|\hat{v}-\hat{V}(s)|) )) \\
&\quad \times ( 1+ 2^{k+n}|y\cdot \theta_1(v, s)|)^{-100}  (1+2^k|y\cdot \theta_2(v,s)|)^{-100}  .\\
\end{split}
\ee

Recall that $\bar{\kappa}:=(\gamma_1-\gamma_2)M_t-n-30\epsilon M_t.$  From the above estimate of kernels, the obtained estimate  \eqref{2021dec27eqn21}, the volume of support of $v$,  and the conservation law  \eqref{conservationlaw}, we have
\be
\begin{split}
    | Err^{\mu,4;\kappa}_{k,j;n}(t_1, t_2) | & \lesssim    \mathcal{M}(C) \big(2^{-a_p+(\gamma_1-\gamma_2) M_t}(t_2-t_1) +1 \big) 2^{k+  \epsilon M_t} \\
 &\quad \times \min\{  2^{-j},  2^{-3k-n-\kappa} 2^{3j+2n+2\epsilon M_t} \}\\
& \lesssim   \mathcal{M}(C)\big(2^{-a_p+(\gamma_1-\gamma_2)M_t}(t_2-t_1) +1 \big) 2^{\alpha^{\star}M_t/2 -(\gamma_1-\gamma_2) M_t/3+ 30\epsilon M_t  }\\
& \lesssim   \mathcal{M}(C)\big(2^{-a_p+(\gamma_1-\gamma_2)M_t}(t_2-t_1) +1 \big) 2^{2\alpha^{\star}M_t/3    }.\\
\end{split}
\ee
Therefore, the desired estimate  \eqref{oct3eqn31}  holds after combining the above estimate and the obtained estimate  \eqref{oct7eqn81}. 
\end{proof}

\subsubsection{Proof of Proposition \ref{elliptfinalpro}}\label{mainproposition2proof}
 
The desired estimate \eqref{oct25eqn21} holds  from the estimate \eqref{oct3eqn52} in Lemma \ref{trivialcaseell} for the trivial cases, the   estimate \eqref{oct3eqn115} in Lemma \ref{smallangest} for the threshold case, the estimate  \eqref{oct7eqn65}  in Lemma \ref{ellerrtyp2}, and the estimate  \eqref{oct14eqn61}  in Lemma 
\ref{mainell3part1} for the main parts in the decompositions in \eqref{2024oct23eqn11} and \eqref{2024oct23eqn12}, and  the estimate  \eqref{oct3eqn31}  in Lemma \ref{error3part1} for the error parts in the decompositions in \eqref{2024oct23eqn11} and \eqref{2024oct23eqn12}.

The essential notations employed in this section are systematically detailed in Table \ref{tablesection5}.
\begin{table}[H] 
\centering
\resizebox{\columnwidth}{!}{%
\begin{tabular}{ |c|c|c|c| } 
 \hline
 Notation & Definition &  Remarks \\
 \hline
 $\mathcal{E}_i$ & \eqref{indexsetsec4}  & The index set of $(k,n)$; Providing  lower bounds for $k+2n$ and $n$. \\
 \hline
 $\mathfrak{E}^{\mu,i}_{k,j;n}(t_1, t_2)$ & \eqref{oct25eqn1} & Accumulated effect of the elliptic parts over the time interval $[t_1, t_2]$\\
 \hline
 ${}_a^{1}\mathfrak{H}^{\mu,i}_{k,j;n}(t_1, t_2)$, $a\in\{0,1\}$ & \eqref{nov12eqn61} &  Accumulated effect of the    Type-I and Type-II hyperbolic part \\
 \hline
 ${}_a^{1}Err^{\mu,i}_{k,j;n}(t_1, t_2)$ & \eqref{oct7eqn31} & The error terms in the first reduction; estimated in Lemma \ref{erroesttyp1} \\ 
 \hline
 ${}^{1}\mathcal{E}_{k,n}$ & \eqref{firstiterindex} & The index set of $(k_1,n_1)$ for the second iteration\\
 \hline
 $Err^a_{ i, i_1}(t_1, t_2)$ & \eqref{oct8eqn1}--\eqref{oct10eqn93} & Error type terms in  the second iteration; estimated in Lemma \ref{2021errhorstep1}\\
 \hline
 $\mathfrak{H}_{ i, i_1,i_2}(t_1, t_2)$ & \eqref{oct29eqn55} & The main hyperbolic parts in the second iteration\\
 \hline
  ${}^{2}\mathcal{E}_{k_1,n_1}$ & \eqref{seconditerindex} & The index set of $(k_2,n_2)$ for the third iteration\\
 \hline
 $Err^a_{ i, i_1,i_2}(t_1, t_2)$ & \eqref{oct11eqn187}--\eqref{oct11eqn43} & Error type terms in  the third iteration; estimated in Lemma     \ref{2021errhorstep2}  \\
 \hline
 $\mathfrak{H}_{ i, i_1,i_2,i_3}(t_1, t_2)$ & \eqref{oct29eqn61} & The main hyperbolic parts in the third iteration\\
 \hline
  ${}^{3}\mathcal{E}_{k_2,n_2}$ & \eqref{thirditerindex} &  The index set of $(k_3,n_3)$ for the fourth iteration\\
\hline 
$ Err^a_{ i, i_1,i_2,i_3}(t_1, t_2)$ & \eqref{oct12eqn2} &  Error type terms in  the fourth iteration; estimated in Lemma   \ref{2021errhorstep3}\\
\hline
$  \mathfrak{H}_{ i, i_1,i_2,i_3,i_4}(t_1, t_2)$ & \eqref{oct12eqn76} &  The main hyperbolic parts in the fourth iteration\\
\hline
  ${}^{4}\mathcal{E}_{k_3,n_3}$  & \eqref{forthiterindex} &  The index set of $(k_4,n_4)$ for the last iteration\\
\hline 
$  Err^i_{ i, i_1,i_2,i_3,i_4}(t_1, t_2)$ & \eqref{2022feb22eqn64} &  Error type terms in  the last iteration; estimated in Lemma   \ref{2022errhorstep5}\\
\hline
$ {}_{}^5\mathfrak{H}(t_1, t_2)$ & \eqref{2022feb22eqn31} &  The main hyperbolic parts in the final iteration;\\
& &  estimated directly without further iteration; see\eqref{oct12eqn98}. \\
\hline
$  \mathfrak{E}^{\mu, 4;\kappa }_{k,j,n}(t_1, t_2)$ & \eqref{oct3eqn111} & Further decomposition for $  \mathfrak{E}^{\mu, 4  }_{k,j,n}(t_1, t_2)$ based on the time resonance set.\\
\hline
$Err^{\mu,a}_{k,j;n}(t_1, t_2)$, $Err^{\mu,4;\kappa }_{k,j,n}(t_1, t_2)$ & \eqref{oct7eqn62} & Error terms in the first iteration for  \\
& & the elliptic parts; estimated  in Lemma \ref{error3part1} \\
\hline
$ {}_{b}^1\mathfrak{E}^{\mu,a}_{k,j;n}(t_1, t_2)$, $ {}_{b}^1 \mathfrak{E}^{\mu,4;\kappa }_{k,j;n}(t_1, t_2)$ &\eqref{oct29eqn82}, \eqref{2024oct28eqn1}  & The main terms  in the first iteration for  the elliptic parts\\
\hline
$ \mathfrak{HE}_{k_1,j_1;n_1}^{i,i_1, \mu_1}(t_1,t_2)$ & \eqref{2022feb25eqn61} & The hyperbolic-elliptic interaction of $ {}_{1}^1\mathfrak{EH}^{\mu,a}_{k,j;n}(t_1, t_2)$ and $ {}_{1}^1 \mathfrak{E}^{\mu,4;\kappa }_{k,j;n}(t_1, t_2) $  \\
\hline
$\mathfrak{EE}_{k_1,j_1;n_1}^{i,i_1, \mu_1}(t_1, t_2) $ & \eqref{2026may1eqn1} & The elliptic-elliptic interaction of $ {}_{1}^1\mathfrak{EH}^{\mu,a}_{k,j;n}(t_1, t_2)$ and $ {}_{1}^1 \mathfrak{E}^{\mu,4;\kappa }_{k,j;n}(t_1, t_2) $  \\
\hline
\end{tabular}%
}
\caption{Essential notations in section \ref{mainimprovedfull}.}\label{tablesection5}
\end{table}

\section{ISS Part II:   Proof of   Proposition \ref{bootstraplemma2}}\label{fullimproved}

 As in Section \ref{mainimprovedfull}, we rigorously prove that the desired estimate \eqref{oct29eqn70} from Proposition \ref{bootstraplemma2} also holds for a general class of coefficients. 

 Assume that $C(\cdot, \cdot):\R^2 \times \R^3\rightarrow \R^3$ is a smooth function s.t., the following estimate holds for any $  s \in [t_1, t_2]$,
\be\label{2024oct30eqn51}
 \big|C(  x_{\bot}, V(s))\big|   + |V(s)| \big|\nabla_{v}C( x_{\bot}, V(s))|_{v = V(s)} \big|\lesssim \mathcal{M}(C),\quad \nabla_{  x_{\bot}} C( x_{\bot}, V(s))=0.
\ee
Under the above assumptions on the coefficients, we prove the following estimate:
\be\label{2025oct16eqn15eqn3}
\begin{split}
&\big|\int_{t_1}^{t_2} C(  X_{\bot}(s), V(s)) \cdot   K(s, X(s), V(s)) d s \big|   \lesssim  \mathcal{M}(C)   2^{   (\gamma-5\epsilon) M_t } .
\end{split}
\ee
Direct computation shows that the coefficient $ ({V_1(s)}/{| V(s)|},   {V_2(s)}/{| V(s)|},{V_3(s)}/{| V(s)|}\big)$ appeared  in \eqref{oct29eqn70} satisfies the assumptions in \eqref{2024oct30eqn51} with the bound $\mathcal{M}(C)=1.$

Note that \eqref{2025oct16eqn15eqn2} and \eqref{2025oct16eqn15eqn3} provide estimates for the same quantity. The only difference lies in the bootstrap assumptions regarding the velocity characteristics and the coefficient $C$. Specifically, the assumption $\nabla_{ x_{\bot}} C(  x_{\bot}, V(t)) = 0$ in \eqref{2024oct30eqn51} reduces the number of terms in the iterative process. To maintain notational consistency, we adopt the notation from Section \ref{mainimprovedfull}.

\subsection{The first reduction}

  Define
\be\label{oct15eqn1} 
\begin{split}
\widetilde{\mathcal{E}}_0 =\widetilde{\mathcal{E}}'_1 =\widetilde{\mathcal{E}}_2&:=\big(\{( k,n):  k\in \Z_+,  n\in    [- (\alpha^{\star} +3\iota+ 60\epsilon ) M_t  ,2]\cap\Z,   \\ 
& \quad k+2n\geq  2(1/3-4\iota -130\epsilon)M_t, k+4n\geq  -2(6\iota+130\epsilon) M_t   \},\\  
  \widetilde{\mathcal{E}}_1&:=\{(  k,n):  k\in \Z_+,  n\in    [- (\alpha^{\star}  +3\iota+ 60\epsilon ) M_t,2]\cap\Z,   \\ 
 &\quad   k+2n\geq 2(2/3- 2\iota  -130\epsilon)M_t   ,    k+4n\geq  -2(6\iota+130\epsilon) M_t    \}, \\
    \widetilde{\mathcal{E}}_3=   \widetilde{\mathcal{E}}_4 &:=\{(    k,n):  k\in \Z_+, n\in    [-   ((1+3\iota)/2+50\epsilon) M_t,2]\cap\Z, \\ 
 & \quad  k+2n\geq  2(1/3-   \iota -130\epsilon)M_t, k+4n\geq   -(1/3 + 5\iota/2 +130\epsilon) M_t  \}.  
\end{split}
\ee

Recall the estimate \eqref{oct2eqn41}.   From the estimate \eqref{2022feb25eqn1} in Theorem \ref{maintheoremellipitic}  and    the estimates in  \eqref{2024oct8eqn1} in Theorem \ref{mainresultsfirstpart},  the following estimate holds for any $i\in \{0,2,3,4\}, (n,k)\in (\widetilde{\mathcal{E}}_i)^c,$ or $i=1, (n,k)\in (\widetilde{\mathcal{E}}'_1)^c$,
\[
 \big| \int_{t_1}^{t_2}    C( X_{\bot}(s), V(s)) \cdot   T_{k,j, n}^{\mu,i}(s,X(s), V(s)) ds \big|\lesssim  \mathcal{M}(C)  2^{   (\gamma-10\epsilon) M_t }  .
\]

Now, it remains to estimate the case  $i\in \{0,2,3,4\}, (n,k)\in  \widetilde{\mathcal{E}}_i ,$ or $i=1, (n,k)\in  \widetilde{\mathcal{E}}'_1  $.  For this case, we use the decomposition of the localized acceleration force in   \eqref{oct7eqn1}. Note that, from     the second estimate in  \eqref{2024oct8eqn1} in Theorem \ref{mainresultsfirstpart},     if $i=1, (n_1,k_1)\in (\widetilde{\mathcal{E}}_1)^c$,  we have 
\be
\big|  \int_{t_1}^{t_2}     C (  X_{\bot}(s), V(s))  \cdot K_{k,j;n}^{\mu,i } (s, X(s), V(s)) d s  \big|\lesssim    \mathcal{M}(C)  2^{   (\gamma-10\epsilon) M_t }. 
\ee

We emphasize that despite the range of $(n,k)$ for the case $i=1$ is improved for the hyperbolic parts, the range of $(n,k)$ remains same, i.e., $(n,k)\in  \widetilde{\mathcal{E}}'_1$ for the case $i=1$, for the elliptic parts.  

As in  \eqref{oct25eqn2} for the case $i\in\{0,1,2,3,4\}$, $(n,k)\in \widetilde{\mathcal{E}}_i$, we do integration by parts in  ``$s$'' once for the hyperbolic part. As a result, the obtained equality in  \eqref{oct25eqn41}  is still valid.  

The   proof of Proposition \ref{bootstraplemma2} is organized as follows.
\begin{enumerate}
\item[$\bullet$] In   section \ref{hyperbolipartISSpartII}, we estimate the hyperbolic type terms ${}_a^{1}\mathfrak{H}^{\mu,i}_{k,j;n}(t_1, t_2), $ $i\in\{0,1,2,3,4\}$, $a\in\{0,1\},$ in  \eqref{oct25eqn41}. 
\item[$\bullet$]In   section \ref{ellipticpartPartII}, we estimate the elliptic type terms $ \mathfrak{E}^{\mu,i}_{k,j;n}(t_1, t_2), i\in\{0,1,2,3,4\}$  in  \eqref{oct25eqn41}. 
\item[$\bullet$] In section \ref{errorcpartPartII}, we estimate the error terms $ {}_a^{1}Err^{\mu,i}_{k,j;n}(t_1, t_2), a\in\{0,1\}$, in \eqref{oct25eqn41},   and the error terms  produced in the ISS process of estimating the hyperbolic parts and the elliptic parts. 
\end{enumerate}

 \subsection{Estimating the hyperbolic parts}\label{hyperbolipartISSpartII}

Same as in section \ref{mainimprovedfull}, c.f., \eqref{2021dec24eqn1},   from the rough estimate of   the electromagnetic field \eqref{maintheoremroughest}   in Theorem \ref{maintheorem1part1}, thanks to the smoothing effect from the normal form transformation, the very large frequency case, e.g., $k\geq 50M_t$, can be safely discarded.  Therefore, it suffices to consider fixed $i\in\{0,1,2,3,4\},(k,n)\in \widetilde{\mathcal{E}}_i  $.     We will also use this observation in later argument. Hence, without further explanation, by paying the price of at most $2^{\epsilon M_t/1000}$,  we don't worry about the  summability issue with respect to frequency and it suffices to let the frequency scale variables, e.g., $k,k_1,k_2,$ etc., be fixed. 

In the following Lemma, we first  estimate  $ {}_0^{1}\mathfrak{H}^{\mu,i}_{k,j;n}(t_1, t_2) $.   

\begin{lemma}
Let $ i\in \{0, 1,2,3,4\},   ( n,k)\in \widetilde{\mathcal{E}}_i$. Under the assumption of Proposition \textup{\ref{bootstraplemma2}}, we have
\be\label{2024oct29eqn1}
\big| {}_0^{1}\mathfrak{H}^{\mu,i}_{k,j;n}(t_1, t_2)  \big|\lesssim \mathcal{M}(C) 2^{(\gamma-10\epsilon)M_t} . \
\ee
\end{lemma}
\begin{proof}

   Recall \eqref{nov12eqn61} and  the decomposition of the acceleration force in  \eqref{oct7eqn1}. 

   \medskip
\noindent \textbf{Step 1.}\quad The first reduction: contribution from the elliptic part. 
   \medskip

We first estimate the contribution of the elliptic part of the new introduced acceleration force in  \eqref{nov12eqn61}.   From the $L^\infty_{x}$-estimate of the elliptic parts  in     \eqref{2022feb25eqn1} in Theorem \ref{maintheoremellipitic} and   the $L^\infty_{x}$-estimate of the hyperbolic parts  in \eqref{2024oct8eqn1} in Theorem \ref{mainresultsfirstpart}, we have  
\be 
\begin{split}
&\sum_{\begin{subarray}{c}
k_1, j_1\in \Z_+, n_1\in [-M_t,2]\cap \Z\\
 \mu_1\in\{+,-\},a_1\in \{0,1,2,3\}  \\  
\end{subarray} }  \big| \int_{t_1}^{t_2} \int_{\R^3} e^{i X(s)\cdot \xi + i \mu s|\xi|} \\
&\quad \times  \mathfrak{E}^{\mu_1, a_1 }_{k_1,j_1;n_1} (s, X(s), V(s)) \cdot \mathcal{F}[ {}^1  \mathfrak{H} ](s, \xi,  X_{\bot}(s), V(s))  d \xi d s\big|\\
&\lesssim    2^{5\alpha^{\star}  M_t/3 +10\epsilon M_t}    2^{-\gamma M_t-n} 2^{  (\alpha^\star + 3\iota+130\epsilon)M_t- (k+2n)/2  } \mathcal{M}(C)\\
 &\lesssim 2^{(\gamma-10\epsilon)M_t} \mathcal{M}(C). \\
\end{split}
\ee
Similar to the obtained estimate \eqref{2024oct29eqn11}, from the above estimate, we have
\be\label{2024oct29eqn12}
\begin{split}
\big|{}_0^{1}\mathfrak{H}_{k,j;n}(t_1, t_2)\big|&\lesssim \sum_{\begin{subarray}{c}
k_1 \in \Z_+, j_1\in [0, (1+2\epsilon)M_t]\cap \Z,  \mu_1\in\{+,-\} \\ 
 i_1\in\{0,1,2,3,4\}, n_1\in [-M_t,2]\cap \Z, 
\end{subarray}}     |\mathfrak{H}_{ i, i_1}(t_1, t_2)| +2^{(\gamma-10\epsilon)M_t} \mathcal{M}(C),
\end{split}
\ee
where $\mathfrak{H}_{ i, i_1}(t_1, t_2)$ is defined in \eqref{2024oct29eqn11}. 

 Let
\be\label{firstindexsetlemm2}
\begin{split}
\forall i_1\in \{0,1,2\}, \quad {}^{1}\widetilde{\mathcal{E}}_{k,n}^{i_1,i}&:=\{(k_1,n_1): k_1 \in \Z_+, n_1\in [- (\alpha^{\star} +3\iota+ 60\epsilon ) M_t, 2]\cap \Z,\\    
 &\quad (k_1+2n_1)/2 - (k+2n)/2\geq  (1/3 
- \mathbf{1}_{i\in \{3,4\}}/6-10\iota   -300\epsilon) M_t\},\\
 \forall i_1\in \{3,4\},  {}^{1}\widetilde{\mathcal{E}}_{k,n}^{i_1,i} & :=\{(k_1,n_1): k_1 \in \Z_+, n_1\in [-  ((1+3\iota)/2+50\epsilon) M_t, 2]\cap \Z,  \\ 
 &\quad (k_1+2n_1)/2- (k+2n)/2\geq  ( 1/3
- \mathbf{1}_{i\in \{3,4\}}/6-7\iota   -200\epsilon) M_t\}. 
\end{split}
\ee

From  the $L^\infty_{x}$-estimate of the hyperbolic parts  in \eqref{2024oct8eqn1} in Theorem \ref{mainresultsfirstpart},  for  $i\in \{0,1, 2,3,4\},$ $(k_1, n_1)\notin  {}^{1}\widetilde{\mathcal{E}}_{k,n}^{i_1,i}$, we have 
\be 
\begin{split}
  \big| \mathfrak{H}_{ i, i_1}(t_1, t_2)\big|  
 & \lesssim  \mathcal{M}(C) 2^{-\gamma M_t-k-3n+170\epsilon M_t}   \big[ 2^{(\gamma-10\epsilon)M_t} + 2^{ (k_1+2n_1)/2+ (2/3+4\iota ) M_t }  \\
 &\quad \times \big(\mathbf{1}_{n_1\geq -  (\alpha^{\star} +3\iota+ 60\epsilon ) M_t, i_1\in\{0,1,2\}} +  \mathbf{1}_{n_1\geq  -  ((1+3\iota)/2+30\epsilon) M_t, i_1\in\{3,4\} }\big)\big]  \\
&\quad \times \big( 2^{(\gamma-10\epsilon)M_t} +  2^{(k+4n)/2+(1+6\iota )M_t}\mathbf{1}_{  i \in\{0,1,2\}  } +2^{(k+4n)/2+ (7 /6+5\iota/2)M_t}\mathbf{1}_{ i \in\{3,4\} }\big)  \\
 &\lesssim 2^{ (\gamma-10\epsilon)M_t}\mathcal{M}(C).\\
 \end{split}
\ee

   \medskip
\noindent \textbf{Step 2.}\quad The second iteration of smoothing. 
   \medskip

Now, it remains to   estimate  $  \mathfrak{H}_{ i, i_1}(t_1, t_2)$    for  the case $i\in\{0,1,2,3,4\}$, $(k,n)\in \widetilde{\mathcal{E}}_{i}$,  $ (k_1, n_1)\in  {}^{1}\widetilde{\mathcal{E}}_{k,n}^{i_1,i}$. For this case, we do integration by parts in time once. As a result, the equality in  \eqref{oct29eqn55} is still valid. 

 The estimate of the error type terms are postponed  to  Lemma \ref{errmainfull1}. Therefore, it suffices to estimate $  \mathfrak{H}_{ i, i_1,i_2}(t_1, t_2)$ in \eqref{oct29eqn55}. Let 
\be\label{2026may1eqn11}
\begin{split}
{}^{2}\widetilde{\mathcal{E}}_{k_1,n_1} :=\{(k_2,n_2) &: k_2 \in \Z_+, n_2\in [- (\alpha^{\star} +3\iota + 60\epsilon ) M_t, 2]\cap \Z,    \\
&\quad (k_2+2n_2)/2- (k_1+2n_1)/2  \geq  (1/3-19\iota)M_t  \}.\\
\end{split}
\ee

From  the $L^\infty_{x}$-estimate of the hyperbolic parts  in \eqref{2024oct8eqn1} in Theorem \ref{mainresultsfirstpart},   for any  $i, i_1,i_2\in \{0,1, 2, 3,4\},$ $(k,n)\in \widetilde{\mathcal{E}}_{i}$,  $ (k_1, n_1)\in  {}^{1}\widetilde{\mathcal{E}}_{k,n}^{i_1,i}$, $(k_2, n_2)\notin  {}^{2}\widetilde{\mathcal{E}}_{k_1,n_1} $,  we have 
\be\label{2021dec29eqn31}
\begin{split}
 | \mathfrak{H}_{ i, i_1,i_2}(t_1, t_2)| 
& \lesssim \mathcal{M}(C)2^{500\epsilon M_t}  \big( 2^{(1-\epsilon)M_t} + 2^{(k_2+2n_2)/2+  (2/3+4\iota) M_t  }\mathbf{1}_{n_2\geq  - (\alpha^{\star} +3\iota + 60\epsilon ) M_t } \big)   \\
&\quad  \times 2^{-2\gamma_2 M_t  } \big[2^{ 7M_t/6-(k_1+2n_1)/2 + 7M_t/6-(k+2n)/2 + 5\iota M_t}  \\
&\quad  +  2^{ (2/3+4\iota) M_t -(k_1+2n_1)/2 } 2^{7M_t/6+ 5\iota M_t/2-(k+4n)/2}  \big]\\
& \lesssim \mathcal{M}(C) 2^{(\gamma-10\epsilon)M_t}. 
\end{split}
\ee
 
   \medskip
\noindent \textbf{Step 3.}\quad The third iteration of smoothing. 
   \medskip

Now, it remains to estimate  $ \mathfrak{H}_{ i, i_1,i_2}(t_1, t_2)$    for  the case  $i, i_1,i_2\in \{0,1, 2, 3,4\},$ $(k,n)\in \widetilde{\mathcal{E}}_{i}$,  $ (k_1, n_1)\in  {}^{1}\widetilde{\mathcal{E}}_{k,n}^{i_1, i}$, $(k_2, n_2)\in  {}^{2}\widetilde{\mathcal{E}}_{k_1,n_1} $. For this case,  we do integration by parts in time once.  As a result, the equality  in  \eqref{oct29eqn61}  is still valid.

 The estimate of the error type terms are postponed  to  Lemma \ref{errortypefull6}. Therefore, it suffices to estimate $  \mathfrak{H}_{ i, i_1,i_2,i_3}(t_1, t_2)$ in \eqref{oct29eqn55}.  Let 
\be\label{2026may1eqn12}
\begin{split}
{}^{3}\widetilde{\mathcal{E}}_{k_2,n_2} :=\{(k_3,n_3)&:  k_3 \in \Z_+, n_3\in [- (\alpha^{\star} + 50\epsilon ) M_t, 2]\cap \Z,    \\
 & (k_3+2n_3)/2- (k_2+2n_2)/2\geq  (1/2-50\iota)M_t  \} .
 \end{split}
\ee

From  the $L^\infty_{x}$-estimate of the hyperbolic parts  in \eqref{2024oct8eqn1} in Theorem \ref{mainresultsfirstpart},    for any $ (k_3, n_3)\notin  {}^{3}\widetilde{\mathcal{E}}_{k_2,n_2}$, we have 
\[
\begin{split}
| \mathfrak{H}_{ i, i_1,i_2,i_3}(t_1, t_2)| & \lesssim \mathcal{M}(C)2^{700\epsilon M_t}   \big( 2^{(1-\epsilon)M_t} + 2^{(k_3+2n_3)/2+ (2/3+4\iota) M_t} \mathbf{1}_{n_3\geq - (\alpha^{\star} +3\iota + 60\epsilon ) M_t  } \big)  \\
 &\quad \times 2^{-3\gamma  M_t}  2^{2(2/3+4\iota)   M_t-(k_2+2n_2)/2}   2^{(7/3+ 12\iota )M_t -(k_1+2n_1)/2  -(k+2n)/2} \\
 &  \lesssim \mathcal{M}(C) 2^{(\gamma-10\epsilon)M_t}.  
 \end{split}
\]

   \medskip
\noindent \textbf{Step 4.}\quad The fourth iteration of smoothing. 
   \medskip

Now, it remains to estimate  $  \mathfrak{H}_{ i, i_1,i_2,i_3}(t_1, t_2)$ for any fixed
   $i, i_1,i_2,i_3\in \{0,1, 2, 3,4\},$ $(k,n)\in \widetilde{\mathcal{E}}_{i}$,  $ (k_1, n_1)\in  {}^{1}\widetilde{\mathcal{E}}_{k,n}^{i_1,i}$, $(k_2, n_2)\in  {}^{2}\widetilde{\mathcal{E}}_{k_1,n_1} $ $ (k_3, n_3)\in  {}^{3}\widetilde{\mathcal{E}}_{k_2,n_2}$.  For this case,  we do integration by parts in time once.  As a result, the equality  in  \eqref{oct12eqn76}  is still valid.

 The estimate of the error type terms are postponed  to  Lemma \ref{errortypefull8}.  From the rough estimate of the electromagnetic field \eqref{maintheoremroughest} in Theorem \ref{maintheorem1part1} and the estimate \eqref{2022feb25eqn1} in Theorem \ref{maintheoremellipitic} , we have 
 \[
 \begin{split}
 |  \mathfrak{H}_{ i, i_1,i_2,i_3,i_4}(t_1, t_2) |&\lesssim \mathcal{M}(C)   2^{(1+2\alpha^{\star})M_t } 2^{-4\gamma M_t} 2^{  2(2/3+4\iota) M_t-(k_2+2n_2)/2} \\
 &\quad \times   2^{ 2(2/3+4\iota)  M_t-(k_3+2n_3)/2}  2^{(7/3+15\iota )M_t -(k_1+2n_1)/2  -(k+2n)/2}  \\
 & \lesssim \mathcal{M}(C)2^{M_t/2}\\
 &  \lesssim \mathcal{M}(C) 2^{(\gamma-10\epsilon)M_t}.  \\
 \end{split} 
 \]
 Hence finishing the      desired  estimate of  ${}_0^{1}\mathfrak{H}^{\mu,i}_{k,j;n}(t_1, t_2) $ in \eqref{2024oct29eqn1}.  

\end{proof}

 \begin{lemma}
Let $ i\in \{0, 1,2,3,4\},   ( n,k)\in \widetilde{\mathcal{E}}_i$. Under the assumption of Proposition \textup{\ref{bootstraplemma2}}, we have
\be\label{2024oct29eqn31}
\big| {}_1^{1}\mathfrak{H}^{\mu,i}_{k,j;n}(t_1, t_2)  \big|\lesssim \mathcal{M}(C) 2^{(\gamma-10\epsilon)M_t} . \
\ee
\end{lemma}
\begin{proof}

Recall \eqref{nov12eqn61}.  We split $ {}_1^{1}\mathfrak{H}^{\mu,i}_{k,j;n}(t_1, t_2) $ into two parts as follows, 

\be\label{oct21eqn21}
{}_1^{1}\mathfrak{H}^{\mu,i}_{k,j;n}(t_1, t_2) =   {}_{1}^{1}\widetilde{\mathfrak{H}}^{\mu,i}_{k,j;n}(t_1, t_2) +{}_{1}^{1}Err\mathfrak{H}^{\mu,i}_{k,j;n}(t_1, t_2)  ,
\ee
where
\be\label{oct29eqn91}
\begin{split}
{}_{1}^{1}\widetilde{\mathfrak{H}}^{\mu,i}_{k,j;n}(t_1, t_2)&:=\int_{t_1}^{t_2} \int_{\R^3}\int_{\R^3} e^{i X(s)\cdot \xi   } 
  |\xi|^{-1}   ({ i \hat{V}(s)\cdot \xi + i \mu |\xi| })^{-1} \\
&\quad  \times \nabla_v \big( C (  X_{\bot}(s), V(s))\cdot
    \tilde{\varphi}_{k,j,n}^i (v,\xi,  V(s))\big)   \\
&\quad \cdot \mathcal{F}\big(K(s,\cdot, V(s)) f\big)(s, \xi, v)
d\xi d v ds,\\
{}_{1}^{1}Err\mathfrak{H}^{\mu,i}_{k,j;n}(t_1, t_2)&:=\int_{t_1}^{t_2} \int_{\R^3}\int_{\R^3} e^{i X(s)\cdot \xi   }  |\xi|^{-1}   ({ i \hat{V}(s)\cdot \xi + i \mu |\xi| })^{-1}\\
 &\quad \times ((\hat{v}-\hat{V}(s))\times B f)\cdot \nabla_v\big( C ( X_{\bot}(s), V(s))\cdot
    \tilde{\varphi}_{k,j,n}^i (v,\xi,  V(s))\big) d\xi d v d s.\\
\end{split}
\ee

The error term $ {}_{1}^{1}Err\mathfrak{H}^{\mu,i}_{k,j;n}(t_1, t_2)$ will be estimated in Lemma \ref{errortypefull}. The hyperbolic part  $ {}_{1}^{1}\widetilde{\mathfrak{H}}^{\mu,i}_{k,j;n}(t_1, t_2)  $  exhibits a structure analogous to a second-generation estimate of the elliptic parts. Both are estimated using the same method in Lemma \ref{errortypefullhyp2}.

 Therefore, from the estimate  \eqref{oct19eqn31}  in Lemma \ref{errortypefull} and the estimate  \eqref{2024oct30eqn52}  in Lemma \ref{errortypefullhyp2},   for any $ i\in \{0,1,2,3,4\},   ( n,k)\in \widetilde{\mathcal{E}}_i,$  $ \mu\in\{+,-\},$  we have 
\be\label{oct29eqn50}
\big|{}_1^{1}\mathfrak{H}^{\mu,i}_{k,j;n}(t_1, t_2)\big|   \lesssim \mathcal{M}(C) 2^{(\gamma-10\epsilon)M_t}.  
\ee
This finishing the proof of  the desired estimate \eqref{2024oct29eqn31}. 
\end{proof}

\subsection{Estimating the elliptic parts}\label{ellipticpartPartII}

As summarized in the following Lemma, the goal of this section is to estimate the elliptic parts, i.e., $ \mathfrak{E}^{\mu,i}_{k,j;n}(t_1, t_2), i\in\{0,1,2,3,4\} $, in \eqref{oct25eqn41}.   

\begin{lemma}\label{ellipticestpartII}
Let $i\in\{0,   2,3,4 \},     (k,n)\in \widetilde{\mathcal{E}}_i$ or $i=1, (k,n)\in \widetilde{\mathcal{E}}'_1$, $ j\in [0, (1+2\epsilon)M_t]\cap \Z_+$, $\mu\in\{+, -\}.$ Under the assumption of Proposition \textup{\ref{bootstraplemma2}}, we have
\be\label{2024oct29eqn111}
\big| \mathfrak{E}^{\mu,i}_{k,j;n}(t_1, t_2)  \big|\lesssim \mathcal{M}(C) 2^{(\gamma-9.9\epsilon)M_t} . \
\ee
\end{lemma}
\begin{proof}
Postponed to the end of this section, see section \ref{proofofellipticestpartII}.
\end{proof}

\subsubsection{Ruling out some trivial cases. }\label{trivialcasespartIIISS}
 
  Recall   \eqref{oct25eqn1}. For any $s\in [t_1,t_2]$, we use the following partition of unity,
\be\label{2026may1eqn21}
1= \sum_{l\in [-M_t, 2]\cap \Z}\varphi_{l;-M_t}(\tilde{v}-\tilde{V}(s)),
\ee
 to  decompose  $\mathfrak{E}^{\mu,i}_{k,j;n}(t_1, t_2) $ dyadically further based on the size of $\tilde{v}-\tilde{V}(s)$ as follows, 
  \be\label{2024oct29eqn41}
   \mathfrak{E}^{\mu,i}_{k,j;n}(t_1, t_2)=\sum_{l\in [-M_t, 2]\cap \Z}  \mathfrak{E}^{\mu,i;l}_{k,j;n}(t_1, t_2),
  \ee
where the formulas of  $ \mathfrak{E}^{\mu,i;l}_{k,j;n}(t_1, t_2)$ can be obtained from  \eqref{oct25eqn1} and \eqref{2022feb24eqn81} straightforwardly. Hence we omit them here.

As in Lemma \ref{trivialcaseell},  we first rule out some range of parameters of $k,n, j$ etc, in the following Lemma.
\begin{lemma}\label{ellfulltyp1}
Let $i\in\{0,   2,3,4 \},     (k,n)\in \widetilde{\mathcal{E}}_i$ or $i=1, (k,n)\in \widetilde{\mathcal{E}}'_1$, $ j\in [0, (1+2\epsilon)M_t]\cap \Z_+$, $\mu\in\{+, -\},$ $l\in[-M_t,2]\cap\Z$. If $k+2n\leq 2(1- \alpha^{\star} )M_t - 3 0\epsilon M_t $, or  $k+4n/3\leq M_t- \alpha^{\star}  M_t/3-30 \epsilon  M_t,$  or $ \min\{l,n\}\leq (1-2 \alpha^{\star} )M_t-30\epsilon M_t$, or $j\leq 3M_t/5-20\epsilon M_t,$ or $k\geq 2\alpha^{\star}M_t+20\epsilon M_t,$ we  have 
 \be\label{oct20eqn12}
|   \mathfrak{E}^{\mu,i;l}_{k,j;n}(t_1, t_2)|\lesssim \mathcal{M}(C) 2^{(\gamma-10\epsilon)M_t}.
\ee

\end{lemma}
\begin{proof}

Recall  \eqref{oct25eqn1} and \eqref{2024oct29eqn41}.
 Note that, after writing  $\mathfrak{E}^{\mu,i;l}_{k,j;n}(t_1, t_2)$ in terms of kernel,   doing integration by parts in $\xi$ along $V(s)$ direction and directions perpendicular to $V(s)$ for the kernel, and using the volume of support of $v$ and the estimate  \eqref{nov24eqn41}  if $|v|\geq 2^{(\alpha_s+\epsilon)M_s}$,  the following estimate holds 
 if $k+2n\leq 2(1- \alpha^{\star}  )M_t-30\epsilon M_t$, or $k+4n/3\leq (1-\alpha^{\star} /3)M_t-30\epsilon M_t $  or $\min\{l,n\}\leq (1-2\alpha^{\star} )M_t-30\epsilon M_t$,  or $j\leq 3M_t/5-20\epsilon M_t,$ 
 \be\label{oct20eqn11}
 \begin{split}
  |    \mathfrak{E}^{\mu,i;l}_{k,j;n}(t_1, t_2)|  &\lesssim \mathcal{M}(C)  2^{2k+ \min\{l,n\}+n+3\epsilon M_t}     \min\big\{2^{-j}  , 2^{-3k-2n}\min\{2^{j+2 \alpha^{\star}   M_t}, 2^{3j}\} \big\}\\
 &\lesssim \mathcal{M}(C)  2^{2k+ \min\{l,n\}+n+3\epsilon M_t}   \min\big\{  2^{-3k-2n +j+2 \alpha^{\star}   M_t}, (2^{-j})^{1/2} (2^{-3k-2n+j+2 \alpha^{\star}   M_t})^{1/2}, \\
 &\quad \big( 2^{-j}  \big)^{1/3}\big(2^{-3k-2n}\min\{2^{j+2 \alpha^{\star}   M_t}, 2^{3j}\}\big)^{2/3}, \big( 2^{-j}  \big)^{2/3}\big(2^{-3k-2n} 2^{2j+  \alpha^{\star}   M_t} \big)^{1/3} \big\}\\
  &\lesssim  \mathcal{M}(C)  2^{5\epsilon M_t}\min\big\{2^{-k+j+2\alpha^{\star}M_t}, 2^{(k+2n)/2+ \alpha^{\star} M_t}, 2^{5j/3}, 2^{k+4n/3+\alpha^{\star} M_t/3}, \\
  &\quad 2^{2\min\{l,n\}/3}\min\{2^{ M_t/3 + 4\alpha^{\star} M_t/3}\}\big\} \\
 & \lesssim \mathcal{M}(C) 2^{(\gamma-10\epsilon)M_t}.\\
 \end{split}
 \ee
 Hence finishing the proof of our desire estimate \eqref{oct20eqn12}.  
\end{proof}

Now, by using the singular weighted spacetime estimate \eqref{nov1eqn1}  in Lemma \ref{singularweigh}, we rule out the case  $(k+2n)/2-l\leq 5 M_t/12-2\iota M_t $.   More precisely, we have 

\begin{lemma}\label{firstreductiontrivialelli}
Let $i\in\{0,   2,3,4 \},     (k,n)\in \widetilde{\mathcal{E}}_i$ or $i=1, (k,n)\in \widetilde{\mathcal{E}}'_1$.    For any $k\in \Z, j\in [0, (1+2\epsilon)M_t]\cap\Z, n\in[-M_t, 2]\cap \Z,$ $l\in[-M_t,2]\cap\Z,$ s.t., $k+2n\geq  2(1- \alpha^{\star} )M_t - 3 0\epsilon M_t $,   $k+4n/3\geq M_t- \alpha^{\star}  M_t/3-30 \epsilon  M_t,$   $ \min\{l,n\}\geq (1-2 \alpha^{\star} )M_t-30\epsilon M_t$,   $j\geq 3M_t/5-20\epsilon M_t$, and $k\leq 2\alpha^{\star}M_t+20\epsilon M_t$. If $(k+2n)/2-l\leq 5 M_t/12-2\iota M_t $, then the following estimate holds,
\be\label{nov9eqn31}
|  \mathfrak{E}^{\mu,i;l}_{k,j;n}(t_1, t_2)|\lesssim  \mathcal{M}(C) 2^{(\gamma-10\epsilon)M_t}.
\ee
 
\end{lemma}
\begin{proof}

Recall  \eqref{oct25eqn1} and \eqref{2024oct29eqn41}. Since $k+2n\geq  2(1- \alpha^{\star} )M_t - 3 0\epsilon M_t$, we have $l\geq -M_t/12+\iota M_t$. For this case, we have $|v_{\bot}|/|v|\sim 2^l$.   From the cylindrical symmetry,  the volume of support of $v$, the estimate  \eqref{nov24eqn41}  if $|v_{\bot}|\geq 2^{(\alpha_s+\epsilon)M_s}$, and  the singular weighted space-time estimate  \eqref{nov1eqn1}  in Lemma \ref{singularweigh}, we have 
\be 
\begin{split}
|  \mathfrak{E}^{\mu,i;l}_{k,j;n}(t_1, t_2)| & \lesssim    \mathcal{M}(C) 2^{2k+n+\min\{l,n\} +\epsilon M_t}  \int_{t_1}^{t_2} \int_{\R^3}\int_{\R^3} (1+2^{k}|y\cdot \tilde{V}(s)|)^{-100}\\
&\quad \times (1+2^{k+n}|y\times  \tilde{V}(s)|)^{-100}  \psi_{[j+l-5, j+l+5]}(  v_{\bot} )    f(s, X(s)-y, v)  \psi_{j}(v) d y d v d s \\
& \lesssim   \mathcal{M}(C) 2^{\min\{l,n\}-n+5\epsilon M_t}\min\{2^{-k+3j+ 2l }, 2^{k+n-2(j+l)+j  } A_l(t_2) \}\\
&\lesssim \mathcal{M}(C) 2^{5\epsilon M_t} \big(2^{-k+3j+ 2l }\big)^{1/4}  \big(  2^{k+n-2(j+l)+j +\alpha^{\star}M_t +6\epsilon M_t} \big)^{3/4}\\
&\lesssim  \mathcal{M}(C)  2^{k/2+3n/4+3\alpha^{\star} M_t/4-l+ 10\epsilon M_t}\\
&\lesssim \mathcal{M}(C)  2^{(k+2n)/2-l+3\alpha^{\star} M_t/4-(1-2 \alpha^{\star} )M_t/4+ 30\epsilon M_t}  \lesssim  \mathcal{M}(C) 2^{(\gamma-10\epsilon)M_t}. 
\end{split}
\ee
 Hence finishing the proof of the desired estimate \eqref{nov9eqn31}.
\end{proof}

Now, it remains to   consider the case  $k+2n\geq  2(1- \alpha^{\star} )M_t - 3 0\epsilon M_t $,   $k+4n/3\geq M_t- \alpha^{\star}  M_t/3-30M_t,$       $ \min\{l,n\}\geq (1-2 \alpha^{\star} )M_t-30\epsilon M_t$,  $j\geq 3M_t/5-20\epsilon M_t$,    $k\leq 2\alpha^{\star}M_t+20\epsilon M_t$, and  $(k+2n)/2-l\geq 5 M_t/12-2\iota M_t $.

 If $i\in\{0,1,2,3\}$, then the good lower bounds of the phase in \eqref{2024oc27eqn81} and \eqref{2024oc27eqn82} are still valid. For the case  $i=4$,    based on the size of $\angle(v, V(s))$ and the size of $(\hat{v}-\hat{V}(s))\cdot \xi$,  we use the same decomposition in  \eqref{oct3eqn111} but with a different threshold  $ \bar{\kappa}:= -(5\alpha^{\star} -3)M_t-100\epsilon M_t$. As a result, for the case $\kappa \in ( \bar{\kappa}, 2]\cap \Z$, the associated phase of $  \mathfrak{E}^{\mu, 4;\kappa }_{k,j,n}(t_1, t_2)$  has a good lower bound, see \eqref{2024oct29eqn65}.

As in \eqref{2024oct29eqn41}, we have
\be\label{2024oct29eqn91}
\mathfrak{E}^{\mu, 4;\kappa }_{k,j,n}(t_1, t_2)= \sum_{l\in [n-2\epsilon M_t, n+2\epsilon M_t]\cap \Z}  \mathfrak{E}^{\mu,i;\kappa,l}_{k,j;n}(t_1, t_2).
\ee
In the above equality, the range of the index $l$ is $[n - 2\epsilon M_t, n + 2\epsilon M_t] \cap \mathbb{Z}$, not $[-M_t, 2] \cap \mathbb{Z}$, as a consequence of the cutoff function $\varphi^4_{j,n}(v, \zeta)$ (see \eqref{sep4eqn6}).

 In the following Lemma, we exclude  the cases $\kappa = \bar{\kappa}$ and $k + l + \kappa \le 2(1 - \alpha^*)M_t - 100\epsilon M_t$.
 \begin{lemma}\label{ellfulltyp3}
Let $\bar{\kappa}:= -(5\alpha^{\star} -3)M_t-100\epsilon M_t$, for any $     (k,n)\in \widetilde{\mathcal{E}}_4$, $j\in [0,(1+2\epsilon)M_t]\cap \Z, $ $l\in  [n-2\epsilon M_t, n+2\epsilon M_t]\cap \Z, $ s.t., $k+2n\geq  2(1- \alpha^{\star} )M_t - 3 0\epsilon M_t $,   $k+4n/3\geq M_t- \alpha^{\star}  M_t/3-30M_t,$   $ \min\{l,n\}\geq (1-2 \alpha^{\star} )M_t-30\epsilon M_t$,   $j\geq 3M_t/5-20\epsilon M_t$,  $k\leq 2\alpha^{\star}M_t+20\epsilon M_t$, and  $(k+2n)/2-l\geq 5 M_t/12-2\iota M_t $, we have 
\be\label{oct20eqn7} 
   \big|  \mathfrak{E}^{\mu,4;\bar{\kappa},l}_{k,j;n}(t_1, t_2)\big|   \lesssim \mathcal{M}(C)  2^{(\gamma-10\epsilon)M_t} .
\ee
Moreover, for any $\kappa\in (\bar{\kappa}, 2]\cap Z$, the following estimate holds if $k+l+\kappa\leq  2(1- \alpha^{\star}  )M_t-50\epsilon M_t, $ or $(2k+2l+3\min\{l,\kappa\})/4-2l\leq (1-3\alpha^{\star}M_t/4)-100\epsilon M_t$, or $\min\{l, \kappa\}\leq 7l/3-\iota M_t$, 
\be\label{nov11eqn231}
  \big|     \mathfrak{E}^{\mu,4;{\kappa},l}_{k,j;n}(t_1, t_2)\big|   \lesssim  \mathcal{M}(C) 2^{(\gamma-10\epsilon)M_t} .
\ee 
\end{lemma}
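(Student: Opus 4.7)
\textbf{Proof plan for Lemma \ref{ellfulltyp3}.} The plan is to bound both quantities directly in kernel form without any further integration by parts in $s$: in case (i), the sub-integral $(\hat V - \hat v)\cdot\xi$ has no useful lower bound on $\mathrm{supp}(\varphi_{\bar\kappa;\bar\kappa})$, and in case (ii), even though $|(\hat V - \hat v)\cdot\xi|\sim 2^{k+l+\kappa}$, the gain $2^{-(k+l+\kappa)}$ from IBP in $s$ is too small to close. In both situations the strategy is: (a) integrate by parts in $\xi$ against a carefully chosen orthonormal frame to localize the kernel; (b) exploit the cylindrical symmetry of $f$ to gain an $|\slashed X(s)|^{-1}$-type factor whenever possible; (c) combine the conservation law $\|(E,B)\|_{L^2}+\int \langle v\rangle f\lesssim 1$ with the decay estimate \eqref{nov24eqn41} if $|\slashed v|\geq 2^{(\alpha_s+\epsilon)M_s}$; and (d) use the volume of support of $v$ coming from the cutoffs $\varphi_j(v)\varphi_{j,n}^4(v,V(s))\varphi_{l;-M_t}(\tilde v-\tilde V(s))$.

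For part (i) I mimic the argument of Lemma \ref{smallangest}. I write $\{e_1,\theta_1(v,s),\theta_2(v,s)\}$ with $e_1=(\hat v-\hat V(s))/|\hat v-\hat V(s)|$, $\theta_1\in\mathrm{span}\{v,V(s)\}\cap e_1^\perp$, $\theta_2\perp\mathrm{span}\{v,V(s)\}$, and integrate by parts in $\xi$ along these three directions many times. The resulting kernel $K^{\bar\kappa}_{k,j,n,l}(y,v,V(s))$ is localized within $|y\cdot e_1|\lesssim 2^{-k-\bar\kappa}$, $|y\cdot\theta_1|\lesssim 2^{-k-n}$, $|y\cdot\theta_2|\lesssim 2^{-k}$ with size $\lesssim \mathcal M(C)\,2^{2k+n+\bar\kappa+\max\{n,(\gamma_1-\gamma_2)M_t\}+\varepsilon}$. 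Crucially the horizontal projection $(-X_2(s),X_1(s),0)/|\slashed X(s)|$ has angle $\gtrsim 2^{n-\epsilon M_t}$ with $e_1$ (this is \eqref{oct3eqn100}), so it must have $\gtrsim 2^{n-\epsilon M_t}$ projection onto $\theta_1\cup\theta_2$; by cylindrical symmetry of $f$ this produces a factor $|\slashed X(s)|^{-1}\lesssim 2^{-a_p}$. Plugging in $\bar\kappa=-(5\alpha^\star-3)M_t-100\epsilon M_t$, using the constraints $k+2n\gtrsim 2(1-\alpha^\star)M_t$, $\min\{l,n\}\gtrsim(1-2\alpha^\star)M_t$ and $\gamma\geq 1-4\epsilon$, I balance the two bounds $2^{-a_p}\cdot\langle v\rangle$-sup and $2^{-3k-n-\bar\kappa}\cdot\min\{2^{3j+2l},2^{j+2\alpha^\star M_t}\}$ (exactly as in \eqref{oct3eqn131}) to land on $\mathcal M(C)2^{(\gamma-10\epsilon)M_t}$.

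For part (ii) I proceed similarly but using the natural frame $\{\tilde V(s),\theta_1,\theta_2\}$ perpendicular to $\tilde V(s)$ since now we do not need to split inside $\mathrm{span}\{v,V(s)\}$. IBP in $\xi$ many times gives kernel localization in $y$ with scales $2^{-k}$ along $\tilde V(s)$ and $2^{-k-n}$ perpendicular to it, plus the angular restriction from $\varphi_{\kappa;\bar\kappa}$ confines $\xi$ to a cone of angular measure $2^\kappa$ around the plane $(\hat v-\hat V(s))^\perp$. The kernel size is thus $\lesssim \mathcal M(C)\,2^{2k+n+\kappa+\epsilon M_t}\cdot|m_4|$ with $|m_4|\lesssim 2^{-k}$ on the relevant support. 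Two bounds on the $y$-integral are then balanced: a conservation-law bound $\lesssim (|\slashed X(s)|^{-1}2^{-k-n+\epsilon M_t}2^{-j})^{1/2}(|\slashed X(s)|^{-1}2^{-k-n+\epsilon M_t}2^{3j+2l})^{1/2}$ and the volume bound $\lesssim \min\{2^{3j+2l},2^{j+2\alpha^\star M_t}\}$ using \eqref{nov24eqn41}. Under the hypothesis $k+l+\kappa\leq 2(1-\alpha^\star)M_t-50\epsilon M_t$ and the other constraints from Lemma \ref{no}, these balance to give the target $\mathcal M(C)2^{(\gamma-10\epsilon)M_t}$.

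\textbf{Main difficulty.} The bookkeeping is delicate: the relation $l\in[n-3\epsilon M_t/2,n+3\epsilon M_t/2]$ forced by $\varphi_{j,n}^4$ must be used to convert $k+l+\kappa$ into the honest phase size $k+n+\kappa$ for the oscillation analysis, while all four numerical constraints ($k+2n$, $k+4n/3$, $\min\{l,n\}$, $j$ ranges) have to be invoked simultaneously to absorb the large loss $2^{-\bar\kappa}=2^{(5\alpha^\star-3)M_t+100\epsilon M_t}$ in (i), and similarly the loss $2^{k+n-2(k+l+\kappa)}$ coming from the oscillation-in-$\xi$ argument against the smallness of $k+l+\kappa$ in (ii). Verifying that every branch of the min/max produced by the balancing indeed gains at least $2^{-10\epsilon M_t}$ beyond $2^{\gamma M_t}$ is the most technical step; once the exponent arithmetic is checked, both estimates follow by the same kernel-IBP plus cylindrical-symmetry mechanism used throughout Section \ref{mainlemm1proof}.
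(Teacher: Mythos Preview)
Your plan has a genuine gap: both parts hinge on gaining a factor $|\slashed X(s)|^{-1}$ via cylindrical symmetry (the mechanism of Lemma~\ref{smallangest} and \eqref{oct3eqn100}--\eqref{oct3eqn131}), but Lemma~\ref{ellfulltyp3} lives in the setting of Lemma~\ref{bootstraplemma2}, where there is \emph{no} lower bound on $|\slashed X(s)|$. In Lemma~\ref{smallangest} the hypothesis $a_p\geq -k+\alpha^\star M_t/2$ is explicitly assumed and the target carries a factor $2^{-a_p/2}$; here the target \eqref{oct20eqn7}--\eqref{nov11eqn231} must be uniform in $a_p$. When $|\slashed X(s)|$ is very small your balance collapses to the branch without $|\slashed X(s)|^{-1}$, and that branch alone gives roughly $2^{2k+l+\bar\kappa}\cdot 2^{-3k-n-\bar\kappa}\cdot 2^{j+2\alpha^\star M_t}\approx 2^{-k+j+2\alpha^\star M_t}$, which can be as large as $2^{(4\alpha^\star-1)M_t}\gg 2^{\gamma M_t}$. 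Your kernel-size formula with $\max\{n,(\gamma_1-\gamma_2)M_t\}$ is also imported from the wrong context: there is no $\gamma_1,\gamma_2$ in Lemma~\ref{bootstraplemma2}.

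The paper's argument avoids $|\slashed X(s)|$ entirely and treats all $\kappa\in[\bar\kappa,2]$ at once. After IBP in $\xi$ along $\{e_1,\theta_1,\theta_2\}$ (not the frame $\{\tilde V,\theta_1,\theta_2\}$ you propose for part~(ii)---the cutoff $\varphi_{\kappa;\bar\kappa}$ varies along $e_1$, so IBP along $\tilde V$ would not produce the $2^{-k-\kappa}$ localization), one balances the conservation-law bound $\int f\psi_j\lesssim 2^{-j}$ against the kernel-$L^1$ plus support bound $2^{-3k-n-\kappa}\min\{2^{j+2\alpha^\star M_t},2^{3\alpha^\star M_t-l}\}$, the second branch coming from $|\slashed v|\sim 2^{j+l}$ when $l\geq(\alpha^\star-\gamma+\epsilon)M_t$ together with \eqref{nov24eqn41}. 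This yields $2^{10\epsilon M_t}\min\{2^{\kappa/3+5\alpha^\star M_t/3},\,2^{(k+l+\kappa)/2+\alpha^\star M_t}\}$; the first branch handles $\kappa=\bar\kappa$ and the second handles $k+l+\kappa\leq 2(1-\alpha^\star)M_t-50\epsilon M_t$, each landing on $2^{(\gamma-10\epsilon)M_t}$ since $\gamma\geq 1-4\epsilon$.
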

\begin{proof}
Recall  \eqref{oct3eqn111}  and  \eqref{2024oct29eqn91}.  Note that, after doing integration by parts with respect to the orthonormal frame $\{(\hat{v}-\hat{V}(s))/|\hat{v}-\hat{V}(s), \theta_1(v, s),\theta_2(v,s)\}$ (defined in  Lemma \ref{smallangest}) for the kernel,   from the volume of support of $v$ and the estimate  \eqref{nov24eqn41}  if $|  v_{\bot}|\geq 2^{(\alpha^\star+\epsilon)M_t}$, the following estimate holds, 
\[
  \begin{split}
 \big| &     \mathfrak{E}^{\mu,4;{\kappa},l}_{k,j;n}(t_1, t_2)\big|  
\lesssim    2^{2k+l +\min\{n,\kappa\} +\epsilon M_t} \int_{t_1}^{t_2} \int_{\R^3} \int_{\R^3}  \mathcal{M}(C) f(s, X(s)-y, v) \varphi_{l;-M_t}\big(\tilde{v}-\tilde{V}(s) \big) \\
&\quad \times \psi_j(v)( 1+ 2^{k+n}|y\cdot \theta_1(v, s)|)^{-100} (1+2^k|y\cdot \theta_2(v, s)|)^{-100}  \\
&\quad \times  (1+2^{k+\min\{n,\kappa\}}(y\cdot (\hat{v}-\hat{V}(s)/|\hat{v}-\hat{V}(s)|) ))^{-100} dy d v ds \\
 &\lesssim  \mathcal{M}(C) 2^{2k+l+\min\{n,\kappa\} +4\epsilon M_t} \min\{ 2^{-j}  ,  2^{-3k-n -\min\{n,\kappa\}}  2^{3j+2l}\}\\ 
&\lesssim 2^{14\epsilon M_t} \mathcal{M}(C) \min\{2^{(k+l+\kappa)/2+ \alpha^{\star} M_t},   2^{ \kappa/3   +5 \alpha^{\star} M_t/3}  \}.
 \end{split}
\]
Hence our desired estimate  \eqref{oct20eqn7} follows directly from the above estimate. Moreover, the desired estimate \eqref{nov11eqn231} holds if $k+l+\kappa\leq  2(1- \alpha^{\star}  )M_t-50\epsilon M_t $.

To prove  the desired estimate \eqref{nov11eqn231} for the other two cases, i.e., $(2k+2l+3\min\{l,\kappa\})/4-2l\leq (1-3\alpha^{\star}M_t/4)-100\epsilon M_t$, or $\min\{l, \kappa\}\leq 7l/3-\iota M_t$. It would be sufficient to consider the case $l \geq (\alpha^{\star}-\gamma+4\epsilon)M_t $, otherwise these two cases are covered by the restrictions $k+l+\kappa\leq  2(1- \alpha^{\star}  )M_t-50\epsilon M_t $ and $\kappa \geq \bar{\kappa}:= -(5\alpha^{\star} -3)M_t-100\epsilon M_t$.

  For the case $l \geq (\alpha^{\star}-\gamma+4\epsilon)M_t $, we have    $|  v_{\bot}|/|v|\sim 2^l$ since $|  V_{\bot}(s)|/|V(s)|\leq 2^{(\alpha^{\star}-\gamma )M_t}$. In particular, in view of  the estimate  \eqref{nov24eqn41}, it would be sufficient to consider the case $j+l\leq (\alpha^\star+\epsilon)M_t$. From the  first fact in   \eqref{2024feb13eqn1},  the first estimate of the Jacbobian in \eqref{jacobianthreshold}, the volume estimate \eqref{oct3eqn100},   and the estimate \eqref{singularweigh}  in Lemma \ref{singularweigh}, the following estimate holds for any $\kappa\in[\bar{\kappa},2]\cap\Z$ after dyadic localizing the size of  $\tilde{\theta}(v,s)\times  \big({(-X_2(s), X_1(s),0)}/{| X_{\bot}(s)|}\big) $ with the threshold $n$,
\be\label{thresholdelliptic}
\begin{split}
 \big|     \mathfrak{E}^{\mu,4;{\kappa},l}_{k,j;n}(t_1, t_2)\big| &   
 \lesssim \sum_{a\in[n,2]} \mathcal{M}(C) 2^{2k+l+\min\{n,\kappa\} +8\epsilon M_t} \min\{   2^{-3k-n -\min\{n,\kappa\}}  \min\{2^{3j+3n/2+3a/2}, 2^{3j+2l}  \},\\
&\quad \times 2^{-k-n-a+10\epsilon M_t} 2^{-2(j+l)+j+\alpha^{\star}M_t } \} \\
&\lesssim \sum_{a\in[n,2]} \mathcal{M}(C) 2^{2k+l+\min\{n,\kappa\}+18\epsilon M_t} \\
&\quad\times \min\big\{  (2^{-3k-n -\min\{n,\kappa\}} 2^{3j+3n/2+3a/2})^{1/4} (2^{-k-n-a } 2^{-2(j+l)+j+\alpha^{\star}M_t} )^{3/4}  ,\\ 
&\quad  (  2^{-k-n-a } 2^{-2(j+l)+j+\alpha^{\star}M_t })^{1/2} (2^{-3k-n -\min\{n,\kappa\}} (2^{3j+3n/2+3a/2})^{2/3}(2^{3j+2l })^{1/3} )^{1/2} \big\} \\
&\lesssim 2^{25\epsilon M_t} \mathcal{M}(C) \min\{  2^{(2k+2l+3\min\{l,\kappa\})/4-2l+3\alpha^{\star}M_t/4},   2^{3\alpha^{\star} M_t/2-7l/6+\min\{l,\kappa\}/2}  \}.
\end{split} 
\ee
Hence  the desired estimate \eqref{nov11eqn231} holds if $(2k+2l+3\min\{l,\kappa\})/4-2l\leq (1-3\alpha^{\star}M_t/4)-100\epsilon M_t$, or $\min\{l, \kappa\}\leq 7l/3-\iota M_t$. 
 
\end{proof} 

\subsubsection{The first iteration and the outline of proof}
 
For the remaining cases, we exploit the benefit of high  oscillation in time, we do  integration by parts in  ``$s$'' once for $ \mathfrak{E}^{\mu,i;l}_{k,j;n}(t_1, t_2), a\in\{0,1,2,3\},$ and $    \mathfrak{E}^{\mu,4;, \kappa, l}_{k,j;n}(t_1, t_2)$, $\kappa\in (\bar{\kappa}, 2]\cap\Z$.

 As a result, similar to   \eqref{2024oct23eqn11} and \eqref{2024oct23eqn12}, we have
 \be\label{2024oct31eqn81}
 \begin{split}
 \mathfrak{E}^{\mu,a;l}_{k,j,n}(t_1, t_2)& =  Err^{\mu,a;l}_{k,j;n}(t_1, t_2)  + \sum_{b=0,1}  {}_{b}^1\mathfrak{E}^{\mu,a;l}_{k,j;n}(t_1, t_2),\\
  \mathfrak{E}^{\mu, 4;\kappa, l}_{k,j,n}(t_1, t_2) & =  Err^{\mu,4;\kappa, l }_{k,j,n}(t_1, t_2)+ \sum_{b=0,1}  {}_{b}^1 \mathfrak{E}^{\mu,4;\kappa,l }_{k,j;n}(t_1, t_2).
 \end{split}
 \ee
For simplicity, we omit the detailed formulas of the above terms, which can easily be obtained from  
\eqref{oct7eqn62}, \eqref{oct29eqn82}, and \eqref{2024oct28eqn1}  with minor modification in the cutoff functions becuase of the new added $\varphi_{l;-M_t}(\tilde{v}-\tilde{V}(s))$ in  \eqref{2024oct29eqn41}.

The estimate of error type terms $ Err^{\mu,a;l}_{k,j;n}(t_1, t_2), a\in\{0,1,2,3\}$ and $Err^{\mu,4;\kappa, l }_{k,j,n}(t_1, t_2)$ are postponed to    Lemma \ref{ellfulltyp4}.  The estimate of $ {}_{1}^1\mathfrak{E}^{\mu,a;l}_{k,j;n}(t_1, t_2), a\in\{0,1,2,3\}$ and $  {}_{1}^1 \mathfrak{E}^{\mu,4;\kappa }_{k,j;n}(t_1, t_2)$ are postponed in Lemma \ref{errortypefullhyp2}.  

Now, we focus on the estimate of $ {}_{0}^1\mathfrak{E}^{\mu,a;l}_{k,j;n}(t_1, t_2), a\in\{0,1,2,3\}$ and $  {}_{0}^1 \mathfrak{E}^{\mu,4;\kappa }_{k,j;n}(t_1, t_2)$. For convenience, we refer these terms as $0$-part.

In contrast to the   estimate of $0$-part in Section \ref{mainpartsISSpart1seco}, the conditions of Proposition \ref{bootstraplemma2} preclude the use of cylindrical symmetry because the velocity characteristics can move along the $z$-axis.  The estimate of the $0$-part in  Proposition \ref{bootstraplemma2}  is much more complicated. We need to do a refined decomposition for the  $0$-part. Based on the size of $i$, we proceed in two steps as follows.

\medskip

\noindent \textbf{Step 1.} \quad The case $i\in\{0,1,2,3\}.$

\medskip

  For the term $ {}_{0}^1\mathfrak{E}^{\mu,a;l}_{k,j;n}(t_1, t_2)$ , we split it  further into two parts as follows, 
\be\label{nov9eqn62}
\begin{split}
 {}_{0}^1\mathfrak{E}^{\mu,a;l}_{k,j;n}(t_1, t_2)&=    {}_{0;0}^{\,\,1}\mathfrak{E}^{\mu,a;l}_{k,j;n}(t_1, t_2)+    {}_{0;1}^{\,\,1}\mathfrak{E}^{\mu,a;l}_{k,j;n}(t_1, t_2),\\
  {}_{0;0}^{\,\,1}\mathfrak{E}^{\mu,a;l}_{k,j;n}(t_1, t_2)& =  \int_{t_1}^{t_2}\int_{\R^3}   \int_{\R^3} e^{i X(s )\cdot \xi    }   \mathcal{F}\big[ K(s, \cdot, V(s) f(s, \cdot, v) \big]( \xi   )\\
 &\quad  \cdot   \nabla_v \big[  \mathcal{F}[ {}_{}^{1}\mathfrak{E}^{a} ]( X_{\bot}(s),   \xi, v, \zeta) \big]  d \xi d v d s,\\
  {}_{0;1}^{\,\,1}\mathfrak{E}^{\mu,a;l}_{k,j;n}(t_1, t_2)& =  \int_{t_1}^{t_2}  \int_{\R^3}  \int_{\R^3} e^{i X(s )\cdot \xi    }   \mathcal{F}\big[  \big((\hat{v}-\hat{V}(s))\times B(s,\cdot) \big) f(s, \cdot, v) \big]( \xi   ) \\
 &\quad \cdot   \nabla_v \big[ \mathcal{F}[ {}_{}^{1}\mathfrak{E}^{a} ](  X_{\bot}(s),   \xi, v, \zeta) \big]  d \xi d v d s.
 \end{split}
\ee 

The term   $  {}_{0;0}^{\,\,1}\mathfrak{E}^{\mu,a;l}_{k,j;n}(t_1, t_2)$ will be estimated in Lemma \ref{errortypefullhyp2} and the term $  {}_{0;1}^{\,\,1}\mathfrak{E}^{\mu,a;l}_{k,j;n}(t_1, t_2)$ will be estimated in  Lemma \ref{ellipticstep2good}.
\medskip

\noindent \textbf{Step 2.} \quad The case $i=4.$

\medskip

Since we have a much more complicated phase for the case $i=4$. The estimate of $ {}_{0}^1 \mathfrak{E}^{\mu,4;\kappa }_{k,j;n}(t_1, t_2) $ is more delicate.   We not only use the decomposition of the acceleration force used in \eqref{nov9eqn62} for the case $i\in\{0,1,2,3\}$ but also use the decomposition of the third component of the magnetic field in   \eqref{nov6eqn47}. As a result, we have 
\be\label{findecompellipticpart2024oct}
 {}_{0}^1 \mathfrak{E}^{\mu,4;\kappa,l }_{k,j;n}(t_1, t_2) :=     {}_{0;0}^{\,\,1} \mathfrak{E}^{\mu,4;\kappa ,l}_{k,j;n}(t_1, t_2)+     {}_{0;1}^{\,\,1} \mathfrak{E}^{\mu,4;\kappa,l }_{k,j;n}(t_1, t_2)+   {}_{0;2}^{\,\,1} \mathfrak{E}^{\mu,4;\kappa,l }_{k,j;n}(t_1, t_2),
\ee
where
\be\label{nov6eqn81}
 \begin{split}
  {}_{0;0}^{\,\,1} \mathfrak{E}^{\mu,4;\kappa ,l}_{k,j;n}(t_1, t_2) 
 &:= \int_{t_1}^{t_2}  \int_{\R^3}  \int_{\R^3} e^{i X(s  )\cdot \xi    } \varphi_{l;-M_t}(\tilde{v}-\tilde{V}(s))  \\
 &\quad \times   \big[ \big(\sum_{\begin{subarray}{c}
k_1\in \Z_+, \mu_1\in \{+,-\}\\
 n_1\in [-2M_t/15 , 2]\cap \Z
 \\ 
\end{subarray}}  \mathcal{F}\big( {}_{}^zT_{k_1,n_1}^{\mu_1}(B)(s,\cdot,V(s)) f(s,\cdot, v))(\xi)\\ 
 &\quad  +   \sum_{\begin{subarray}{c}
k_1\in \Z_+, \mu_1\in \{+,-\}\\
 n_1\in [-M_t,-2M_t/15 ]\cap \Z
\end{subarray}} \mathcal{F}\big(  {}_{}^zT_{k_1,n_1}^{\mu_1;1}(B)(s,\cdot ,V(s)) f(s,\cdot, v))(\xi) \big)  \\
 & \quad \times \big( (\hat{v}_2-\hat{V}_2(s)) \p_{v_1}   -(\hat{v}_1-\hat{V}_1(s))   \p_{v_2}\big) \\
 &\quad+  \mathcal{F}\big( f(s,\cdot, v)  K(s, \cdot, V(s) )  \big)(\xi) \cdot   \nabla_v 
 \big]  \mathcal{F}[ {}_{}^{1}\mathfrak{E}^{\kappa} ](  X_{\bot}(s),   \xi, v, \zeta) \big]  d \xi d v d s,
 \end{split}
\ee

\be\label{oct30eqn1}
\begin{split}
{}_{0;1}^{\,\,1}  \mathfrak{E}^{\mu,4;\kappa,l }_{k,j;n}(t_1, t_2)&:=\int_{t_1}^{t_2} \int_{\R^3}\int_{\R^3} e^{i X(s)\cdot \xi   }\varphi_{l;-M_t}(\tilde{v}-\tilde{V}(s))  \big[ \mathbf{P}_3\big((\hat{v}-\hat{V}(s))\times  \mathcal{F}[Bf](s,\xi,v) \big)\p_{v_3}  \\
 &\quad  - (\hat{v}_3-\hat{V}_3(s))  \mathbf{P}_2\big( \mathcal{F}[Bf](s,\xi,v)\big)\p_{v_1} + (\hat{v}_3-\hat{V}_3(s)) \\
  &\quad \times  \mathbf{P}_1\big(  \mathcal{F}[Bf](s,\xi,v) \big)\p_{v_2}   \big]\big(   \mathcal{F}[ {}_{}^{1}\mathfrak{E}^{\kappa} ](  X_{\bot}(s),   \xi, v, V(s)) \big)  d \xi d v ds,\\
  \end{split}
\ee

\be\label{nov11eqn61}
\begin{split}
{}_{0;2}^{\,\,1}  \mathfrak{E}^{\mu,4;\kappa,l }_{k,j;n}(t_1, t_2) &:=   \sum_{\begin{subarray}{c}
k_1\in \Z_+,   \mu_1\in \{+,-\}\\ 
n_1\in [-M_t, -2M_t/15 ]\cap \Z, 
\\ 
\end{subarray}}  \int_{t_1}^{t_2} \int_{\R^3}\int_{\R^3} e^{i X(s)\cdot \xi   }  \varphi_{l;-M_t}(\tilde{v}-\tilde{V}(s))   \\
&\quad \times   \mathcal{F}[  {}_{}^zT_{k_1,n_1}^{\mu_1;2}(B)(s,\cdot,  V(s)) f(s, \cdot, v)](\xi) \big( (\hat{v}_2-\hat{V}_2(s)) \p_{v_1}  \\
&\quad    -(\hat{v}_1-\hat{V}_1(s))   \p_{v_2} \big)\big]\big(  \mathcal{F}[ {}_{}^{1}\mathfrak{E}^{\kappa} ](  X_{\bot}(s),   \xi, v, V(s)) \big)  d \xi d v ds.\\
\end{split}
\ee

The estimate of  $  {}_{0;0}^{\,\,1} \mathfrak{E}^{\mu,4;\kappa }_{k,j;n}(t_1, t_2)$ is postponed to   Lemma  \ref{errortypefullhyp2}. The  estimate of $ {}_{0;1}^{\,\,1} \mathfrak{E}^{\mu,4;\kappa }_{k,j;n}(t_1, t_2)$ and $ {}_{0;2}^{\,\,1} \mathfrak{E}^{\mu,4;\kappa }_{k,j;n}(t_1, t_2)$ is postponed to Lemma \ref{ellipticstep1good}.

To sum up, the proof is structured as follows:
\begin{enumerate}
\item[$\bullet$] In Lemma \ref{ellipticstep2good}, we estimate  $  {}_{0;1}^{\,\,1}\mathfrak{E}^{\mu,a;l}_{k,j;n}(t_1, t_2)$,  $a\in\{0,1,2,3\}$.
\item[$\bullet$] In Lemma \ref{ellipticstep1good}, we estimate $   {}_{0;1}^{\,\,1} \mathfrak{E}^{\mu,4;\kappa }_{k,j;n}(t_1, t_2)$ and $    {}_{0;2}^{\,\,1} \mathfrak{E}^{\mu,4;\kappa }_{k,j;n}(t_1, t_2)$. 
\item[$\bullet$] In Lemma  \ref{errortypefullhyp2}, we estimate ${}_{1}^{1}\widetilde{\mathfrak{H}}^{\mu,i}_{k,j;n}(t_1, t_2),$ $  {}_{0;0}^{\,\,1}\mathfrak{E}^{\mu,a;l}_{k,j;n}(t_1, t_2)$, $    {}_{0;0}^{\,\,1} \mathfrak{E}^{\mu,4;\kappa,l }_{k,j;n}(t_1, t_2)$  ,  ${}_{1}^{1}\mathfrak{E}^{\mu,a;l}_{k,j;n}(t_1, t_2)$, and $  {}_{1}^{1} \mathfrak{E}^{\mu,4;\kappa,l }_{k,j;n}(t_1, t_2)$, $a\in\{0,1,2,3\}, \kappa\in (\bar{\kappa},2]\cap\Z.$
\end{enumerate}

\subsubsection{The estimate of $  {}_{0;1}^{\,\,1}\mathfrak{E}^{\mu,i;l}_{k,j;n}(t_1, t_2)$,  $i\in\{0,1,2,3\}$.}

\begin{lemma}\label{ellipticstep2good}
Let   $i\in\{0,   2,3  \},     (k,n)\in \widetilde{\mathcal{E}}_i$ or $i=1, (k,n)\in \widetilde{\mathcal{E}}'_1$.   For any  $\mu\in\{+, -\},    j\in [0,(1+2\epsilon)M_t]\cap \Z, $ $l\in[-j,2]\cap\Z,  $ s.t., $k+2n\geq  2(1- \alpha^{\star} )M_t - 3 0\epsilon M_t $,   $k+4n/3\geq M_t- \alpha^{\star}  M_t/3-30M_t,$   $ \min\{l,n\}\geq (1-2 \alpha^{\star} )M_t-30\epsilon M_t$,  $j\geq 3M_t/5-20\epsilon M_t$,  $k\leq 2\alpha^{\star}M_t+20\epsilon M_t$, and  $(k+2n)/2-l\geq 5 M_t/12-2\iota M_t $,  we have
\be\label{nov9eqn51}
\big|   {}_{0;1}^{\,\,1}\mathfrak{E}^{\mu,i;l}_{k,j;n}(t_1, t_2)\big| \lesssim \mathcal{M}(C) 2^{(\gamma-10 \epsilon)M_t}.
\ee
 
\end{lemma}

\begin{proof}

Recall  \eqref{nov9eqn62}.  After using the    decomposition of the magnetic field in  \eqref{2024oct30eqn2},  and doing integration by parts in $\xi$ along $V(t)$ direction and directions perpendicular to $V(t)$ for the kernel,  we have
\be\label{sep10eqn10}
\begin{split}
\big| {}_{0;1}^{\,\,1}\mathfrak{E}^{\mu,i;l}_{k,j;n}(t_1, t_2)\big| & \lesssim  \int_{t_1}^{t_2} \sum_{\begin{subarray}{c}
 (\tilde{m},\tilde{k},\tilde{j},\tilde{l})\in \mathcal{S}_1(t)\cup \mathcal{S}_2(t)
 \end{subarray} }\big| {}_{0;1}^{\,\,1}\mathfrak{E}^{\mu,i;l;\tilde{m}}_{k,j,n;\tilde{k}, \tilde{j}, \tilde{l}}(s)\big| d s,\\ 
\big| {}_{0;1}^{\,\,1}\mathfrak{E}^{\mu,i;l;\tilde{m}}_{k,j,n;\tilde{k}, \tilde{j}, \tilde{l}}(s )\big| &\lesssim \mathcal{M}(C) 2^{k-j-l+l+4\epsilon M_t}  \int_{\R^3}\int_{\R^3}   \varphi_{l;-M_t}\big(\tilde{v}-\tilde{V}(s) \big) \\ 
&\quad \times (1+2^k|y\cdot\tilde{V}(s)|)^{-100}(1+2^{k+n}|y\times \tilde{V}(s)|)^{-100} \\
&\quad \times  f(s, X(s)-y)\big|B_{\tilde{k};\tilde{j}, \tilde{l} }^{\tilde{m}} (s, X(s)- y,v) \big| d y d v,
\end{split}
\ee

Based on the range of $ (\tilde{m},\tilde{k},\tilde{j},\tilde{l})$, we proceed in two steps as follows. 

\medskip

\noindent \textbf{Step 1.}\quad  If $ (\tilde{m},\tilde{k},\tilde{j},\tilde{l})\in \mathcal{S}_1(t
).$

\medskip

From the above estimate  \eqref{sep10eqn10}, the conservation law  \eqref{conservationlaw},  the volume of support of $v$ and the estimate \eqref{nov24eqn41}  if $|v_{\bot}|\geq 2^{(\alpha_s+\epsilon)M_s}$,   we have  
\be\label{nov11eqn1}
  \big|  {}_{0;1}^{\,\,1}\mathfrak{E}^{\mu,i;l;\tilde{m}}_{k,j,n;\tilde{k}, \tilde{j}, \tilde{l}}(s )\big|\lesssim    \mathcal{M}(C)  2^{k-j+4\epsilon M_t} \min\{2^{-3k-2n}\min\{2^{3j+2l}, 2^{j+2\alpha^{\star} M_t}\}, 2^{-j}\}\| B_{\tilde{k};\tilde{j}, \tilde{l} }^{\tilde{m}} (s, \cdot) \|_{L^\infty_x}.
 \ee
 Therefore, from the above estimate and the  first estimate  in  \eqref{2024oct30eqn1}  in Proposition \ref{meanLinfest},  we have 
\be
\begin{split}
\sum_{(\tilde{m},\tilde{k},\tilde{j},\tilde{l})\in \mathcal{S}_1(t
)}  \big|  {}_{0;1}^{\,\,1}\mathfrak{E}^{\mu,i;l;\tilde{m}}_{k,j,n;\tilde{k}, \tilde{j}, \tilde{l}}(s )\big|&\lesssim      2^{2 \alpha^{\star} M_t +40\epsilon M_t +l -(k+2n)/2}\mathcal{M}(C)\\
&\lesssim 2^{(\gamma-10\epsilon)M_t}\mathcal{M}(C).
\end{split}
\ee

\medskip

\noindent \textbf{Step 2.}\quad  If $ (\tilde{m},\tilde{k},\tilde{j},\tilde{l})\in \mathcal{S}_2(t
).$

\medskip

Note that, by using the same argument, the obtained estimate  \eqref{nov11eqn1}  is still valid if we put $\| B_{\tilde{k};\tilde{j}, \tilde{l} }^{\tilde{m}} (t, \cdot) \|_{L^\infty_x}$ in $L^\infty_x$. 
Moreover, from \eqref{sep10eqn10}, if we use the Cauchy-Schwarz inequality,  the volume of support of $v$ and the estimate \eqref{nov24eqn41} if $|v_{\bot}|\geq 2^{(\alpha_s+\epsilon)M_s}$,   the conservation law  \eqref{conservationlaw}, the following estimate holds,
\be\label{nov10eqn21}
\begin{split}
\big|  {}_{0;1}^{\,\,1}\mathfrak{E}^{\mu,i;l;\tilde{m}}_{k,j,n;\tilde{k}, \tilde{j}, \tilde{l}}(s )\big| &\lesssim \| B_{\tilde{k};\tilde{j}, \tilde{l} }^{\tilde{m}} (s, \cdot) \|_{L^2_x} 2^{k-j+10\epsilon M_t} \big( 2^{j+2\alpha^{\star} M_t} \big)^{1/2} (2^{-3k-2n+j+2\alpha^{\star} M_t})^{1/2} \mathcal{M}(C)\\
&\lesssim 2^{2\alpha^{\star} M_t +10\epsilon M_t -(k+2n)/2}\| B_{\tilde{k};\tilde{j}, \tilde{l} }^{\tilde{m}} (s, \cdot) \|_{L^2_x}\mathcal{M}(C).\\
\end{split}
\ee
After combining the obtained estimates  \eqref{nov11eqn1}  and  \eqref{nov10eqn21}, the following estimate holds from  the second estimate in  \eqref{2024oct30eqn1} in Proposition  \ref{meanLinfest},  
\[ 
\begin{split}
\sum_{(\tilde{m},\tilde{k},\tilde{j},\tilde{l})\in \mathcal{S}_2(t
)}\big| {}_{0;1}^{\,\,1}\mathfrak{E}^{\mu,i;l;\tilde{m}}_{k,j,n;\tilde{k}, \tilde{j}, \tilde{l}}(s )\big|&\lesssim    2^{2\alpha^{\star}M_t +50\epsilon M_t }(2^{l-(k+2n)/2})^{1/2}(2^{-(k+2n)/2})^{1/2}\mathcal{M}(C)\\
&\lesssim 2^{2\alpha^{\star}M_t +50\epsilon M_t } (2^{-(1-\alpha^{\star})M_t +15\epsilon M_t} )^{1/2} (2^{-5M_t/12+2\iota M_t})^{1/2}\mathcal{M}(C)\\
&\lesssim \mathcal{M}(C) 2^{(\gamma-10\epsilon)M_t}.
\end{split}
\]
Hence finishing the proof of our desired estimate  \eqref{nov9eqn51}. 
\end{proof} 
\subsubsection{The estimate of $   {}_{0;1}^{\,\,1} \mathfrak{E}^{\mu,4;\kappa,l }_{k,j;n}(t_1, t_2)$ and $    {}_{0;2}^{\,\,1} \mathfrak{E}^{\mu,4;\kappa,l }_{k,j;n}(t_1, t_2)$. }
\begin{lemma}\label{ellipticstep1good}
Let $\bar{\kappa}:= -(5\alpha^{\star} -3)M_t-100\epsilon M_t$,  $  \mu\in\{+, -\},    (k,n)\in \widetilde{\mathcal{E}}_4$, $j\in [0,(1+2\epsilon)M_t]\cap \Z, $ $l\in[-j,2]\cap\Z, \kappa\in (\bar{\kappa}, 2]\cap \Z $ s.t., $k+2n\geq  2(1- \alpha^{\star} )M_t - 3 0\epsilon M_t $,    $k+4n/3\geq M_t- \alpha^{\star}  M_t/3-30M_t,$   $ \min\{l,n\}\geq (1-2 \alpha^{\star} )M_t-30\epsilon M_t$,  $j\geq 3M_t/5-20\epsilon M_t$,  $k\leq 2\alpha^{\star}M_t+20\epsilon M_t$,    $(k+2n)/2-l\geq 5 M_t/12-2\iota M_t $,  $k+l+\kappa\geq  2(1- \alpha^{\star}  )M_t-50\epsilon M_t, $      $(2k+2l+3\min\{l,\kappa\})/4-2l\geq (1-3\alpha^{\star}M_t/4)-100\epsilon M_t$, and $\min\{l, \kappa\}\geq 7l/3-\iota M_t$, we have
\be\label{nov28eqn80}
 \big| {}_{0;1}^{\,\,1} \mathfrak{E}^{\mu,4;\kappa,l }_{k,j;n}(t_1, t_2)\big| + \big|  {}_{0;2}^{\,\,1} \mathfrak{E}^{\mu,4;\kappa,l }_{k,j;n}(t_1, t_2)\big| \lesssim \mathcal{M}(C) 2^{(\gamma-10\epsilon)M_t}.
 \ee 
\end{lemma}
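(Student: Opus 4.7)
\textbf{Proof Proposal for Lemma \ref{ellipticstep1good}.} The plan is to mimic the proof of the preceding Lemma (i.e. the estimate \eqref{nov9eqn51} for $  {}_{1}^2EllH^{\mu,a;l}_{k,j,n}(t_1, t_2)$) but exploit the fact that both $ {}_{1;1}^{\kappa}EllH^{\mu,4;l}_{k,j,n}$ and $ {}_{1;2}^{\kappa}EllH^{\mu,4;l}_{k,j,n}$ involve a strictly \emph{better} object than the full magnetic field: either the horizontal projection $\mathbf{P}(B)$ (for which the dichotomy in Proposition \ref{meanLinfest} gives sharper bounds once we observe it only enters through $\mathbf{P}(B)$), or the third component $\mathbf{P}_3(B)$ localized to very small angle $n_1\leq -2M_t/15$ (for which Lemma \ref{set1goodPartP3B} gives an improvement of $2^{\tilde n}$ with $\tilde n=-2M_t/15$). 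First I will dyadically localize $B$ via \eqref{july5eqn1} and split over $(\tilde m,\tilde k,\tilde j,\tilde l)\in\mathcal S_1(t)\cup\mathcal S_2(t)$, and localize the frequency around $\tilde{V}(s)$ as in \eqref{oct19eqn42}--\eqref{oct20eqn1} with the additional sharp-cone cutoff $\varphi_{\kappa;\bar\kappa}$ coming from \eqref{oct3eqn111}.

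For $ {}_{1;1}^{\kappa}EllH^{\mu,4;l}_{k,j,n}$: observe that $\mathbf P_3\big((\hat v-\hat V(s))\times B\big)=(\hat v_1-\hat V_1(s))B_2-(\hat v_2-\hat V_2(s))B_1$ depends \emph{only} on $\mathbf P(B)=(B_1,B_2)$, while the two remaining pieces carry the factor $(\hat v_3-\hat V_3(s))$, which on the support of the cutoffs is bounded by $2^{2\max\{l,(\alpha^\star-\gamma)M_t\}}$ (a gain of at least $2^{2(\alpha^\star-1)M_t}$ over the trivial bound $1$, since we are in the regime $l\leq-2(1-\alpha^\star)M_t/5+30\epsilon M_t$ and $|\slashed V(s)|/|V(s)|\leq 2^{(\alpha^\star-\gamma)M_t}$). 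Because $\mathbf P(B)$ satisfies the same dichotomy scheme as $B$ (with the bound on $(\hat v+\omega)\times B$ in the S-part replaced by $\mathbf P((\hat v+\omega)\times B)$, which only sees horizontal components of $B$), I will rerun the proof of Proposition \ref{meanLinfest} to produce a refined dichotomy for the horizontal-projection-localized magnetic field analogous to \eqref{aug4eqn11}--\eqref{aug4eqn10}, quantitatively improved by the factor of order $2^{\tilde n_1}$ where $\tilde n_1$ measures the angle between $\tilde V(s)$ and the $z$-axis. Inserting this refined dichotomy into the Cauchy--Schwarz argument of \eqref{sep10eqn10}--\eqref{nov10eqn21}, together with the kernel bound for $\nabla_v(\clubsuit\widetilde{Ell}^{\mu,4;l,\kappa}_{k,j,n})$ obtained by integrating by parts with respect to the orthonormal frame $\{(\hat v-\hat V(s))/|\hat v-\hat V(s)|,\theta_1(v,s),\theta_2(v,s)\}$ (as in the proof of Lemma \ref{smallangest} and Lemma \ref{ellfulltyp3}), yields the desired bound.

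For $ {}_{1;2}^{\kappa}EllH^{\mu,4;l}_{k,j,n}$: the magnetic field enters only through ${}^zT_{k_1,n_1}^{\mu_1;2}(B)(s,\cdot,V(s))$ with $n_1\leq -2M_t/15$, so I will directly invoke the improved dichotomy estimates \eqref{2021dec10eqn5}--\eqref{nov6eqn4} of Lemma \ref{set1goodPartP3B} with $\tilde n= -2M_t/15+O(\epsilon M_t)$. This contributes a saving of $2^{\tilde n}=2^{-2M_t/15}$ compared with the analogous bound for $B$, which is exactly the margin needed to close the estimate. The coefficient in \eqref{nov11eqn61} is $(\hat v_2-\hat V_2)\p_{v_1}-(\hat v_1-\hat V_1)\p_{v_2}$, purely horizontal, and the $\nabla_v$ derivative acting on $\clubsuit\widetilde{Ell}^{\mu,4;l,\kappa}_{k,j,n}$ is controlled by the same frame-based integration-by-parts estimate as above. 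Plugging into Cauchy--Schwarz over $\mathcal S_1(t)\cup\mathcal S_2(t)$ gives the desired $2^{(\gamma-10\epsilon)M_t}\mathcal M(C)$ bound.

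The main obstacle, as always in this scheme, is bookkeeping: once I account for the full constraints $(k,n)\in\widetilde{\mathcal E}_4$ and $k+l+\kappa\geq 2(1-\alpha^\star)M_t-50\epsilon M_t$ together with $j\geq 3M_t/5-20\epsilon M_t$ and $l\leq -2(1-\alpha^\star)M_t/5+30\epsilon M_t$, I must verify in each of the cases $m+k\lessgtr -2\tilde l+\epsilon M_t$ and $\tilde m+\tilde l\lessgtr -M_t-100\epsilon M_t$ that the gain of $2^{2(\alpha^\star-1)M_t}$ (from the third-component factor) or $2^{-2M_t/15}$ (from the $n_1$-localization) is enough to absorb the worst factor $2^{(7/6+\tilde\alpha_t/4)M_t}$ from the dichotomy. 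The critical arithmetic relies on $\alpha^\star=2/3+\iota$ and $\gamma\geq 1-4\epsilon$, which provide exactly the slack that makes $2^{(\gamma-10\epsilon)M_t}$ achievable; I expect no genuinely new difficulty beyond a careful bookkeeping of exponents analogous to the one carried out in Lemmas \ref{ellfulltyp3}--\ref{ellfulltyp4}.
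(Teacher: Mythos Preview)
Your proposal has two genuine gaps.

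\textbf{First gap (treatment of ${}_{1;1}^{\kappa}EllH$, the $\p_{v_3}$ piece).} You correctly note that $\mathbf{P}_3\big((\hat v-\hat V(s))\times B\big)=(\hat v_1-\hat V_1(s))B_2-(\hat v_2-\hat V_2(s))B_1$ involves only $B_1,B_2$, and propose to ``rerun Proposition~\ref{meanLinfest}'' to produce a refined dichotomy for $\mathbf{P}(B)=(B_1,B_2)$ with a gain of $2^{\tilde n_1}$. But no such gain exists: the symbol of $\mathbf{P}(B)$ is $\mathbf{P}(\hat v\times\xi)=(\hat v_2\xi_3-\hat v_3\xi_2,\,-\hat v_1\xi_3+\hat v_3\xi_1)$, which when $v,\xi$ are near the $z$-axis has size $\sim\max\{|\slashed{\hat v}|,|\slashed\xi|/|\xi|\}$ --- this is the \emph{dominant} part of $\hat v\times\xi$, not a small one. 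The improved dichotomy of Lemma~\ref{set1goodPartP3B} is specific to $\mathbf{P}_3(B)$, whose symbol $\hat v_1\xi_2-\hat v_2\xi_1$ genuinely picks up an extra factor of $|\slashed{\hat v}|\cdot|\slashed\xi|/|\xi|$. What the paper does instead is much simpler: the coefficient itself already carries the smallness, since $|\slashed{\hat v}-\slashed{\hat V}(s)|\lesssim 2^l+2^{(\alpha^\star-\gamma)M_t}$ on the support of $\varphi^4_{j,n}$. So one bounds all three pieces of \eqref{oct30eqn1} by $(2^l+2^{(\alpha^\star-\gamma)M_t})\,|B^{\tilde m}_{\tilde k;\tilde j,\tilde l}|$ and invokes the \emph{standard} dichotomy of Proposition~\ref{meanLinfest}; see \eqref{nov10eqn51}.

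\textbf{Second gap (missing use of Lemma~\ref{singularweigh}).} Even with the correct coefficient extraction, your Cauchy--Schwarz plus dichotomy argument does not close in the worst sub-case of $\mathcal{S}_2(t)$, namely when $l\in[-2(2\alpha^\star-1)M_t/5-100\epsilon M_t,\,-2(1-\alpha^\star)M_t/5+30\epsilon M_t]$ and $j+l\geq\alpha^\star M_t-100\epsilon M_t$ (a non-empty range of width $\sim\iota M_t$). In that regime $|\slashed v|/|v|\sim 2^l$, and the paper replaces the volume-based $L^\infty$ bound \eqref{nov11eqn31} by the singular weighted space-time estimate \eqref{nov1eqn1} of Lemma~\ref{singularweigh}, yielding \eqref{nov11eqn32}; the geometric mean of \eqref{nov11eqn32} with the $L^2$ bound \eqref{nov11eqn56} is what closes. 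You make no mention of Lemma~\ref{singularweigh}, and the ``careful bookkeeping'' you anticipate will not produce the missing factor without it.
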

\begin{proof}

Recall the definition of ${}_{0;1}^{\,\,1} \mathfrak{E}^{\mu,4;\kappa }_{k,j;n}(t_1, t_2)$ and ${}_{0;2}^{\,\,1} \mathfrak{E}^{\mu,4;\kappa }_{k,j;n}(t_1, t_2)$ in \eqref{oct30eqn1} and   \eqref{nov11eqn61}, and the definition of $\mathcal{F}[ {}_{}^{1}\mathfrak{E}^{\kappa} ](   X_{\bot}(s),   \xi, v, \zeta)$ in  \eqref{2024oct28eqn11}.   Due to the cutoff function $\varphi^4_{j,n}(v,\xi)$ (see  \eqref{sep4eqn6}), we have $l\in[n-2\epsilon M_t, n+2\epsilon M_t]$.

 After  doing integration by parts in  $\xi$  with respect to the orthonormal frame $\{(\hat{v}-\hat{V}(s))/|\hat{v}-\hat{V}(s), \theta_1(v,s),\theta_2(v,s)\}$ (defined in  Lemma \ref{smallangest}) many times,   for the kernel and     using the decomposition of the magnetic field in  \eqref{2024oct30eqn2}  and  \eqref{nov6eqn47}, $ \forall i\in \{1,2\}, $ we have
\be\label{nov10eqn51} 
\begin{split}
 \big|{}_{0;i}^{\,\,1} \mathfrak{E}^{\mu,4;\kappa,l }_{k,j;n}(t_1, t_2)\big| & \lesssim  \int_{t_1}^{t_2} \sum_{\begin{subarray}{c}
 (\tilde{m},\tilde{k},\tilde{j},\tilde{l})\in \mathcal{S}_1(t)\cup \mathcal{S}_2(t)
 \end{subarray} } \big|   {}_{0;i}^{\,\,1} \mathfrak{E}^{\mu, 4; \kappa,l;\tilde{m}}_{k,j,n;\tilde{k}, \tilde{j}, \tilde{l}}(s )\big| d s,\\ 
\big|   {}_{0;1}^{\,\,1} \mathfrak{E}^{\mu, 4; \kappa,l;\tilde{m}}_{k,j,n;\tilde{k}, \tilde{j}, \tilde{l}}(s )\big|& \lesssim \int_{\R^3}\int_{\R^3} K_{k,n,\kappa}(  X_{\bot}(s), y, v, V(s))  (2^{l}+2^{(\alpha^{\star}-\gamma)M_t})\\
&\quad \times  \big|B_{\tilde{k};\tilde{j}, \tilde{l} }^{\tilde{m}} (s, X(s)- y) \big| f(s, X(s)-y, v) dy d v, \\ 
\big|    {}_{0;2}^{\,\,1} \mathfrak{E}^{\mu, 4; \kappa,l;\tilde{m}}_{k,j,n;\tilde{k}, \tilde{j}, \tilde{l}}(s ) \big| &\lesssim \int_{\R^3}\int_{\R^3} K_{k,n,\kappa}( X_{\bot}(s), y, v, V(s))  \big|{}_{}^zT_{\tilde{k},\tilde{j};\tilde{n},\tilde{l}}^{\mu;\tilde{m},2}(B)(s,x,\zeta)\big| \\
&\quad \times \mathbf{1}_{\tilde{n}\leq -2M_t/15+10\epsilon M_t}   f(s, X(s)-y, v) dy d v, \\ 
\end{split}
\ee
where the kernel $K_{k,n,\kappa}( X_{\bot}(s), y, v, V(s))$ satisfies the following estimate, 
\be
\begin{split}
  \big|K_{k,n,\kappa}(  X_{\bot}(s),y, v, V(s))\big| 
  &\lesssim  \mathcal{M}(C)  2^{k+l-n-j-\min\{l,\kappa\}+l+4\epsilon M_t} \\
  &\quad \times ( 1+ 2^{k+n}|y\cdot \theta_1(v, s)|)^{-N_0^3}  (1+2^k|y\cdot \theta_2(v, s)|)^{-N_0^3} \\ 
  &\quad \times   (1+2^{k+\min\{n,\kappa\} }(y\cdot (\hat{v}-\hat{V}(s)/|\hat{v}-\hat{V}(s)|) ))^{-N_0^3 } . 
\end{split}
\ee

On one hand, from the obtained estimate  \eqref{nov10eqn51}, the volume of support of $v$, and the conservation law \eqref{conservationlaw}, we have    
\be\label{nov11eqn31}
\begin{split}
   \big|     {}_{0;1}^{\,\,1} \mathfrak{E}^{\mu, 4; \kappa,l;\tilde{m}}_{k,j,n;\tilde{k}, \tilde{j}, \tilde{l}}(s ) \big| & \lesssim  2^{k+l-j-\min\{l,\kappa\} +4\epsilon M_t} \min\{2^{-3k-n- \min\{n,\kappa\} }\min\{ 2^{3j+2l}, 2^{j+2\alpha^{\star}M_t}\}, 2^{-j}\} \\
   &\quad \times (2^{l}+2^{(\alpha^{\star}-\gamma)M_t}) \big\|B_{\tilde{k};\tilde{j}, \tilde{l} }^{\tilde{m}} (s,\cdot) \big\|_{L^\infty_x}  \mathcal{M}(C) \\ 
   \big|    {}_{0;2}^{\,\,1} \mathfrak{E}^{\mu, 4; \kappa,l;\tilde{m}}_{k,j,n;\tilde{k}, \tilde{j}, \tilde{l}}(s ) \big|&\lesssim  2^{k+l-j-\min\{l,\kappa\} +4\epsilon M_t} \min\{2^{-3k-n- \min\{n,\kappa\} } \min\{ 2^{3j+2l}, 2^{j+2\alpha^{\star}M_t}\}, 2^{-j}\}   \\
   &\quad \times \big\|{}_{}^zT_{\tilde{k},\tilde{j};\tilde{n},\tilde{l}}^{\mu;\tilde{m},2}(B)(s,\cdot,\zeta)\big\|_{L^\infty_x} \mathbf{1}_{\tilde{n}\leq  -2M_t/15 } \mathcal{M}(C). 
\end{split}
\ee

On the other hand,  from the  first fact in   \eqref{2024feb13eqn1}, the estimate \eqref{nov10eqn51}, the cylindrical symmetry of the distribution function and the estimate \eqref{singularweigh}  in Lemma \ref{singularweigh}, the following estimate holds if $l\geq (\alpha^{\star}-\gamma)M_t+100\epsilon M_t,$
\be\label{nov11eqn32}
\begin{split}
   \int_{t_1}^{t_2} \big|     {}_{0;1}^{\,\,1} \mathfrak{E}^{\mu, 4; \kappa,l;\tilde{m}}_{k,j,n;\tilde{k}, \tilde{j}, \tilde{l}}(s ) \big|  ds &\lesssim  2^{k+l-j-\min\{l,\kappa\} +4\epsilon M_t} 2^{-k-2n+4\epsilon M_t} 2^{-2(j+l)+j +\alpha^{\star}M_t+6\epsilon M_t} \\
   &\quad \times  (2^{l}+2^{(\alpha^{\star}-\gamma)M_t}) \sup_{s\in[t_1, t_2]} \big\|B_{\tilde{k};\tilde{j}, \tilde{l} }^{\tilde{m}} (s,\cdot) \big\|_{L^\infty_x}\mathcal{M}(C)  ,\\
        \int_{t_1}^{t_2}   \big|    {}_{0;2}^{\,\,1} \mathfrak{E}^{\mu, 4; \kappa,l;\tilde{m}}_{k,j,n;\tilde{k}, \tilde{j}, \tilde{l}}(s )\big| ds  & \lesssim  2^{  -2n-2j- l-\min\{l,\kappa\} +\alpha^{\star}M_t+15\epsilon M_t}  \\
        &\quad \times  \sup_{s\in[t_1, t_2]} \big\|{}_{}^zT_{\tilde{k},\tilde{j};\tilde{n},\tilde{l}}^{\mu;\tilde{m},2}(B)(s,\cdot,\zeta)\big\|_{L^\infty_x} \mathbf{1}_{\tilde{n}\leq -2M_t/15 } \mathcal{M}(C).
\end{split}
\ee
In the above estimate, we used the fact that $|  v_{\bot}|/|v|\sim 2^l$ if $l\geq (\alpha^{\star}-\gamma)M_t+100\epsilon M_t. $

 Based on the range of $ (\tilde{m},\tilde{k},\tilde{j},\tilde{l})$, we proceed in two main steps  as follows. 

\medskip

\noindent \textbf{Step 1.}\quad  If $ (\tilde{m},\tilde{k},\tilde{j},\tilde{l})\in \mathcal{S}_1(t
).$

\medskip

If $l\leq -2M_t/15+3\iota M_t,$ then from the obtained estimate  \eqref{nov11eqn31},   the first estimate in   \eqref{2024oct30eqn1} in Proposition \ref{meanLinfest},  and  the estimate  \eqref{2024oct30eqn21}    in Proposition  \ref{set1goodPartP3B},  we have 
\be 
\begin{split}
\sum_{(\tilde{m},\tilde{k},\tilde{j},\tilde{l})\in \mathcal{S}_1(t
), i\in\{1,2\}}    \big|    {}_{0;i}^{\,\,1} \mathfrak{E}^{\mu, 4; \kappa,l;\tilde{m}}_{k,j,n;\tilde{k}, \tilde{j}, \tilde{l}}(s )\big| &    \lesssim  \mathcal{M}(C) 2^{2\alpha^{\star} M_t  + 150\epsilon M_t} 2^{2l-\min\{l,\kappa\}-(k+n+\min\{n,\kappa\})/2}   \\
&\quad \times  \big(  2^{  (\alpha^{\star}-\gamma)M_t} +2^{ -2M_t/15+3\iota M_t}\big)\\
 & \lesssim 2^{(\gamma-20\epsilon)M_t}\mathcal{M}(C). 
  \end{split}
\ee

If  $ l\geq -2M_t/15+3\iota M_t$, then  from the obtained estimates \eqref{nov11eqn31} and \eqref{nov11eqn32},  the first estimate in   \eqref{2024oct30eqn1} in Proposition \ref{meanLinfest},  and  the estimate  \eqref{2024oct30eqn21}    in Proposition  \ref{set1goodPartP3B}, and the fact that   $(2k+2l+3\min\{l,\kappa\})/4-2l\geq (1-3\alpha^{\star}M_t/4)-25\epsilon M_t$, we have
\be 
\begin{split}
\sum_{(\tilde{m},\tilde{k},\tilde{j},\tilde{l})\in \mathcal{S}_1(t
), i\in\{1,2\}}   \int_{t_1}^{t_2}  \big|    {}_{0;i}^{\,\,1} \mathfrak{E}^{\mu, 4; \kappa,l;\tilde{m}}_{k,j,n;\tilde{k}, \tilde{j}, \tilde{l}}(s )\big|  d s  & \lesssim  2^{2\alpha^{\star} M_t  + 150\epsilon M_t} \big( 2^{  -2l-2j -\min\{l,\kappa\} +\alpha^{\star}M_t+15\epsilon M_t}\big)^{1/2}\\
&\times \big(2^{-2k-2\min\{l,\kappa\}+2j+3l}\big)^{1/2}\mathcal{M}(C) \\
&\lesssim 2^{5\alpha^{\star}M_t/2+ 150\epsilon M_t-5l/2-(k+3\min\{l,\kappa\}/2-3l)} \mathcal{M}(C)\\
& \lesssim 2^{(\gamma-20\epsilon)M_t}\mathcal{M}(C).
  \end{split}
\ee

\medskip

\noindent \textbf{Step 2.}\quad   If $ (\tilde{m},\tilde{k},\tilde{j},\tilde{l})\in \mathcal{S}_2(t
).$

\medskip

From the Cauchy-Schwarz inequality, the obtained estimate  \eqref{nov10eqn51}, and the volume of support of $v$, we have
 \be\label{nov11eqn56}
 \begin{split}
  \big|    {}_{0;1}^{\,\,1} \mathfrak{E}^{\mu, 4; \kappa,l;\tilde{m}}_{k,j,n;\tilde{k}, \tilde{j}, \tilde{l}}(s )\big|&\lesssim  2^{k +l -j-\min\{l,\kappa\}+4\epsilon M_t} (2^{l}+2^{(\alpha^{\star}-\gamma)M_t}) \big( \min\{ 2^{3j+2l}, 2^{j+2\alpha^{\star}M_t}\}\big)^{1/2}    \\ 
   &\quad  \times    \big( 2^{-3k-n-\min\{n,\kappa\} }  \min\{ 2^{3j+2l}, 2^{j+2\alpha^{\star}M_t}\} \big)^{1/2}  \mathcal{M}(C) \big\|B_{\tilde{k};\tilde{j}, \tilde{l} }^{\tilde{m}} (s, \cdot) \big\|_{L^2}, \\
     \big|     {}_{0;2}^{\,\,1} \mathfrak{E}^{\mu, 4; \kappa,l;\tilde{m}}_{k,j,n;\tilde{k}, \tilde{j}, \tilde{l}}(s )\big|&\lesssim
     2^{k +l-j-\min\{l,\kappa\}+4\epsilon M_t}  \big( 2^{-3k-n-\min\{n,\kappa\} } \min\{ 2^{3j+2l}, 2^{j+2\alpha^{\star}M_t}\} \big)^{1/2}\\ 
     &\quad \times  \big( \min\{ 2^{3j+2l}, 2^{j+2\alpha^{\star}M_t}\}\big)^{1/2} \mathcal{M}(C)    \big\|{}_{}^zT_{\tilde{k},\tilde{j};\tilde{n},\tilde{l}}^{\mu;\tilde{m},2}(B)(s,x,\zeta)\big\|_{L^2} \mathbf{1}_{\tilde{n}\leq  -2M_t/15 }. 
 \end{split}
 \ee

 Based on the possible size of   $l$, we proceed in two sub-steps  as follows. 

 \medskip

\textbf{Step 2A.}\quad  If $l\leq -50\iota M_t$. 

\medskip

From the obtained estimates \eqref{nov11eqn31} and  \eqref{nov11eqn56},  the second  estimate in  \eqref{2024oct30eqn1}  in Proposition \ref{meanLinfest}, and  the second estimate in \eqref{2024oct30eqn21}  in Proposition  \ref{set1goodPartP3B},  for any $s\in [t_1, t_2]$,  we have 
\[
\begin{split}
\sum_{\begin{subarray}{c}
(\tilde{m},\tilde{k},\tilde{j},\tilde{l})\in \mathcal{S}_2(t
)\\ 
i\in\{1,2\}
\end{subarray}} & \big|   {}_{0;i}^{\,\,1} \mathfrak{E}^{\mu, 4; \kappa,l;\tilde{m}}_{k,j,n;\tilde{k}, \tilde{j}, \tilde{l}}(s ) \big|   \lesssim 2^{2\alpha^{\star} M_t+3l/2 +150\epsilon M_t -\min\{l,\kappa\} -(k+n+\min\{n,\kappa\})/2}  \\
 &\quad \times (2^{l}+2^{-2M_t/15})  \mathcal{M}(C) \\
 &\lesssim   \mathcal{M}(C)  2^{(3\alpha^{\star}-1)M_t }\big(2^{5l/2-7l/3+\iota M_t} + 2^{-2M_t/15} 2^{3l/2}( 2^{-(5\alpha^{\star}-3)M_t} )^{ 5/14 } (2^{-7l/3+\iota M_t})^{9/14} \big)\\
 & \lesssim \mathcal{M}(C) 2^{(\gamma-20\epsilon)M_t} . 
 \end{split}
\]
 
 \medskip

\textbf{Step 2B.}\quad     If $l\geq  -50\iota M_t$. 

 \medskip

Note that, for this case, we have $|  v_{\bot} |/|v|\sim 2^l$ and $\min\{l, \kappa\}\geq -120\iota M_t.$ In view of the estimate \eqref{nov24eqn41}, it would be sufficient to consider the case $j+l\leq (\alpha^{\star}+\epsilon)M_t.$   Recall the obtained estimate  \eqref{nov10eqn51}.    From 
   the  first fact in   \eqref{2024feb13eqn1},  the first estimate of the Jacbobian in \eqref{jacobianthreshold},    and the estimate \eqref{singularweigh}  in Lemma \ref{singularweigh},  after using the Cauchy-Schwarz inequality,   we have
 \[
 \begin{split}
  \big|    {}_{0;1}^{\,\,1} \mathfrak{E}^{\mu, 4; \kappa,l;\tilde{m}}_{k,j,n;\tilde{k}, \tilde{j}, \tilde{l}}(s )\big|&\lesssim    2^{k +2l -j-\min\{l,\kappa\}+20\epsilon M_t}   \big(   2^{3j+2l}  \big)^{1/2}     \big(2^{-k-2n -2(j+l)+j+\alpha^{\star}M_t } \big)^{1/2}  \mathcal{M}(C) \big\|B_{\tilde{k};\tilde{j}, \tilde{l} }^{\tilde{m}} (s, \cdot) \big\|_{L^2}  \\
   &\lesssim 2^{k/2+l+\alpha^{\star}M_t/2-\min\{l,\kappa\}+20\epsilon M_t}  \mathcal{M}(C) \big\|B_{\tilde{k};\tilde{j}, \tilde{l} }^{\tilde{m}} (s, \cdot) \big\|_{L^2},  \\
     \big|     {}_{0;2}^{\,\,1} \mathfrak{E}^{\mu, 4; \kappa,l;\tilde{m}}_{k,j,n;\tilde{k}, \tilde{j}, \tilde{l}}(s )\big|&\lesssim  2^{k/2+ \alpha^{\star}M_t/2-\min\{l,\kappa\}+20\epsilon M_t}  \mathcal{M}(C)     \big\|{}_{}^zT_{\tilde{k},\tilde{j};\tilde{n},\tilde{l}}^{\mu;\tilde{m},2}(B)(s,x,\zeta)\big\|_{L^2} \mathbf{1}_{\tilde{n}\leq  -2M_t/15 }. 
 \end{split}
 \]

 From the above estimate and the obtained estimate   \eqref{nov11eqn31}, the second  estimate in  \eqref{2024oct30eqn1}  in Proposition \ref{meanLinfest}, and   the second estimate in \ref{2024oct30eqn21}  in Proposition  \ref{set1goodPartP3B},   and the fact that   $(2k +3\min\{l,\kappa\})/4-3l/2\geq (1-3\alpha^{\star}M_t/4)-25\epsilon M_t$, we have
\[
\begin{split}
\sum_{ 
(\tilde{m},\tilde{k},\tilde{j},\tilde{l})\in \mathcal{S}_2(t
), i\in\{1,2\}} &  \int_{t_1}^{t_2}   \big|  {}_{0;i}^{\,\,1} \mathfrak{E}^{\mu, 4; \kappa,l;\tilde{m}}_{k,j,n;\tilde{k}, \tilde{j}, \tilde{l}}(s )  \big|  d s   \lesssim 2^{ \alpha^{\star} M_t +20\epsilon M_t }\big( 2^{k/2+l+\alpha^{\star}M_t/2-\min\{l,\kappa\}+20\epsilon M_t}  \big)^{1/2}  \\
&\quad \times  \big(      2^{-2k- 2\min\{l,\kappa\}+2j+3l}   \big)^{1/2}   \mathcal{M}(C) \\
& \lesssim 2^{ 9\alpha^{\star} M_t/4  +  50\epsilon M_t   -3\min\{l,\kappa\}/8-5l/4-3(k+3\min\{l,\kappa\}/2-3l)/4 }  \mathcal{M}(C)\\
&\lesssim 2^{(\gamma-20\epsilon)M_t}  \mathcal{M}(C). 
\end{split}
\]
Hence,  the desired estimate  \eqref{nov28eqn80}  holds from the above obtained estimates and   the estimate  \eqref{nov10eqn51}.
\end{proof}

\subsubsection{The estimate of all other main terms}\label{mainpartsISSpart2firs}

In the following lemma, we estimate the main terms  $  {}_{0;0}^{\,\,1} \mathfrak{E}^{\mu,4;\kappa,l }_{k,j;n}(t_1, t_2)$,  $  {}_{1}^1 \mathfrak{E}^{\mu,4;\kappa }_{k,j;n}(t_1, t_2)$, $  {}_{0;0}^{\,\,1} \mathfrak{E}^{\mu,a; l }_{k,j;n}(t_1, t_2) $, $ {}_{1}^1\mathfrak{E}^{\mu,a;l}_{k,j;n}(t_1, t_2)$ ($a \in {0, 1, 2, 3}$), and ${}_{1}^{1}\widetilde{\mathfrak{H}}^{\mu,i}_{k,j;n}(t_1, t_2)$. These terms are estimated together due to their shared structural features.

\begin{lemma}\label{errortypefullhyp2}
 Let $\bar{\kappa}:= -(5\alpha^{\star} -3)M_t-100\epsilon M_t$,  $  \mu\in\{+, -\},    (k,n)\in \widetilde{\mathcal{E}}_4$, $j\in [0,(1+2\epsilon)M_t]\cap \Z, $ $l\in[-M_t,2]\cap\Z, \kappa\in (\bar{\kappa}, 2]\cap \Z $ s.t., $k+2n\geq  2(1- \alpha^{\star} )M_t - 3 0\epsilon M_t $,   $k+4n/3\geq M_t- \alpha^{\star}  M_t/3-30M_t,$   $ \min\{l,n\}\geq (1-2 \alpha^{\star} )M_t-30\epsilon M_t$,  $j\geq 3M_t/5-20\epsilon M_t$,    $k+l+\kappa\geq  2(1- \alpha^{\star}  )M_t-100\epsilon M_t, $  $(2k+2l+3\min\{l,\kappa\})/4-2l\geq (1-3\alpha^{\star}M_t/4)-100\epsilon M_t$, and $\min\{l, \kappa\}\geq 7l/3-\iota M_t$,  we have
\be\label{nov12eqn21}
\big|       {}_{0;0}^{\,\,1} \mathfrak{E}^{\mu,4;\kappa,l }_{k,j;n}(t_1, t_2) \big| + \big|   {}_{1}^{ 1} \mathfrak{E}^{\mu,4;\kappa,l }_{k,j;n}(t_1, t_2)  \big|\lesssim 2^{(\gamma-10 \epsilon)M_t} \mathcal{M}(C).
\ee
Moreover, 
\begin{enumerate}
\item[(i)] For any $ i\in \{0,2,3 \},   ( n,k)\in \widetilde{\mathcal{E}}_i$ or $i=1, (n,k)\in \widetilde{\mathcal{E}}'_1,$  $ \mu\in\{+,-\},$ we have 
\be\label{2024oct31eqn510}
 |   {}_{0;0}^{\,\,1} \mathfrak{E}^{\mu,i; l }_{k,j;n}(t_1, t_2) |+\big|  {}_{1}^{ 1} \mathfrak{E}^{\mu,i; l }_{k,j;n}(t_1, t_2)  \big| \lesssim 2^{(\gamma-10 \epsilon)M_t} \mathcal{M}(C).
\ee
\item[(i)]For any $ i\in \{0,1,2,3,4 \},   ( n,k)\in \widetilde{\mathcal{E}}_i$ ,   $ \mu\in\{+,-\},$ we have 
\be\label{2024oct30eqn52}
 \big| {}_{1}^{1}\widetilde{\mathfrak{H}}^{\mu,i}_{k,j;n}(t_1, t_2)  \big|   \lesssim 2^{(\gamma-10 \epsilon)M_t} \mathcal{M}(C).
\ee
\end{enumerate}

\end{lemma}

\begin{proof}

Recall  \eqref{oct29eqn82}, \eqref{2024oct28eqn1}, \eqref{oct29eqn91},  \eqref{nov9eqn62}  and \eqref{nov6eqn81}. The singularity introduced when $\kappa < \min\{l, n\}$ in the terms  $   {}_{0;0}^{\,\,1} \mathfrak{E}^{\mu,4;\kappa,l }_{k,j;n}(t_1, t_2)  $ and $  {}_{1}^{ 1} \mathfrak{E}^{\mu,4;\kappa,l }_{k,j;n}(t_1, t_2) $  makes their estimation more difficult than that of ${}_{1}^{1}\widetilde{\mathfrak{H}}^{\mu,i}_{k,j;n}(t_1, t_2)$, $ {}_{0;0}^{\,\,1} \mathfrak{E}^{\mu,a; l }_{k,j;n}(t_1, t_2)$, and $ {}_{1}^{ 1} \mathfrak{E}^{\mu,a; l }_{k,j;n}(t_1, t_2)$ ($a \in {0, 1, 2, 3}$, $i \in {1, 2, 3, 4}$), which possess similar, less singular structures. 

We restrict our attention to a detailed proof of the more challenging case, specifically, estimate \eqref{nov12eqn21}. With minor modifications in  the proof of estimate  \eqref{nov12eqn21}, the desired estimates in   \eqref{2024oct31eqn510} and   \eqref{2024oct30eqn52} follows similarly.

 \medskip

\noindent \textbf{Step 1.}\quad The  first reduction.

 \medskip

Due to the cutoff function $\varphi^4_{j,n}(v,\xi)$ (see  \eqref{sep4eqn6}), we have $l\in[n-2\epsilon M_t, n+2\epsilon M_t]$. Recall \eqref{2024oct28eqn1}  and  \eqref{nov6eqn81}.  After localizing the frequency of new introduced acceleration force and using the  decomposition of localized acceleration force in \eqref{oct7eqn1},   we have
\be\label{oct19eqn41}
 \begin{split}
\big|       {}_{0;0}^{\,\,1} \mathfrak{E}^{\mu,4;\kappa,l }_{k,j;n}(t_1, t_2) \big| + \big|   {}_{1}^{ 1} \mathfrak{E}^{\mu,4;\kappa,l }_{k,j;n}(t_1, t_2)  \big|
 &\lesssim   \sum_{\begin{subarray}{c}
k_1\in\Z_+,   n_1\in [-M_t,2]\cap \Z\\ 
\mu_1\in \{+, -\}, i_1\in\{0,1,2,3,4\}  \\  
\end{subarray} }  \big|H^{\mu, \mu_1;i,i_1 }_{k,n;k_1,n_1}(t_1, t_2) \big| \\
&\quad + \big|M^{\mu, \mu_1;i,i_1 }_{k,n;k_1,n_1}(t_1, t_2) \big|+   \big|Err^{\mu, \mu_1;i,i_1 }_{k,n;k_1,n_1}(t_1, t_2) \big|,  
\end{split}
 \ee
 where
 \be\label{nov12eqn64}
 \begin{split}
  H^{\mu, \mu_1;i}_{k,n;k_1,n_1}(t_1, t_2) &=   \int_{t_1}^{t_2} \int_{\R^3}\int_{\R^3}  f(s, X(s)- y, v )\\
  &\quad \times  \mathfrak{H}^{\mu_1, i_1}_{k_1,j_1;n_1}(s,X(s) ,   V(s))\cdot  \mathcal{K}_{k,n,l}^{\mu;\kappa}( X_{\bot}(s),y, v, V(s) )  dy d v d s, \\ 
  Err^{\mu, \mu_1;i}_{k,n;k_1,n_1}(t_1, t_2)&=   \int_{t_1}^{t_2} \int_{\R^3}\int_{\R^3}  f(s, X(s)- y, v ) \big(   \|Ini_{k_1,j_1,n_1}^{\mu_1,i_1}(s, x, V(s))\|_{L^\infty_x}  \\ 
   &\quad +  \| \mathfrak{E}^{\mu_1, i_1}_{k_1,j_1;n_1} (s, x, V(s))\|_{L^\infty_x}\mathbf{1}_{i_1\in\{0,1,2,3\} } \big)\\
   &\quad \times  \big[ \big|\mathcal{K}_{k,n,l}^{\mu;\kappa}(  X_{\bot}(s),y, v, V(s) )\big|+  \big| \mathcal{M}_{k,n,l}^{\mu;\kappa}( X_{\bot}(s),y, v, V(s) )\big|\big] dy d v d s, \\ 
   M^{\mu, \mu_1;i,i_1 }_{k,n;k_1,n_1}(t_1, t_2)&:= \int_{t_1}^{t_2} \int_{\R^3}\int_{\R^3}  f(s, X(s)- y, v ) \\
   &\quad \times  \mathfrak{T}^{\mu_1, i_1}_{k_1,j_1;n_1}(s,X(s)- y,   V(s))\cdot  \mathcal{M}_{k,n,l}^{\mu;\kappa}( X_{\bot}(s),y, v, V(s) )dy d v d s,
\end{split}
\ee
where $  \mathfrak{T}^{\mu_1, i_1}_{k_1,j_1;n_1}(s,\cdot,   V(s))$ and the kernels $ \mathcal{K}_{k,n,l}^{\mu;\kappa}(  X_{\bot}(s),\cdot, v, V(s) )$ and $\mathcal{M}_{k,n,l}^{\mu;\kappa}( X_{\bot}(s),\cdot, v, V(s) ) $ are defined as follows, 
\be\label{nov12eqn63}
\begin{split}
\mathcal{K}_{k,n,l}^{\mu;\kappa}( X_{\bot}(s),y, v, V(s) )&:= \int_{\R^3} e^{i y\cdot \xi }\nabla_{\zeta}\big( \varphi_{l;-M_t}\big(\tilde{v}-\tilde{\zeta}  \big)   \mathcal{F}[ {}_{}^{1}\mathfrak{E}^{\kappa} ](   X_{\bot}(s),   \xi, v, \zeta) \big)\big|_{\zeta=V(s)} d \xi,\\
 \mathcal{M}_{k,n,l}^{\mu;\kappa}( X_{\bot}(s),y, v, V(s) )& :=  \int_{\R^3} e^{i y\cdot \xi}   \nabla_v \big[  \varphi_{l;-M_t}\big(\tilde{v}-\tilde{\zeta}  \big)   \mathcal{F}[ {}_{}^{1}\mathfrak{E}^{\kappa} ] ( X_{\bot}(s),   \xi, v, \zeta) \big]\big|_{\zeta= V(s)} d\xi,\\  \mathfrak{T}^{\mu_1, i_1}_{k_1,n_1,j_1}(s,x,   V(s))&:=\mathfrak{H}^{\mu_1, i_1}_{k_1,j_1;n_1}(s,x,   V(s)) +\big[ {}_{}^zT_{k_1,n_1}^{\mu_1}(B)(s,x,V(s)) \mathbf{1}_{n_1\geq -2M_t/15 }  \\ 
 &\quad  +  {}_{}^zT_{k_1,n_1}^{\mu_1;1}(B)(s,x,V(s))  \mathbf{1}_{n_1\leq -2M_t/15 } \big]    \big( \hat{v}_2-\hat{V}_2(s), -\hat{v}_1+\hat{V}_1(s), 0\big).
 \end{split}
 \ee

   Recall   \eqref{2024oct28eqn11}. After doing integration by parts in ``$\xi$''    with respect to the orthonormal frame  $\{(\hat{v}-\hat{V}(s))/|\hat{v}-\hat{V}(s), \theta_1(v,s),\theta_2(v,s)\}$ (defined in  Lemma  \ref{smallangest}) many times, we have 
\be\label{oct19eqn61}
\begin{split}
&2^j \big|    \mathcal{M}_{k,n,l}^{\mu;\kappa}( X_{\bot}(s),y, v, V(s) ) \big|+ 2^{\gamma M_t} \big|\mathcal{K}_{k,n,l}^{\mu;\kappa}(  X_{\bot}(s),y, v, V(s) )\big|\\
 &\lesssim 2^{k -\min\{l,\kappa\}+5\epsilon M_t} (1+2^k|y\cdot \theta_2(v, s)|)^{-N_0^3} ( 1+ 2^{k+n}|y\cdot \theta_1(v, s)|)^{-N_0^3}\\
&\quad  \times  (1+2^{k+\min\{n,\kappa\} }(y\cdot (\hat{v}-\hat{V}(s)/|\hat{v}-\hat{V}(s)|) ))^{-N_0^3} . 
 \end{split}
\ee 
Moreover, the following improved estimate holds for the projection of the kernels onto the $z$-axis, 
\be\label{nov12eqn41}
\begin{split}
 &2^j \big|  \mathbf{P}_3\big(  \mathcal{M}_{k,n,l}^{\mu;\kappa}( X_{\bot}(s),y, v, V(s) )\big) \big|+ 2^{\gamma M_t} \big| \mathbf{P}_3\big(  \mathcal{K}_{k,n,l}^{\mu;\kappa}( X_{\bot}(s),y, v, V(s) )\big)\big|\\
 &\lesssim 2^{k -\min\{l,\kappa\}+5\epsilon M_t} (2^l+2^{(\alpha^{\star}-\gamma )M_t})  ( 1+ 2^{k+n}|y\cdot \theta_1(v, s)|)^{- N_0^3 }\\
&\quad  \times   (1+2^k|y\cdot \theta_2(v, s)|)^{- N_0^3 } (1+2^{k+\min\{n,\kappa\} }(y\cdot (\hat{v}-\hat{V}(s)/|\hat{v}-\hat{V}(s)|) ))^{-N_0^3}  . 
 \end{split}
\ee
 
 We first estimate the error type term $Err^{\mu, \mu_1;i}_{k,n;k_1,n_1}(t_1, t_2)$.   From the above estimates of kernel,  the rough estimate of the elliptic part in \eqref{2022feb25eqn1} in  Theorem \ref{maintheoremellipitic}, and the facts that $\min\{l, \kappa\}\geq 7l/3-\iota M_t$ and $\kappa\geq \bar{\kappa}$, we have
\be\label{2026may1eqn31}
\begin{split}
\big|Err^{\mu, \mu_1;i}_{k,n;k_1,n_1}(t_1, t_2)\big| &  \lesssim 2^{10\epsilon M_t}   2^{5 \alpha^{\star}   M_t/3} 2^{-j-\min\{l,\kappa\}}  \min\{2^{k-j}, 2^{-3k-n-\min\{n,\kappa\}+ 3j+2l}\}  \mathcal{M}(C)\\
&\lesssim 2^{10\epsilon M_t}   2^{5 \alpha^{\star}   M_t/3}    2^{l-\min\{l,\kappa\}  } 2^{-(k+n+\min\{n,\kappa\})/2}\mathcal{M}(C)\\
&\lesssim 2^{60\epsilon M_t}   2^{5 \alpha^{\star}   M_t/3}   2^{l} (2^{-7l/3+\iota M_t})^{3/7} (2^{ (5\alpha^\star-3)M_t+ \iota M_t})^{4/7}    2^{-(1-\alpha^{\star})M_t  }\mathcal{M}(C)\\
& \lesssim 2^{(\gamma-10\epsilon)M_t} \mathcal{M}(C). \\
\end{split}
\ee

Now, we focus on  the other terms in \eqref{oct19eqn41}. From the  estimate of kernel $\mathcal{K}_{k,n,l}^{\mu;\kappa}( X_{\bot}(s),y, v, V(s) )$ in  \eqref{oct19eqn61}, the estimates  in \eqref{2024oct8eqn1} in Theorem \ref{mainresultsfirstpart},    the volume of support of $v$ and the estimate  \eqref{nov24eqn41}  if $| v_{\bot}|\geq 2^{(\alpha^\star+\epsilon)M_t}$, the following estimate holds if either $(k_1+2n_1)/2\leq  (k +n+ \min\{n, \kappa\})/2 + (1-\alpha^{\star}-10\iota )M_t-200\epsilon M_t$ or  $n_1\leq    -(\alpha^{\star}+3\iota+60\epsilon) M_t , $
\be\label{oct15eqn60}
\begin{split}
  \big|H^{\mu, \mu_1;i}_{k,n;k_1,n_1}(t_1, t_2)\big|  & \lesssim   \big(2^{(1-10\epsilon)M_t} + 2^{(k_1+2n_1)/2+ (\alpha^{\star}+3\iota + 130\epsilon)  M_t}\mathbf{1}_{n_1\geq  -(\alpha^{\star}+3\iota+60\epsilon) M_t  } \big)\\
&\quad \times  2^{ 10\epsilon M_t -\min\{l,\kappa\}} 2^{k- \gamma M_t}\min\{2^{-3k-n-\min\{n,\kappa\} + j+2\alpha^{\star}M_t}, 2^{-j}\}    \mathcal{M}(C) \\
&  \lesssim 2^{(\gamma-10\epsilon)M_t} \mathcal{M}(C).
\end{split}
\ee

Recall  \eqref{nov12eqn64}  and   \eqref{nov12eqn63}. From the estimates  in \eqref{2024oct8eqn1} in Theorem \ref{mainresultsfirstpart},   the estimates in   \eqref{nov5eqn1}     in Proposition \ref{goodpartprojmagn} with the choice of $ {n}=-2M_t/15$,  the  estimate of kernel $\mathcal{M}_{k,n,l}^{\mu;\kappa}(  X_{\bot}(s),y, v, V(s) )$ in \eqref{oct19eqn61},   the volume of support of $v$,  and the facts that $\min\{l, \kappa\}\geq 7l/3-\iota M_t$ and $\kappa\geq \bar{\kappa}$,  if $(k_1+2n_1)/2\leq (k+n +\min\{n,\kappa\}  )/2+  2M_t/15-10\iota M_t,$ or $n_1\leq - (\alpha^{\star}+3\iota+60\epsilon) M_t$, we have
\be\label{nov12eqn68}
\begin{split}
| M^{\mu, \mu_1;i,i_1 }_{k,n;k_1,n_1}(t_1, t_2)|&\lesssim      \mathcal{M}(C)  \big(2^{(1-10\epsilon)M_t} + 2^{(k_1+2n_1)/2+ (\alpha^{\star}+3\iota + 130\epsilon)  M_t}\mathbf{1}_{n_1\geq  -(\alpha^{\star}+3\iota+60\epsilon) M_t  } \big) 
\\
&\quad \times  2^{ 10\epsilon M_t -\min\{l,\kappa\} } 2^{k-j} (1+2^{ 2M_t/15} 2^{l}  )  \min\{2^{-3k-n-\min\{n,\kappa\} +3j+2l}, 2^{-j}\}   \\
&\lesssim   \mathcal{M}(C)  \big(2^{(1-10\epsilon)M_t} + 2^{(k_1+2n_1)/2+ (\alpha^{\star}+3\iota + 130\epsilon)  M_t}\mathbf{1}_{n_1\geq  -(\alpha^{\star}+3\iota+60\epsilon) M_t  } \big) \\
&\quad \times 2^{ 10\epsilon M_t  }2^{-(k+n+\min\{n,\kappa\})/2}\big[ 2^{l} (2^{-7l/3+\iota M_t})^{3/7} (2^{ (5\alpha^\star-3)M_t+ \iota M_t})^{4/7}  \\
&\quad +2^{ 2M_t/15} 2^{2l}(2^{-7l/3+\iota M_t})^{6/7} (2^{ (5\alpha^\star-3)M_t+ \iota M_t})^{1/7}\big]\\
& \lesssim 2^{(\gamma-10\epsilon)M_t} \mathcal{M}(C).
\end{split}
\ee
 
 \medskip

\noindent \textbf{Step 2.}\quad The second iteration of smoothing. 

 \medskip

 Now, it remains to estimate $ H^{\mu, \mu_1;i}_{k,n;k_1,n_1}(t_1, t_2)$ and $ M^{\mu, \mu_1;i}_{k,n;k_1,n_1}(t_1, t_2)$ for  the case $(k_1+2n_1)/2\geq  (k+n +\min\{n,\kappa\}  )/2+  2M_t/15-10\iota M_t$ and    $n_1\geq - (\alpha^{\star}+3\iota+60\epsilon) M_t$.  Recall  \eqref{nov12eqn64}.  Note that, on the Fourier side, we have
 \be
 \begin{split}
 H^{\mu, \mu_1;i}_{k,n;k_1,n_1}(t_1, t_2)
 &=  \int_{t_1}^{t_2}\int_{\R^3}  \int_{\R^3}\int_{\R^3} e^{i  X(s)\cdot \xi  - i s\hat{v}\cdot \eta + i s\mu_1 |\xi-\eta|  }    \mathcal{F}[\mathfrak{H}_{k_1,j_1;n_1}^{\mu_1,i_1}](s, \xi-\eta, V(s))   \\ 
 &\quad       \cdot  \nabla_{\zeta}\big(\varphi_{l;-M_t}\big(\tilde{v}-\tilde{\zeta}  \big)     \mathcal{F}[ {}_{}^{1}\mathfrak{E}^{\kappa} ]  (   X_{\bot}(s),   \eta, v, \zeta) \big)\big|_{\zeta=V(s)} \widehat{g}(s, \eta, v)      d \eta d \xi  d v   d s, \\  
  M^{\mu, \mu_1;i}_{k,n;k_1,n_1}(t_1, t_2)& =  \int_{t_1}^{t_2}\int_{\R^3}  \int_{\R^3}\int_{\R^3} e^{i  X(s)\cdot \xi  - i s\hat{v}\cdot \eta + i s\mu_1 |\xi-\eta|  } \mathcal{F}[\widetilde{\mathfrak{H}_{k_1,j_1;n_1}^{\mu_1,i_1}}] (s, \xi-\eta, v, V(s)) \\
  &\quad         \cdot  \nabla_{v}\big(  \varphi_{l;-M_t}\big(\tilde{v}-\tilde{\zeta}  \big)   \mathcal{F}[ {}_{}^{1}\mathfrak{E}^{\kappa} ](  X_{\bot}(s),   \xi, v, \zeta) \big)\big|_{\zeta=V(s)}   \widehat{g}(s, \eta, v) d \eta d \xi  d v   ds ,
 \end{split}
 \ee
 where
  \be\label{nov12eqn70}
  \begin{split}
  \mathcal{F}[\widetilde{\mathfrak{H}_{k_1,j_1;n_1}^{\mu_1,i_1}}](s, \xi,v,  V(s)) &: =      \mathcal{F}[ {\mathfrak{H}_{k_1,j_1;n_1}^{\mu_1,i_1}}](s, \xi , V(s))  + \big[\mathcal{F}_{x\rightarrow \xi}[{}_{}^zT_{k_1,n_1}^{\mu_1}(B)](s,\xi,V(s)) \\
 &\quad \times \mathbf{1}_{n_1\geq -2M_t/15  } +  \mathcal{F}_{x\rightarrow \xi}[{}_{}^zT_{k_1,n_1}^{\mu_1;1}(B)](s,\xi,V(s))  \mathbf{1}_{n_1\leq -2M_t/15 } \big]\\
 &\quad \times \big( \hat{v}_2-\hat{V}_2(s), -\hat{v}_1+\hat{V}_1(s), 0\big) .
  \end{split}
 \ee

Recall the equations satisfied by characteristics in  \eqref{backward}. After doing integration by parts in time, using the Vlasov equation and doing integration by parts in $v$  if $\p_t$ hits $\widehat{g}(t, \eta, v)$, and  using the decomposition  in  \eqref{oct7eqn1} for   the new introduced acceleration force,  for any $ U\in \{H, M\}$, the following decomposition holds
  \be\label{oct29eqn40}
 U^{\mu, \mu_1;i}_{k,n;k_1,n_1}(t_1, t_2) = \sum_{a=0,1,2,3,4} ErrU^a_{i,i_1}(t_1, t_2) +\sum_{\begin{subarray}{c}
  k_2\in \Z_+, n_2\in[-M_t,2]\cap \Z\\  
  \mu_2\in \{+,-\}, i_2\in\{0,1,2,3,4\}, b\in\{1,2\}\\  
  \end{subarray}} U^b_{i,i_1,i_2}(t_1,t_2),
\ee
where, for convenience in notation,  we suppressed the dependence of $U^b_{i,i_1,i_2}(t_1,t_2)$ with respect to $k,k_1, k_2$ etc., 
  \be\label{nov14eqn31}
  \begin{split}
  U^1_{i,i_1,i_2}(t_1,t_2)&:= \int_{t_1}^{t_2}\int_{(\R^3)^3}  e^{i  X(s)\cdot \xi  - i s\hat{v}\cdot \eta + i s \mu_1 |\xi-\eta|  }\widehat{g}(s, \eta, v) \\
  &\quad \times  \mathfrak{H}_{k_2,j_2,n_2}^{\mu_2,i_2}(s, X(s), V(s)) \cdot    \mathcal{F}[{}_{1}^{2}\mathfrak{E}U](s, \xi, \eta, v,X_{\bot}(s), V(s))   d \eta  d \xi  d v d s,\\
  U^2_{i,i_1,i_2}(t_1,t_2)&:= \int_{t_1}^{t_2}\int_{(\R^3)^3}  e^{i  X(s)\cdot \xi   + i s \mu_1 |\xi-\eta|  }     \mathcal{F}[{}_{2}^{2}\mathfrak{E}U](s, \xi, \eta, v,   X_{\bot}(s), V(s)) \\
  &\quad  \cdot    \mathcal{F}\big[\mathfrak{H}_{k_2,j_2,n_2}^{\mu_2,i_2}(s,\cdot , V(s))  {f}\big](s, \eta, v) d \eta  d \xi  d v d s,
  \end{split}
  \ee
  where
\be\label{oct27eqn51}
\begin{split}
 \mathcal{F}[{}_{1}^{2}\mathfrak{E}H](s, \xi, \eta, v, X_{\bot}(s), \zeta ) &:= \nabla_\zeta \big[ ( \hat{\zeta} \cdot \xi - \hat{v}\cdot \eta + \mu_1 |\xi-\eta|)^{-1} \\
&\quad \times   \mathcal{F}[\mathfrak{H}_{k_1,j_1;n_1}^{\mu_1,i_1}](s, \xi-\eta,\zeta) \cdot  \nabla_{\zeta}\big( \varphi_{l;-M_t}\big(\tilde{v}-\tilde{\zeta}  \big)  \mathcal{F}[ {}_{}^{1}\mathfrak{E}^{\kappa} ] (  X_{\bot}(s),   \eta, v, \zeta) \big)   \big],\\
\mathcal{F}[{}_{2}^{2}\mathfrak{E}H](s, \xi, \eta, v,   X_{\bot}(s), \zeta )&:= \nabla_v \big[ ( \hat{\zeta} \cdot \xi - \hat{v}\cdot \eta + \mu_1 |\xi-\eta|)^{-1}  \\
&\quad \times \mathcal{F}[\mathfrak{H}_{k_1,j_1;n_1}^{\mu_1,i_1}](s, \xi-\eta,\zeta) \cdot  \nabla_{\zeta}\big( \varphi_{l;-M_t}\big(\tilde{v}-\tilde{\zeta}  \big)  \mathcal{F}[ {}_{}^{1}\mathfrak{E}^{\kappa} ] (  _{\bot}X_{\bot}(s),   \eta, v, \zeta) \big)  \big],\\
 \mathcal{F}[{}_{1}^{2}\mathfrak{E}M](s, \xi, \eta, v,  X_{\bot}(s), \zeta)&:=   \nabla_\zeta \big[ ( \hat{\zeta} \cdot \xi - \hat{v}\cdot \eta + \mu_1 |\xi-\eta|)^{-1}\\
&\quad\times  \mathcal{F}[\widetilde{\mathfrak{H}_{k_1,j_1;n_1}^{\mu_1,i_1}}](s, \xi-\eta, v, \zeta)  \cdot  \nabla_{v}\big( \varphi_{l;-M_t}\big(\tilde{v}-\tilde{\zeta}  \big)  \mathcal{F}[ {}_{}^{1}\mathfrak{E}^{\kappa} ] (  X_{\bot}(s),   \eta, v, \zeta) \big)\big],\\
 \mathcal{F}[{}_{2}^{2}\mathfrak{E}M](s, \xi, \eta, v,   X_{\bot}(s),  \zeta)&:= \nabla_v \big[  ( \hat{\zeta} \cdot \xi - \hat{v}\cdot \eta + \mu_1 |\xi-\eta|)^{-1}\\
&\quad \times \mathcal{F}[\widetilde{\mathfrak{H}_{k_1,j_1;n_1}^{\mu_1,i_1}}](s, \xi-\eta, v, \zeta)  \cdot  \nabla_{v}\big(  \varphi_{l;-M_t}\big(\tilde{v}-\tilde{\zeta}  \big)  \mathcal{F}[ {}_{}^{1}\mathfrak{E}^{\kappa} ] ( X_{\bot}(s),   \eta, v, \zeta) \big)  \big]. \\
\end{split}
\ee
 
The error terms in \eqref{oct29eqn40} are given as follows,  
 \be\label{oct28eqn1}
 \begin{split}
 ErrH^0_{i,i_1}(t_1, t_2) & =\sum_{a=1,2} (-1)^a  \int_{\R^3}  \int_{\R^3}\int_{\R^3} e^{i  X(t_a)\cdot \xi  - i t_a\hat{v}\cdot \eta + i t_a\mu_1 |\xi-\eta|  }
\hat{g}(t_a, \eta, v) \\
&\quad \times  ( \hat{V}(t_a)\cdot \xi - \hat{v}\cdot \eta + \mu_1 |\xi-\eta|)^{-1}  \mathcal{F}[\mathfrak{H}_{k_1,j_1;n_1}^{\mu_1,i_1}](t_a, \xi-\eta, V(t_a))  \\
 &\quad       \cdot  \nabla_{\zeta}\big( \varphi_{l;-M_t}\big(\tilde{v}-\tilde{\zeta}  \big)  \mathcal{F}[ {}_{}^{1}\mathfrak{E}^{\kappa} ] (  X_{\bot}(t_a),   \eta, v, \zeta) \big) \big|_{\zeta=V(t_a)}  d \eta d \xi  d v ,\\
 \end{split}
\ee
\be\label{2024oct30eqn81}
\begin{split}
 ErrM^0_{i,i_1}(t_1, t_2) &=\sum_{a=1,2} (-1)^a  \int_{\R^3}  \int_{\R^3}\int_{\R^3} e^{i  X(t_a)\cdot \xi  - i t\hat{v}\cdot \eta + i t_a\mu_1 |\xi-\eta|  }
\hat{g}(t_a, \eta, v) \\
&\quad \times  ( \hat{V}(t_a)\cdot \xi - \hat{v}\cdot \eta + \mu_1 |\xi-\eta|)^{-1}  \mathcal{F}[\widetilde{\mathfrak{H}_{k_1,j_1;n_1}^{\mu_1,i_1}}](t_a, \xi-\eta,v,  V(t_a))\\
  &\quad   \cdot    \nabla_{ v  }\big(  \varphi_{l;-M_t}\big(\tilde{v}-\tilde{\zeta}  \big)  \mathcal{F}[ {}_{}^{1}\mathfrak{E}^{\kappa} ] (   X_{\bot}(t_a),   \eta, v, V(t_a)) \big)    d \eta d \xi  d v ,\\
 \end{split}
\ee

\be\label{2024oct30eqn82}
\begin{split}
  ErrH^1_{i,i_1}(t_1, t_2) &= \int_{t_1}^{t_2}\int_{\R^3}  \int_{\R^3}\int_{\R^3} e^{i  X(s)\cdot \xi  - i s\hat{v}\cdot \eta + i s \mu_1 |\xi-\eta|  } ( \hat{V}(s)\cdot \xi - \hat{v}\cdot \eta + \mu_1 |\xi-\eta|)^{-1} \\
  &\quad \times     \nabla_{\zeta}\big(  \varphi_{l;-M_t}\big(\tilde{v}-\tilde{\zeta}  \big)  \mathcal{F}[ {}_{}^{1}\mathfrak{E}^{\kappa} ] (   X_{\bot}(s),   \eta, v, \zeta) \big)  \big|_{\zeta=V(s)} \\
  &\quad \cdot  \p_s   \mathcal{F}[\mathfrak{H}_{k_1,j_1;n_1}^{\mu_1,i_1}](s , \xi-\eta, V(s))     \widehat{g}(s, \eta, v)      d \eta d \xi  d v d s ,\\
 \end{split}
\ee

 \be\label{nov13eqn11}
 \begin{split}
 ErrM^1_{i,i_1}(t_1, t_2) & = \int_{t_1}^{t_2}\int_{\R^3}  \int_{\R^3}\int_{\R^3} e^{i  X(s)\cdot \xi  - i s\hat{v}\cdot \eta + i s \mu_1 |\xi-\eta|  } ( \hat{V}(s )\cdot \xi - \hat{v}\cdot \eta + \mu_1 |\xi-\eta|)^{-1} \\
 &\quad \times      \nabla_{ v  }\big(  \varphi_{l;-M_t}\big(\tilde{v}-\tilde{\zeta}  \big)  \mathcal{F}[ {}_{}^{1}\mathfrak{E}^{\kappa} ] (   X_{\bot}(s),   \eta, v, \zeta) \big)  \big|_{\zeta=V(s)} \\
&\quad \cdot \p_s \mathcal{F}[\widetilde{\mathfrak{H}_{k_1,j_1;n_1}^{\mu_1,i_1}}]  (s, \xi-\eta,v,  V(s))  \widehat{g}(s, \eta, v)    d \eta d \xi  d v d s ,\\
 \end{split}
\ee

\be\label{2024oct30eqn83}
\begin{split}
ErrH^2_{i,i_1}(t_1, t_2) &  = \int_{t_1}^{t_2}\int_{\R^3}  \int_{\R^3}\int_{\R^3} e^{i  X(s)\cdot \xi    + i s \mu_1 |\xi-\eta|  } \mathcal{F}\big[(\hat{v}-\hat{V}(s))\times B f\big](s, \eta, v)\\
&\quad \cdot \nabla_v\big[  ( \hat{V}(s)\cdot \xi - \hat{v}\cdot \eta + \mu_1 |\xi-\eta|)^{-1} \mathcal{F}[\mathfrak{H}_{k_1,j_1;n_1}^{\mu_1,i_1}](s , \xi-\eta, V(s ))  \\
&\quad      \cdot  \nabla_{\zeta}\big(  \varphi_{l;-M_t}\big(\tilde{v}-\tilde{\zeta}  \big)  \mathcal{F}[ {}_{}^{1}\mathfrak{E}^{\kappa} ] (  X_{\bot}(s),   \eta, v, \zeta) \big)   \big|_{\zeta=V(s)} \big]   d \eta d \xi  d v d s,\\
 \end{split}
\ee

\be\label{2024oct30eqn84}
\begin{split}
ErrM^2_{i,i_1}(t_1, t_2) & = \int_{t_1}^{t_2}\int_{\R^3}  \int_{\R^3}\int_{\R^3} e^{i  X(s)\cdot \xi  - i s\hat{v}\cdot \eta + i s \mu_1 |\xi-\eta|  } \mathcal{F}\big[(\hat{v}-\hat{V}(s))\times B f\big](s, \eta, v)\\
&\quad \cdot \nabla_v\big[   ( \hat{V}(s )\cdot \xi - \hat{v}\cdot \eta + \mu_1 |\xi-\eta|)^{-1}    \mathcal{F}[\widetilde{\mathfrak{H}_{k_1,j_1;n_1}^{\mu_1,i_1}}]  (s , \xi-\eta,v,  V(s ))  \\
&\quad \cdot    \nabla_{ v  }\big(   \varphi_{l;-M_t}\big(\tilde{v}-\tilde{\zeta}  \big)  \mathcal{F}[ {}_{}^{1}\mathfrak{E}^{\kappa} ] (  X_{\bot}(s),   \eta, v, \zeta) \big)   \big|_{\zeta=V(s)} \big]  d \eta d \xi  d v d s,\\
 \end{split}
\ee

\be\label{2024oct30eqn85}
\begin{split}
ErrU^3_{i,i_1}(t_1, t_2) & =  \sum_{\begin{subarray}{c}
  k_2\in \Z_+, n_2\in[-M_t,2]\cap \Z\\ 
  \mu_2\in \{+,-\}, i_2\in\{0,1,2,3,4\}
  \end{subarray}} \int_{t_1}^{t_2}\int_{\R^3}  \int_{\R^3}\int_{\R^3} e^{i  X(s)\cdot \xi  - i s\hat{v}\cdot \eta + i s \mu_1 |\xi-\eta|  } \\
  &\quad\times \big(   Ini_{k_2,j_2,n_2}^{\mu_2,i_2}(s, X(s), V(s)) +  \mathfrak{E}^{\mu_2, i_2; l_2}_{k_2,j_2;n_2} (s, X(s), V(s))\mathbf{1}_{i_2\in\{0,1,2,3\} }  \big)\\
  & \quad \cdot     \mathcal{F}[{}_{1}^{2}\mathfrak{E}U](s, \xi, \eta, v,  X_{\bot}(s), V(s))  \widehat{g}(s, \eta, v)   d \eta d \xi  d v d s, \quad \forall U\in \{H, M\},\\
 \end{split}
\ee

\be\label{2024oct30eqn86}
\begin{split}
  ErrU^4_{i,i_1}(t_1, t_2) & =  \sum_{\begin{subarray}{c}
  k_2\in \Z_+, n_2\in[-M_t,2]\cap \Z\\ 
  \mu_2\in \{+,-\}, i_2\in \{0,1,2,3\}
  \end{subarray}}   \int_{t_1}^{t_2}\int_{(\R^3)^3}  e^{i  X(s)\cdot \xi   + i s \mu_1 |\xi-\eta|  } \mathcal{F}\big[ \big(      \mathbf{1}_{i_2\in\{0,1,2,3\} }    \\
   &\quad  \times     \mathfrak{E}^{\mu_2, i_2; l_2}_{k_2,j_2;n_2} (s, X(s), V(s))+  Ini_{k_2,j_2,n_2}^{\mu_2,i_2}(s, X(s), V(s)) \big)  {f}\big](s, \eta, v) \\
   &\quad    \cdot    \mathcal{F}[{}_{2}^{2}\mathfrak{E}U](s, \xi, \eta, v,   X_{\bot}(s), V(s))   d \eta  d \xi  d v d s, \quad \forall U\in \{H, M\}.  \\
\end{split}
\ee
 
 Estimates for the error terms $ErrU^a_{i,i_1}(t_1, t_2)$ ($a \in \{0, 1, 2, 3, 4\}$, $U \in \{H, M\}$) in \eqref{oct29eqn40} are deferred to Lemma \ref{errhypotype1}. 
We focus on the estimate of  $U^b_{i,i_1,i_2}(t_1,t_2), b\in \{1,2\} $ in  \eqref{nov14eqn31}.

We first estimate $U^1_{i,i_1,i_2}(t_1,t_2)$. Recall  \eqref{oct27eqn51}.    After writing $U^1_{i,i_1,i_2}(t_1,t_2)$ in terms of kernel, from  the estimates in \eqref{2024oct8eqn1} in Theorem \ref{mainresultsfirstpart},   if either $k_2+2n_2\leq k_1+2n_1 + 3M_t/5-30\iota M_t $ or $n_2\leq -  (\alpha^{\star}+3\iota+60\epsilon) M_t$, we have 
\[
\begin{split}
&\sum_{U\in \{H, M\}}|U^1_{i,i_1,i_2}(t_1,t_2)| \\
&\lesssim \mathcal{M}(C) 2^{-\gamma M_t+470\epsilon M_t}  2^{l-\min\{l,
\kappa\}} 2^{-(k+n+\min\{n,\kappa\})/2} \\
&\quad \times  \big(2^{ M_t} + 2^{(k_2+2n_2)/2+(\alpha^{\star} +3\iota) M_t}\mathbf{1}_{ n_2\geq - (\alpha^{\star}+3\iota+60\epsilon) M_t } \big)\\
&\quad  \times \big(2^{7M_t/6 +5\iota M_t/2 -(k_1+2n_1)/2} + 2^{  (\alpha^{\star} +3\iota)  M_t-(k_1+2n_1)/2-n}\big)  \\
  &\lesssim \mathcal{M}(C)\big[ 2^{(k_2+2n_2)/2 -(k_1+2n_1)/2 + 7M_t/10+14\iota M_t } \mathbf{1}_{ n_2\geq  (\alpha^{\star}+3\iota+60\epsilon) M_t } +2^{(\gamma-10\epsilon)M_t}\big] \\
  & \lesssim 2^{(\gamma-10\epsilon)M_t}\mathcal{M}(C).
  \end{split}
\]

Now, we estimate   $U^2_{i,i_1,i_2}(t_1,t_2)$. Recall \eqref{nov14eqn31}. After writing $U^2_{i,i_1,i_2}(t_1,t_2)$ in terms of kernel,  from  the estimates in \eqref{2024oct8eqn1} in Theorem \ref{mainresultsfirstpart},    if either $k_2+2n_2\leq k_1+2n_1 + 8M_t/15-40\iota M_t$ or $n_2\leq - (\alpha^{\star}+3\iota+60\epsilon)  M_t$, we have 
\[
\begin{split}
 \sum_{U\in \{H, M\}}|U^2_{i,i_1,i_2}(t_1,t_2)|&\lesssim \mathcal{M}(C)   \big(2^{ M_t} + 2^{(k_2+2n_2)/2+(\alpha^{\star} +3\iota) M_t}\mathbf{1}_{ n_2\geq - (\alpha^{\star}+3\iota+60\epsilon) M_t } \big) \\
 &\quad \times 2^{ \alpha^{\star}  M_t+180\epsilon M_t-(k_1+2n_1)/2}   (1+2^{ 2M_t/15} 2^{l}  ) \\
&\quad \times   2^{k -2\min\{l,\kappa\} -2j} \min\big\{2^{-3k-n-\min\{n,\kappa\} + 3j+2l}, 2^{-j} \big\}  \\
&\lesssim  \mathcal{M}(C)  (2^{ M_t} + 2^{(k_2+2n_2)/2+(\alpha^{\star} +3\iota) M_t}\mathbf{1}_{ n_2\geq - (\alpha^{\star}+3\iota+60\epsilon) M_t } )(1+2^{ 2M_t/15} 2^{l}  )  \\
  &\times  2^{(\alpha^{\star } +3\iota) M_t-(k_1+2n_1)/2}   2^{-5(k+l+\kappa)/4+ 2l-2\min\{l,\kappa\} +(5\kappa-3\min\{n,\kappa\})/4+200\epsilon M_t} \\
  &\lesssim 2^{(\gamma-10\epsilon)M_t }\mathcal{M}(C).
  \end{split}
\]
 
 \medskip

\noindent \textbf{Step 3.}\quad The third iteration of smoothing. 

 \medskip

Now, it suffices to consider the case $k_2+2n_2\geq  k_1+2n_1 + 8M_t/15-40\iota M_t$ and $n_2\geq   - (\alpha^{\star}+3\iota+60\epsilon)  M_t$. Recall  \eqref{nov14eqn31}. Note that, on the Fourier side, $\forall U\in \{H, M\},$ we have
\be\label{nov14eqn71}
\begin{split}
  U^1_{i,i_1,i_2}(t_1,t_2)&:= \int_{t_1}^{t_2}\int_{(\R^3)^4}  e^{i   \widetilde{\Phi}^1_{\mu_1, \mu_2}(\xi,\eta, \sigma;s,X(s),v)  } \widehat{g}(s, \eta, v)   \mathcal{F}[\mathfrak{H}_{k_2,j_2,n_2}^{\mu,i_2}](s, \sigma, V(s))\\
  &\quad    \cdot     \mathcal{F}[{}_{1}^{2}\mathfrak{E}U](s, \xi, \eta, v,  X_{\bot}(s), V(s))   d\sigma   d \eta d \xi  d v d s,  \\ 
   U^2_{i,i_1,i_2}(t_1,t_2)&:= \int_{t_1}^{t_2}\int_{(\R^3)^4}  e^{i  \widetilde{\Phi}^2_{\mu_1, \mu_2}(\xi,\eta, \sigma;s,X(s),v)}    \widehat{g}(s, \eta , v) \mathcal{F}[\mathfrak{H}_{k_2,j_2,n_2}^{\mu,i_2}](s, \sigma, V(s)) \\
   &\quad   \cdot   \mathcal{F}[{}_{2}^{2}\mathfrak{E}U](s, \xi, \eta+\sigma, v, X_{\bot}(s),  V(s))   d\sigma   d \eta d \xi  d v d s,\\ 
   \end{split}
\ee
where  the oscillation phases appear above are given as follows, 
\be\label{2024oct31eqn11}
\begin{split}
\widetilde{\Phi}^1_{\mu_1, \mu_2}(\xi,\eta, \sigma;s,X(s),v)&:=     X(s )\cdot ( \xi +\sigma) +  s(\mu_2 |\sigma| -  \hat{v}\cdot \eta + \mu_1 |\xi-\eta| ), \\  
\widetilde{\Phi}^2_{\mu_1, \mu_2}(\xi,\eta, \sigma;s,X(s),v)&:= X(s )\cdot\xi +   s (\mu_2 |\sigma| -  \hat{v}\cdot  \eta   + \mu_1 |\xi-\eta-\sigma| ).
\end{split}
\ee
 
 Note that, $\forall s\in [t_1,t_2],$ for the case we are considering, we have 
\be
\begin{split}
\big|\p_s\big(\widetilde{\Phi}^1_{\mu_1, \mu_2}(\xi,\eta, \sigma;s,X(s),v)\big) \big|&\sim    |\sigma|+ \mu_2\hat{V}(s)\cdot \sigma   ,\\
  \big| \p_s\big( \widetilde{\Phi}^2_{\mu_1, \mu_2}(\xi,\eta, \sigma;s,X(s),v)\big)\big|  &\sim \big|| \mu_2|\sigma| + \hat{v}\cdot \sigma |\big|   ,
\end{split}
\ee 

Therefore, to exploit high oscillation in time,  we do integration by parts in $s$ once. As a result, for $U\in \{H, M\}$,  $b\in\{1,2\},$ we have
\be\label{nov15eqn99}
\begin{split}
U^b_{i,i_1,i_2}(t_1,t_2)= \sum_{a=0,1,2,3}ErrU^{b;a}_{i,i_1,i_2}(t_1, t_2)  + \sum_{\begin{subarray}{c}
  k_3\in \Z_+, n_3\in[-M_t,2]\cap \Z\\
  \mu_3\in \{+,-\}, i_3\in\{0,1,2,3,4\}
  \end{subarray} } U^b_{i,i_1,i_2,i_3}(t_1,t_2),
  \end{split}
\ee
where
\be\label{nov12eqn81}
\begin{split}
U^b_{i,i_1,i_2,i_3}(t_1,t_2) &:= \int_{t_1}^{t_2}\int_{(\R^3)^3}    e^{i \widetilde{\Phi}^b_{\mu_1, \mu_2}(\xi,\eta, \sigma;s,X(s),v)  } \widehat{g}(s, \eta, v)\mathfrak{H}_{k_3,j_3,n_3}^{\mu_3,i_3}(s, X(s), V(s))\\
&\quad      \cdot \mathcal{F}[{}_{b}^{3}\mathfrak{E}U](s, \xi, \eta,\sigma, v,  X_{\bot}(s), V(t)) d \eta d \xi  d v d s,\\
&\\
\mathcal{F}[{}_{1}^{3}\mathfrak{E}U](s, \xi, \eta,\sigma,  v,  X_{\bot}(s), \zeta)&:= \nabla_\zeta \big[ \big(  \p_s(\widetilde{\Phi}^1_{\mu_1, \mu_2}(\xi,\eta, \sigma;s,X(s),v))  \big)^{-1} \\
&\quad \times \mathcal{F}[\mathfrak{H}_{k_2,j_2;n_2}^{\mu_2,i_2}](s , \sigma , \zeta)) \cdot     \mathcal{F}[{}_{1}^{2}\mathfrak{E}U](s, \xi, \eta, v,  X_{\bot}(s), \zeta)    \big],\\
&\\
\mathcal{F}[{}_{2}^{3}\mathfrak{E}U](s, \xi, \eta,\sigma,  v,  X_{\bot}(s),\zeta)&:= \nabla_\zeta \big[ \big(  \p_s( \widetilde{\Phi}^2_{\mu_1, \mu_2}(\xi,\eta, \sigma;s,X(s),v)) \big)^{-1} \\
&\quad \times  \mathcal{F}[\mathfrak{H}_{k_2,j_2;n_2}^{\mu_2,i_2}](s , \sigma , \zeta)) \cdot    \mathcal{F}[{}_{2}^{2}\mathfrak{E}U](s, \xi, \eta+\sigma, v, X_{\bot}(s), \zeta)  \big],
\end{split}
\ee

The error terms in \eqref{nov15eqn99} are defined as follows, 
 \be\label{oct29eqn6}
 \begin{split}
 ErrU^{1;0}_{i,i_1,i_2}(t_1, t_2) & =\sum_{a=1,2} (-1)^a  \int_{\R^3}  \int_{\R^3}\int_{\R^3} e^{i   \widetilde{\Phi}^1_{\mu_1, \mu_2}(\xi,\eta, \sigma;t_a, X(t_a),v)   }\\
 &\quad \times      \mathcal{F}[\mathfrak{H}_{k_2,j_2,n_2}^{\mu,i_2}](t_a, \sigma, V(t_a))  \cdot         \mathcal{F}[{}_{1}^{2}\mathfrak{E}U](t_a, \xi, \eta, v,   X_{\bot}(t_a), V(t_a))    \\
  &\quad\times \big(  \p_s(\widetilde{\Phi}^1_{\mu_1, \mu_2}(\xi,\eta, \sigma;t_a,X(t_a),v))  \big)^{-1}  \widehat{g}(t_a, \eta, v) d \eta d \xi  d v, \\
 \end{split}
\ee

\be\label{2024oct30eqn100}
 \begin{split}
 ErrU^{2;0}_{i,i_1,i_2}(t_1, t_2) & =\sum_{a=1,2} (-1)^a  \int_{\R^3}  \int_{\R^3}\int_{\R^3} e^{i   \widetilde{\Phi}^2_{ \mu_1, \mu_2}(\xi,\eta, \sigma;t_a, X(t_a),v)   } \\
  &\quad  \times       \mathcal{F}[\mathfrak{H}_{k_2,j_2,n_2}^{\mu,i_2}](t_a, \sigma, V(t_a))  \cdot      \mathcal{F}[{}_{2}^{2}\mathfrak{E}U](t_a, \xi, \eta+\sigma, v,    X_{\bot}(t_a), V(t_a))  \\
 &\quad \times  \big(  \p_s(\widetilde{\Phi}^2_{\mu_1, \mu_2}(\xi,\eta, \sigma;t_a,X(t_a),v))  \big)^{-1}  \widehat{g}(t_a, \eta, v)  
  d \eta d \xi  d v, \\
 \end{split}
\ee

\be\label{2024oct30eqn101}
 \begin{split}
  ErrU^{1;1}_{i,i_1,i_2}(t_1, t_2)&: = \int_{t_1}^{t_2}\int_{\R^3}  \int_{\R^3}\int_{\R^3} e^{i  \widetilde{\Phi}^1_{ \mu_1, \mu_2}(\xi,\eta, \sigma; s , X(s ),v)   } \big(  \p_s(\widetilde{\Phi}^1_{\mu_1, \mu_2}(\xi,\eta, \sigma;s,X(s),v))  \big)^{-1}\\
  &\quad \times   \widehat{g}(s , \eta, v)  \big[ \p_s        \mathcal{F}[\mathfrak{H}_{k_2,j_2,n_2}^{\mu,i_2}](s , \sigma, V(s)) \cdot      \mathcal{F}[{}_{1}^{2}\mathfrak{E}U](s , \xi, \eta, v,  X_{\bot}(s),  V(s ))\\
 &\quad  +        \mathcal{F}[\mathfrak{H}_{k_2,j_2,n_2}^{\mu,i_2}](s, \sigma, V(s )) \cdot       \p_s  \mathcal{F}[{}_{1}^{2}\mathfrak{E}U](s , \xi, \eta, v,    X_{\bot}(s), V(s )) \big]   d \eta d \xi  d v d s,\\
 \end{split}
\ee

\be\label{2024oct30eqn102}
 \begin{split}
 ErrU^{2 ;1}_{i,i_1,i_2}(t_1, t_2) &= \int_{t_1}^{t_2}\int_{\R^3}  \int_{\R^3}\int_{\R^3} e^{i   \widetilde{\Phi}^2_{ \mu_1, \mu_2}(\xi,\eta, \sigma;s, X(s ),v)   }   \big(  \p_s(\widetilde{\Phi}^2_{\mu_1, \mu_2}(\xi,\eta, \sigma;s,X(s),v))  \big)^{-1} \\
 &\quad \times   \widehat{g}(s, \eta, v)\big[ \p_s    \mathcal{F}[\mathfrak{H}_{k_2,j_2,n_2}^{\mu,i_2}](s , \sigma, V(s )) \cdot        \mathcal{F}[{}_{2}^{2}\mathfrak{E}U](s , \xi, \eta+\sigma, v,  X_{\bot}(s), V(s))  \\
 &\quad+        \mathcal{F}[\mathfrak{H}_{k_2,j_2,n_2}^{\mu,i_2}](s , \sigma, V(s)) \cdot       \p_s   \mathcal{F}[{}_{2}^{2}\mathfrak{E}U](s , \xi, \eta+\sigma, v,  X_{\bot}(s), V(s)) \big]   d \eta d \xi  d v d s,\\
 \end{split}
\ee

\be\label{nov15eqn40}
\begin{split}
ErrU^{1;2}_{i,i_1,i_2}(t_1, t_2)& = \int_{t_1}^{t_2}\int_{\R^3}  \int_{\R^3}\int_{\R^3} e^{i \widetilde{\Phi}^1_{ \mu_1, \mu_2}(\xi,\eta, \sigma;s , X(s ),v)  } \big(  \p_s(\widetilde{\Phi}^1_{\mu_1, \mu_2}(\xi,\eta, \sigma;s,X(s),v))  \big)^{-1}  \\
&\quad \times         \mathcal{F}[\mathfrak{H}_{k_2,j_2,n_2}^{\mu,i_2}](s , \sigma, V(s )) \cdot     \mathcal{F}[{}_{1}^{2}\mathfrak{E}U](s , \xi, \eta, v,   X_{\bot}(s), V(s ))    \\
&\quad \times   \p_s \widehat{g}(s, \eta, v)   d \eta d \xi  d v d s,\\
 \end{split}
\ee
 
 \be\label{2024oct30eqn103}
 \begin{split}
ErrU^{2;2}_{i,i_1,i_2}(t_1, t_2)& = \int_{t_1}^{t_2}\int_{\R^3}  \int_{\R^3}\int_{\R^3} e^{i \widetilde{\Phi}^2_{ \mu_1, \mu_2}(\xi,\eta, \sigma;s, X(s),v)  }   \big(  \p_s(\widetilde{\Phi}^2_{\mu_1, \mu_2}(\xi,\eta, \sigma;s,X(s),v))  \big)^{-1}   \\
&\quad \times      \mathcal{F}[\mathfrak{H}_{k_2,j_2,n_2}^{\mu,i_2}](s , \sigma, V(s)) \cdot      \mathcal{F}[{}_{2}^{2}\mathfrak{E}U](s, \xi, \eta+\sigma, v,   X_{\bot}(s), V(s ))  \\
&\quad\times   \p_s \widehat{g}(s, \eta, v)   d \eta d \xi  d v d s,\\
 \end{split}
\ee
 
 \be\label{2024oct30eqn104}
 \begin{split}
ErrU^{b;3}_{i,i_1,i_2}(t_1, t_2)&=\sum_{\begin{subarray}{c}
  k_3\in \Z_+, n_3\in[-M_t,2]\cap \Z\\ 
  \mu_3\in \{+,-\}, i_3\in\{0,1,2,3,4\}
  \end{subarray} } \int_{t_1}^{t_2} \int_{\R^3}  \int_{\R^3}\int_{\R^3}   e^{i   \widetilde{\Phi}^b_{ \mu_1, \mu_2}(\xi,\eta, \sigma;s , X(s ),v)  } \widehat{g}(s, \eta, v)\\
  &\quad \times  \big(     Ini_{k_3,j_3,n_3}^{\mu_3,i_3}(s, X(s), V(s))  +   \mathfrak{E}^{\mu_3, i_3 }_{k_3,j_3;n_3} (s, X(s), V(s))  \\
  &\quad  \times  \mathbf{1}_{i_3\in\{0,1,2,3\} }        \big) \cdot  \mathcal{F}[{}_{b}^{3}\mathfrak{E}U](s, \xi, \eta,\sigma, v, X_{\bot}(s),  V(s))     d \eta d \xi  d v d s.\\
  \end{split}
\ee

The error terms in \eqref{nov15eqn99} will be estimated in   Lemma \ref{ellerrstep2}. At this stage, we focus on the  estimate of main terms    $U^b_{i,i_1,i_2,i_3}(t_1,t_2),b\in \{1,2\}, U\in \{H, M\}$. 

 Recall \eqref{nov12eqn81}. After writing $  U^b_{i,i_1,i_2,i_3}(t_1,t_2)$ in terms of kernel,  from  the estimates in \eqref{2024oct8eqn1} in Theorem \ref{mainresultsfirstpart},     for any $U\in \{H, M\},$ if either $k_3+2n_3\leq k_2+2n_2 + 13M_t/15-50\iota M_t $ or $n_3\leq - (\alpha^{\star}+3\iota+60\epsilon) M_t $, then we have 
 \[
 \begin{split}
 \big| U^1_{i,i_1,i_2,i_3}(t_1,t_2) \big|&\lesssim \mathcal{M}(C) \big(2^{ M_t} + 2^{(k_3+2n_3)/2+(\alpha^{\star} +3\iota) M_t}\mathbf{1}_{ n_3\geq - (\alpha^{\star}+3\iota+60\epsilon) M_t } \big)\\
 &\quad \times     2^{-2\gamma  M_t+800\epsilon M_t } 2^{ 2(\alpha^{\star}+3\iota) M_t-(k_2+2n_2)/2} 2^{7M_t/6 +5\iota M_t/2-(k_1+2n_1)/2} \\
 &\quad \times   2^{k-\min\{l,
\kappa\}-j}  \min\big\{2^{-3k-n-\min\{n,\kappa\} + 3j+2l}, 2^{-j} \big\}\\
& \lesssim  \mathcal{M}(C)  2^{(\gamma-10\epsilon)M_t}, \\
&\\
 \big| U^2_{i,i_1,i_2,i_3}(t_1,t_2) \big| &\lesssim \mathcal{M}(C) (2^{ M_t} + 2^{(k_3+2n_3)/2+(\alpha^{\star} +3\iota) M_t}\mathbf{1}_{ n_3\geq - (\alpha^{\star}+3\iota+60\epsilon) M_t } )  2^{- \gamma  M_t +800\epsilon M_t }   \\
 &\quad \times 2^{ (\alpha^{\star}+3\iota) M_t-(k_2+2n_2)/2 +7M_t/6 +5\iota M_t/2 -(k_1+2n_1)/2}  2^{k -2\min\{l,\kappa\} -2j} \\
 &\quad \times  \min\big\{2^{-3k-n-\min\{n,\kappa\} + 3j+2l}, 2^{-j} \big\}\\
 &\lesssim \mathcal{M}(C)  \big(2^{ M_t} + 2^{(k_3+2n_3)/2+  (\alpha^{\star} +3\iota)  M_t} \mathbf{1}_{ n_3\geq -\alpha^{\star} M_t-50\epsilon M_t} \big)\\
 &\quad \times    2^{ (\alpha^{\star} +3\iota) M_t -(k_2+2n_2)/2 + M_t/6+5\iota M_t/2 -(k_1+2n_1)/2}  \\
 &\quad \times 2^{-5(k+l+\kappa)/4+ 2l -2\min\{l,\kappa\} +(5\kappa-3\min\{n,\kappa\})/4+800\epsilon M_t}\\
 & \lesssim  \mathcal{M}(C) 2^{(\gamma-10\epsilon)M_t}. 
 \end{split}
 \]

\medskip

\noindent \textbf{Step 4.}\quad The fourth (last) iteration of smoothing. 

 \medskip

Now, it suffices   to consider the case  $k_3+2n_3\geq k_2+2n_2 + 13M_t/15 -50\iota M_t$ and  $n_3\geq -(\alpha^{\star}+3\iota+60\epsilon) M_t $.   Recall  \eqref{nov12eqn81}. For this case, note that, on the Fourier side, $ \forall U\in \{H, M\}, b\in \{1,2\}, $ we have
\be
\begin{split}
  U^b_{i,i_1,i_2,i_3}(t_1,t_2)&:= \int_{t_1}^{t_2}\int_{(\R^3)^4}  e^{i \widetilde{\Phi}^b_{\mu_1,\mu_2,\mu_3}(\xi, \eta, \sigma,\kappa;s,X(s), v)   } \widehat{g}(s, \eta, v)   \mathfrak{H}_{k_3,j_3,n_3}^{\mu_3,i_3}(s, \kappa, V(s))\\
  &\quad     \cdot   \mathcal{F}[{}_{b}^{3}\mathfrak{E}U](s, \xi, \eta,\sigma, v,  X_{\bot}(s),  V(s))    d \kappa  d\sigma   d \eta d \xi  d v d s, \\
   \widetilde{\Phi}^b_{\mu_1,\mu_2,\mu_3}(\xi, \eta, \sigma,\kappa;s,x, v)&:= \widetilde{\Phi}^b_{\mu_1, \mu_2}(\xi,\eta, \sigma;s,x,v) + x \cdot \kappa + s \mu_3 |\kappa|
\end{split}
\ee
where $\widetilde{\Phi}^b_{\mu_1, \mu_2}(\xi,\eta, \sigma;s,x,v), b\in\{1,2\},$ are defined in \eqref{2024oct31eqn11}.  Note  that, for the case we are considering, we have
\be
\begin{split}
\p_s\big(    \widetilde{\Phi}^b_{ \mu_1, \mu_2,\mu_3}(\xi,\eta, \sigma,\kappa;s, X(s),v)\big)&:= \p_s\big( \widetilde{\Phi}^b_{\mu_1, \mu_2}(\xi,\eta, \sigma;s,X(s),v)\big) + \hat{V}(s)\cdot \kappa + \mu_3|\kappa| ,\\
 \Longrightarrow &\quad | \p_s\big(    \widetilde{\Phi}^b_{ \mu_1, \mu_2,\mu_3}(\xi,\eta, \sigma,\kappa;s, X(s),v)\big)|   \sim |  \hat{V}(s)\cdot \kappa + \mu_3|\kappa||.
 \end{split}
\ee

To exploit high oscillation in time,  we do integration by parts in $s$ once. As a result, for $U\in \{H, M\}, b\in \{1,2\}$,  we have
\be\label{nov15eqn96}
U^b_{i,i_1,i_2,i_3}(t_1,t_2)= \sum_{a=0,1,2 }ErrU^{b;a}_{i,i_1,i_2,i_3}(t_1, t_2)  + LastU^b_{i,i_1,i_2,i_3}(t_1,t_2),
\ee
where
\be\label{nov12eqn90}
\begin{split}
LastU^b_{i,i_1,i_2,i_3}(t_1,t_2)&= \int_{t_1}^{t_2}\int_{(\R^3)^5}  e^{i  \widetilde{\Phi}^b_{\mu_1,\mu_2,\mu_3}(\xi, \eta, \sigma,\kappa;s,X(s), v) } \widehat{g}(s, \eta, v) K (s, X(s), V(s))  \\
&\quad     \cdot     \mathcal{F}[{}_{b}^{4}\mathfrak{E}U](s, \xi, \eta,\sigma, v,   X_{\bot}(s),  V(s)) d \kappa  d\sigma   d \eta d \xi  d v d s, \\
 \mathcal{F}[{}_{b}^{4}\mathfrak{E}U](s, \xi, \eta,\sigma, v,   X_{\bot}(s), \zeta) &  : = \nabla_{\zeta}\big[ \big(  \p_s\big(    \widetilde{\Phi}^b_{ \mu_1, \mu_2,\mu_3}(\xi,\eta, \sigma,\kappa;s, X(s),v)\big) \big)^{-1} \\
 &\quad \times  \mathcal{F}[\mathfrak{H}_{k_3,j_3,n_3}^{\mu_3,i_3}](s, \kappa, \zeta)  \cdot   \mathcal{F}[{}_{b}^{3}\mathfrak{E}U](s, \xi, \eta,\sigma, v,   X_{\bot}(s),\zeta )  \big] ,  \\
 \end{split}
\ee

The error terms in \eqref{nov15eqn96} are given as follows
 
  \be\label{nov12eqn91}
  \begin{split}
  ErrU^{b;0}_{i,i_1,i_2,i_3}(t_1, t_2)&:= \sum_{a=1,2}(-1)^{a-1} \int_{ \R^3 }\int_{ \R^3 }\int_{ \R^3 }\int_{ \R^3 }\int_{ \R^3 }  e^{i  \Phi_{\mu_1,\mu_2,\mu_3}(\xi, \eta, \sigma,\kappa;t_a,X(t_a))   }  \widehat{g}(t_a, \eta, v)  \\
  &\quad \times \big(  \p_s\big(    \widetilde{\Phi}^b_{ \mu_1, \mu_2,\mu_3}(\xi,\eta, \sigma,\kappa;t_a, X(t_a),v)\big) \big)^{-1}  \mathcal{F}[\mathfrak{H}_{k_3,j_3,n_3}^{\mu_3,i_3}](t_a, \kappa, V(t_a)) \\
  &\quad   \cdot    \mathcal{F}[{}_{b}^{3}\mathfrak{E}U] (t_a, \xi, \eta,\sigma, v,   X_{\bot}(t_a), V(t_a))  d \kappa  d\sigma   d \eta d \xi  d v, \\
 \end{split}
\ee
\be\label{2024oct31eqn21}
  \begin{split}
 ErrU^{b;1}_{i,i_1,i_2,i_3}(t_1, t_2)&:=  \int_{t_1}^{t_2}\int_{ \R^3 }\int_{ \R^3 }\int_{ \R^3 }\int_{ \R^3 }\int_{ \R^3 } e^{i   \widetilde{\Phi}^b_{\mu_1,\mu_2,\mu_3}(\xi, \eta, \sigma,\kappa;s,X(s), v)   }   \p_s \widehat{g}(s, \eta, v)\\
 &\quad\times   \big(  \p_s\big(    \widetilde{\Phi}^b_{ \mu_1, \mu_2,\mu_3}(\xi,\eta, \sigma,\kappa;s, X(s),v)\big) \big)^{-1}\mathcal{F}[\mathfrak{H}_{k_3,j_3,n_3}^{\mu_3,i_3}](s , \kappa, V(s))\\
 &\quad      \cdot   \mathcal{F}[{}_{b}^{3}\mathfrak{E}U] (s , \xi, \eta,\sigma, v,  X_{\bot}(s),  V(s))  d \kappa  d\sigma   d \eta d \xi  d v d s,\\
 \end{split}
\ee

 \be\label{2024oct31eqn22}
  \begin{split}
 ErrU^{b;2}_{i,i_1,i_2,i_3}(t_1, t_2)&:=  \int_{t_1}^{t_2} \int_{ \R^3 }\int_{ \R^3 }\int_{ \R^3 }\int_{ \R^3 }\int_{ \R^3 }   e^{i   \widetilde{\Phi}^b_{\mu_1,\mu_2,\mu_3}(\xi, \eta, \sigma,\kappa;s,X(s), v)   } \\
 &\quad \times   \widehat{g}(s, \eta, v) \big(  \p_s\big(    \widetilde{\Phi}^b_{ \mu_1, \mu_2,\mu_3}(\xi,\eta, \sigma,\kappa;s, X(s),v)\big) \big)^{-1}\\
 &\quad \times \big[ \p_s\mathcal{F}[\mathfrak{H}_{k_3,j_3,n_3}^{\mu_3,i_3}](s, \kappa, V(s))  \cdot    \mathcal{F}[{}_{b}^{3}\mathfrak{E}U](s , \xi, \eta,\sigma, v,   X_{\bot}(s), V(s))\\
 &\quad + \mathcal{F}[\mathfrak{H}_{k_3,j_3,n_3}^{\mu_3,i_3}](s , \kappa, V(s))  \cdot   \p_s   \mathcal{F}[{}_{b}^{3}\mathfrak{E}U](s, \xi, \eta,\sigma, v,  X_{\bot}(s),   V(s)) \big]    d \kappa  d\sigma   d \eta d \xi  d v d s.\\
  \end{split}
\ee

The error terms in  \eqref{nov15eqn96} will be estimated in Lemma \ref{ellerrstep3}. At this stage, we focus on the estimate of the main parts $LastU^b_{i,i_1,i_2,i_3}(t_1,t_2),b\in \{1,2\}, U\in \{H, M\}$, $b\in\{1,2\}$.

Recall \eqref{nov12eqn90}.  After writing  $ LastU^b_{i,i_1,i_2,i_3}(t_1,t_2), b\in \{1,2\},$ in terms of kernel, from the estimates in \eqref{2024oct8eqn1} in Theorem \ref{mainresultsfirstpart},  and the rough estimate of the electromagnetic field in  \eqref{maintheoremroughest} in Theorem  \ref{mainresultsfirstpart},   we have 
\[
\begin{split}
&\big| LastU^1_{i,i_1,i_2,i_3}(t_1,t_2) \big|\\
&\lesssim  \mathcal{M}(C) 2^{(1+2\alpha^{\star}+ \iota)M_t} 2^{-3\gamma M_t} 2^{2(\alpha^{\star} + 3\iota) M_t -(k_3+2n_3)/2}  2^{2 (\alpha^{\star} + 3\iota) M_t -(k_2+2n_2)/2} \\
&\quad  \times 2^{7M_t/6+5\iota M_t/2-(k_1+2n_1)/2} 2^{k  - j - \min\{l, \kappa\} } \min\{ 2^{-3k-n-\min\{n,\kappa\}}2^{3j+2l},  2^{-j}\}    \\
& \lesssim \mathcal{M}(C)    2^{19M_t/6+ 22\iota M_t-(k_3+2n_3)/2-(k_2+2n_2)/2-(k_1+2n_1)/2}  2^{l-\min\{l,\kappa\}}  2^{-(k+n+\min\{n,\kappa\})/2}\\
&\lesssim \mathcal{M}(C) 2^{11M_t/15 + 100\iota  M_t}    \lesssim \mathcal{M}(C) 2^{(\gamma-10\epsilon)M_t }  , \\
&\\
&\big| LastU^2_{i,i_1,i_2,i_3}(t_1,t_2) \big|\\
&\lesssim   2^{(1+  2\alpha^{\star}+400\epsilon)M_t} 2^{-2\gamma M_t} 2^{ (\alpha^{\star} + 3\iota)  M_t -(k_3+2n_3)/2}  2^{2 (\alpha^{\star} + 3\iota) M_t  -(k_2+2n_2)/2}\\
 &\quad \times 2^{7M_t/6+5\iota M_t/2-(k_1+2n_1)/2} 2^{k   -2 j - 2\min\{l, \kappa\} }   \min\{ 2^{-3k-n-\min\{n,\kappa\}}2^{3j+2l},  2^{-j}\}   \mathcal{M}(C)\\
 &  \lesssim \mathcal{M}(C)    2^{21M_t/6+ 17\iota M_t-(k_3+2n_3)/2-(k_2+2n_2)/2-(k_1+2n_1)/2}\\
&\quad \times   2^{-5(k+l+\kappa)/4+ 2l -2\min\{l,\kappa\} +(5\kappa-3\min\{n,\kappa\})/4}\\
&\lesssim 2^{(\gamma-10\epsilon)M_t } \mathcal{M}(C). 
\end{split}
\]
Hence, finishing the proof of our desired estimate  \eqref{nov12eqn21}.  
\end{proof}

\subsubsection{Proof of Lemma \ref{ellipticestpartII}}\label{proofofellipticestpartII}

Recall the decompositions in \eqref{2024oct29eqn41}, \eqref{oct3eqn111},  and \eqref{2024oct29eqn91}.  From the estimate \eqref{nov9eqn31} in Lemma \ref{firstreductiontrivialelli},  the estimate  \eqref{oct20eqn7}  and \eqref{nov11eqn231}  in Lemma \ref{ellfulltyp3},  we know that a set of parameters  and the threshold case for the other set of parameters are ruled out. For the other cases, we use the formulas of first iteration in \eqref{2024oct23eqn11} and \eqref{2024oct23eqn12} and the decomposition of $0$-parts in \eqref{nov9eqn62} and \eqref{findecompellipticpart2024oct}.

 From  the estimate \eqref{nov9eqn51} in Lemma \ref{ellipticstep2good},   the estimate  \eqref{nov28eqn80}  in Lemma \ref{ellipticstep1good},  and the estimates  \eqref{nov12eqn21},      \eqref{2024oct31eqn510}   in Lemma \ref{errortypefullhyp2}, and the estimate  \eqref{oct29eqn80}  in Lemma \ref{ellfulltyp4} for the error terms,  we have 
\be
 \sum_{a=0,1,2,3}\big|     \mathfrak{E}^{\mu,a;l}_{k,j,n}(t_1, t_2) \big| +\sum_{\kappa\in (\bar{\kappa}, 2]\cap \Z} \big| \mathfrak{E}^{\mu,4;\kappa, l}_{k,j,n}(t_1, t_2)\big|  \lesssim     \mathcal{M}(C)  2^{(\gamma-10\epsilon) M_t}.
\ee
 Hence finishing the estimate of the elliptic part, i.e.,   the desired estimate  \eqref{2024oct29eqn111}  in Lemma \ref{ellipticestpartII} holds.

\subsection{Estimating  error type terms}\label{errorcpartPartII}

In the following Lemma, we first estimate the error terms created in the first reduction, see \eqref{oct25eqn41}. 

\begin{lemma}\label{errmainfull0}
Let $ i\in \{0, 1,2,3,4\},   ( n,k)\in \widetilde{\mathcal{E}}_i$. Under the assumption of Proposition \textup{\ref{bootstraplemma2}}, we have
\be\label{oct15eqn22}
\sum_{j\in \Z_+}\sum_{a=0,1} \big|{}_a^{1}Err^{\mu,i}_{k,j;n}(t_1, t_2)\big| \lesssim   2^{  M_t/2} \mathcal{M}(C). 
\ee
\end{lemma}
\begin{proof}

Recall  \eqref{oct7eqn31}.  Due to the assumption  $\nabla_{  x_{\bot} } C(  X_{\bot}(t), V(t))=0 $ of the coefficient $C$ in \eqref{2024oct30eqn51}, we know that  ${}_1^{1}Err^i_{k,j;n}(t_1, t_2)=0.$ Hence, it's sufficient to estimate  ${}_0^{1}Err^i_{k,j;n}(t_1, t_2)$.

 From    the estimates in \eqref{2024oct8eqn1} in Theorem \ref{mainresultsfirstpart}, we have 
\[
\sum_{i=0, 1,2,3,4}\big|{}_0^{1}Err^i_{k,j;n}(t_1, t_2)\big|\lesssim \mathcal{M}(C)  2^{   (\alpha^\star + 3\iota + 140\epsilon) M_t- (k+2n)/2} \lesssim 2^{  M_t/2} \mathcal{M}(C). 
\]
 Hence finishing the proof of our desired estimate  \eqref{oct15eqn22}. 
\end{proof}

The rest of this section is naturally divided into two steps as follows. 
\begin{enumerate}
\item[$\bullet$]
In section \ref{errorhyperbolicPartII}, we mainly estimate the error type terms created in the ISS process of estimating the hyperbolic parts ${}_a^{1}\mathfrak{H}^{\mu,i}_{k,j;n}(t_1, t_2), a\in\{0,1\},i\in\{0,1,2,3,4\}$ in   \eqref{oct25eqn41}. 
\item[$\bullet$]  In section \ref{errorellipticPartIIISS}, we mainly estimate the error type terms created in the ISS process of estimating the elliptic parts $ \mathfrak{E}^{\mu,i}_{k,j;n}(t_1, t_2), i\in\{0,1,2,3,4\}$ in   \eqref{oct25eqn41}. 
\end{enumerate}
\subsubsection{Error terms in estimating the hyperbolic parts}\label{errorhyperbolicPartII}

We first estimate the error terms created in  estimating ${}_0^{1}\mathfrak{H}^{\mu,i}_{k,j;n}(t_1, t_2)$. For error terms in  \eqref{oct29eqn55}, the following lemma holds.

\begin{lemma}\label{errmainfull1}
Let $i\in\{0,1,2,3,4\}$,  $(k,n)\in \widetilde{\mathcal{E}}_{i}$,  $ (k_1, n_1)\in  {}^{1}\widetilde{\mathcal{E}}_{k,n}^{i_1,i}$.  Under the assumption of Proposition \textup{\ref{bootstraplemma2}}, we have  
\be\label{oct26eqn76}
  \sum_{a\in\{0,1,2 \}} \sum_{i_1\in\{ 0,1,2,3,4 \}} \big| Err^a_{ i, i_1}(t_1, t_2)\big|   \lesssim     2^{(\gamma -10\epsilon)M_t} \mathcal{M}(C) . 
\ee
\end{lemma}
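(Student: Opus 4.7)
The plan is to mirror the structure of Lemma \ref{2021errhorstep1} while exploiting the essential simplification from Lemma \ref{bootstraplemma2} that $\nabla_{\slashed x} C(\slashed x, V(s)) = 0$, which eliminates several integrand terms. Since $a_p$ (the dyadic scale of $|\slashed X(s)|$) does not appear in the hypothesis here, the constraints $(k,n)\in \widetilde{\mathcal{E}}_i$ and $(k_1,n_1)\in{}^{1}\widetilde{\mathcal{E}}^{i}_{k,n}$ in \eqref{oct15eqn1} and \eqref{firstindexsetlemm2} give us gains of at least $2^{(k+2n)/2}\gtrsim 2^{(1/3-4\iota)M_t}$ and $(k_1+2n_1)/2-(k+2n)/2\gtrsim (1/3-10\iota)M_t$ (or $(1/6-7\iota)M_t$ for $i\in\{3,4\}$), which combine to a surplus of roughly $2^{-(2/3-20\iota)M_t}$ relative to the first two applications of the acceleration-force pointwise bound \eqref{aug30eqn1}.

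For $Err^0_{i,i_1}(t_1,t_2)$, the integral term with $\hat{\slashed V}(s)\cdot\nabla_{\slashed x}$ in \eqref{oct8eqn1} vanishes by $\nabla_{\slashed x}C=0$, leaving only the endpoint terms at $s=t_1,t_2$. These are controlled directly by inserting the pointwise estimate \eqref{aug30eqn1} for both $\clubsuit K^{\mu_1,i_1}_{k_1,j_1;n_1}$ and ${}^1\clubsuit K$, using the factor $2^{-(k_1+2n_1)}$ coming from the product of the two phase denominators $\Phi^{-1}$. The dominant contribution is of order $2^{-(k+2n)/2-(k_1+2n_1)/2}2^{(\alpha^\star+3\iota)M_t}\cdot 2^{(\alpha^\star+3\iota)M_t}$ times $\mathcal{M}(C)2^{-\gamma M_t-n}$, which by the constraints above collapses below $2^{(\gamma-10\epsilon)M_t}\mathcal{M}(C)$.

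For $Err^1_{i,i_1}(t_1,t_2)$, the key inputs are the time-derivative bounds \eqref{oct7eqn21}--\eqref{oct7eqn24} from Lemma \ref{secondhypohori} applied to $\p_t\clubsuit K^{\mu_1,i_1}_{k_1,j_1;n_1}$, together with an analogous bound on $\p_t{}^1\clubsuit K$ obtained by chasing the Vlasov equation through the definition \eqref{oct7eqn90} as was done in \eqref{oct11eqn50}. These bounds introduce at most an $L^\infty_x$-factor of the electromagnetic field (of size $2^{(1+2\alpha^\star+10\epsilon)M_t}$ by Proposition \ref{Linfielec}) at the cost of one extra loss of $2^{-\gamma_2 M_t}$ per time derivative, but the prefactor $2^{-(k_1+2n_1)}$ produced by integration by parts in $s$ is large enough (by ${}^{1}\widetilde{\mathcal{E}}^{i}_{k,n}$) to absorb this loss and leave the target $2^{(\gamma-10\epsilon)M_t}$. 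For $Err^2_{i,i_1}$, the new-generation source is replaced by the elliptic part (plus initial data) of the acceleration force, which is bounded in $L^\infty_x$ by $2^{5\alpha^\star M_t/3+10\epsilon M_t}$ according to \eqref{oct19eqn60}; pairing this with the ${}^1\clubsuit K$ bound and the gain from $(k_1+2n_1)$ again closes the estimate.

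The main obstacle I anticipate is bookkeeping rather than conceptual: one has to check that in \emph{every} branch of $(i,i_1)\in\{0,1,2,3,4\}^2$, with the disjoint index sets $\widetilde{\mathcal{E}}_i$, ${}^1\widetilde{\mathcal{E}}^{i}_{k,n}$ (and the improved versions for $i_1=1$ coming from \eqref{oct18eqn1} of Proposition \ref{finalestfirst}), the combined exponent actually drops below $\gamma M_t-10\epsilon M_t$. The subtlest situation is the small-angle case $i,i_1\in\{3,4\}$ where the elliptic part gives only a $2^{(k+4n)/2+(7/6+5\iota/2)M_t}$ bound (rather than the better $2^{(1+6\iota)M_t}$) and the cutoff forces us to rely on the constraint $k+4n\ge -(1/3+5\iota/2+130\epsilon)M_t$; here one must verify that $(k_1+2n_1)/2\ge (k+2n)/2+(1/6-7\iota)M_t$ produces the needed absorption. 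All other cases follow the same two-scale template with strictly larger safety margins, so once the tight case is done the rest reduces to verifying arithmetic inequalities on the exponents.
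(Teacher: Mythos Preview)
Your treatment of $Err^0_{i,i_1}$ and $Err^2_{i,i_1}$ matches the paper's argument: the vanishing of $\nabla_{\slashed x}C$ kills the integral term in \eqref{oct8eqn1}, and the remaining endpoint and elliptic pieces are handled by the pointwise estimates of Proposition \ref{verest} and \eqref{oct19eqn60}. These parts are fine.

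The gap is in $Err^1_{i,i_1}$, specifically the first integrand $\partial_t\clubsuit K^{\mu_1,i_1}_{k_1,j_1;n_1}\cdot{}^1\clubsuit K$. Your plan is to feed in the $L^\infty_x$ bounds \eqref{oct7eqn21}--\eqref{oct7eqn24} for $\partial_t\clubsuit K$ directly and absorb the loss with $2^{-(k_1+2n_1)}$. In the small-angle case $i\in\{3,4\}$ this does not close: taking $\|\partial_t\clubsuit K\|\lesssim 2^{(k_1+2n_1)/2+(1+\alpha^\star)M_t}$ and $\|{}^1\clubsuit K\|\lesssim \mathcal M(C)2^{-\gamma M_t}2^{(7/6)M_t-(k+2n)/2}$ (via \eqref{2022feb12eqn22} and the $\nabla_\zeta$ cost $2^{-\gamma M_t-n}$), the product with $\Phi_2^{-1}$ has exponent
\[
-(k_1+2n_1)/2 + (1+\alpha^\star) - \gamma + 7/6 - (k+2n)/2 \ \approx\ 1 + O(\iota),
\]
which exceeds the target $\gamma-10\epsilon\le 1-14\epsilon$ by a margin of order $\iota\gg\epsilon$. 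So the ``bookkeeping'' you anticipate is not the issue; there is a missing structural input.

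What the paper actually does for this term is a projection split: it writes $\partial_t\clubsuit K\cdot{}^1\clubsuit K = \mathbf P(\partial_t\clubsuit K)\cdot\mathbf P({}^1\clubsuit K) + \mathbf P_3(\partial_t\clubsuit K)\,\mathbf P_3({}^1\clubsuit K)$ and exploits two separate gains. For the horizontal part, the improved bound \eqref{oct26eqn41} on $\mathbf P(\partial_t\clubsuit K)$ replaces $2^{(1+\alpha^\star)M_t}$ by essentially $2^{7\alpha^\star M_t/3}$ (the key being that $|\slashed V(s)|/|V(s)|\lesssim 2^{(\alpha^\star-\gamma)M_t}$ suppresses the $|\slashed\zeta|/|\zeta|$--weighted terms). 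For the vertical part, $\mathbf P_3({}^1\clubsuit K)=\partial_{\zeta_3}[\phi^{-1}C\cdot\clubsuit K]$ gains because $\partial_{\zeta_3}\tilde\zeta$ is smaller than $\nabla_{\slashed\zeta}\tilde\zeta$ by the same factor $2^{(\alpha^\star-\gamma)M_t}$. This is precisely the ``observation'' flagged in the paper's proof, and it is what brings both summands below $2^{(\gamma-10\epsilon)M_t}$. Your proposal makes no use of either projection improvement, so as written it cannot close the $Err^1$ bound in the worst case.
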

\begin{proof}

Based on the size of $a$, we proceed in steps as follows.

\medskip

\noindent \textbf{Step 1.}\quad The estimate of  $Err^0_{ i, i_1}(t_1, t_2)$.

\medskip

Recall  \eqref{oct8eqn1}. After writing $ Err^0_{ i, i_1}(t_1, t_2)$ in terms of kernel,  from   the estimates in \eqref{2024oct8eqn1} in Theorem \ref{mainresultsfirstpart}, we have 
\be\label{oct26eqn1}
\begin{split}
| Err^0_{ i, i_1}(t_1, t_2)|&\lesssim \mathcal{M}(C) 2^{ (\alpha^{\star} + 3\iota) M_t -(k_1+2n_1)/2} 2^{-\gamma M_t} 2^{(7/6+3\iota)M_t-(k+2n)/2}\\
& \lesssim    2^{(\gamma -10\epsilon)M_t} \mathcal{M}(C) .
\end{split}
\ee

\medskip

\noindent \textbf{Step 2.}\quad The estimate of  $Err^1_{ i, i_1}(t_1, t_2)$.

\medskip

Recall \eqref{oct10eqn77} and  \eqref{oct7eqn90}. We observe that the third component 
of $\mathcal{F}[  {}^1 \mathfrak{H}](s, \xi,  X_{\bot}(s), V(s))$ is better than $\mathbf{P}\big( \mathcal{F}[  {}^1 \mathfrak{H}] (s, \xi,   X_{\bot}(s), V(s))\big)$.   Thanks to this observation, from the pointwise estimates in \eqref{oct7eqn21}   and \eqref{2024oct27eqn61} in Lemma \ref{secondhypohori} and   the estimates in \eqref{2024oct8eqn1} in Theorem \ref{mainresultsfirstpart}, we have   
\be\label{oct26eqn72}
\begin{split}
& \big|\int_{t_1}^{t_2} \int_{\R^3}\int_{\R^3} e^{i X(s)\cdot (\xi+\eta) + i \mu s|\xi| + i \mu_1 s|\eta|}   
  (  \hat{V}(s)\cdot (\xi+\eta) +   \mu |\xi| +  \mu_1 |\eta|)^{-1} \\
  &\quad \times \p_s  \mathcal{F}[\mathfrak{H}_{k_1,j_1;n_1}^{\mu_1,i_1}](s, \eta , V(s))  \cdot   \mathcal{F}[  {}^1 \mathfrak{H}] (s, \xi,  X_{\bot}(s), V(s))   d \xi d\eta  d s\big| \\
& \lesssim  \mathcal{M}(C)  \big(    2^{7\alpha^{\star} M_t/3-(k_1+2n_1)/2}    2^{-\gamma M_t} 2^{(7 M_t/6+3\iota)M_t-(k+2n)/2}  \\
&\quad +  2^{ (1+\alpha^\star )M_t-(k_1+2n_1)/2}    2^{-\gamma M_t} 2^{ (\alpha^{\star} + 3\iota)M_t-(k+2n)/2} \big)\\
&\lesssim  \mathcal{M}(C)  2^{(\gamma-10\epsilon)M_t}. \\
\end{split}
\ee 

It remains to consider the case when $\p_s$ hits $   \mathcal{F}[  {}^1 \mathfrak{H}](s, \xi,  X_{\bot}(s), V(s)) $. For this case, we observe that $\mathbf{P}\big(  \mathcal{F}[\mathfrak{H}_{k_1,j_1;n_1}^{\mu_1,i_1}](s, \eta ,   V(s))\big) $ is better than $  \mathcal{F}[\mathfrak{H}_{k_1,j_1;n_1}^{\mu_1,i_1}](s, \eta , V(s)) $, see \eqref{2022feb22eqn81}. In particular, for the case $i=3,4$, in which we have $|\tilde{v}-\tilde{\zeta}|\lesssim 2^{n+\epsilon M_t}$, we know that the symbol of $\mathbf{P}\big( \mathcal{F}[\mathfrak{H}_{k_1,j_1;n_1}^{\mu_1,i_1}](s, \eta , V(s))\big) $ is better than $ \mathcal{F}[\mathfrak{H}_{k_1,j_1;n_1}^{\mu_1,i_1}](s, \eta , V(s)) $ by a factor of $\max\{\frac{ \zeta_{\bot}}{|\zeta|},\frac{ v_{\bot}}{|v|}, \frac{ \xi_{\bot}}{|\xi|} \}$, which is less that $2^{\epsilon M_t}\max\{2^n, \frac{  \zeta_{\bot}}{|\zeta|}\}$. Therefore, similar to the obtained estimate   in \eqref{2024oct8eqn1} in Theorem \ref{mainresultsfirstpart} for  $i=3,4$, we have 
\[
\begin{split}
&\big\|\int_{\R^3} e^{ix\cdot \xi + i \mu s|\xi|} \mathbf{P}\big( \mathcal{F}[\mathfrak{H}_{k_1,j_1;n_1}^{\mu_1,i_1}]\big) d \xi\big\|_{L^\infty_x}\\
&\lesssim 2^{\epsilon M_t} \max\{2^n, \frac{|  \zeta_{\bot}|}{|\zeta|}\} \big[2^{(1-19\epsilon) M_{t^\star} } + 2^{128\epsilon M_{t^\star} }  \mathbf{1}_{n\geq   -(1/2+3\iota/2 + 40\epsilon)M_{t^{\star}} } \\ 
&\quad \times  \min\{2^{(k+2n)/2 +  {\alpha}^{\star} M_{t^\star}}, 2^{(k+4n)/2 +(7/6+5\iota/2)M_{t^\star}}\} \big]. 
\end{split}
\]

Recall  \eqref{oct15eqn1}  and  \eqref{firstindexsetlemm2}  for the definition of index sets $\widetilde{\mathcal{E}}_{i}$ and $   {}^{1}\widetilde{\mathcal{E}}_{k,n}^{i_1,i}$. From the above estimate,  the pointwise estimates  in \eqref{oct7eqn21}   and \eqref{2024oct27eqn61} in Lemma \ref{secondhypohori} and   the estimates in \eqref{2024oct8eqn1} in Theorem \ref{mainresultsfirstpart}, we have 
\be\label{oct26eqn71}
\begin{split}
&\big|\int_{t_1}^{t_2} \int_{\R^3}\int_{\R^3} e^{i X(s)\cdot (\xi+\eta) + i \mu s|\xi| + i \mu_1 s |\eta|}   
  (  \hat{V}(s)\cdot (\xi+\eta) +   \mu |\xi| +  \mu_1 |\eta|)^{-1} \\
  &\quad \times \mathcal{F}[\mathfrak{H}_{k_1,j_1;n_1}^{\mu_1,i_1}](s, \eta , V(s))  \cdot   \p_s  \mathcal{F}[  {}^1 \mathfrak{H}] (s, \xi,    X_{\bot}(s), V(s)) d \xi d\eta  d s\big| \\
&\lesssim \mathcal{M}(C) 2^{  (\alpha^{\star} + 3\iota)M_t+200\epsilon M_t-(k_1+2n_1)/2}  2^{-\gamma M_t-n} \\
&\quad \times 2^{ (1+\alpha^{\star})M_t-(k+2n)/2} \big(1+ 2^{\max\{n, (\alpha^{\star}-\gamma)M_t \}+\epsilon M_t}\mathbf{1}_{i=3,4}  \big)  \\
&\lesssim   \mathcal{M}(C) 2^{(\gamma-20\epsilon)M_t}. 
\end{split}
\ee
Recall  \eqref{oct10eqn77}. After combining the above two estimates  \eqref{oct26eqn72}  and  \eqref{oct26eqn71}, we have 
\be
| Err^1_{ i, i_1}(t_1, t_2)\big| \lesssim     \mathcal{M}(C)2^{(\gamma -20\epsilon)M_t}.
\ee

\medskip

\noindent \textbf{Step 3.}\quad The estimate of  $Err^2_{ i, i_1}(t_1, t_2)$.

\medskip

Recall \eqref{oct10eqn93}  and   \eqref{oct10eqn96}. 
Similar to the obtained estimate  \eqref{2021dec29eqn31}, from the rough $L^\infty_x$-type  estimate of the elliptic part in 
\eqref{2022feb25eqn1} in  Theorem \ref{maintheoremellipitic}    and  the estimates in \eqref{2024oct8eqn1} in Theorem \ref{mainresultsfirstpart}, we have   
\be\label{oct26eqn3}
\begin{split}
| Err^2_{ i, i_1}(t_1, t_2)\big|& \lesssim  2^{  5 \alpha^{\star}  M_t/3+300\epsilon M_t} 2^{-2\gamma M_t}  \big[2^{ 7M_t/6 + 5\iota M_t/2 -(k_1+2n_1)/2 + 7M_t/6+ 5\iota M_t/2-(k+2n)/2} \\
&\quad   +  2^{ (\alpha^{\star} + 3\iota) M_t -(k_1+2n_1)/2 }  2^{7M_t/6+ 5\iota M_t/2-(k+4n)/2}      \big] \mathcal{M}(C)  \\
 &\lesssim  \mathcal{M}(C) 2^{ 5\alpha^{\star} M_t/3- 7M_t/15+20\iota M_t}\\
 &\lesssim  \mathcal{M}(C)2^{(\gamma-10\epsilon)M_t}. 
 \end{split}
\ee
Hence finishing the proof of our desired estimate  \eqref{oct26eqn76}. 
\end{proof}

 As summarized in the following lemma,  for error terms in  \eqref{oct29eqn61}, we have 

\begin{lemma}\label{errortypefull6}
Let $i\in\{0,1,2,3,4\}$, $(k,n)\in \widetilde{\mathcal{E}}_{i}$,  $ (k_1, n_1)\in  {}^{1}\widetilde{\mathcal{E}}_{k,n}^{i_1,i}$, $(k_2, n_2)\in {}^{2}\widetilde{\mathcal{E}}_{k_1,n_1}.$  Under the assumption of Proposition \textup{\ref{bootstraplemma2}}, we have  
\be\label{oct26eqn80}
  \sum_{a\in\{0,1,2 \}} \sum_{i_1,i_2\in\{0,1,2,3,4\}} \big| Err^a_{ i, i_1,i_2}(t_1, t_2) \big|   \lesssim  \mathcal{M}(C) 2^{(\gamma-10\epsilon )M_t}. 
\ee
\end{lemma}
\begin{proof}

Based on the size of $a$, we proceed in steps as follows.

\medskip

\noindent \textbf{Step 1.}\quad The estimate of  $Err^0_{ i, i_1,i_2}(t_1, t_2)$.

\medskip

Recall  \eqref{oct11eqn187}  and  \eqref{seconditerindex}. After writing $ Err^0_{ i, i_1, i_2}(t_1, t_2)$ in terms of kernel,  from  the estimates in \eqref{2024oct8eqn1} in Theorem \ref{mainresultsfirstpart}, we have
\be\label{oct26eqn6}
\begin{split}
| Err^0_{ i, i_1,i_2}(t_1, t_2)|&\lesssim 2^{ (\alpha^{\star} + 3\iota) M_t +200\epsilon M_t -(k_2+2n_2)/2}  2^{-2\gamma M_t}\\
&\quad \times  \big[2^{ 7M_t/6+ 5\iota M_t/2-(k_1+2n_1)/2 + 7M_t/6+5\iota M_t/2-(k+2n)/2}  \\
&\quad +  2^{ (2/3+4\iota) M_t -(k_1+2n_1)/2 }  2^{7M_t/6+ 5\iota M_t/2 -(k+4n)/2}   \big] \mathcal{M}(C)\\
&  \lesssim  \mathcal{M}(C) 2^{(\gamma-10\epsilon )M_t}  .
\end{split}
\ee

\medskip

\noindent \textbf{Step 2.}\quad The estimate of  $Err^1_{ i, i_1,i_2}(t_1, t_2)$.

\medskip

Recall \eqref{oct11eqn42}. From the estimates in   \eqref{oct7eqn21}  and  \eqref{2024oct27eqn61}  in   Lemma \ref{secondhypohori} and   the estimates in \eqref{2024oct8eqn1} in Theorem \ref{mainresultsfirstpart}, we have  
\be 
\begin{split}
&| Err^1_{ i, i_1, i_2}(t_1, t_2)\big|\\
 &\lesssim 2^{6\iota} \mathcal{M}(C) \big[ 2^{ (1+\alpha^{\star})  M_t -(k_2+2n_2)/2} 2^{-2\gamma M_t } 2^{7M_t/6 -(k_1+2n_1)/2}2^{7M_t/6 -(k+2n)/2}\\
&\quad + 2^{  (\alpha^{\star} + 3\iota)  M_t -(k_2+2n_2)/2}  2^{-2\gamma M_t }  2^{-\min\{n,n_1\} } 2^{  (1+\alpha^{\star}) M_t+7M_t/6 -(k_1+2n_1)/2  -(k+2n)/2}   \big]\\
&\lesssim \mathcal{M}(C) 2^{ M_t/2 + 16M_t/9-M_t/3-M_t/2-5M_t/6+10 0\iota M_t} \\
&\lesssim \mathcal{M}(C) 2^{(\gamma-20\epsilon)M_t}. 
\end{split}
\ee

\medskip

\noindent \textbf{Step 3.}\quad The estimate of  $Err^2_{ i, i_1,i_2}(t_1, t_2)$.

\medskip

Recall  \eqref{oct11eqn43} and  \eqref{oct10eqn96}.   From the rough $L^\infty_x$-type  estimate of the elliptic part in 
\eqref{2022feb25eqn1} in  Theorem \ref{maintheoremellipitic} and   the estimates in \eqref{2024oct8eqn1} in Theorem \ref{mainresultsfirstpart}, we have   
 \be
\begin{split}
| Err^2_{ i, i_1, i_2}(t_1, t_2)\big| & \lesssim \mathcal{M}(C) 2^{ 5 \alpha^{\star}   M_t/3+6\iota M_t}  2^{-3\gamma  M_t}2^{2 (\alpha^{\star} + 3\iota) M_t-(k_2+2n_2)/2}  \\
&\quad \times 2^{(7/6+10\epsilon)M_t-(k_1+2n_1)/2}  2^{(7/6+10\epsilon)M_t-(k+2n)/2} \\
& \lesssim \mathcal{M}(C) 2^{16M_t/9 -M_t/3-M_t/2-5M_t/6+100\iota M_t} \\
&\lesssim \mathcal{M}(C) 2^{(\gamma-20\epsilon)M_t}. 
\end{split}
\ee
Hence finishing the proof of the desired estimate  \eqref{oct26eqn80}. 
\end{proof}
For error terms in   \eqref{oct12eqn76},  the following lemma holds. 
\begin{lemma}\label{errortypefull8}
Let $i\in\{0,1,2,3,4 \}$, $(k,n)\in \widetilde{\mathcal{E}}_{i}$,  $ (k_1, n_1)\in  {}^{1}\widetilde{\mathcal{E}}_{k,n}^{i_1,i}$, $(k_2, n_2)\in {}^{2}\widetilde{\mathcal{E}}_{k_1,n_1}, (k_3, n_3)\in {}^{3}\widetilde{\mathcal{E}}_{k_2,n_2}.$ Under the assumption of Proposition \textup{\ref{bootstraplemma2}}, we have  
\be\label{oct26eqn86}
  \sum_{a\in\{0,1,2  \}} \sum_{i_1,i_2,i_3\in\{0,1,2,3,4\}} \big| Err^a_{ i, i_1,i_2,i_3}(t_1, t_2) \big|   \lesssim  \mathcal{M}(C) 2^{(\gamma-20\epsilon)M_t}.
\ee
\end{lemma}
\begin{proof}

Based on the size of $a$, we proceed in steps as follows.

\medskip

\noindent \textbf{Step 1.}\quad The estimate of  $Err^0_{ i, i_1,i_2,i_3}(t_1, t_2) $.

\medskip

Recall  \eqref{oct12eqn2}. After writing $ Err^0_{ i, i_1, i_2, i_3}(t_1, t_2)$ in terms of kernel, from  the estimates in \eqref{2024oct8eqn1} in Theorem \ref{mainresultsfirstpart},  we have
\be 
\begin{split}
| Err^0_{ i, i_1,i_2,i_3}(t_1, t_2)|&\lesssim 2^{  (\alpha^{\star} + 3\iota)  M_t + 6\iota M_t -(k_3+2n_3)/2}   2^{-3\gamma  M_t}\\
&\quad \times  2^{2  (\alpha^{\star} + 3\iota)  M_t+ 7M_t/3 -(k_2+2n_2)/2 -(k_1+2n_1)/2 -(k+2n)/2} \mathcal{M}(C)\\
&\lesssim  \mathcal{M}(C) 2^{(\gamma-20\epsilon)M_t} .\\
\end{split}
\ee

\medskip

\noindent \textbf{Step 2.}\quad The estimate of  $Err^1_{ i, i_1,i_2,i_3}(t_1, t_2) $.

\medskip

  Recall  \eqref{oct12eqn2}. From the estimates in   \eqref{oct7eqn21}  and  \eqref{2024oct27eqn61}  in   Lemma \ref{secondhypohori} and   the estimates in \eqref{2024oct8eqn1} in Theorem \ref{mainresultsfirstpart}, we have  
\be 
\begin{split}
| Err^1_{ i, i_1, i_2}(t_1, t_2)\big| & \lesssim \mathcal{M}(C)   2^{30\iota M_t}  2^{(1+\alpha^{\star}) M_t -(k_3+2n_3)/2} 2^{-3\gamma M_t }\\
&\quad \times  2^{6\alpha^{\star} M_t -(k_2+2n_2)/2  -(k_1+2n_1)/2  -(k+2n)/2}\\
&\lesssim \mathcal{M}(C) 2^{5M_t/3 + 5\alpha^{\star} M_t/3-M_t/3-M_t/2-5M_t/6-4M_t/3+200\iota M_t} \\
&\lesssim  \mathcal{M}(C) 2^{(\gamma-20\epsilon)M_t}. \\
\end{split}
\ee

\medskip

\noindent \textbf{Step 3.}\quad The estimate of  $Err^2_{ i, i_1,i_2,i_3}(t_1, t_2) $.

\medskip

  Recall  \eqref{oct12eqn2}. From the rough $L^\infty_x$-type  estimate of the elliptic part in 
\eqref{2022feb25eqn1} in  Theorem \ref{maintheoremellipitic} and   the estimates in \eqref{2024oct8eqn1} in Theorem \ref{mainresultsfirstpart}, we have 
 \[
 \begin{split}
 | Err^2_{ i, i_1,i_2,i_3}(t_1, t_2) |&\lesssim \mathcal{M}(C)   2^{5\alpha^{\star}M_t/3+10\epsilon M_{t} } 2^{-4\gamma M_t} 2^{  2(2/3+4\iota) M_t-(k_2+2n_2)/2} \\
 &\quad \times   2^{ 2(2/3+4\iota)  M_t-(k_3+2n_3)/2}  2^{(7/3+15\iota )M_t -(k_1+2n_1)/2  -(k+2n)/2}  \\
 & \lesssim \mathcal{M}(C)2^{M_t/2}.  \\
 \end{split} 
 \]
Hence finishing the proof of the desired estimate  \eqref{oct26eqn86}. 
\end{proof}

 As summarized in the following lemma,  for error terms in  \eqref{oct21eqn21}, we have
 
 \begin{lemma}\label{errortypefull}
Let $ i\in \{0,1,2,3,4\},   ( n,k)\in \widetilde{\mathcal{E}}_i,$  $ \mu\in\{+,-\}.$ Under the assumption of Proposition \textup{\ref{bootstraplemma2}}, we have  
\be\label{oct19eqn31}
 \big|  {}_{1}^{1}Err\mathfrak{H}^{\mu,i}_{k,j;n}(t_1, t_2)\big|     \lesssim \mathcal{M}(C) 2^{(\gamma-10 \epsilon)M_t}.
\ee

\end{lemma}

\begin{proof}
Recall  \eqref{oct29eqn91}. After using the decomposition of the magnetic field in  \eqref{2024oct30eqn2},   we have
\[
\begin{split}
 {}_{1}^{1}Err\mathfrak{H}^{\mu,i}_{k,j;n}(t_1, t_2)&:=   \sum_{l\in [-M_t, 2]\cap \Z}   {}_{1}^{1}Err\mathfrak{H}^{\mu,i;l}_{k,j;n}(t_1, t_2), \\
  {}_{1}^{1}Err\mathfrak{H}^{\mu,i;l}_{k,j;n}(t_1, t_2) &:=\sum_{  {(\tilde{m},\tilde{k},\tilde{j},\tilde{l})\in \mathcal{S}_1(t)\cup\mathcal{S}_2(t)  } }      Err^{\mu,i;l;\tilde{m} }_{k,n; \tilde{k},\tilde{j}, \tilde{l}}(t_1, t_2), 
\end{split} 
\]
where
 \be\label{oct21eqn22}
 \begin{split}
  {}_{1}^{1}Err\mathfrak{H}^{\mu,i;l}_{k,j;n}(t_1, t_2)&:=\int_{t_1}^{t_2} \int_{\R^3}\int_{\R^3} f(s, X(s)- y, v )  \\
  &\quad \times      \mathcal{K}^{ \mu,i;l }_{k,n,j}(  X_{\bot}(s), y, v,  V(s))\cdot \big[(\hat{v}- \hat{V}(s))\times  B  (s, X(s)- y)\big]dy d v d s, \\
   Err^{\mu,i;l;\tilde{m} }_{k,n; \tilde{k},\tilde{j}, \tilde{l}}(t_1, t_2)&:=  \int_{t_1}^{t_2} \int_{\R^3}\int_{\R^3} f(s, X(s)- y, v )    \\
  &\quad \times \mathcal{K}^{ \mu,i;l }_{k,n,j}( X_{\bot}(s), y, v,  V(s))\cdot \big[(\hat{v}- \hat{V}(s))\times  B_{\tilde{k};\tilde{j}, \tilde{l} }^{\tilde{m}} (s, X(s)- y)\big] dy d v d s,
  \end{split}
\ee
where the kernel $ \mathcal{K}^{\mu,i;l}_{k,n,j}(  X_{\bot}(s),y, v, V(s)),\mu\in\{+,-\}, i\in \{0,1,\cdots, 4\},   $ are defined as follows, 
  \be\label{oct19eqn42}
\begin{split}
 \mathcal{K}^{\mu,i;l}_{k,n,j}( X_{\bot}(s),y, v, V(s)) & : = \int_{\R^3} e^{i y \cdot \xi }    \mathcal{F}[\mathcal{K}_{k,j,n}^{\mu,i;l}](   X_{\bot}(s),   \xi, v, V(s))
  d\xi,\\
 \mathcal{F}[\mathcal{K}_{k,j,n}^{\mu,i;l}](  X_{\bot}(s),   \xi, v, \zeta) &:=   |\xi|^{-1}({ i \hat{\zeta} \cdot \xi + i \mu |\xi| })^{-1}  {  \varphi_{n;-M_t}\big(\tilde{\zeta}\times \tilde{\xi} \big) \psi_k(\xi) }    \varphi_{l;-M_t}\big(\tilde{v}-\tilde{\zeta}  \big) \\
 &\quad \times \nabla_v \big(  C(  X_{\bot}(s), \zeta)\cdot \tilde{\varphi}_{k,j,n}^i (v,\xi,  \zeta) \big) .
 \end{split}
 \ee

Based on the size of $i$, we proceed in steps as follows. 

 \medskip 

\noindent \textbf{Step 1.}\quad  The estimate of $  {}_{1}^{1}Err\mathfrak{H}^{\mu,1;l}_{k,j;n}(t_1, t_2)$.

 \medskip

Recall the definition of the set $  \widetilde{\mathcal{E}}_1$ in  \eqref{oct15eqn1}. After writing  $H^{\mu ;1 ;\tilde{m} }_{k,n; \tilde{k},\tilde{j}, \tilde{l}}(t_1, t_2)$ in terms of kernel and doing integration by parts in $\xi$ along $V(s)$ direction and directions perpendicular to $V(s)$ many times for the kernel, the following estimate holds after using the Cauchy-Schwarz inequality, the volume of support of $v$ and the estimate  \eqref{nov24eqn41} if $|v|\geq 2^{(\alpha_s+\epsilon)M_s}$,
\be\label{oct19eqn10}
\begin{split}
 \big| Err^{\mu,1;l;\tilde{m} }_{k,n; \tilde{k},\tilde{j}, \tilde{l}}(t_1, t_2)\big| & \lesssim    \int_{t_1}^{t_2} 2^{ k } 2^{-j-l+l+4\epsilon M_t} \mathcal{M}(C)  \big( \min\{  2^{-j}, 2^{-3k-2n}  \min\{ 2^{3j+2l}, 2^{j+2\alpha_s M_s} \}  \} \big)^{1/2}  \\
 &\quad \times \big(\int_{\R^3} \min\{2^{j+2\alpha_t M_t} , 2^{3j+2l}\}  | B_{\tilde{k};\tilde{j}, \tilde{l} }^{\tilde{m}} (t, y)|^2 dy  \big)^{1/2} d s \\
& \lesssim \sup_{s\in [t_1,t_2] } \|B_{\tilde{k};\tilde{j}, \tilde{l} }^{\tilde{m}} (s,\cdot)\|_{L^2_x} 2^{4\epsilon M_t} \min\{2^{-(k+2n)/2 } \min\{2^{2\alpha^{\star}  M_t}, 2^{2j+2l} \}, 2^{ k +l}\}\mathcal{M}(C).\\
\end{split}
\ee
Moreover, if we put the localized magnetic field in $L^\infty$, then we have
\be\label{nov28eqn76}
\begin{split}
\big|Err^{\mu,1;l;\tilde{m} }_{k,n; \tilde{k},\tilde{j}, \tilde{l}}(t_1, t_2)\big| &\lesssim \sup_{s\in [t_1,t_2] } \|B_{\tilde{k};\tilde{j}, \tilde{l} }^{\tilde{m}} (s,\cdot)\|_{L^\infty_x}  2^{ k -j+4\epsilon M_t} \mathcal{M}(C)\\
&\quad \times \min\{ 2^{-3k-2n} \min\{ 2^{3j+2l}, 2^{j+2\alpha^{\star} M_t} \},  2^{-j} \}\\ 
&\lesssim \sup_{s\in [t_1,t_2] } \|B_{\tilde{k};\tilde{j}, \tilde{l} }^{\tilde{m}} (s,\cdot)\|_{L^\infty_x}  2^{ k -j+4\epsilon M_t}\big(2^{-3k-2n}   2^{3j+2l}\big)^{1/2}  \big( 2^{-j}\big)^{1/2}\mathcal{M}(C)\\
&\lesssim  \sup_{s\in [t_1,t_2] } \|B_{\tilde{k};\tilde{j}, \tilde{l} }^{\tilde{m}} (s,\cdot)\|_{L^\infty_x}  2^{ -(k+2n)/2+l+4\epsilon M_t}\mathcal{M}(C). 
\end{split}
\ee
 
 If $ (\tilde{m},\tilde{k},\tilde{j},\tilde{l})\in \mathcal{S}_1(t)$, from the above estimate and the estimate \eqref{2024oct30eqn1} in Proposition \ref{meanLinfest}, we have
\be\label{oct19eqn16}
\begin{split}
\sum_{  {(\tilde{m},\tilde{k},\tilde{j},\tilde{l})\in \mathcal{S}_1(t)}}\big|Err^{\mu,1;l;\tilde{m} }_{k,n; \tilde{k},\tilde{j}, \tilde{l}}(t_1, t_2)\big|&\lesssim 2^{2\alpha^{\star} M_t+20\epsilon M_t -(k+2n)/2 +l}\mathcal{M}(C)\\
&\lesssim 2^{ 2M_t/3+4\iota M_t+2 00\epsilon M_t}\mathcal{M}(C)\\
&\lesssim 2^{(\gamma-20 \epsilon)M_t}\mathcal{M}(C). 
\end{split}
\ee

If $ (\tilde{m},\tilde{k},\tilde{j},\tilde{l})\in \mathcal{S}_2(t)$, from the obtained estimates  \eqref{oct19eqn10}  and  \eqref{nov28eqn76} and the estimate  \eqref{2024oct30eqn1}  in Proposition \ref{meanLinfest}, we have
\be\label{oct19eqn17}
\begin{split}
\sum_{  {(\tilde{m},\tilde{k},\tilde{j},\tilde{l})\in \mathcal{S}_2(t)}}\big| Err^{\mu,1;l;\tilde{m} }_{k,n; \tilde{k},\tilde{j}, \tilde{l}}(t_1, t_2)\big| &\lesssim \sum_{  {(\tilde{m},\tilde{k},\tilde{j},\tilde{l})\in \mathcal{S}_2(t)}}\mathcal{M}(C)  \big( \sup_{s\in [t_1,t_2] } \|B_{\tilde{k};\tilde{j}, \tilde{l} }^{\tilde{m}} (t,\cdot)\|_{L^2_x} \big)^{1/2}\\
&\quad \times  \big( \sup_{s\in [t_1,t_2] } \|B_{\tilde{k};\tilde{j}, \tilde{l} }^{\tilde{m}} (t,\cdot)\|_{L^\infty_x} \big)^{1/2}2^{\alpha^{\star } M_t -(k+2n)/2+20\epsilon M_t +l/2}
\\
& \lesssim 2^{2\alpha^{\star} M_t+200\epsilon M_t -(k+2n)/2}\mathcal{M}(C)\\
& \lesssim 2^{(\gamma-20\epsilon)M_t}\mathcal{M}(C).  
\end{split}
\ee

  \medskip 

\noindent \textbf{Step 2.}\quad  The estimate of $   {}_{1}^{1}Err\mathfrak{H}^{\mu,i;l}_{k,j;n}(t_1, t_2), i\in\{0,2,3,4\}$.

 \medskip

Recall the cutoff functions $\varphi_{j,n}^a(\cdot, \cdot)$ in  \eqref{sep4eqn6}. For the case we are considering, we either have $l\leq   \vartheta^\star_0:=\max\{n+\epsilon M_t, -10^{-3} M_t\} +10  $ or $j\leq   (1/2+3\iota + 55\epsilon ) M_{t^{\star  }} +10. $

Based on the possible size of $j$, $l$, and $n$,  we proceed in three sub-steps as follows.

  \medskip 

  \textbf{Step 2A.}\quad  If $  j\leq   (1/2+3\iota + 55\epsilon ) M_{t^{\star  }} +10$, i.e., the case $i=0.$

  \medskip

Similar to the obtained estimate \eqref{oct19eqn10}, from the conservation law \eqref{conservationlaw},  we have 
\be\label{oct19eqn18}
\begin{split}
\big|   {}_{1}^{1}Err\mathfrak{H}^{\mu,0;l}_{k,j;n}(t_1, t_2) \big| & \lesssim   \sup_{t\in [t_1,t_2] } \|B  (t,\cdot)\|_{L^2_x} 2^{4\epsilon M_t}  2^{-(k+2n)/2 +2j+2l }  \mathcal{M}(C)\\
& \lesssim 2^{3M_t/4} \mathcal{M}(C) \lesssim 2^{(\gamma-20\epsilon)M_t}\mathcal{M}(C). 
\end{split}
\ee

  \medskip 

  \textbf{Step 2B.}\quad  If $l\leq    \vartheta^\star_0:=\max\{n+\epsilon M_t, -10^{-3} M_t\} +10 $ and $n\leq-10^{-3} M_t-\epsilon M_t.$

    \medskip

For this case, we have $l\leq -10^{-3}M_t=-10 \iota M_t.$ Recall   \eqref{oct15eqn1}. Similar to the obtained estimates  \eqref{oct19eqn16}  and  \eqref{oct19eqn17}, we have
\be\label{oct19eqn19}
\begin{split}
\big|    {}_{1}^{1}Err\mathfrak{H}^{\mu,i;l}_{k,j;n}(t_1, t_2) \big| &\lesssim \sum_{  {(\tilde{m},\tilde{k},\tilde{j},\tilde{l})\in \mathcal{S}_1(t)\cup\mathcal{S}_2(t)  } }\big|Err^{\mu,i;l;\tilde{m} }_{k,n; \tilde{k},\tilde{j}, \tilde{l}}(t_1, t_2)\big|  \\
& \lesssim \big(2^{2\alpha^{\star} M_t+30\epsilon M_t -(k+2n)/2 +l}  
 + 2^{2\alpha^{\star} M_t+40\epsilon M_t -(k+2n)/2 +l/2} \big)\mathcal{M}(C)\\
 & \lesssim 2^{M_t- \iota M_t} \mathcal{M}(C) \lesssim 2^{(\gamma-20\epsilon)M_t}\mathcal{M}(C). 
 \end{split}
\ee

  \medskip 

  \textbf{Step 2C.}\quad   If $l\leq    \vartheta^\star_0:=\max\{n+\epsilon M_t, -10^{-3} M_t\} +10 $ and $n\geq-10^{-3} M_t-\epsilon M_t.$

  \medskip

For this case, we have $l\leq n +\epsilon M_t$.   Similar to the obtained estimate  \eqref{oct19eqn10}, the following estimate holds if $k+l\leq (1-20\epsilon)M_t,$
\be\label{oct19eqn20}
\begin{split}
\big|     {}_{1}^{1}Err\mathfrak{H}^{\mu,i;l}_{k,j;n}(t_1, t_2) \big| &  \lesssim   \sup_{s\in [t_1,t_2] } \|B  (t,\cdot)\|_{L^2_x} 2^{k+l+4\epsilon M_t} \mathcal{M}(C)\\
& \lesssim 2^{(\gamma-10\epsilon)M_t}\mathcal{M}(C). 
\end{split}
\ee
If $k+l\geq (1-20\epsilon)M_t$, then similar to the obtained estimates  \eqref{oct19eqn16}  and  \eqref{oct19eqn17}, we have
\be\label{oct19eqn21}
\begin{split}
\big|   {}_{1}^{1}Err\mathfrak{H}^{\mu,i;l}_{k,j;n}(t_1, t_2) \big| & \lesssim \sum_{  {(\tilde{m},\tilde{k},\tilde{j},\tilde{l})\in \mathcal{S}_1(t)\cup\mathcal{S}_2(t)  } }\big|Err^{\mu,i;l;\tilde{m} }_{k,n; \tilde{k},\tilde{j}, \tilde{l}}(t_1, t_2)\big|  \\
 &\lesssim 
\big( 2^{2\alpha^{\star} M_t+30\epsilon M_t -(k+2n)/2 +l}  + 2^{2\alpha^{\star} M_t+40\epsilon M_t -(k+2n)/2 +l/2}\big) \mathcal{M}(C) \\
& \lesssim  2^{2\alpha^{\star}  M_t -M_t/2 + 11\iota M_t} \mathcal{M}(C) \\
& \lesssim 2^{(\gamma-10\epsilon)M_t}\mathcal{M}(C). 
\end{split}
\ee
To sum up, in whichever case,  our desired estimate  \eqref{oct19eqn31}  holds from the obtained estimates  \eqref{oct19eqn16}--\eqref{oct19eqn21}.   
\end{proof}

\subsubsection{Error terms in estimating the elliptic parts}\label{errorellipticPartIIISS}

As summarized in the following Lemma, for the error terms in  \eqref{2024oct31eqn81} (see also \eqref{oct7eqn62}), we have  
\begin{lemma}\label{ellfulltyp4}
 Let  $\bar{\kappa}:= -(5\alpha^{\star} -3)M_t-100\epsilon M_t$, $i\in\{0,   2,3,4 \},     (k,n)\in \widetilde{\mathcal{E}}_i$ or $i=1, (k,n)\in \widetilde{\mathcal{E}}'_1$. For any  $ \mu\in\{+, -\}$, $j\in [0,(1+2\epsilon)M_t]\cap \Z, $ $l\in[-M_t,2]\cap\Z, \kappa\in (\bar{\kappa}, 2]\cap \Z $ s.t., $k+2n\geq  2(1- \alpha^{\star} )M_t - 3 0\epsilon M_t $, $k+4n/3\geq M_t- \alpha^{\star}  M_t/3-30M_t,$   $ \min\{l,n\}\geq (1-2 \alpha^{\star} )M_t-30\epsilon M_t$,  $j\geq 3M_t/5-20\epsilon M_t$,     $k+l+\kappa\geq  2(1- \alpha^{\star}  )M_t-50\epsilon M_t, $  $(2k+2l+3\min\{l,\kappa\})/4-2l\geq (1-3\alpha^{\star}M_t/4)-100\epsilon M_t$, and $\min\{l, \kappa\}\geq 7l/3-\iota M_t$,   we have 
\be\label{oct29eqn80}
  \big|   Err^{\mu,a;l}_{k,j;n}(t_1, t_2) \big| \mathbf{1}_{a\in\{0,1,2,3\} } + \sum_{\kappa\in (\bar{\kappa}, 2 ]\cap \Z}  \big|     Err^{\mu,4;\kappa, l}_{k,j;n}(t_1, t_2) \big|  \lesssim   2^{(\gamma-10\epsilon)M_t}  \mathcal{M}(C).
\ee
\end{lemma}
\begin{proof}

 Recall  \eqref{oct7eqn62}. Note that  $\nabla_{ x_{\bot} } C(  X_{\bot}(t), V(t)))=0$.  After writing  $ Err^{\mu,a;l}_{k,j;n}(t_1, t_2), a\in \{0,1,2,3\},$ in terms of kernel,   doing integration by parts in $\xi$ along $V(s)$ direction and directions perpendicular to $V(s)$ for the kernel, and using the volume of support of $v$ and the estimate  \eqref{nov24eqn41}  if $|v_{\bot}|\geq 2^{(\alpha^\star+\epsilon)M_t}$,  the following estimate holds for any $a\in \{0,1,2,3\},$
\be
\begin{split}
 \big| Err^{\mu,a;l}_{k,j;n}(t_1, t_2)\big|&\lesssim   2^{ k+20\epsilon M_t} \min\{2^{-j},  2^{-3k-2n+j+2\alpha^{\star}M_t}\} \mathcal{M}(C) \\
 &\lesssim 2^{(\alpha^{\star}+20\epsilon)M_t-(k+2n)/2} \mathcal{M}(C)\\
 & \lesssim  2^{ 3\alpha^{\star} M_t/4 } \mathcal{M}(C). \\
 \end{split}
\ee

It remains to consider the case $i=4$.  Due to the cutoff function $\varphi^4_{j,n}(v,\xi)$ (see   \eqref{sep4eqn6}), we have $l\in[n-2\epsilon M_t, n+2\epsilon M_t]$.   Note that, after doing integration by parts with respect to the orthonormral frame $\{(\hat{v}-\hat{V}(t))/|\hat{v}-\hat{V}(t), \theta_1(v, t),\theta_2(v, t)\}$(defined in   Lemma \ref{smallangest}), we have 
\be 
\begin{split}
\sum_{\kappa\in (\bar{\kappa}, 2 ]\cap \Z} \big|   Err^{\mu,4;\kappa, l}_{k,j;n}(t_1, t_2) \big|& \lesssim  \sum_{\kappa\in (\bar{\kappa}, 2 ]\cap \Z} 2^{ k+20\epsilon M_t} \min\{2^{-j}  , 2^{-3k-n- {\kappa}+j+2 \alpha^{\star} M_t}\} \mathcal{M}(C) \\
&\lesssim 2^{(\alpha^{\star}+20\epsilon)M_t-(k+n+\kappa)/2}  \mathcal{M}(C)  \\
& \lesssim 2^{(\gamma-10\epsilon)M_t} \mathcal{M}(C). 
\end{split}
\ee
Hence finishing the proof of our desired estimate  \eqref{oct29eqn80}.
\end{proof}

As summarized in the following Lemma, for the error terms in    \eqref{oct29eqn40}, we have 
\begin{lemma}\label{errhypotype1}
  Let $\bar{\kappa}:= -(5\alpha^{\star} -3)M_t-100\epsilon M_t$,  $  \mu\in\{+, -\},  k_1\in \Z_+, n_1\in [-M_t ,2]\cap \Z,   (k,n)\in \widetilde{\mathcal{E}}_4$, $j\in [0,(1+2\epsilon)M_t]\cap \Z, $ $l\in[-M_t,2]\cap\Z, \kappa\in (\bar{\kappa}, 2]\cap \Z $, s.t., $k+2n\geq  2(1- \alpha^{\star} )M_t - 3 0\epsilon M_t $,    $k+4n/3\geq M_t- \alpha^{\star}  M_t/3-30M_t,$   $ \min\{l,n\}\geq (1-2 \alpha^{\star} )M_t-30\epsilon M_t$,   $j\geq 3M_t/5-20\epsilon M_t$,    $k+l+\kappa\geq  2(1- \alpha^{\star}  )M_t-50\epsilon M_t, $  $(2k+2l+3\min\{l,\kappa\})/4-2l\geq (1-3\alpha^{\star}M_t/4)-100\epsilon M_t$,   $\min\{l, \kappa\}\geq 7l/3-\iota M_t$,  $(k_1+2n_1)/2\geq  (k+n +\min\{n,\kappa\}  )/2+  2M_t/15-10\iota M_t$ and    $n_1\geq - (\alpha^{\star}+3\iota+60\epsilon) M_t$,  we have   
\be\label{oct27eqn81}
\sum_{a\in\{0,1,2,3,4\}}\sum_{U\in \{H, M\}}|ErrU^a_{i,i_1}(t_1, t_2)|\lesssim \mathcal{M}(C) 2^{(\gamma-10\epsilon)M_t}. 
\ee 
\end{lemma}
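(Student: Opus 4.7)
The plan is to bound each of the five error types $ErrU^a_{i,i_1}(t_1,t_2)$ separately, exploiting the crucial fact that in the parameter regime of the lemma, namely $(k_1+2n_1)/2 \geq (k+n+\min\{n,\kappa\})/2 + 2M_t/15 - 10\iota M_t$ and $n_1 \geq -(\alpha^\star + 3\iota + 60\epsilon)M_t$, the phase denominators $\Phi^b_{\mu_1,\mu_2}(\xi,\eta,\sigma;V(s),v)$ that appear after integration by parts in time have size comparable to $2^{k_1+2n_1}$. This is the smoothing gain needed to absorb the loss $2^{(7/6+5\iota/2)M_t-(k+2n)/2}$ coming from the original elliptic kernel and still beat $2^{\gamma M_t}$.

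First, I would establish pointwise estimates for the composite kernels ${}^{1}\clubsuit EllU^b(s,\xi,\eta,v,\zeta)$, $b\in\{1,2\}$, $U\in\{H,M\}$, by doing integration by parts in $\xi$ with respect to the orthonormal frame $\{(\hat{v}-\hat{V}(s))/|\hat{v}-\hat{V}(s)|, \theta_1(v,s), \theta_2(v,s)\}$ (as in the proof of Lemma \ref{smallangest}), combined with the pointwise estimates of $\clubsuit K_{k_1,j_1;n_1}^{\mu_1,i_1}$ and its $\widetilde{\ \cdot\ }$ variant coming from Proposition \ref{verest} and the third-component improvement in Lemma \ref{goodpartprojmagn}. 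The same kernel machinery used for the elliptic bounds (\ref{oct19eqn61}) and (\ref{nov12eqn41}) will apply, upgraded by the phase denominator.

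For the boundary terms $ErrU^0$ (see (\ref{oct28eqn1}) and (\ref{nov12eqn76})), after representing on the physical side, the above kernel estimates together with the conservation law (\ref{conservationlaw}), the volume of support of $v$, and (\ref{nov24eqn41}) for $|\slashed v|\gtrsim 2^{(\alpha_t+\epsilon)M_t}$ will suffice. For $ErrU^1$ (see (\ref{oct28eqn2}), (\ref{nov13eqn11})) the time derivative either hits $\clubsuit K_{k_1,j_1;n_1}^{\mu_1,i_1}$ or its $\widetilde{\ \cdot\ }$ version, and we apply the time-derivative pointwise estimates (\ref{oct7eqn21}), (\ref{oct7eqn24}), (\ref{oct26eqn41}) of Lemma \ref{secondhypohori}; when $\p_t$ hits $V(s)$ inside ${}^1\clubsuit EllU^b$, the factor $\p_t\hat{V}(s)=(1-|\hat{V}(s)|^2)\cdot\text{(bdd acceleration)}$ is absorbed by the rough electromagnetic $L^\infty$ bound from Proposition \ref{Linfielec}. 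For $ErrU^3$ and $ErrU^4$ (see (\ref{oct28eqn4}), (\ref{nov14eqn81})), the appearance of $Ell^{\mu_2,i_2;l_2}_{k_2,j_2;n_2}$ or $Ini_{k_2,j_2,n_2}^{\mu_2,i_2}$ is controlled by the rough elliptic $L^\infty$-type bound (\ref{oct19eqn60}), giving $\lesssim 2^{5\alpha^\star M_t/3 + 10\epsilon M_t}$, which combines with the phase gain.

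The main obstacle will be $ErrU^2$ (see (\ref{oct28eqn3}), (\ref{nov12eqn77})), the bilinear term containing $(\hat{v}-\hat{V}(s))\times B(s)$. Here I will exploit three features simultaneously: (i) the smallness factor $|\hat{v}-\hat{V}(s)|\sim 2^l$ coming from the cutoff $\varphi_{l;-M_t}(\tilde v - \tilde\zeta)$, upgraded to $2^{(\alpha^\star-\gamma)M_t}$ for the $z$-component via the decomposition (\ref{nov6eqn47}); (ii) the decomposition of $B$ from (\ref{july5eqn1}) combined with the dichotomy of Proposition \ref{meanLinfest} and the projection-onto-$z$ dichotomy of Lemma \ref{set1goodPartP3B}, splitting into $\mathcal{S}_1(t)\cup\mathcal{S}_2(t)$ and using Cauchy--Schwarz with the space-time conservation law (\ref{march18eqn31}) for $\mathcal{S}_2$; (iii) the phase gain $2^{-(k_1+2n_1)}$, which, given the constraint $(k_1+2n_1)/2 \geq (k+n+\min\{n,\kappa\})/2 + 2M_t/15-10\iota M_t$, yields $2^{-(k+n+\min\{n,\kappa\}) - 4M_t/15 + 20\iota M_t}$. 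Interpolating these three gains against the worst case $l\leq -2(1-\alpha^\star)M_t/5 + 30\epsilon M_t$ and $k+l+\kappa\geq 2(1-\alpha^\star)M_t-100\epsilon M_t$ should close the estimate just as in the proof of Lemma \ref{ellipticstep1good}, where the same threshold $-2M_t/15$ appears. If this interpolation is tight, I may need to further invoke the singular weighted space-time estimate (\ref{nov1eqn1}) of Lemma \ref{singularweigh} when $l$ is close to its upper limit, exactly as in Lemma \ref{no}. The constants $2/15$, $10\iota$, and $100\epsilon$ are chosen precisely so that this borderline case delivers $2^{(\gamma-10\epsilon)M_t}$ with a tiny margin.
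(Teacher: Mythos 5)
Your proposal is correct in outline and, for four of the five terms, coincides with what the paper does: $ErrU^0$ and $ErrU^1$ are bounded by combining the pointwise bounds on $\clubsuit K_{k_1,j_1;n_1}^{\mu_1,i_1}$ (and its $\widetilde{\ \cdot\ }$ variant, via Lemma \ref{goodpartprojmagn}) from Propositions \ref{finalestfirst} and \ref{finalestsmallprop} with the time-derivative estimates of Lemma \ref{secondhypohori}, and $ErrU^3$, $ErrU^4$ are controlled by the rough elliptic bound (\ref{oct19eqn60}); in each case the gain $2^{-(k_1+2n_1)/2}$ left over after the phase denominator absorbs the kernel loss exactly as you describe. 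The one place where you diverge is $ErrU^2$, which you single out as the main obstacle and attack with the full dichotomy apparatus (the decomposition (\ref{july5eqn1}) of $B$, Proposition \ref{meanLinfest}, Lemma \ref{set1goodPartP3B}, and possibly Lemma \ref{singularweigh}). The paper instead disposes of $ErrU^2$ in one line of Cauchy--Schwarz, putting $B$ in $L^2$ via the conservation law (\ref{conservationlaw}), using the volume of support of $v$ together with (\ref{nov24eqn41}), and cashing in the same phase gain $2^{-(k_1+2n_1)/2}$; see (\ref{oct29eqn3}). The heavier machinery you invoke would also close the estimate (it only sharpens the bound on $B$), but it is not needed: the dichotomy and the singular weighted space-time estimate are reserved in the paper for the genuinely borderline elliptic terms of Lemmas \ref{ellipticstep1good} and \ref{no}, whereas the error terms here close with margin. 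One small caution: your remark that the contribution of $\p_t$ hitting $V(s)$ inside ${}_{}^1\clubsuit EllU^b$ is ``absorbed by the rough electromagnetic $L^\infty$ bound'' is a mis-framing --- that contribution is precisely the main hyperbolic term $U^b_{i,i_1,i_2}$ (plus its elliptic part $ErrU^3$), and the hyperbolic part cannot be closed with the rough bound of Proposition \ref{Linfielec}; it requires the further normal-form iterations carried out after this lemma. Since that term is outside the statement being proved, this does not invalidate your argument for the five $ErrU^a$, but it should not be folded into $ErrU^1$.
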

\begin{proof}

Based on the size of $a$, we proceed in five steps as follows.

\medskip

\noindent \textbf{Step 1.}\quad   The estimate of $ErrU^0_{i,i_1}(t_1, t_2), U\in \{H, M\}.$

\medskip

Recall  \eqref{oct28eqn1} and  \eqref{2024oct30eqn81}.   After writing $   ErrU^0_{i,i_1}(t_1, t_2), U\in\{H, M\},$ in terms of kernel,  from   the estimates in \eqref{2024oct8eqn1} in Theorem \ref{mainresultsfirstpart}, we have 
\be\label{oct29eqn1}
\begin{split}
 |ErrU^0_{i,i_1}(t_1, t_2)| & \lesssim \mathcal{M}(C) 2^{ (\alpha^{\star}+4\iota) M_t- (k_1+2n_1)/2 }\\ 
&\quad \times \big[  2^{   k-\min\{l,\kappa\} -\gamma M_t } \min\{2^{-3k-n-\min\{n,\kappa\} + j+2\alpha^{\star}M_t}, 2^{-j}\} \\
&\quad + (1+2^{l+2M_t/15}) 2^{k-\min\{l, \kappa\}-j}\min\{2^{-3k-l-\min\{l,\kappa\}+3j+2l}, 2^{-j}\} \big]  \\
&\lesssim \mathcal{M}(C) 2^{(\gamma-10\epsilon)M_t}.
\end{split}
\ee

\medskip

\noindent \textbf{Step 2.}\quad The estimate of $ErrU^1_{i,i_1}(t_1, t_2), U\in \{H, M\}.$

\medskip

Recall \eqref{2024oct30eqn82}  and  \eqref{nov13eqn11}.  After writing $   ErrU^1_{i,i_1}(t_1, t_2),U\in\{H, M\},$ in terms of kernel, from the estimates in   \eqref{oct7eqn21}  and  \eqref{2024oct27eqn61}  in   Lemma \ref{secondhypohori} and   the estimates in \eqref{2024oct8eqn1} in Theorem \ref{mainresultsfirstpart}, we have  
 \be\label{oct29eqn2}
 \begin{split}
 |ErrU^1_{i,i_1}(t_1, t_2)| &\lesssim   \mathcal{M}(C)\big( 2^{ (1+ \alpha^{\star})M_t  +l} + 2^{7\alpha^{\star}  M_t/3} \big) 2^{\iota M_t-(k_1+2n_1)/2}\\
 &\quad \times  \big[  2^{   k-\min\{l,\kappa\} -\gamma M_t } \min\{2^{-3k-n-\min\{n,\kappa\} + j+2\alpha^{\star}M_t}, 2^{-j}\} \\
&\quad + (1+2^{l+2M_t/15}) 2^{k-\min\{l, \kappa\}-j}\min\{2^{-3k-l-\min\{l,\kappa\}+3j+2l}, 2^{-j}\} \big]\\
&\lesssim   \mathcal{M}(C)  2^{(\gamma-10\epsilon)M_t} .
\end{split}
\ee

\medskip

\noindent \textbf{Step 3.}\quad   The estimate of $ErrU^2_{i,i_1}(t_1, t_2), U\in \{H, M\}.$

\medskip

Recall  \eqref{2024oct30eqn83}  and   \eqref{2024oct30eqn84}. Similar to the obtained estimates  \eqref{oct19eqn61}  and  \eqref{nov12eqn41}  for the kernels, after   using the Cauchy-Schwarz inequality and the estimate     \eqref{nov24eqn41}  if $|  v_{\bot}|\geq 2^{(\alpha^\star+\epsilon)M_t}$, for any $U\in\{H, M\},$  we have
\be\label{oct29eqn3}
\begin{split}
 |ErrU^2_{i,i_1}(t_1, t_2)| &  \lesssim \mathcal{M}(C) 2^{(\alpha^{\star}+5\iota) M_t -(k_1+2n_1)/2} 2^{k-j+l-2\min\{l,\kappa\}}   \\
&\quad \times (2^{-j  } + 2^{ -\gamma M_t} )  \big(2^{-3k-l-\min\{l,\kappa\}+ j +2(\alpha^\star+\epsilon)M_t } \big)^{1/2} \big(2^{3j+2l} \big)^{1/2} \\
& \lesssim \mathcal{M}(C)  2^{ 2\alpha^{\star} M_t+ 10\iota M_t +2l-(k_1+2n_1)/2 - (k +l+\min\{l,\kappa\} )/2 -2\min\{l,\kappa\} } \\
&  \lesssim \mathcal{M}(C) 2^{(\gamma-10\epsilon)M_t}  . 
\end{split}
\ee

\medskip

\noindent \textbf{Step 4.}\quad  The estimate of $ErrU^3_{i,i_1}(t_1, t_2), U\in \{H, M\}.$

\medskip

Recall  \eqref{2024oct30eqn85}  and  \eqref{oct27eqn51}.  
 After writing $   ErrU^3_{i,i_1}(t_1, t_2)$ in terms of kernel, from the rough $L^\infty_x$-type  estimate of the elliptic part in 
\eqref{2022feb25eqn1} in  Theorem \ref{maintheoremellipitic} and   the estimates in \eqref{2024oct8eqn1} in Theorem \ref{mainresultsfirstpart}, we have 
\be\label{oct29eqn4}
\begin{split}
|ErrU^3_{i,i_1}(t_1, t_2)| & \lesssim    2^{5\alpha^{\star} M_t/3 + 5\iota M_t }   2^{-\gamma  M_t+  7M_t/6 -(k_1+2n_1)/2}\\
&\quad \times  2^{l-\min\{l,\kappa\}}   2^{-(k+n+\min\{n,\kappa\})/2} \mathcal{M}(C)\\
 &\lesssim \mathcal{M}(C) 2^{(\gamma-10\epsilon)M_t}. 
 \end{split}
 \ee

\medskip

\noindent \textbf{Step 5.}\quad The estimate of $ErrU^4_{i,i_1}(t_1, t_2), U\in \{H, M\}.$

\medskip

 Recall  \eqref{2024oct30eqn86} and  \eqref{oct27eqn51}.   After writing $   ErrU^4_{i,i_1}(t_1, t_2)$ in terms of kernel,  from the rough $L^\infty_x$-type  estimate of the elliptic part in 
\eqref{2022feb25eqn1} in  Theorem \ref{maintheoremellipitic} and   the estimates in \eqref{2024oct8eqn1} in Theorem \ref{mainresultsfirstpart}, we have 
    \be\label{nov14eqn97}
    \begin{split}
     & |ErrU^4_{i,i_1}(t_1, t_2)|\\
      &\lesssim  \mathcal{M}(C)  2^{5\alpha^{\star} M_t/3 +5\iota \epsilon M_t } 2^{ (\alpha^\star+3\iota)M_t -(k_1+2n_1)/2} \\
      &\quad \times \min\big\{2^{-3k-n-\min\{n,\kappa\} + 3j+2l}, 2^{-j} \big\}  2^{k -2\min\{l,\kappa\} -2j} \\
    &   \lesssim  \mathcal{M}(C) 2^{5\alpha^{\star} M_t/3 +5\iota \epsilon M_t }  2^{ (\alpha^\star+3\iota)M_t -(k_1+2n_1)/2}   2^{-5(k+l+\kappa)/4+2l-2\min\{l,\kappa\} +(5\kappa-3\min\{n,\kappa\})/4}  \\
 &\lesssim  \mathcal{M}(C) 2^{(\gamma-10\epsilon)M_t}. 
 \end{split}
 \ee
  Hence, our desired estimate  \eqref{oct27eqn81}  holds from the obtained estimates  \eqref{oct29eqn1}--\eqref{nov14eqn97}. 
\end{proof}

For error terms in  \eqref{nov15eqn99}, we have 
 
\begin{lemma}\label{ellerrstep2}
   Let $\bar{\kappa}:= -(5\alpha^{\star} -3)M_t-100\epsilon M_t$,  $  \mu\in\{+, -\},  k_1, k_2\in \Z_+, n_1, n_2\in [-M_t ,2]\cap \Z,   (k,n)\in \widetilde{\mathcal{E}}_4$, $j\in [0,(1+2\epsilon)M_t]\cap \Z, $ $l\in[-M_t,2]\cap\Z, \kappa\in (\bar{\kappa}, 2]\cap \Z $, s.t., $k+2n\geq  2(1- \alpha^{\star} )M_t - 3 0\epsilon M_t $,    $k+4n/3\geq M_t- \alpha^{\star}  M_t/3-30M_t,$   $ \min\{l,n\}\geq (1-2 \alpha^{\star} )M_t-30\epsilon M_t$,  $j\geq 3M_t/5-20\epsilon M_t$,     $k+l+\kappa\geq  2(1- \alpha^{\star}  )M_t-50\epsilon M_t, $  $(2k+2l+3\min\{l,\kappa\})/4-2l\geq (1-3\alpha^{\star}M_t/4)-100\epsilon M_t$,   $\min\{l, \kappa\}\geq 7l/3-\iota M_t$,  $(k_1+2n_1)/2\geq (k+n +\min\{n,\kappa\}  )/2+   2M_t/15-10\iota M_t,$  $k_2+2n_2\geq  k_1+2n_1    + 8M_t/15-40\iota M_t$, and $n_1, n_2\geq   -(\alpha^{\star}+3\iota+60\epsilon) M_t$,  we have 
\be\label{nov15eqn30}
 \sum_{a=0,1,2,3}\sum_{b=1,2}\big| ErrU^{b;a}_{i,i_1,i_2}(t_1, t_2)\big|\lesssim  \mathcal{M}(C)2^{(\gamma-10\epsilon)M_t}. 
 \ee
\end{lemma}
\begin{proof}

We proceed in steps as follows.

\medskip

\noindent \textbf{Step 1.}\quad   The estimate of $ErrU^{b;0}_{i,i_1,i_2}(t_1, t_2), b\in\{1,2\}, U\in \{H, M\}.$

\medskip

Recall  \eqref{oct29eqn6}and  \eqref{2024oct30eqn100}. After writing $ErrU^{b;0}_{i,i_1,i_2}(t_1, t_2), b\in\{0,1\}, U\in \{H, M\}$ in terms of kernels,   from   the estimates in \eqref{2024oct8eqn1} in Theorem \ref{mainresultsfirstpart}, we have 
\be\label{nov15eqn1}
\begin{split}
\big| ErrU^{1;0}_{i,i_1,i_2}(t_1, t_2)\big|  &\lesssim 2^{(\alpha^{\star}+4\iota) M_t  -(k_2+2n_2)/2} 2^{-\gamma M_t} \min\big\{2^{-3k-n-\min\{n,\kappa\} + 3j+2l}, 2^{-j} \big\} \\
&\quad \times 2^{k-\min\{l,
\kappa\}-\gamma M_t} \big(2^{7M_t/6 -(k_1+2n_1)/2} + 2^{ \alpha^{\star} M_t -(k_1+2n_1)/2-n}\big)\mathcal{M}(C)      \\
& \lesssim 2^{(\gamma-10\epsilon)M_t} \mathcal{M}(C), \\
\big| ErrU^{2;0}_{i,i_1,i_2}(t_1, t_2)\big| & \lesssim   2^{2(\alpha^{\star}+4\iota) M_t     -(k_2+2n_2)/2 -(k_1+2n_1)/2 }  2^{k -2\min\{l,\kappa\} -2j}  \\
&\quad \times  \min\big\{2^{-3k-n-\min\{n,\kappa\} + 3j+2l}, 2^{-j} \big\}\mathcal{M}(C)\\
& \lesssim   2^{-5(k+l+\min\{n,\kappa\})/4+  2l-2\min\{l,\kappa\} +(5\kappa-3\min\{n,\kappa\})/4} \\
&\quad \times 2^{ 2 (\alpha^{\star}+4\iota) M_t    -(k_2+2n_2)/2-(k_1+2n_1)/2}  \mathcal{M}(C)\\
&\lesssim \mathcal{M}(C)2^{(\gamma-10\epsilon)M_t}. 
\end{split}
\ee

\medskip

\noindent \textbf{Step 2.}\quad The estimate of $ErrU^{b;1}_{i,i_1,i_2}(t_1, t_2), b\in\{1,2\}, U\in \{H, M\}.$

\medskip

Recall  \eqref{2024oct30eqn101}  and \eqref{2024oct30eqn102}.  After writing $  ErrU^{2;0}_{i,i_1,i_2}(t_1, t_2),U\in\{H, M\},$ in terms of kernel, from the estimates in   \eqref{oct7eqn21}  and  \eqref{2024oct27eqn61}  in   Lemma \ref{secondhypohori} and   the estimates in \eqref{2024oct8eqn1} in Theorem \ref{mainresultsfirstpart}, we have  
\be\label{nov15eqn5}
\begin{split}
\big|ErrU^{1;1}_{i,i_1,i_2}(t_1, t_2)\big|& \lesssim  \mathcal{M}(C)  2^{(1+\alpha^{\star})M_t }   2^{  -(k_2+2n_2)/2} 2^{-\gamma M_t}  2^{7M_t/6 +3\iota M_t-(k_1+2n_1)/2} \\
&\quad \times 2^{l-\min\{l,
\kappa\}}   2^{-(k+n+\min\{n,\kappa\})/2} \\
&\lesssim \mathcal{M}(C)2^{(\gamma-10\epsilon)M_t}, \\
&\\
\big|ErrU^{2;1}_{i,i_1,i_2}(t_1, t_2)\big| &\lesssim  \mathcal{M}(C)    2^{(1+\alpha^{\star})M_t  }    2^{   -(k_2+2n_2)/2}    2^{(\alpha^{\star}+4\iota)M_t  -(k_1+2n_1)/2}   \\
&\quad \times   2^{k -2\min\{l,\kappa\} -2j}   \min\big\{2^{-3k-n-\min\{n,\kappa\} + 3j+2l}, 2^{-j} \big\}\\
& \lesssim \mathcal{M}(C)2^{( \gamma -10\epsilon)M_t}.
\end{split}
\ee

\medskip

\noindent \textbf{Step 3.}\quad  The estimate of $ErrU^{b;2}_{i,i_1,i_2}(t_1, t_2), b\in\{1,2\}, U\in \{H, M\}.$

\medskip

Recall  \eqref{nov15eqn40} and  \eqref{2024oct30eqn103}. We  first use the equation satisfied by the distribution function in Vlasov equation  \eqref{mainequation} and then  do integration by parts in $v$ for $ErrU^{1;2}_{i,i_1,i_2}(t_1, t_2)$ and $ErrU^{2 ;2}_{i,i_1,i_2}(t_1, t_2). $ As a result,  from  the estimates in \eqref{2024oct8eqn1} in Theorem \ref{mainresultsfirstpart}, we have 
\be\label{nov15eqn21}
\begin{split}
 \big| ErrU^{1 ;2}_{i,i_1,i_2}(t_1, t_2)\big| 
 & \lesssim \big|  \int_{t_1}^{t_2}\int_{\R^3}  \int_{\R^3}\int_{\R^3} e^{i \widetilde{\Phi}^1_{ \mu_1, \mu_2}(\xi,\eta, \sigma; s , X(s ),v)  + i s\hat{v}\cdot \eta  } \\
&\quad \times  \mathcal{F}\big[ (E+\hat{v}\times B)f  \big](s, \eta, v)\cdot \nabla_v \big[      \big(  \p_s(\widetilde{\Phi}^1_{\mu_1, \mu_2}(\xi,\eta, \sigma;s,X(s),v))  \big)^{-1} \\
&\quad \times       \mathcal{F}[\mathfrak{H}_{k_2,j_2,n_2}^{\mu,i_2}](s , \sigma, V(s )) \cdot     \mathcal{F}[{}_{1}^{2}\mathfrak{E}U](s , \xi, \eta, v,   X_{\bot}(s), V(s ))\big]  d \eta d \xi  d v d s\big|\\
&\lesssim   \mathcal{M}(C)  2^{ (\alpha^{\star}+6\iota) M_t     -(k_2+2n_2)/2} 2^{-\gamma M_t}  2^{7M_t/6 -(k_1+2n_1)/2}\\
&\quad \times  2^{k-2j -2\min\{l, \kappa\} }
  \big( 2^{3j+2l}\big)^{1/2} \big( 2^{-3k-n-\min\{n,\kappa\}}2^{j+2(\alpha^\star + \epsilon)M_t}\big)^{1/2} \\
&\lesssim \mathcal{M}(C)     2^{2 \alpha^{\star} M_t +7\iota  M_t-(k_2+2n_2)/2} 2^{ M_t/6 -(k_1+2n_1)/2} 2^{ l -2\min\{l, \kappa\} } 2^{-(k+n+\min\{n,\kappa\})/2}\\
& \lesssim  \mathcal{M}(C)2^{(\gamma-10\epsilon)M_t}. 
\end{split}
\ee
Similarly, we have
 \be\label{nov15eqn22}
 \begin{split}
 \big| ErrU^{2 ;2}_{i,i_1,i_2}& (t_1, t_2)\big|  
 \lesssim  \mathcal{M}(C)  2^{ 2 (\alpha^{\star}+4\iota)M_t -(k_2+2n_2)/2   -(k_1+2n_1)/2}\\
 &\quad\times  2^{k-3j -3\min\{l, \kappa\} }
 \big( 2^{-3k-n-\min\{n,\kappa\}}2^{3j+2l}\big)^{1/2}  \big( 2^{3j+2l}\big)^{1/2}\\
  &\lesssim \mathcal{M}(C)     2^{  2 (\alpha^{\star}+4\iota)M_t  -(k_2+2n_2)/2  -(k_1+2n_1)/2} 2^{2 l-3\min\{l, \kappa\} } 2^{-(k+n+\min\{n,\kappa\})/2}\\
  & \lesssim  \mathcal{M}(C)2^{(\gamma-10\epsilon)M_t}. 
  \end{split}
\ee

\medskip

\noindent \textbf{Step 4.}\quad   The estimate of $ErrU^{b;3}_{i,i_1,i_2}(t_1, t_2), b\in\{1,2\}, U\in \{H, M\}.$

\medskip

Recall  \eqref{2024oct30eqn104}.  From   the rough $L^\infty_x$-type  estimate of the elliptic part in 
\eqref{2022feb25eqn1} in  Theorem \ref{maintheoremellipitic} and   the estimates in \eqref{2024oct8eqn1} in Theorem \ref{mainresultsfirstpart}, we have 
 \be\label{nov15eqn25}
\begin{split}
\big| ErrU^{1 ;3}_{i,i_1,i_2}(t_1, t_2)\big| & \lesssim \mathcal{M}(C) 2^{  5\alpha^{\star} M_t/3+4\iota M_t}   2^{-2\gamma  M_t} 2^{ 2(\alpha^{\star}+3\iota) M_t-(k_2+2n_2)/2}  2^{7M_t/6 -(k_1+2n_1)/2} \\
 &\quad \times  2^{k-\min\{l,
\kappa\}-j} \min\big\{2^{-3k-n-\min\{n,\kappa\} + 3j+2l}, 2^{-j} \big\} \\
&\lesssim  2^{(\gamma-10\epsilon)M_t} \mathcal{M}(C),\\
&\\
\big| ErrU^{2 ;3}_{i,i_1,i_2}(t_1, t_2)\big| &  \lesssim \mathcal{M}(C)  2^{ 5\alpha^{\star}  M_t/3+4\iota M_t}  2^{- \gamma  M_t} 2^{ (\alpha^{\star}+3\iota) M_t -(k_2+2n_2)/2 +7M_t/6 -(k_1+2n_1)/2} \\
 &\quad \times   2^{k -2\min\{l,\kappa\} -2j} \min\big\{2^{-3k-n-\min\{n,\kappa\} + 3j+2l}, 2^{-j} \big\} \\
 & \lesssim 2^{(\gamma -10\epsilon)M_t} \mathcal{M}(C). \\
\end{split}
\ee
To sum up, our desired estimate \eqref{nov15eqn30}  holds from the obtained estimates \eqref{nov15eqn1}--\eqref{nov15eqn25}. 
\end{proof}

For error terms in  \eqref{nov15eqn96}, we have 
 
\begin{lemma}\label{ellerrstep3}
  Let $\bar{\kappa}:= -(5\alpha^{\star} -3)M_t-100\epsilon M_t$,  $  \mu\in\{+, -\},  k_1, k_2, k_3\in \Z_+, n_1, n_2, n_3\in [-M_t ,2]\cap \Z,   (k,n)\in \widetilde{\mathcal{E}}_2$, $j\in [0,(1+2\epsilon)M_t]\cap \Z, $ $l\in[-M_t,2]\cap\Z, \kappa\in (\bar{\kappa}, 2]\cap \Z $, s.t., $k+2n\geq  2(1- \alpha^{\star} )M_t - 3 0\epsilon M_t $,   $k+4n/3\geq M_t- \alpha^{\star}  M_t/3-30M_t,$   $ \min\{l,n\}\geq (1-2 \alpha^{\star} )M_t-30\epsilon M_t$,  $j\geq 3M_t/5-20\epsilon M_t$,      $k+l+\kappa\geq  2(1- \alpha^{\star}  )M_t-50\epsilon M_t, $  $(2k+2l+3\min\{l,\kappa\})/4-2l\geq (1-3\alpha^{\star}M_t/4)-100\epsilon M_t$,   $\min\{l, \kappa\}\geq 7l/3-\iota M_t$,  $(k_1+2n_1)/2\geq (k+n +\min\{n,\kappa\}  )/2+  2M_t/15-10\iota M_t,$  $k_2+2n_2\geq  k_1+2n_1 + 8M_t/15-40\iota M_t$, $k_3+2n_3\geq k_2+2n_2 + 13M_t/15 -50\iota M_t$, and $n_1, n_2, n_3\geq   -(\alpha^{\star}+3\iota +60\epsilon) M_t$,  we have 
 \be\label{nov15eqn81}
 \sum_{a=0,1,2 }\sum_{b=0,1} \big| ErrU^{b;a}_{i,i_1,i_2,i_3}(t_1, t_2) \big| \lesssim  \mathcal{M}(C)2^{(\gamma-10\epsilon)M_t}. 
 \ee
 \end{lemma}

\begin{proof}

We proceed in steps as follows.

\medskip

\noindent \textbf{Step 1.}\quad  The estimate of $ ErrU^{b;0}_{i,i_1,i_2,i_3}(t_1, t_2), b\in \{1,2\}, U\in \{H, M\}. $

\medskip

Recall \eqref{nov12eqn91}.   After writing  $  ErrU^{b;0}_{i,i_1,i_2,i_3}(t_1, t_2)$ in terms of kernel,  from the estimates in \eqref{2024oct8eqn1} in Theorem \ref{mainresultsfirstpart}, we have
 \be\label{nov15eqn51}
 \begin{split}
  \big|  ErrU^{1;0}_{i,i_1,i_2,i_3}(t_1, t_2)\big| 
  &\lesssim   \mathcal{M}(C) 2^{(\alpha^{\star}+3\iota) M_t -(k_3+2n_3)/2}  2^{-2\gamma  M_t} 2^{2(\alpha^{\star}+3\iota) M_t-(k_2+2n_2)/2}  \\
& \quad  \times    2^{7M_t/6+3\iota M_t -(k_1+2n_1)/2} 2^{k-\min\{l,
\kappa\}-j} \min\big\{2^{-3k-n-\min\{n,\kappa\} + 3j+2l}, 2^{-j} \big\}  \\
& \lesssim    \mathcal{M}(C)2^{(\gamma-10\epsilon)M_t}, \\
&\\
  \big|  ErrU^{2;0}_{i,i_1,i_2,i_3}(t_1, t_2)\big| 
  &\lesssim     2^{(\alpha^{\star}+3\iota)M_t -(k_3+2n_3)/2} 2^{- \gamma  M_t} 2^{(\alpha^{\star}+3\iota)M_t-(k_2+2n_2)/2 +7M_t/6 +3\iota M_t-(k_1+2n_1)/2}  \\
 &\quad \times  2^{k -2\min\{l,\kappa\} -2j} \min\big\{2^{-3k-n-\min\{n,\kappa\} + 3j+2l}, 2^{-j} \big\}   \mathcal{M}(C)  \\
 & \lesssim     \mathcal{M}(C) 2^{(2\alpha^{\star}+8\iota)M_t+ M_t/6-(k_3+2n_3)/2  -(k_2+2n_2)/2  -(k_1+2n_1)/2}  \\
 &\quad \times    2^{-5(k+l+\kappa)/4+ 2l-2\min\{l,\kappa\} +(5\kappa-3\min\{n,\kappa\})/4} \\
 & \lesssim  \mathcal{M}(C)2^{(\gamma-10\epsilon)M_t}. 
\end{split}
 \ee

\medskip

\noindent \textbf{Step 2.}\quad  The estimate of $ ErrU^{b;1}_{i,i_1,i_2,i_3}(t_1, t_2), b\in \{1,2\}, U\in \{H, M\}. $

\medskip

Recall \eqref{2024oct31eqn21}.   We  first use the equation satisfied by the distribution function in Vlasov equation  \eqref{mainequation}   and then  do integration by parts in $v$ for $ ErrU^{b;1}_{i,i_1,i_2,i_3}(t_1, t_2), b\in\{1,2\}.$ As a result,     from the estimates in \eqref{2024oct8eqn1} in Theorem \ref{mainresultsfirstpart}, we have
\be\label{nov15eqn54}
\begin{split}
 \big|ErrU^{1;1}_{i,i_1,i_2,i_3}(t_1, t_2)\big|  
&\lesssim   \mathcal{M}(C)   2^{ (\alpha^{\star}+7\iota) M_t   -(k_3+2n_3)/2} 2^{2(\alpha^{\star}+3\iota) M_t -(k_2+2n_2)/2} 2^{-2\gamma  M_t}  \\
 &\quad \times 2^{7M_t/6 -(k_1+2n_1)/2} 2^{k -2j -2\min\{l, \kappa\} } \big( 2^{-3k-n-\min\{n,\kappa\}}2^{3j+2l}\big)^{1/2} \big( 2^{3j+2l}\big)^{1/2}  \\  
 &\lesssim  \mathcal{M}(C)2^{(\gamma-10\epsilon)M_t},\\
 &\\
 \big|ErrU^{2;1}_{i,i_1,i_2,i_3}(t_1, t_2)\big|   &\lesssim   \mathcal{M}(C)   2^{ 2 (\alpha^{\star}+3\iota) M_t +5\iota M_t-(k_3+2n_3)/2 -(k_2+2n_2)/2}  2^{-\gamma  M_t} \\
  &\quad  \times  2^{7M_t/6 -(k_1+2n_1)/2} 2^{k -3j -3\min\{l, \kappa\} } \big( 2^{-3k-n-\min\{n,\kappa\}}2^{3j+2l}\big)^{1/2} \big( 2^{3j+2l}\big)^{1/2} \\
  &   \lesssim    \mathcal{M}(C)2^{(\gamma-10\epsilon)M_t}. \\
  \end{split}
\ee

\medskip

\noindent \textbf{Step 3.}\quad   The estimate of $ ErrU^{b;2}_{i,i_1,i_2,i_3}(t_1, t_2), b\in \{1,2\}, U\in \{H, M\}. $

\medskip

Recall  \eqref{nov12eqn91}. After writing $  ErrU^{b;2}_{i,i_1,i_2}(t_1, t_2),U\in\{H, M\},$ in terms of kernel,    from the estimates in   \eqref{oct7eqn21}  and  \eqref{2024oct27eqn61}  in   Lemma \ref{secondhypohori} and   the estimates in \eqref{2024oct8eqn1} in Theorem \ref{mainresultsfirstpart}, we have   
\be\label{nov15eqn82}
\begin{split}
 \big|ErrU^{1 ;2}_{i,i_1,i_2,i_3}(t_1, t_2)\big|  
&\lesssim   \mathcal{M}(C)   2^{ (1+\alpha^{\star} + 4 \iota )M_t-(k_3+2n_3)/2} 2^{ 2(\alpha^{\star}+3\iota) M_t -(k_2+2n_2)/2} 2^{-2\gamma  M_t}  \\
 &\quad \times 2^{7M_t/6 -(k_1+2n_1)/2}   2^{k  - j - \min\{l, \kappa\} } \min\{ 2^{-3k-n-\min\{n,\kappa\}}2^{3j+2l},  2^{-j}\}\\  &   \lesssim \mathcal{M}(C)     2^{ 3\alpha^{\star}M_t+  M_t/6+10\iota M_t-(k_3+2n_3)/2-(k_2+2n_2)/2}\\
   &\quad  \times  2^{ -(k_1+2n_1)/2+ l-\min\{l,\kappa\}-(k+n+\min\{n,\kappa\})/2 } \\
   & \lesssim    \mathcal{M}(C)2^{(\gamma-10\epsilon)M_t},  \\
   &\\
    \big|ErrU^{2 ;2}_{i,i_1,i_2,i_3}(t_1, t_2)\big|&\lesssim   \mathcal{M}(C)   2^{ (1+\alpha^{\star}+4\iota )M_t-(k_3+2n_3)/2} 2^{ (\alpha^{\star}+3\iota) M_t -(k_2+2n_2)/2}  2^{k -2j -2\min\{l, \kappa\} } \\
    &\quad  \times 2^{- \gamma M_t} 2^{7M_t/6 -(k_1+2n_1)/2}   \min\{2^{-3k-n-\min\{n,\kappa\}}2^{3j+2l}, 2^{-j}\}   \\
    &   \lesssim \mathcal{M}(C)     2^{ 2\alpha^{\star} M_t+  7M_t/6+ 10\iota M_t-(k_3+2n_3)/2-(k_2+2n_2)/2} 2^{ -(k_1+2n_1)/2 }\\
    & \quad \times    2^{-5(k+l+\kappa)/4+  2l-2\min\{l,\kappa\} +(5\kappa-3\min\{n,\kappa\})/4}\\
    &  \lesssim   \mathcal{M}(C)2^{(\gamma-10\epsilon)M_t}.\\
   \end{split} 
\ee
 
To sum up, our desired estimate  \eqref{nov15eqn81} holds after combining the obtained estimates  \eqref{nov15eqn51}--\eqref{nov15eqn82}. 
\end{proof}

 Equation \eqref{2025oct16eqn15eqn2} and \eqref{2025oct16eqn15eqn3} provide point estimates for the same physical quantity, differing only with respect to the bootstrap assumptions governing velocity characteristics and the coefficient $C$. For notational consistency with Section \ref{mainimprovedfull}, we adhere to its established convention. Supplementary notations employed herein—excluding those listed in Table \ref{tablesection5}—are systematically delineated in Table \ref{tablesection6}.
\begin{table}[H] 
\centering
\resizebox{\columnwidth}{!}{%
\begin{tabular}{ |c|c|c|c| } 
 \hline
 Notation & Definition &  Remarks \\
 \hline
 $\widetilde{\mathcal{E}}_i, \widetilde{\mathcal{E}}'_1$ & \eqref{oct15eqn1}  & The   set of $(k,n)$ in the first iteration for the hyperbolic parts;\\ 
 & &  Providing  lower bounds for $k+2n$ and $n$. \\
 \hline
 ${}^{1}\widetilde{\mathcal{E}}_{k,n}^{i_1,i} $ & \eqref{firstindexsetlemm2} &  The   set of $(k_1,n_1)$ in the second iteration  for the hyperbolic parts\\
 \hline
  $ {}^{2}\widetilde{\mathcal{E}}_{k_1,n_1}  $ & \eqref{2026may1eqn11} & The   set of $(k_2,n_2)$ in the forth iteration  for the hyperbolic parts\\
 \hline
 ${}^{3}\widetilde{\mathcal{E}}_{k_2,n_2}  $ & \eqref{2026may1eqn12} & The   set of $(k_3,n_3)$ in the third iteration  for the hyperbolic parts\\
 \hline
$ {}_{1}^{1}\widetilde{\mathfrak{H}}^{\mu,i}_{k,j;n}(t_1, t_2)$, ${}_{1}^{1}Err\mathfrak{H}^{\mu,i}_{k,j;n}(t_1, t_2) $  & \eqref{oct29eqn91} & Two components of ${}_1^{1}\mathfrak{H}^{\mu,i}_{k,j;n}(t_1, t_2)$, see \eqref{nov12eqn61}. \\
 \hline
 $  \mathfrak{E}^{\mu,i;l}_{k,j;n}(t_1, t_2), \mathfrak{E}^{\mu,i;\kappa,l}_{k,j;n}(t_1, t_2)$ & \eqref{2024oct29eqn41}, \eqref{2024oct29eqn91} & Further dyadic localization  based on the angle\\ 
 $ {}_{b}^1\mathfrak{E}^{\mu,a;l}_{k,j;n}(t_1, t_2)$, ${}_{b}^1 \mathfrak{E}^{\mu,4;\kappa,l }_{k,j;n}(t_1, t_2)$ & \eqref{2024oct31eqn81} &  between $v$ and $V(s)$, see \eqref{2026may1eqn21} \\
 \hline
 $  {}_{0;a}^{\,\,1}\mathfrak{E}^{\mu,a;l}_{k,j;n}(t_1, t_2)$, $a\in\{0,1\}$ & \eqref{nov9eqn62} & Two components of  $ {}_{0}^{ 1}\mathfrak{E}^{\mu,a;l}_{k,j;n}(t_1, t_2)$\\
 \hline
 $ {}_{0;b}^{\,\,1} \mathfrak{E}^{\mu,4;\kappa ,l}_{k,j;n}(t_1, t_2)$, $b\in\{0,1,2\}$ & \eqref{findecompellipticpart2024oct} & Three components of  $ {}_{0}^1 \mathfrak{E}^{\mu,4;\kappa,l }_{k,j;n}(t_1, t_2)$; the refined decomposition\\ 
 & &  of $\mathbf{P}_3(B)$ in \eqref{nov6eqn47}  was employed\\
 \hline
 $Err^{\mu, \mu_1;i,i_1 }_{k,n;k_1,n_1}(t_1, t_2) $ & \eqref{nov12eqn64} & The contribution from the new introduced acceleration\\
 & &  force without the hyperbolic parts, see \eqref{oct7eqn1}; estimated in \eqref{2026may1eqn31}.  \\
 \hline
  $ H^{\mu, \mu_1;i}_{k,n;k_1,n_1}(t_1, t_2)$   & \eqref{nov12eqn64}, \eqref{nov12eqn63} & New Type-I term created in the first iteration for the elliptic parts\\
$  M^{\mu, \mu_1;i,i_1 }_{k,n;k_1,n_1}(t_1, t_2)$ & & New Type-II term created in the first iteration for the elliptic parts\\
 \hline
 $ErrU^a_{i,i_1}(t_1, t_2)$ & & Error terms in the second iteration; estimated in Lemma \ref{errhypotype1}\\
 $U^1_{i,i_1,i_2}(t_1,t_2)$, $U\in \{H, M\}$ & \eqref{nov14eqn31} & New Type-I term created in the second iteration\\
 $U^2_{i,i_1,i_2}(t_1,t_2)$ & & New Type-II term created in the second iteration\\
 \hline
 $ErrU^{b;a}_{i,i_1,i_2}(t_1, t_2)$ & & Error terms in the third iteration; estimated in       Lemma \ref{ellerrstep2}\\
 $U^1_{i,i_1,i_2,i_3}(t_1,t_2)$, $U\in \{H, M\}$ & \eqref{nov15eqn99} & New Type-I term created in the third iteration\\
 $U^2_{i,i_1,i_2,i_3}(t_1,t_2)$ & & New Type-II term created in the third iteration\\
 \hline
 $ErrU^{b;a}_{i,i_1,i_2,i_3}(t_1, t_2) $ & \eqref{nov12eqn91}--\eqref{2024oct31eqn22} & Error terms in the fourth(last)  iteration; estimated in Lemma \ref{ellerrstep3}\\
 $LastU^b_{i,i_1,i_2,i_3}(t_1,t_2)$, $U\in \{H, M\}$ & \eqref{nov12eqn90} &  The main term    in the fourth(last) iteration\\
 \hline
\end{tabular}%
}
\caption{Essential notations in section \ref{fullimproved}.}\label{tablesection6}
\end{table}

\end{document}